\documentclass[twoside,11pt]{article}

%

%
%
%

\usepackage[nohyperref]{jmlr2e}

\usepackage{amsmath} 
\usepackage{tikz}
\usetikzlibrary{fit, shapes}
\tikzset{both/.style={rectangle, path picture={%
\draw[black]
(path picture bounding box.north) -- (path picture bounding box.west) 
(path picture bounding box.west) -- (path picture bounding box.south)
(path picture bounding box.south) -- (path picture bounding box.east) 
(path picture bounding box.east) -- (path picture bounding box.north);
}}}
\usepackage{algorithm, algorithmic}
\usepackage{booktabs}

\usepackage[colorlinks=false,allbordercolors={1 1 1}, breaklinks=TRUE, bookmarksnumbered=FALSE]{hyperref}

\newcommand\ind{\protect\mathpalette{\protect\independenT}{\perp}} 
\def\independenT#1#2{\mathrel{\rlap{$#1#2$}\mkern4mu{#1#2}}}
\DeclareMathOperator{\de}{de}
\DeclareMathOperator{\Forbb}{forb}
\newcommand{\fb}[2][X,Y]{\Forbb(\mathbf{#1},#2)}

\defcitealias{HenckelPerkovicMaathuis2019}{HPM19}
\defcitealias{RotnitzkySmucler2019}{RS20}




\firstpageno{1}

	\usepackage{lastpage}
	\jmlrheading{21}{2020}{1-45}{2/20; Revised
		11/20}{12/20}{20-175}{Janine Witte, Leonard Henckel, Marloes H.\ Maathuis and Vanessa Didelez}
	\ShortHeadings{On Efficient Adjustment in Causal Graphs}{Witte, Henckel, Maathuis and Didelez}

\begin{document}
	
	\begin{tikzpicture}[overlay]
	\node[draw, fill=white, thick,minimum width=6cm,minimum height=0.6cm] (b) at (7,1){\url{https://www.jmlr.org/papers/v21/20-175.html}};
	\end{tikzpicture}
	
	\vspace{-0.4cm}

\title{On Efficient Adjustment in Causal Graphs}

\author{\name Janine Witte \email witte@leibniz-bips.de \\
       \addr Leibniz Institute for Prevention Research and Epidemiology---BIPS, Bremen, Germany\\
       and Faculty of Mathematics and Computer Science, University of Bremen, Germany       
       \AND
       \name Leonard Henckel \email henckel@stat.math.ethz.ch \\
       \addr Seminar for Statistics, ETH Zurich, Switzerland
       \AND
       \name Marloes H.\ Maathuis \email maathuis@stat.math.ethz.ch\\
       \addr Seminar for Statistics, ETH Zurich, Switzerland
   		\AND
   		\name Vanessa Didelez \email didelez@leibniz-bips.de \\
   		\addr Leibniz Institute for Prevention Research and Epidemiology---BIPS, Bremen, Germany\\
   		and Faculty of Mathematics and Computer Science, University of Bremen, Germany   }

\editor{Peter Spirtes}

\maketitle

\begin{abstract}
We consider estimation of a total causal effect from observational data via covariate adjustment. Ideally, adjustment sets are selected based on a given causal graph, reflecting knowledge of the underlying causal structure. Valid adjustment sets are, however, not unique. Recent research has introduced a graphical criterion for an `optimal’ valid adjustment set (\(\mathbf{O}\)-set). For a given graph, adjustment by the \(\mathbf{O}\)-set yields the smallest asymptotic variance compared to other adjustment sets in certain parametric and non-parametric models. In this paper, we provide three new results on the \(\mathbf{O}\)-set. First, we give a novel, more intuitive graphical characterisation: We show that the \(\mathbf{O}\)-set is the parent set of the outcome node(s) in a suitable latent projection graph, which we call the forbidden projection. An important property is that the forbidden projection preserves all information relevant to total causal effect estimation via covariate adjustment, making it a useful methodological tool in its own right. Second, we extend the existing IDA algorithm to use the $\mathbf{O}$-set, and argue that the algorithm remains semi-local. This is implemented in the \texttt{R}-package \texttt{pcalg}. Third, we present assumptions under which the \(\mathbf{O}\)-set can be viewed as the target set of popular non-graphical variable selection algorithms such as stepwise backward selection.
\end{abstract}

\begin{keywords}
  causal discovery, causal inference, confounder selection, confounding, efficiency, graphical models, IDA algorithm, model selection, sufficient adjustment set
\end{keywords}

\section{Introduction}
\label{sec:intro}

In typical analyses of observational data, we wish to estimate the total causal effect of a (possibly multivariate) treatment or exposure \(\mathbf{X}\) on a (possibly multivariate) outcome \(\mathbf{Y}\). Ideally, we can fully specify the underlying causal directed acyclic graph (DAG). We can then use a graphical adjustment criterion, e.g.\ Pearl's back-door criterion \citep{Pearl2009} or the generalised adjustment criterion \citep{Perkovicetal2015, Perkovicetal2018, ShpitserVanderWeeleRobins2010}, to check whether a set of covariates is valid for adjustment. However, there may be more than one valid adjustment set. Although all resulting estimators are then consistent, their variances may differ considerably.

There are several approaches to choose an adjustment set among all valid adjustment sets. For example, one can pick a minimal adjustment set \citep{deLunaWaernbaumRichardson2011,TextorLiskiewicz2011}. An alternative strategy is to aim at decreasing the causal effect estimator's variance by including variables associated with the outcome (e.g.\ \citealp{Brookhartetal2006, LuncefordDavidian2004, ShortreedErtefaie2017}). \cite{WitteDidelez2019} referred to this strategy as the `outcome-oriented' approach. It is especially popular when little graphical knowledge is available. A major advancement for the outcome-oriented approach was the graphical characterisation of the `optimal' adjustment set (\(\mathbf{O}\)-set) by \cite{HenckelPerkovicMaathuis2019} (HPM19). They showed that under a linear model, adjusting for the \(\mathbf{O}\)-set yields the smallest asymptotic variance for the causal effect estimator compared to all other valid adjustment sets, under assumptions detailed below. Strengthening this result, \cite{RotnitzkySmucler2019} (RS20) recently showed that the minimal variance property of the \(\mathbf{O}\)-set is retained for a class of non-parametric estimators. All these results apply to DAGs, as well as so-called amenable completed partially directed acyclic graphs (CPDAGs; see e.g.\ \citealp{AnderssonMadiganPerlman1997}) and amenable maximally oriented partially directed acyclic graphs (maxPDAGs; see \citealp{PerkovicKalischMaathuis2017}). These are larger classes of graphs allowing for undirected edges where the direction cannot be decided. Amenability implies that despite the undirected edges, an adjustment set can be identified from the CPDAG (or maxPDAG) so that this set is valid for adjustment in all DAGs in the equivalence class. If a CPDAG (or maxPDAG) is not amenable, no common adjustment set for all DAGs in the equivalence class exists \citep{Perkovicetal2018}, and hence different DAGs may imply different true causal effects of \(\mathbf{X}\) on  \(\mathbf{Y}\). However, it is then still possible to estimate a multiset of possible causal effects (meaning that all effects in the multiset are compatible with the non-amenable graph) using the IDA algorithm by \citet{MaathuisKalischBuhlmann2009, Maathuisetal2010}.

In this paper, we provide three new results on efficient causal effect estimation. First, after briefly reviewing the results of \citetalias{HenckelPerkovicMaathuis2019} and \citetalias{RotnitzkySmucler2019} (Section~\ref{sec:review}), we provide an alternative, intuitive characterisation of the \(\mathbf{O}\)-set. This is based on the new concept of a forbidden projection, which has many interesting properties regarding adjustment for confounding (Section~\ref{sec:new_def}). Second, we extend the application of the \(\mathbf{O}\)-set to non-amenable CPDAGs and maxPDAGs, by incorporating optimal adjustment into the IDA algorithm (Section~\ref{sec:IDA}). Third, we discuss how and under what assumptions the \(\mathbf{O}\)-set can be viewed as the target set of data-driven variable selection methods such as backward model selection (Section~\ref{sec:data}).

\section{Optimal Adjustment for Known Causal Structure}
\label{sec:review}

We begin by clarifying our setting and defining the \(\mathbf{O}\)-set, before proposing an alternative definition in Section~\ref{sec:new_def}. We defer most of the terminology and formal definitions to Appendix~\ref{appA}; here we only state some key concepts.

\textbf{(Possibly) causal nodes and forbidden nodes.} Let \(\mathcal{G}\) be a causal DAG, CPDAG or maxPDAG. A path \((V_1,\dots,V_m)\) in \(\mathcal{G}\) is called \textit{causal} from \(V_1\) to \(V_m\) if \(V_i\rightarrow V_{i+1}\) for all \(i\in\{1, \dots, m-1\}\). It is called \textit{possibly causal} if there are no \(i,j\in\{1,\dots,m\}\), \(i<j\), such that \(V_i\leftarrow V_j\). Otherwise it is called \textit{non-causal} from \(V_1\) to \(V_m\). A path from \(\mathbf{X}\) to \(\mathbf{Y}\) is \textit{proper} if only its first node is in \(\mathbf{X}\). If there is a causal path from \(V_1\) to \(V_m\) in \(\mathcal{G}\), then \(V_m\) is called a \textit{descendant} of \(V_1\) in \(\mathcal{G}\). Analogously, if there is a possibly causal path from \(V_1\) to \(V_m\) in \(\mathcal{G}\), then \(V_m\) is called a \textit{possible descendant} of \(V_1\) in \(\mathcal{G}\). The set of all descendants of \(V_1\) in \(\mathcal{G}\) is denoted by \(\mathrm{de}(V_1, \mathcal{G})\), and the set of all possible descendants by \(\mathrm{possde}(V_1, \mathcal{G})\). The \textit{causal nodes} with respect to (\(\mathbf{X}\), \(\mathbf{Y}\)) in \(\mathcal{G}\), denoted by \(\mathrm{cn}(\mathbf{X},\mathbf{Y},\mathcal{G})\), are the nodes on proper causal paths from \(\mathbf{X}\) to \(\mathbf{Y}\), excluding \(\mathbf{X}\) itself. The \textit{possibly causal nodes} \(\mathrm{posscn}(\mathbf{X},\mathbf{Y},\mathcal{G})\) are defined analogously. The \textit{forbidden set} with respect to (\(\mathbf{X}\), \(\mathbf{Y}\)) and \(\mathcal{G}\) is defined as \(\mathrm{forb}(\mathbf{X},\mathbf{Y},\mathcal{G})=\mathrm{possde}(\mathrm{posscn}(\mathbf{X},\mathbf{Y},\mathcal{G}),\mathcal{G})\cup\mathbf{X}\). In a DAG, this simplifies to \(\mathrm{forb}(\mathbf{X},\mathbf{Y},\mathcal{G})=\mathrm{de}(\mathrm{cn}(\mathbf{X},\mathbf{Y},\mathcal{G}),\mathcal{G})\cup\mathbf{X}\). The nodes in the forbidden set are called \textit{forbidden nodes}. It can be shown that valid adjustment sets never contain forbidden nodes \citep{Perkovicetal2018}.

\textbf{Valid adjustment sets.} We consider a set of treatments \(\mathbf{X}\) and a set of outcomes \(\mathbf{Y}\). A (possibly empty) set \(\mathbf{Z}\) is a \textit{valid adjustment set} relative to \((\mathbf{X},\mathbf{Y})\) if the interventional distribution \(f(\mathbf{y}\mid do(\mathbf{x}))\) of \(\mathbf{Y}\), given we set \(\mathbf{X}\) to \(\mathbf{x}\) by intervention, factorises as follows:
\begin{equation*}
f(\mathbf{y}\mid do(\mathbf{x}))=\begin{cases}
f(\mathbf{y}\mid\mathbf{x})&\textit{if }\mathbf{Z}=\emptyset,\\
\int_{\mathbf{z}}f(\mathbf{y}\mid\mathbf{x},\mathbf{z})f(\mathbf{z})d\mathbf{z}&\text{otherwise.}
\end{cases}
\end{equation*}
Valid adjustment sets can be read off from a given causal DAG, CPDAG or maxPDAG \(\mathcal{G}\) using the \textit{generalised adjustment criterion} \citep{PerkovicKalischMaathuis2017,Perkovicetal2018,ShpitserVanderWeeleRobins2010}, which generalises Pearl's back-door criterion \citep{Pearl2009}: \(\mathbf{Z}\) is a valid adjustment set relative to \((\mathbf{X},\mathbf{Y})\) in \(\mathcal{G}\) if and only if the following three conditions hold: (a) every proper possibly causal path from \((\mathbf{X}\) to \(\mathbf{Y})\) starts with a directed edge out of \(\mathbf{X}\), (b) \(\mathbf{Z}\cap\mathrm{forb}(\mathbf{X},\mathbf{Y},\mathcal{G})=\emptyset\), (c) all proper non-causal definite-status paths from \(\mathbf{X}\) to \(\mathbf{Y}\) are blocked by \(\mathbf{Z}\). Property (a) is called \textit{amenability}. See Appendix \ref{appA} for the definition of a definite-status path. In a DAG, all paths are of definite status.

We consider two model classes and corresponding strategies for estimating causal effects when a valid adjustment set is available: (i) the causal linear model with possibly non-Gaussian error terms, where causal effects are estimated using linear regression \citepalias{HenckelPerkovicMaathuis2019}, and (ii) the more general non-parametric causal model, where estimation proceeds non-parametrically \citepalias{RotnitzkySmucler2019}. In both settings, we assume an underlying causal DAG, and that we observe all variables displayed as nodes in the DAG, i.e.\ there are no latent variables.

\textbf{Causal linear models (\citetalias{HenckelPerkovicMaathuis2019}).} A causal linear model is a causal DAG where every edge represents a linear causal effect. In a causal linear model, the (joint) causal effect of \(\mathbf{X}=\{X_1,\dots,X_{k_x}\}\) on \(\mathbf{Y}=\{Y_1,\dots,Y_{k_y}\}\) is defined as the matrix \(\boldsymbol{\tau}_{\mathbf{yx}}\) with elements
\begin{align*}
(\boldsymbol{\tau}_{\mathbf{yx}})_{j,i}&=\frac{\partial}{\partial x_i}\mathrm{E}(Y_j\mid do(x_1,\dots,x_{k_x}))\\
&=\mathrm{E}(Y_j\mid do(x_1,\dots,x_i+1,\dots,x_{k_x})) - \mathrm{E}(Y_j\mid do(x_1,\dots,x_{k_x})),
\end{align*}
where element \((\boldsymbol{\tau}_{\mathbf{yx}})_{j,i}\) corresponds to the controlled direct effect \citep{RobinsGreenland1992, Pearl2001} of \(X_i\) on \(Y_j\) relative to \(\mathbf{X}\). In other words, \((\boldsymbol{\tau}_{\mathbf{yx}})_{j,i}\) is the difference in \(\mathrm{E}(Y_j)\) when \(\mathbf{X}\) is set to \((x_1,\cdots,x_i+1,\dots,x_{k_x})\) by intervention, compared to when \(\mathbf{X}\) is set to \((x_1,\dots,x_{k_x})\) by intervention. We can compute the effect of more general interventions as functions of the elements of \(\boldsymbol{\tau}_{\mathbf{yx}}\); for example, the sum of the first row corresponds to the effect on \(Y_1\) of increasing all elements of \((x_1,\dots,x_{k_x})\) by one. Given a valid adjustment set \(\mathbf{Z}\) for the effect of \(\mathbf{X}\) on \(\mathbf{Y}\), \(\boldsymbol{\tau}_{\mathbf{yx}}\) can be rewritten as a matrix of regression coefficients as follows: Denote by \(\boldsymbol{\beta}_{\mathbf{yx}.\mathbf{z}}\) the \((k_y\times k_x)\)-matrix whose \((j,i)\)-th element is the regression coefficient \(\beta_{y_jx_i.\mathbf{x_{-i}z}}\) of \(X_i\) in a linear regression of \(Y_j\) on \(X_i\) and \(\mathbf{Z}\cup\mathbf{X}_{-i}\), where \(\mathbf{X}_{-i}=\mathbf{X}\setminus\{X_i\}\). Then \(\boldsymbol{\tau}_{\mathbf{yx}}=\boldsymbol{\beta}_{\mathbf{yx}.\mathbf{z}}\). The ordinary least squares (OLS) estimator \(\hat{\boldsymbol{\beta}}_{\mathbf{yx}.\mathbf{z}}\) is a consistent estimator of \(\boldsymbol{\beta}_{\mathbf{yx}.\mathbf{z}}\). We denote the asymptotic variance of \(\hat{\beta}_{y_jx_i.\mathbf{x_{-i}z}}\) by \(a.var(\hat{\beta}_{y_jx_i.\mathbf{x_{-i}z}})\).

\textbf{Non-parametric estimation of causal effects (\citetalias{RotnitzkySmucler2019}).} In the more general setting of a causal DAG without linearity or other assumptions on the functional form, we define the causal effect of \(\mathbf{X}\) on \(\mathbf{Y}\) as follows. Let \(\mathcal{X}\) be the set of values that \(\mathbf{X}\) can take. For a pair of vectors \(\mathbf{x}, \mathbf{x'}\in\mathcal{X}\), the causal effect of intervening to set \(\mathbf{X}\) to \(\mathbf{x}\) vs.\ \(\mathbf{x'}\) is the vector \(\boldsymbol{\Delta}_{\mathbf{yxx'}}\) with elements
\[(\boldsymbol{\Delta}_{\mathbf{yxx'}})_j=\mathrm{E}(Y_j\mid do(\mathbf{x}))-\mathrm{E}(Y_j\mid do(\mathbf{x'})).\]
Note that in the non-parametric case, it is not possible to compactly represent the causal effect of \(\mathbf{X}\) on \(\mathbf{Y}\) in a \((k_y\times k_x)\)-matrix. \citetalias{RotnitzkySmucler2019} considered the class of regular asymptotically linear estimators for the non-parametric estimation of \(\boldsymbol{\Delta}_{\mathbf{yxx'}}\). This class includes inverse probability weighting by a non-parametrically estimated propensity score \citep{HiranoImbensRidder2003}, non-parametric outcome regression \citep{Hahn1998}, and double machine learning \citep{Chernozhukovetal2018, SmuclerRotnitzkyRobins2019}. We use \(\hat{\boldsymbol{\Delta}}_{\mathbf{yxx'}.\mathbf{z}}\) to denote an estimator from this class that estimates \(\boldsymbol{\Delta}_{\mathbf{yxx'}}\) adjusting for a valid adjustment set \(\mathbf{Z}\). Under a causal DAG model and certain smoothness and complexity restrictions, \(\hat{\boldsymbol{\Delta}}_{\mathbf{yxx'}.\mathbf{z}}\) is a consistent estimator of \(\boldsymbol{\Delta}_{\mathbf{yxx'}}\). For given \(\mathbf{y}\), \(\mathbf{x}\) and \(\mathbf{x'}\), the asymptotic distribution of estimators from this class depends only on \(\mathbf{Z}\), therefore we do not further distinguish between the estimators. We denote the asymptotic variance of \((\hat{\boldsymbol{\Delta}}_{\mathbf{yxx'}.\mathbf{z}})_j=\hat{\Delta}_{y_j\mathbf{xx'}.\mathbf{z}}\) by \(a.var(\hat{\Delta}_{y_j\mathbf{xx'.z}})\). See \citetalias{RotnitzkySmucler2019} and the references therein for more details on regular asymptotically linear estimators.

\begin{definition}[\(\mathbf{O}\)-set; \citetalias{HenckelPerkovicMaathuis2019} Definition 3.8]
	\label{Oset1}
	Let \(\mathbf{X}\) and \(\mathbf{Y}\) be disjoint node sets in a DAG, CPDAG or maxPDAG \(\mathcal{G}\). Then \(\mathbf{O}(\mathbf{X},\mathbf{Y},\mathcal{G})\) is defined as:
	\[\mathbf{O}(\mathbf{X},\mathbf{Y},\mathcal{G})=\mathrm{pa}(\mathrm{cn}(\mathbf{X},\mathbf{Y},\mathcal{G}),\mathcal{G})\setminus \mathrm{forb}(\mathbf{X},\mathbf{Y},\mathcal{G}).\]
\end{definition}

An example is given in Figure~\ref{fig:ExO}. It shows the causal relations between 12 symptoms of prodromal schizophrenia as measured by the \textit{Schizotypic Syndrome Questionnaire} \citep{vanKampen2006}. The DAG was constructed using a combination of expert knowledge and data-driven structure learning \citep{vanKampen2014}. For illustration, we here take this given DAG as ground truth. Suppose we are interested in the causal effect of Alienation (ALN) on Delusional Thinking (DET). The bold edges indicate the causal paths with causal nodes \{PER, SUS, FTW, DET\} (circles). The parents of the causal nodes are \{ALN, PER, SUS, FTW, AIS, CDR\}, the forbidden set is \{ALN, PER, SUS, FTW, DET, HOS, EGC\} and the \(\mathbf{O}\)-set is \{ALN, PER, SUS, FTW, AIS, CDR\} \(\setminus\) \{ALN, PER, SUS, FTW, DET, HOS, EGC\}\(=\)\{AIS, CDR\} (shown in boxes). Other valid adjustment sets are, for example, \{AFF, SAN\}, \{AIS, CDR, AFF\} and \{AFF, APA, AIS, CDR, SAN\}. This can be checked using the generalised adjustment criterion stated above.

\begin{figure}[h]
	\begin{center}
		\begin{tikzpicture}[node distance=20mm, >=latex]
		\node (AFF) {AFF};
		\node[below of=AFF, yshift=-20mm] (APA) {APA};
		\node[right of=APA, xshift=10mm, circle, fill=gray!50, draw, inner sep=0pt, minimum size=32pt] (ALN) {ALN};
		\node[right of=AFF, xshift=60mm, rectangle, draw] (CDR) {CDR};
		\node[right of=ALN, xshift=70mm, circle, fill=gray!50, draw, inner sep=0pt, minimum size=32pt] (DET) {DET};
		\node[above of=ALN] (SAN) {SAN};
		\node[right of=SAN, xshift=15mm, rectangle, draw] (AIS) {AIS};
		\node[right of=ALN, yshift=-20mm, circle, draw, inner sep=0pt, minimum size=32pt] (SUS) {SUS};
		\node[right of=SUS, xshift=20mm, circle, draw, inner sep=0pt, minimum size=32pt] (FTW) {FTW};
		\node[above of=FTW, yshift=10mm, circle, draw, inner sep=0pt, minimum size=32pt] (PER) {PER};
		\node[below of=SUS, xshift=-20mm, circle, draw, inner sep=0pt, minimum size=32pt] (HOS) {HOS};
		\node[below of=SUS, xshift=20mm, circle, draw, inner sep=0pt, minimum size=32pt] (EGC) {EGC};
		\draw[->, line width=1mm] (ALN) to (DET);
		\draw[->, thick] (AFF) to (ALN);
		\draw[->, thick] (SAN) to (AFF);
		\draw[->, thick] (SAN) to (AIS);
		\draw[->, thick] (SAN) to (APA);
		\draw[->, thick] (SAN) to (ALN);
		\draw[->, thick] (SAN) to (CDR);
		\draw[->, thick] (CDR) to (DET);
		\draw[->, thick] (AFF) to (APA);
		\draw[->, thick] (AIS) to (AFF);
		\draw[->, thick] (AFF) to (CDR);
		\draw[->, thick] (ALN) to (APA);
		\draw[->, line width=1mm] (ALN) to (PER);
		\draw[->, line width=1mm] (ALN) to (SUS);
		\draw[->, line width=1mm] (ALN) to (FTW);
		\draw[->, thick] (AIS) to (SUS);
		\draw[->, thick] (AIS) to (EGC);
		\draw[->, thick] (SUS) to (HOS);
		\draw[->, thick] (EGC) to (HOS);
		\draw[->, thick] (FTW) to (EGC);
		\draw[->, thick] (SUS) to (EGC);
		\draw[->, line width=1mm] (PER) to (DET);
		\draw[->, line width=1mm] (SUS) to (FTW);
		\draw[->, line width=1mm] (FTW) to (DET);
		\end{tikzpicture}
	\end{center}
	\caption{DAG from \cite{vanKampen2014} illustrating the assumed causal relations between 12 prodromal symptoms of schizophrenia: AFF=Affective Flattening, AIS=Active Isolation, ALN=Alienation, APA=Apathy, CDR=Cognitive Derailment, DET=Delusional Thinking, EGC=Egocentrism, FTW=Living in a Fantasy World, HOS=Hostility, PER=Perceptual Aberrations, SAN=Social Anxiety, SUS=Suspiciousness. We are interested in the causal effect of ALN on DET, both shown in grey circles. Bold arrows show the causal paths from ALN to DET. The forbidden nodes are shown as circles, nodes in the \(\mathbf{O}\)-set are shown as boxes.}
	\label{fig:ExO}
\end{figure}
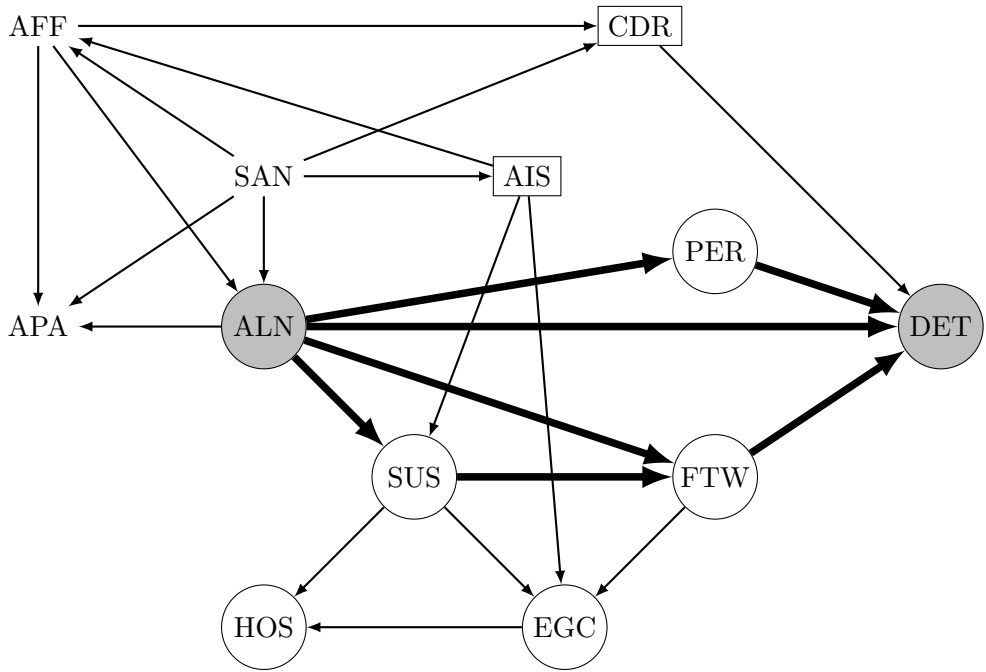

Note that in many applications it might be possible to augment a causal graph e.g.\ with further parents of \(Y\) that are marginally independent of all other non-descendants of \(Y\). This induces a different \(\mathbf{O}\)-set illustrating that this set depends on what variables are included in the graph. Note also that the \(\mathbf{O}\)-set is defined even if no valid adjustment set exists, but this case will rarely be of interest.

\begin{proposition}[\citetalias{HenckelPerkovicMaathuis2019} Theorem 3.10 (1)]
	\label{Osuff}
	Let \(\mathbf{X}\) and \(\mathbf{Y}\) be disjoint subsets of the node set \(\mathbf{V}\) of a causal DAG, CPDAG or maxPDAG \(\mathcal{G}\). The set \(\mathbf{O}(\mathbf{X},\mathbf{Y},\mathcal{G})\) is a valid adjustment set relative to \((\mathbf{X},\mathbf{Y})\) in \(\mathcal{G}\) if (i) \(\mathbf{Y}\subseteq\mathrm{possde}(\mathbf{X},\mathcal{G})\) and (ii) a valid adjustment set relative to \((\mathbf{X},\mathbf{Y})\) in \(\mathcal{G}\) exists.
\end{proposition}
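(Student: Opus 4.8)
The plan is to verify the three conditions (a)--(c) of the generalised adjustment criterion for the set $\mathbf{Z}=\mathbf{O}(\mathbf{X},\mathbf{Y},\mathcal{G})$. Conditions (a) and (b) are almost immediate. Amenability (a) is a property of $\mathcal{G}$ alone and is one of the three conditions that every valid adjustment set must satisfy; hence assumption (ii), the existence of \emph{some} valid adjustment set, already forces (a) to hold. Condition (b), namely $\mathbf{O}\cap\mathrm{forb}(\mathbf{X},\mathbf{Y},\mathcal{G})=\emptyset$, holds by the very definition of $\mathbf{O}$ as $\mathrm{pa}(\mathrm{cn})\setminus\mathrm{forb}$. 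So the entire content of the proposition sits in condition (c): every proper non-causal definite-status path from $\mathbf{X}$ to $\mathbf{Y}$ is blocked by $\mathbf{O}$.

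Before attacking (c) I would record one consequence of the hypotheses: assumption (i) together with amenability yields $\mathbf{Y}\subseteq\mathrm{cn}(\mathbf{X},\mathbf{Y},\mathcal{G})$. In a DAG this is clear, since $Y\in\mathrm{de}(\mathbf{X})$ gives a proper directed path from $\mathbf{X}$ to $Y$, so $Y$ is a causal node; in the CPDAG/maxPDAG case the same conclusion needs the standard amenability lemmas that replace proper possibly causal paths by proper causal ones. In particular each endpoint $Y$ satisfies $Y\in\mathrm{de}(\mathrm{cn})\subseteq\mathrm{forb}$.

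For (c) I would argue by contradiction in the DAG case, assuming a proper non-causal path $p=\langle V_0=X,\dots,V_k=Y\rangle$ is open given $\mathbf{O}$. Let $j$ be the largest index with $V_j\notin\mathrm{de}(\mathrm{cn}(\mathbf{X},\mathbf{Y},\mathcal{G}))$; this is well defined because $V_0=X$ is never a descendant of a causal node (that would create a directed cycle), and $j<k$ because $V_k=Y\in\mathrm{de}(\mathrm{cn})$. By maximality $V_{j+1},\dots,V_k\in\mathrm{de}(\mathrm{cn})$. The edge between $V_j$ and $V_{j+1}$ cannot be $V_{j+1}\to V_j$, since that would place $V_j$ in $\mathrm{de}(\mathrm{cn})$; hence $V_j\to V_{j+1}$, so $V_j$ is a non-collider on $p$. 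The key observation is that a node of $\mathrm{de}(\mathrm{cn})$ neither lies in $\mathbf{O}$ nor has a descendant in $\mathbf{O}$, because $\mathbf{O}$ is disjoint from $\mathrm{forb}\supseteq\mathrm{de}(\mathrm{cn})$ and $\mathrm{de}(\mathrm{cn})$ is downward closed. Since an open path requires every collider to have a descendant in $\mathbf{O}$, the tail $V_{j+1},\dots,V_{k-1}$ can contain no collider, forcing the segment from $V_j$ to $V_k$ to be the directed path $V_j\to V_{j+1}\to\cdots\to V_k=Y$. Concatenating a directed path from a causal node into $V_{j+1}$ with this directed path into $Y$ exhibits $V_{j+1}$ on a proper causal path from $\mathbf{X}$ to $\mathbf{Y}$, i.e.\ $V_{j+1}\in\mathrm{cn}$, whence $V_j\in\mathrm{pa}(\mathrm{cn})$. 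As $V_j\notin\mathrm{de}(\mathrm{cn})$ and $V_j\notin\mathbf{X}$ (otherwise $p$ would be the directed, hence causal, path $X\to\cdots\to Y$), we get $V_j\notin\mathrm{forb}$ and therefore $V_j\in\mathbf{O}$. Then $p$ contains the non-collider $V_j\in\mathbf{O}$ and is blocked, a contradiction.

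The main obstacle is transferring this argument from DAGs to CPDAGs and maxPDAGs. There $p$ may contain undirected edges, so the clean dichotomy ``$V_{j+1}\to V_j$ or $V_j\to V_{j+1}$'' must be replaced by an analysis of undirected edges incident to possible descendants of possibly causal nodes, the definite-status assumption on $p$ must be used to control colliders, and one must reconcile the definition of $\mathbf{O}$ through $\mathrm{cn}$ with the forbidden set's use of $\mathrm{posscn}$ and $\mathrm{possde}$. I would resolve these points by invoking the amenability lemmas of Perkovi\'c et al.\ that align possibly causal with causal structure in amenable graphs, after which the descendant-closure argument above carries over essentially verbatim. Since the statement is exactly \citetalias{HenckelPerkovicMaathuis2019} Theorem~3.10(1), one may alternatively cite their proof.
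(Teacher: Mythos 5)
The paper never proves this proposition: it is imported verbatim from \citetalias{HenckelPerkovicMaathuis2019} (their Theorem 3.10(1)), so your self-contained argument supplies a proof where the paper supplies only a citation. Your reduction to condition (c) of the generalised adjustment criterion is right, and your DAG argument---take the last node $V_j$ of the putatively open path that lies outside $\mathrm{de}(\mathrm{cn}(\mathbf{X},\mathbf{Y},\mathcal{G}),\mathcal{G})$, show the remainder of the path must be a directed tail into $Y$ because forbidden colliders cannot be ancestors of $\mathbf{O}$, and conclude $V_j\in\mathrm{pa}(\mathrm{cn}(\mathbf{X},\mathbf{Y},\mathcal{G}),\mathcal{G})\setminus\mathrm{forb}(\mathbf{X},\mathbf{Y},\mathcal{G})=\mathbf{O}$ blocks the path---is essentially the standard proof of this result and is sound in substance.

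One justification does need repair. Your reason for the well-definedness of $j$ (``$V_0=X$ is never a descendant of a causal node (that would create a directed cycle)'') is false for multivariate $\mathbf{X}$: a node $X_2\in\mathbf{X}$ can be a descendant of a causal node lying on a proper causal path that starts at a different $X_1\in\mathbf{X}$, with no cycle; this is exactly the configuration in panel \textbf{F} of Figure~\ref{fig:Examples}, where no valid adjustment set exists. The correct justification is assumption (ii) via Corollary 27 of Perkovi\'c et al.\ (Lemma~\ref{EmaCorollary} in the paper), which yields $\mathbf{X}\cap\mathrm{de}(\mathrm{cn}(\mathbf{X},\mathbf{Y},\mathcal{G}),\mathcal{G})=\emptyset$. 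Note that as written your blocking argument never actually invokes (ii), yet without it the proposition is false, so this is not cosmetic. Two smaller points: extracting a proper causal path through $V_{j+1}$ from your concatenated walk requires observing that any node shared by the two directed segments would be both an ancestor and a descendant of $V_{j+1}$ and hence equal to it; and the CPDAG/maxPDAG case remains a sketch, which is acceptable only because you also offer the citation to \citetalias{HenckelPerkovicMaathuis2019} as a fallback.
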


Condition (i) can be checked using a simple query on \(\mathcal{G}\). If \(\mathbf{Y}\not\subseteq\mathrm{possde}(\mathbf{X},\mathcal{G})\), we know that the causal effect of \(\mathbf{X}\) on \(\mathbf{Y}\setminus\mathrm{possde}(\mathbf{X},\mathcal{G})\) is zero. Hence, without loss of generality, we can consider the set of outcome variables \(\mathbf{Y}\cap\mathrm{possde}(\mathbf{X},\mathcal{G})\) instead of \(\mathbf{Y}\). Condition (ii) is satisfied if \(\mathbf{O}(\mathbf{X},\mathbf{Y},\mathcal{G})\) or any other subset of \(\mathbf{V}\setminus\{\mathbf{X},\mathbf{Y}\}\) fulfils the generalised adjustment criterion stated above. For the DAG in Figure~\ref{fig:ExO}, it can easily be seen that \(\text{DET}\in\mathrm{de}(\text{ALN})\), hence condition (i) is satisfied. Under condition (i), condition (ii) is always satisfied for univariate treatment and outcome in a DAG, because the parents of treatment then form a valid adjustment set (see \citealp{Pearl2009}, p.\,72f.).

The following proposition, which builds on earlier work by \cite{KurokiMiyakawa2003} and \cite{KurokiCai2004}, establishes the optimality of the \(\mathbf{O}\)-set in terms of the asymptotic variance in the linear and in the non-parametric setting.

\begin{proposition}
	\label{eff}
	Let \(\mathbf{X}\) and \(\mathbf{Y}\) be disjoint subsets of the node set \(\mathbf{V}\) of a causal DAG, CPDAG or maxPDAG \(\mathcal{G}\), such that \(\mathbf{Y}\subseteq\mathrm{possde}(\mathbf{X},\mathcal{G})\). Let \(\mathbf{Z}\) be a valid adjustment set relative to \((\mathbf{X},\mathbf{Y})\) in \(\mathcal{G}\) and let \(\mathbf{O}=\mathbf{O}(\mathbf{X},\mathbf{Y},\mathcal{G})\).
	\begin{itemize}
		\item[(a)] (\citetalias{HenckelPerkovicMaathuis2019} Theorem 3.10 (2)) If the variables \(\mathbf{V}\) follow a linear causal model compatible with \(\mathcal{G}\), then, for every \(X_i\in\mathbf{X}\) and \(Y_j\in\mathbf{Y}\), \(a.var(\hat{\beta}_{y_jx_i.\mathbf{x}_{-i}\mathbf{o}})\le a.var(\hat{\beta}_{y_jx_i.\mathbf{x}_{-i}\mathbf{z}})\).
	
		\item[(b)] (\citetalias{RotnitzkySmucler2019} Theorem 2) For every \(Y_j\in\mathbf{Y}\) and pair of vectors \(\mathbf{x}, \mathbf{x'}\in\mathcal{X}\),
		
		\(a.var(\hat{\Delta}_{y_j\mathbf{xx'.o}}) \le a.var(\hat{\Delta}_{y_j\mathbf{xx'.z}})\).
	\end{itemize}
\end{proposition}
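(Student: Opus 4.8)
The plan is to reduce the two statements to the cited theorems after confirming that our hypotheses coincide with theirs, and then to sketch the variance-comparison mechanism that underlies each part. First I would check the alignment of hypotheses: \(\mathbf{X},\mathbf{Y}\) disjoint together with \(\mathbf{Y}\subseteq\mathrm{possde}(\mathbf{X},\mathcal{G})\) and the existence of a valid adjustment set guarantee, via Proposition~\ref{Osuff}, that \(\mathbf{O}\) is itself a valid adjustment set. This is exactly the regime in which \citetalias{HenckelPerkovicMaathuis2019} Theorem~3.10(2) and \citetalias{RotnitzkySmucler2019} Theorem~2 are stated, so both parts follow once the underlying inequalities are in place; the remaining task is to recall why those inequalities hold.

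For part (a), I would express the asymptotic variance of the OLS coefficient as a ratio of residual variances. By the partial-regression (Frisch--Waugh--Lovell) identity, \(a.var(\hat{\beta}_{y_jx_i.\mathbf{x}_{-i}\mathbf{z}})\) equals \(\mathrm{Var}(Y_j\mid\mathbf{X},\mathbf{Z})/\mathrm{Var}(X_i\mid\mathbf{X}_{-i},\mathbf{Z})\). The target inequality then says that replacing \(\mathbf{Z}\) by \(\mathbf{O}\) shrinks the numerator while not shrinking the denominator. The mechanism I would aim to exploit is graphical: because \(\mathbf{O}=\mathrm{pa}(\mathrm{cn}(\mathbf{X},\mathbf{Y},\mathcal{G}),\mathcal{G})\setminus\mathrm{forb}(\mathbf{X},\mathbf{Y},\mathcal{G})\) collects all non-forbidden parents of the causal nodes, it should make \(Y_j\) as predictable as possible given \(\mathbf{X}\), so \(\mathrm{Var}(Y_j\mid\mathbf{X},\mathbf{O})\le\mathrm{Var}(Y_j\mid\mathbf{X},\mathbf{Z})\); and the variables \(\mathbf{O}\) adds relative to a generic \(\mathbf{Z}\) should be d-separated from \(X_i\) given \(\mathbf{X}_{-i}\) and the rest, so they do not further explain the treatment and \(\mathrm{Var}(X_i\mid\mathbf{X}_{-i},\mathbf{O})\ge\mathrm{Var}(X_i\mid\mathbf{X}_{-i},\mathbf{Z})\).

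The \emph{main obstacle} is that the numerator and denominator both tend to decrease as the adjustment set grows, so a direct term-by-term comparison of an arbitrary valid \(\mathbf{Z}\) with \(\mathbf{O}\) does not split cleanly, and the two idealized inequalities above need not both hold for an arbitrary \(\mathbf{Z}\). The real work, and the step I expect to be the hardest, is to justify the two effects simultaneously. Following \citetalias{HenckelPerkovicMaathuis2019} I would interpolate through a chain of intermediate valid adjustment sets and prove one variance-reducing ``building block'' at a time: adding an outcome predictor that is conditionally independent of the treatment, and deleting a variable that is not a parent of a causal node. Each building block requires a careful d-separation argument tied to the definition of \(\mathbf{O}\), and composing them moves from \(\mathbf{Z}\) to \(\mathbf{O}\) while never increasing the variance.

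For part (b), I would replace the OLS variances by the variance of the efficient influence function of the adjustment functional for a regular asymptotically linear estimator, which decomposes into an outcome-regression contribution and a propensity/overlap contribution, both conditional on \(\mathbf{Z}\). I would then mirror the linear argument at the level of influence functions: the \(\mathbf{O}\)-set minimizes the outcome-regression term, while the variables it adds relative to \(\mathbf{Z}\) are irrelevant for treatment assignment given \(\mathbf{X}_{-i}\) and so cannot inflate the overlap term. The obstacle here is that, lacking the linear-algebraic ratio, one must argue monotonicity purely through conditional-variance and conditional-independence identities; as in \citetalias{RotnitzkySmucler2019} this is handled by the same graphical supplementation scheme, reducing the non-parametric comparison to a sequence of conditional-variance inequalities, each certified by d-separation in \(\mathcal{G}\).
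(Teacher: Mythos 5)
Proposition~\ref{eff} is not proved in the paper: both parts are stated as direct citations of \citetalias{HenckelPerkovicMaathuis2019} Theorem~3.10(2) and \citetalias{RotnitzkySmucler2019} Theorem~2, so the paper's ``proof'' is exactly the reduction you perform in your first paragraph (hypothesis alignment via Proposition~\ref{Osuff}, then invoke the cited theorems). Your additional sketch of the underlying mechanisms is consistent with how those results are actually established (and with how this paper itself reuses the key lemmas of \citetalias{HenckelPerkovicMaathuis2019} in its Appendix~D proof of Proposition~\ref{propIDA}, via the ratio \(\sigma_{yy.x\mathbf{o}}/\sigma_{xx.\mathbf{o}}\) and the interpolation \(\mathbf{Z}\to\mathbf{Z}\cup\mathbf{O}\to\mathbf{O}\) certified by the two conditional independencies), so the proposal is correct and takes essentially the same approach.
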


In other words, for a given causal linear model, the \(\mathbf{O}\)-set yields the smallest asymptotic variance for the OLS estimator among all valid adjustment sets. If linearity cannot be assumed, the \(\mathbf{O}\)-set yields the smallest variance for regular asymptotically linear estimators. Thus, assume that Figure~\ref{fig:ExO} represents a causal linear model. Proposition \ref{eff} then implies that if we estimate the effect of ALN on DET by linearly regressing DET on ALN and the \(\mathbf{O}\)-set \{AIS, CDR\}, then the estimator will have a smaller asymptotic variance than if we regress DET on ALN and a different valid adjustment set, say the parent set of ALN, which equals \{AFF, SAN\}. Moreover, when we relax linearity, non-parametric adjustment for the \(\mathbf{O}\)-set \{AIS, CDR\} is more efficient than non-parametric adjustment for any other valid adjustment set, provided the estimator is in the class of regular asymptotically linear estimators.

\section{The O-Set via Forbidden Projection}
\label{sec:new_def}

In this section we provide an alternative, intuitive construction of the \(\mathbf{O}\)-set. For the sake of clarity, we restrict ourselves to DAGs; generalisations to amenable maxPDAGs are given in Appendix~\ref{appC}.

To motivate our alternative construction, we posit that a useful adjustment set should be i) valid, ii) easy to compute, and iii) efficient. Consider singleton treatment \(X\) and outcome \(Y\), where the latter is not an ancestor of  \(X\). The parents of \(X\) are easy to determine and guaranteed to be valid (see \citealp{Pearl2009}, p.\,72f.). However, it is well-known that adjusting for variables strongly associated with treatment tends to reduce the efficiency of OLS and other estimators of the treatment effect. Hence, adjusting for the parents of treatment is typically inefficient compared to other valid adjustment sets.
In contrast, it is also well-known that regression adjustment for variables strongly associated with the outcome tends to improve the efficiency of OLS and other estimators. Hence, the parents of the outcome would appear a natural, easy to determine and more efficient alternative for adjustment. However, the parents of \(Y\) are not guaranteed to be a valid adjustment set; they may contain forbidden nodes, specifically mediators between treatment and outcome. For example, in Figure~\ref{fig:ExO}, FTW is a parent of the outcome DET, but a descendant of the treatment ALN and hence cannot be used for adjustment. Simply omitting such nodes from the parents of \(Y\) does not generally lead to a valid adjustment set either. For example, CDR alone does not form a valid adjustment set in Figure~\ref{fig:ExO}, since there are open confounding paths, e.g.\ \(ALN\leftarrow SAN\rightarrow AIS\rightarrow SUS\rightarrow FTW\rightarrow DET\).

Nonetheless, the intuition of using the parents of \(Y\) is correct if applied to a modified graph. As we show below, marginalising out, i.e.\ projecting over, the forbidden nodes results in a graph where the parent set of \(Y\) indeed coincides with the \(\mathbf{O}\)-set, and is thus guaranteed to yield an estimator with minimal asymptotic variance in the settings we consider, see Proposition \ref{eff}. This characterization of the \(\mathbf{O}\)-set thus combines validity, graphical simplicity and efficiency. We will now explain this formally.

Consider again the case of a DAG \(\mathcal{D}\) containing sets \(\mathbf{X}\) and \(\mathbf{Y}\). We first need the concept of latent projection, used to \textit{marginalise} or \textit{collapse over} latent, i.e.\ unobserved nodes, while preserving the remaining causal relations and (in)dependencies between the observed nodes.

\begin{definition}[Latent projection; \citealp{VermaPearl1990, Shpitseretal2014}]
	\label{lproj}
	
	\textcolor{white}{i} Let \(\mathcal{D}\) be a DAG with node set \(\mathbf{W}\cup \mathbf{L}\) and \(\mathbf{W}\cap \mathbf{L}=\emptyset\). The \emph{latent projection} \(\mathcal{D}(\mathbf{W})\) \emph{over} \(\mathbf{L}\) \emph{on} \(\mathbf{W}\) is a graph with node set \(\mathbf{W}\) and edges as follows: For distinct nodes \(W_i,W_j\in\mathbf{W}\),
	\begin{enumerate}
		\item \(\mathcal{D}(\mathbf{W})\) contains a directed edge \(W_i\rightarrow W_j\) if and only if \(\mathcal{D}\) contains a directed path \(W_i\rightarrow\dots\rightarrow W_j\) on which all non-endpoint nodes are in \(\mathbf{L}\),
		\item\(\mathcal{D}(\mathbf{W})\) contains a bi-directed edge \(W_i\leftrightarrow W_j\) if and only if \(\mathcal{D}\) contains a path, with at least one non-endpoint node, of the form \(W_i\leftarrow\dots\rightarrow W_j\) on which all non-endpoint nodes are non-colliders and in \(\mathbf{L}\).
	\end{enumerate}
\end{definition}

In the latent projection \(\mathcal{D}(\mathbf{W})\), two nodes may be connected by a directed and a bi-directed edge at the same time. (In)dependence relations can be read off from a latent projection using the \textit{m}-separation criterion \citep{Richardson2003}. For disjoint \(\mathbf{A},\mathbf{B},\mathbf{C}\subset\mathbf{W}\), \(\mathbf{A}\) and \(\mathbf{B}\) are \textit{d}-separated given \(\mathbf{C}\) in \(\mathcal{D}\) if and only if \(\mathbf{A}\) and \(\mathbf{B}\) are \textit{m}-separated given \(\mathbf{C}\) in \(\mathcal{D}(\mathbf{W})\) \citep{Richardsonetal2017}.

For our definition of the \(\mathbf{O}\)-set, we project over the forbidden nodes, save \(\mathbf{X}\) and \(\mathbf{Y}\), which motivates the following definition:

\begin{definition}[Forbidden projection]
	\label{fproj}
	Let \(\mathcal{D}\) be a DAG with node set \(\mathbf{V}\) and let \(\mathbf{X}\) and \(\mathbf{Y}\) be disjoint subsets of \(\mathbf{V}\). We call the graph \(\mathcal{D}^{\mathbf{XY}}=\mathcal{D}((\mathbf{V}\setminus\mathrm{forb}(\mathbf{X},\mathbf{Y},\mathcal{D}))\cup\mathbf{X}\cup\mathbf{Y})\) the \emph{forbidden projection} of \(\mathcal{D}\) with respect to \((\mathbf{X},\mathbf{Y})\).
\end{definition}

Figures \ref{fig:Examples} and \ref{fig:ExOproj} show some examples, where the forbidden nodes are shown as circles. In panels \(\mathbf{A}\) and \(\mathbf{B}\) of Figure \ref{fig:Examples}, the forbidden sets only contain nodes in \(\mathbf{X}\cup\mathbf{Y}\), hence nothing is projected over. Panels \(\mathbf{E}\) and \(\mathbf{F}\) show DAGs where the forbidden projection has bi-directed edges, which will become relevant in Proposition \ref{nobiDAG}.

While we primarily introduce the forbidden projection to provide an alternative characterisation of the \(\mathbf{O}\)-set, it is a useful tool in its own right. In particular, as we show next, the forbidden projection of a causal DAG preserves all information relevant to the estimation of a causal effect via adjustment. All proofs are given in Appendix~\ref{appB} and generalised to maxPDAGs in Appendix~\ref{appC}.

\newpage

\begin{figure}[H]
	\begin{tikzpicture}[node distance=20mm, >=latex]
	\node[anchor=north west, rectangle, draw, minimum width=16cm, minimum height=17cm] (outerframe) {};
	\node[anchor=north west, rectangle, draw, minimum width=8cm, minimum height=2.5cm] (Aframe) {};
	\node[xshift=80mm, anchor=north west, rectangle, draw, minimum width=8cm, minimum height=2.5cm] (Bframe) {};
	\node[yshift=-25mm, anchor=north west, rectangle, draw, minimum width=16cm, minimum height=3cm] (Cframe) {};
	\node[yshift=-55mm, anchor=north west, rectangle, draw, minimum width=16cm, minimum height=4.5cm] (Dframe) {};
	\node[yshift=-100mm, anchor=north west, rectangle, draw, minimum width=16cm, minimum height=3.5cm] (Eframe) {};
	\node[yshift=-135mm, anchor=north west, rectangle, draw, minimum width=16cm, minimum height=3.5cm] (Fframe) {};
	
	\node[anchor=north west, rectangle, draw, minimum size=3mm] (A) {\textbf{A}};
	\node[right of=A, xshift=-15mm, anchor=north west] (DA) {\(\mathcal{D}_\mathbf{A}=\mathcal{D}^{XY}_\mathbf{A}\)};
	\node[below of=A, xshift=20mm, yshift=5mm] (V1A) {\(V_1\)};
	\node[right of=V1A, circle, fill=gray!50, draw] (XA) {\(X\)};
	\node[right of=XA, xshift=-5mm, yshift=10mm, rectangle, draw] (V2A) {\(V_2\)};
	\node[right of=XA, xshift=10mm, circle, fill=gray!50, draw] (YA) {\(Y\)};
	
	\draw[->, thick] (V1A) to (XA);
	\draw[->, thick] (V2A) to (XA);
	\draw[->, thick] (V2A) to (YA);
	\draw[->, line width=0.7mm] (XA) to (YA);
	
	\node[xshift=80mm, anchor=north west, rectangle, draw, minimum size=3mm] (B) {\textbf{B}};
	\node[right of=B, xshift=-15mm, anchor=north west] (DB) {\(\mathcal{D}_\mathbf{B}=\mathcal{D}^{\mathbf{X}Y}_\mathbf{B}\)};
	\node[below of=B, xshift=10mm, yshift=5mm, circle, fill=gray!50, draw, scale=0.85] (X1B) {\(X_1\)};
	\node[right of=X1B] (V1B) {\(V_1\)};
	\node[right of=V1B, circle, fill=gray!50, draw] (YB) {\(Y\)};
	\node[above of=YB, xshift=-10mm, yshift=-10mm, rectangle, draw] (V2B) {\(V_2\)};
	\node[right of=YB, circle, fill=gray!50, draw, scale=0.85] (X2B) {\(X_2\)};
	
	\draw[->, thick] (X1B) to (V1B);
	\draw[->, thick] (V2B) to (YB);
	\draw[->, thick] (V2B) to (V1B);
	\draw[->, thick] (YB) to (X2B);
	
	\node[yshift=-25mm, anchor=north west, rectangle, draw, minimum size=3mm] (C) {\textbf{C}};
	\node[right of=C, xshift=-15mm, anchor=north west] (DC) {\(\mathcal{D}_\mathbf{C}\)};
	\node[below of=C] (V1C) {\(V_1\)};
	\node[right of=V1C, circle, fill=gray!50, draw] (XC) {X};
	\node[right of=XC, xshift=-10mm, yshift=15mm, rectangle, draw] (V2C) {\(V_2\)};
	\node[right of=XC, circle, draw, scale=0.95] (V3C) {\(V_3\)};
	\node[right of=V3C, circle, fill=gray!50, draw] (YC) {\(Y\)};
	\node[above of=YC, xshift=10mm, yshift=-5mm, rectangle, draw] (V4C) {\(V_4\)};	
	
	\draw[->, thick] (V1C) to (XC);
	\draw[->, thick] (V2C) to (XC);
	\draw[->, thick] (V2C) to (V3C);
	\draw[->, thick] (V4C) to (YC);
	\draw[->, line width=0.7mm] (XC) to (V3C);
	\draw[->, line width=0.7mm] (V3C) to (YC);
	
	\node[right of=DC, xshift=60mm] (DC*) {\(\mathcal{D}^{XY}_\mathbf{C}\)};
	\node[right of=YC] (V1C*) {\(V_1\)};
	\node[right of=V1C*, circle, fill=gray!50, draw] (XC*) {\(X\)};
	\node[above of=XC*, xshift=10mm, yshift=-5mm, rectangle, draw] (V2C*) {\(V_2\)};
	\node[right of=XC*, xshift=20mm, circle, fill=gray!50, draw] (YC*) {\(Y\)};
	\node[above of=YC*, xshift=10mm, yshift=-5mm, rectangle, draw] (V4C*) {\(V_4\)};
	
	\draw[->, thick] (V1C*) to (XC*);
	\draw[->, thick] (V2C*) to (XC*);
	\draw[->, thick] (V2C*) to (YC*);
	\draw[->, thick] (V4C*) to (YC*);
	\draw[->, line width=0.7mm] (XC*) to (YC*);	
	
	\node[yshift=-55mm, anchor=north west, rectangle, draw, minimum size=3mm] (D) {\textbf{D}};
	\node[right of=D, xshift=-15mm, anchor=north west] (DD) {\(\mathcal{D}_\mathbf{D}\)};
	\node[below of=D] (V1D) {\(V_1\)};
	\node[right of=V1D, circle, fill=gray!50, draw] (XD) {\(X\)};
	\node[above of=XD, yshift=-5mm, rectangle, draw] (V2D) {\(V_2\)};
	\node[below of=XD, yshift=5mm] (V3D) {\(V_3\)};
	\node[right of=XD, circle, draw, scale=0.9] (V5D) {\(V_5\)};
	\node[above of=V5D, yshift=-5mm, rectangle, draw] (V4D) {\(V_4\)};
	\node[below of=V5D, yshift=5mm, circle, draw, scale=0.9] (V6D) {\(V_6\)};
	\node[right of=V5D, circle, fill=gray!50, draw] (YD) {\(Y\)};
	\node[above of=YD, yshift=-5mm, rectangle, draw] (V7D) {\(V_7\)};
	\node[below of=YD, yshift=5mm, circle, draw, scale=0.9] (V8D) {\(V_8\)};
	
	\draw[->, thick] (V1D) to (XD);
	\draw[->, thick] (V2D) to (XD);
	\draw[->, thick] (XD) to (V3D);
	\draw[->, thick] (V2D) to (YD);
	\draw[->, thick] (V4D) to (V5D);
	\draw[->, thick] (V5D) to (V6D);
	\draw[->, thick] (V7D) to (YD);
	\draw[->, thick] (YD) to (V8D);
	\draw[->, line width=0.7mm] (XD) to (V5D);
	\draw[->, line width=0.7mm] (V5D) to (YD);
	
	\node[right of=DD, xshift=60mm] (DD*) {\(\mathcal{D}^{XY}_\mathbf{D}\)};
	\node[right of=YD] (V1D*) {\(V_1\)};
	\node[right of=V1D*, circle, fill=gray!50, draw] (XD*) {\(X\)};
	\node[above of=XD*, yshift=-5mm, rectangle, draw] (V2D*) {\(V_2\)};
	\node[below of=XD*, yshift=5mm] (V3D*) {\(V_3\)};
	\node[right of=V2D*, rectangle, draw] (V4D*) {\(V_4\)};
	\node[right of=XD*, xshift=20mm, circle, fill=gray!50, draw] (YD*) {\(Y\)};
	\node[above of=YD*, yshift=-5mm, rectangle, draw] (V7D*) {\(V_7\)};
	
	\draw[->, thick] (V1D*) to (XD*);
	\draw[->, thick] (V2D*) to (XD*);
	\draw[->, thick] (XD*) to (V3D*);
	\draw[->, thick] (V2D*) to (YD*);
	\draw[->, thick] (V4D*) to (YD*);
	\draw[->, thick] (V7D*) to (YD*);
	\draw[->, line width=0.7mm] (XD*) to (YD*);
	
	\node[yshift=-100mm, anchor=north west, rectangle, draw, minimum size=3mm] (E) {\textbf{E}};
	\node[right of=E, xshift=-15mm, anchor=north west] (DE) {\(\mathcal{D}_\mathbf{E}\)};
	\node[below of=DE, xshift=10mm, yshift=7mm, circle, fill=gray!50, draw] (XE) {\(X\)};
	\node[right of=XE, circle, draw, scale=0.85] (VE) {\(V_E\)};
	\node[right of=VE, yshift=10mm, circle, fill=gray!50, draw, scale=0.9] (Y1E) {\(Y_1\)};
	\node[right of=VE, yshift=-10mm, circle, fill=gray!50, draw, scale=0.9] (Y2E) {\(Y_2\)};
	
	\draw[->, line width=0.7mm] (XE) to (VE);
	\draw[->, line width=0.7mm] (VE) to (Y1E);
	\draw[->, line width=0.7mm] (VE) to (Y2E);
	
	\node[right of=DE, xshift=60mm] (DE*) {\(\mathcal{D}^{X\mathbf{Y}}_\mathbf{E}\)};
	\node[below of=DE*, xshift=10mm, yshift=7mm, circle, fill=gray!50, draw] (XE*) {\(X\)};
	\node[right of=XE*, xshift=20mm, yshift=10mm, circle, fill=gray!50, draw, scale=0.9] (Y1E*) {\(Y_1\)};
	\node[right of=XE*, xshift=20mm, yshift=-10mm, circle, fill=gray!50, draw, scale=0.9] (Y2E*) {\(Y_2\)};
	
	\draw[->, line width=0.7mm] (XE*) to (Y1E*);
	\draw[->, line width=0.7mm] (XE*) to (Y2E*);
	\draw[<->, thick, bend left=30] (Y1E*) to (Y2E*);
	
	\node[yshift=-135mm, anchor=north west, rectangle, draw, minimum size=3mm] (F) {\textbf{F}};
	\node[right of=F, xshift=-15mm, anchor=north west] (DF) {\(\mathcal{D}_\mathbf{F}\)};
	\node[below of=F, xshift=10mm, yshift=5mm, rectangle, draw] (V1F) {\(V_1\)};
	\node[right of=V1F, circle, fill=gray!50, draw, xshift=-5mm, scale=0.85] (X1F) {\(X_1\)};
	\node[right of=X1F, xshift=-5mm, circle, draw, scale=0.95] (V2F) {\(V_2\)};
	\node[right of=V2F, circle, fill=gray!50, draw, xshift=-5mm, scale=0.85] (X2F) {\(X_2\)};
	\node[right of=X2F, circle, fill=gray!50, draw, xshift=-5mm] (YF) {\(Y\)};
	
	\draw[->, thick] (V1F) to (X1F);
	\draw[->, line width=0.7mm] (X1F) to (V2F);
	\draw[->, thick] (V2F) to (X2F);
	\draw[->, line width=0.7mm] (X2F) to (YF);
	\draw[->, thick] (X1F) to [bend left=55] (X2F);
	\draw[->, line width=0.7mm] (V2F) to [bend right=55] (YF);
	
	\node[right of=DF, xshift=60mm] (DF*) {\(\mathcal{D}^{\mathbf{X}Y}_\mathbf{F}\)};
	\node[right of=YF, xshift=-2mm, rectangle, draw] (V1F*) {\(V_1\)};
	\node[right of=V1F*, circle, fill=gray!50, draw, xshift=-5mm, scale=0.85] (X1F*) {\(X_1\)};
	\node[right of=X1F*, circle, fill=gray!50, draw, xshift=5mm, scale=0.85] (X2F*) {\(X_2\)};
	\node[right of=X2F*, circle, fill=gray!50, draw, xshift=-5mm] (YF*) {\(Y\)};
	
	\draw[->, thick] (V1F*) to (X1F*);
	\draw[->, thick] (X1F*) to (X2F*);
	\draw[->, line width=0.7mm] (X2F*) to (YF*);
	\draw[->, thick] (V1F*) to [bend right=45] (YF*);
	\draw[->, line width=0.7mm] (X1F*) to [bend right] (YF*);
	\draw[->, thick] (V1F*) to [bend left=50] (X2F*);
	\draw[<->, thick] (X2F*) to [bend left=75] (YF*);
	
	\end{tikzpicture}
	\caption{Example DAGs with their forbidden projections. The forbidden nodes are shown as circles, nodes in the \(\mathbf{O}\)-set are shown as boxes. The bold arrows show the causal paths from \(\mathbf{X}\) to \(\mathbf{Y}\). In panels \textbf{A} and \textbf{B}, the original DAGs and their forbidden projections are identical. In panel \textbf{E}, the empty set is a valid adjustment set and also the \(\mathbf{O}\)-set. In panel \textbf{F}, the bi-directed edge between \(X_2\) and \(Y\) indicates that the effect of \(\mathbf{X}=\{X_1,X_2\}\) on \(Y\) is not identified via adjustment.}
	\label{fig:Examples}
\end{figure}
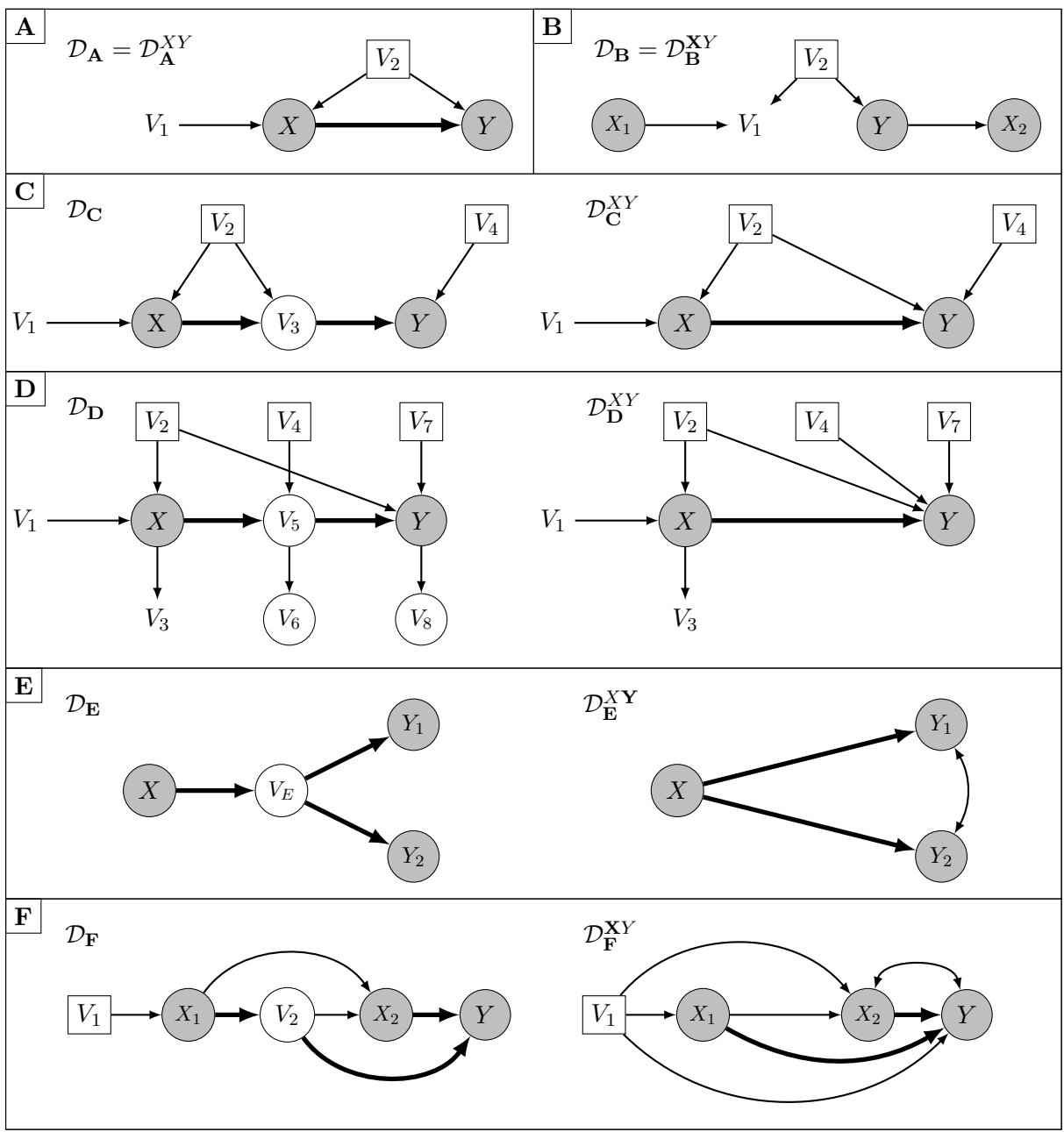

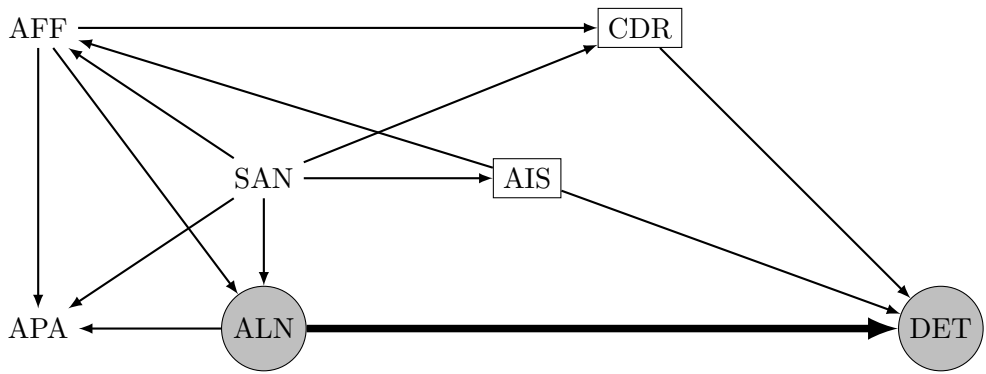
\begin{figure}[ht]
	\begin{center}
	\begin{tikzpicture}[node distance=20mm, >=latex]
	\node (AFF) {AFF};
	\node[below of=AFF, yshift=-20mm] (APA) {APA};
	\node[right of=APA, xshift=10mm, circle, fill=gray!50, draw, inner sep=0pt, minimum size=32pt] (ALN) {ALN};
	\node[right of=AFF, xshift=60mm, rectangle, draw] (CDR) {CDR};
	\node[right of=ALN, xshift=70mm, circle, fill=gray!50, draw, inner sep=0pt, minimum size=32pt] (DET) {DET};
	\node[above of=ALN] (SAN) {SAN};
	\node[right of=SAN, xshift=15mm, rectangle, draw] (AIS) {AIS};
	\draw[->, line width=1mm] (ALN) to (DET);
	\draw[->, thick] (AFF) to (ALN);
	\draw[->, thick] (SAN) to (AFF);
	\draw[->, thick] (SAN) to (AIS);
	\draw[->, thick] (SAN) to (APA);
	\draw[->, thick] (SAN) to (ALN);
	\draw[->, thick] (SAN) to (CDR);
	\draw[->, thick] (CDR) to (DET);
	\draw[->, thick] (AFF) to (APA);
	\draw[->, thick] (AIS) to (AFF);
	\draw[->, thick] (AFF) to (CDR);
	\draw[->, thick] (ALN) to (APA);
	\draw[->, thick] (AIS) to (DET);
	
	\end{tikzpicture}
	\end{center}
	\caption{Forbidden projection of the DAG in Figure~\ref{fig:ExO} with respect to \(\mathbf{X}=\{\text{ALN}\}\) and \(\mathbf{Y}=\{\text{DET}\}\). The forbidden nodes are shown as circles, nodes in the \(\mathbf{O}\)-set (parents of DET) are shown as boxes. The bold arrow shows the causal path from ALN to DET.}
	\label{fig:ExOproj}
\end{figure} 

First, the forbidden projection can be used to check whether a valid adjustment set exists relative to given sets of nodes \(\mathbf{X}\) and \(\mathbf{Y}\):

\begin{proposition}
	\label{nobiDAG}
	Let \(\mathbf{X}\) and \(\mathbf{Y}\) be disjoint node sets in a causal DAG \(\mathcal{D}\) such that \(\mathbf{Y}\subseteq\mathrm{de}(\mathbf{X},\mathcal{D})\). Then a valid adjustment set relative to \((\mathbf{X}\),\(\mathbf{Y})\) in \(\mathcal{D}\) exists if and only if there is no bi-directed edge between any \(X\in\mathbf{X}\) and \(Y\in\mathbf{Y}\) in \(\mathcal{D}^{\mathbf{XY}}\).
\end{proposition}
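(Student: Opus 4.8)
The plan is to reduce the statement to the generalised adjustment criterion and then convert the combinatorial obstruction into a confounding path that witnesses a bi-directed edge. Since $\mathcal{D}$ is a DAG, amenability (condition (a)) holds automatically and every path is of definite status, so any set $\mathbf{Z}$ with $\mathbf{Z}\cap\mathrm{forb}(\mathbf{X},\mathbf{Y},\mathcal{D})=\emptyset$ is valid if and only if it blocks every proper non-causal path from $\mathbf{X}$ to $\mathbf{Y}$. I would fix as canonical candidate the set $\mathbf{Z}^\ast=\mathrm{an}(\mathbf{X}\cup\mathbf{Y},\mathcal{D})\setminus\mathrm{forb}(\mathbf{X},\mathbf{Y},\mathcal{D})$ of non-forbidden ancestors of $\mathbf{X}\cup\mathbf{Y}$, which is disjoint from the forbidden set by construction. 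The proposition then follows once I show that $\mathbf{Z}^\ast$ blocks every proper non-causal path if and only if there is no bi-directed edge between any $X\in\mathbf{X}$ and $Y\in\mathbf{Y}$ in $\mathcal{D}^{\mathbf{XY}}$: the ``only if'' part gives non-existence whenever a bi-edge is present, and the ``if'' part exhibits $\mathbf{Z}^\ast$ itself as a valid set.

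For the direction ``valid set exists $\Rightarrow$ no bi-edge'' I argue contrapositively and self-containedly. By Definition \ref{lproj}, $X\leftrightarrow Y$ in $\mathcal{D}^{\mathbf{XY}}$ means that $\mathcal{D}$ contains a path $X\leftarrow L_1\cdots L_m\rightarrow Y$ whose internal nodes $L_1,\dots,L_m$ are all non-colliders and all lie in $\mathbf{L}=\mathrm{forb}(\mathbf{X},\mathbf{Y},\mathcal{D})\setminus(\mathbf{X}\cup\mathbf{Y})$. This path is proper (its only $\mathbf{X}$-node is $X$), non-causal (it begins with an edge into $X$), and, having no colliders and only forbidden internal nodes, it is d-connecting given every set disjoint from $\mathrm{forb}$. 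Since condition (b) forces every valid adjustment set to be disjoint from $\mathrm{forb}$, no valid set can block it, so none exists.

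For the converse I again argue contrapositively: assuming $\mathbf{Z}^\ast$ fails to block some proper non-causal path, I construct a bi-edge. Among all proper non-causal paths from $\mathbf{X}$ to $\mathbf{Y}$ open given $\mathbf{Z}^\ast$, choose a shortest one $p$. Two structural facts guide the surgery. First, every internal non-collider of $p$ is forbidden: it is not in $\mathbf{Z}^\ast$ (else $p$ is blocked), yet following an outgoing edge along $p$ one reaches $\mathbf{X}\cup\mathbf{Y}$ or an open collider, so it is an ancestor of $\mathbf{X}\cup\mathbf{Y}$; a non-forbidden such ancestor would lie in $\mathbf{Z}^\ast$, a contradiction. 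Second, every collider of $p$ is open and hence an ancestor of $\mathbf{Z}^\ast\subseteq\mathrm{an}(\mathbf{X}\cup\mathbf{Y})$. If all colliders can be removed, the resulting collider-free open proper path runs through forbidden non-colliders only; being non-causal it must point into $X$ at the $X$-end, and (using that no $X\in\mathbf{X}$ is a descendant of $\mathbf{Y}$, so its unique source cannot be $Y$) into $Y$ at the $Y$-end, while minimality forbids an internal $\mathbf{Y}$-node; its source is then an internal forbidden fork, so the path is exactly a witness for $X\leftrightarrow Y$.

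The main obstacle is the collider-elimination step. Each open collider $C$ has a directed path to $\mathbf{X}\cup\mathbf{Y}$, and the natural move is to splice a prefix of it into $p$, turning $C$ into a non-collider; the delicate point is that such a splice may reintroduce a non-forbidden node of $\mathbf{Z}^\ast$ and thereby re-block the path. I would control this by exploiting minimality of $p$ and routing each detour only up to the first node it shares with $p$ or with $\mathbf{X}\cup\mathbf{Y}$, producing a strictly shorter path that is still open, proper, and non-causal, which contradicts minimality unless $p$ was already collider-free. Carrying out this rerouting bookkeeping rigorously, and checking that properness and non-causality are preserved at every splice, is where the real effort lies; the two structural facts above then certify that the collider-free path obtained is a pure confounding path through $\mathbf{L}$, i.e. the desired bi-directed edge.
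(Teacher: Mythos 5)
Your first direction is essentially the paper's, and your plan for the converse---exhibit the canonical candidate $\mathbf{Z}^\ast=\mathrm{an}(\mathbf{X}\cup\mathbf{Y},\mathcal{D})\setminus\mathrm{forb}(\mathbf{X},\mathbf{Y},\mathcal{D})$ and convert any surviving open path into a pure confounding path---amounts to re-deriving from scratch the existence criterion that the paper simply cites: Corollary~27 of Perkovi\'c et al.\ (2018), quoted as Lemma~\ref{EmaCorollary}, by which non-existence of a valid adjustment set is equivalent to $\mathbf{X}\cap\mathrm{de}(\mathrm{cn}(\mathbf{X},\mathbf{Y},\mathcal{D}),\mathcal{D})\neq\emptyset$. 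From that the paper obtains the bi-directed edge directly, by picking $X^\ast\in\mathbf{X}\cap\mathrm{de}(\mathrm{cn}(\mathbf{X},\mathbf{Y},\mathcal{D}),\mathcal{D})$, a causal node $C^\ast$ and some $Y^\ast\in\mathbf{Y}$, and forming $X^\ast\leftarrow\dots\leftarrow C^\ast\rightarrow\dots\rightarrow Y^\ast$ with all non-endpoints forbidden non-colliders; no open-path surgery is needed.

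The genuine gap is your collider-elimination step, which carries the whole weight of the converse and is only sketched. The mechanism you propose does not deliver what you need: splicing into $p$ a directed descent path from an open collider $C$, truncated at the first node shared with $p$ or with $\mathbf{X}\cup\mathbf{Y}$, does not in general yield a \emph{strictly shorter} path (the descent path can be arbitrarily long), so length-minimality of $p$ gives no contradiction; the spliced walk can also fail to be proper or become causal. The step should be done without surgery: if $C$ is an open collider on $p$ given $\mathbf{Z}^\ast$, some descendant of $C$ lies in $\mathbf{Z}^\ast$, which is disjoint from $\mathrm{de}(\mathrm{cn}(\mathbf{X},\mathbf{Y},\mathcal{D}),\mathcal{D})$; hence no ancestor of $C$ lies in $\mathrm{de}(\mathrm{cn}(\mathbf{X},\mathbf{Y},\mathcal{D}),\mathcal{D})$, so the path-neighbour of $C$ on the $\mathbf{Y}$ side is a non-endpoint non-collider $B$ with $B\rightarrow C$ that is a non-forbidden ancestor of $\mathbf{X}\cup\mathbf{Y}$, i.e.\ $B\in\mathbf{Z}^\ast$, and $p$ is blocked after all---so an open path can have no colliders. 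Separately, your parenthetical ``no $X\in\mathbf{X}$ is a descendant of $\mathbf{Y}$'' is not implied by $\mathbf{Y}\subseteq\mathrm{de}(\mathbf{X},\mathcal{D})$ when $\mathbf{X}$ is not a singleton (consider $X_1\rightarrow Y\rightarrow X_2$ with $\mathbf{X}=\{X_1,X_2\}$), so the claim that the resulting collider-free path must point into its $\mathbf{Y}$-endpoint, rather than being directed from $Y$ into $X$, needs its own justification; this configuration is a blind spot that the paper's construction also handles only implicitly (its path degenerates when $C^\ast\in\mathbf{Y}$).
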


In Figure \ref{fig:Examples}, valid adjustment sets with respect to \(\mathbf{X}\) and \(\mathbf{Y}\) exist in all panels except for panel \(\mathbf{F}\). The effect of \(\mathbf{X}=\{X_1,X_2\}\) on \(Y\) in panel \(\mathbf{F}\) is, however, identified e.g.\ by the more general G-formula \citep{Robins1986, DawidDidelez2010}, the algorithm in \cite{TianPearl2003}, or the methods in \cite{NandyMaathuisRichardson2017}. See \cite{GuoPerkovic2020} and \citetalias{RotnitzkySmucler2019} for results on efficient adjustment in the linear and non-parametric case, respectively. The bi-directed edge between \(Y_1\) and \(Y_2\) in panel \(\mathbf{E}\) has no relevance in defining or determining a valid adjustment set.

For singleton \(Y\) such that a valid adjustment set with respect to \((\mathbf{X},Y)\) exists, the forbidden projection is particularly easy to interpret, as it is itself a causal DAG.
\begin{proposition}
	\label{XYDAG}
	Let \(\mathbf{X}\) and \(\{Y\}\) be disjoint node sets in a causal DAG \(\mathcal{D}\) such that \(Y\in\mathrm{de}(\mathbf{X},\mathcal{D})\). Then \(\mathcal{D}^{\mathbf{X}Y}\) is a causal DAG if and only if there exists a valid adjustment set relative to \((\mathbf{X},Y)\) in \(\mathcal{D}\).
\end{proposition}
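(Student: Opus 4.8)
The plan is to reduce the statement to the absence of bi-directed edges in the forbidden projection and then to control where such edges can occur. First I would record that acyclicity is automatic: by Definition~\ref{lproj} every directed edge of \(\mathcal{D}^{\mathbf{X}Y}\) corresponds to a directed path in \(\mathcal{D}\), so a directed cycle in \(\mathcal{D}^{\mathbf{X}Y}\) would concatenate to a directed walk and hence a directed cycle in \(\mathcal{D}\), which is impossible. Thus \(\mathcal{D}^{\mathbf{X}Y}\) is a causal DAG if and only if it has no bi-directed edge. Throughout I write \(\mathbf{L}=\mathrm{forb}(\mathbf{X},Y,\mathcal{D})\setminus(\mathbf{X}\cup\{Y\})\) for the set projected over, and note \(\mathbf{L}\subseteq\mathrm{de}(\mathrm{cn}(\mathbf{X},Y,\mathcal{D}),\mathcal{D})\subseteq\mathrm{forb}(\mathbf{X},Y,\mathcal{D})\). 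The implication ``\(\mathcal{D}^{\mathbf{X}Y}\) is a causal DAG \(\Rightarrow\) a valid adjustment set exists'' is then immediate: no bi-directed edges means in particular no bi-directed edge between \(\mathbf{X}\) and \(Y\), so Proposition~\ref{nobiDAG} (applicable since \(Y\in\mathrm{de}(\mathbf{X},\mathcal{D})\)) yields a valid adjustment set.

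For the converse I would first locate the endpoints of any hypothetical bi-directed edge. If \(W_i\leftrightarrow W_j\), then by Definition~\ref{lproj}(2) the witnessing path is \(W_i\leftarrow\cdots\rightarrow W_j\) with all non-endpoints non-colliders in \(\mathbf{L}\); such a path has a unique source \(S\in\mathbf{L}\) with directed paths \(S\to\cdots\to W_i\) and \(S\to\cdots\to W_j\). Hence \(W_i,W_j\in\mathrm{de}(S,\mathcal{D})\subseteq\mathrm{de}(\mathrm{cn})\subseteq\mathrm{forb}\). Since \(W_i,W_j\) are also retained in the projection, i.e.\ lie in \((\mathbf{V}\setminus\mathrm{forb})\cup\mathbf{X}\cup\{Y\}\), intersecting with \(\mathrm{forb}\) forces \(W_i,W_j\in\mathbf{X}\cup\{Y\}\). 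By Proposition~\ref{nobiDAG} a valid adjustment set excludes a bi-directed edge between \(\mathbf{X}\) and \(Y\); and because \(Y\) is a \emph{single} node, the two distinct endpoints cannot both equal \(Y\). The only remaining possibility is therefore \(W_i,W_j\in\mathbf{X}\), and ruling this out is the crux.

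I would rule it out via the lemma that existence of a valid adjustment set forces \(\mathbf{X}\cap\mathrm{de}(\mathrm{cn}(\mathbf{X},Y,\mathcal{D}),\mathcal{D})=\emptyset\); given this, no endpoint of a bi-directed edge can lie in \(\mathbf{X}\), so (as it cannot be \(Y\) twice) no bi-directed edge exists and \(\mathcal{D}^{\mathbf{X}Y}\) is a causal DAG. I would prove the lemma by contraposition. Given \(X'\in\mathbf{X}\cap\mathrm{de}(\mathrm{cn})\), choose a \emph{shortest} directed path \(C\to\cdots\to X'\) from a causal node \(C\) to \(\mathbf{X}\); minimality guarantees that its interior meets neither \(\mathrm{cn}\) nor \(\mathbf{X}\), hence lies in \(\mathbf{L}\). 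Splicing its reverse with the causal subpath \(C\to\cdots\to Y\) produces a path \(X'\leftarrow\cdots\leftarrow C\to\cdots\to Y\) that is proper (only \(X'\) lies in \(\mathbf{X}\)), non-causal (it begins with an edge into \(X'\)), and whose every non-endpoint is a non-collider lying in \(\mathrm{forb}\). By condition~(b) of the generalised adjustment criterion any valid \(\mathbf{Z}\) avoids forbidden nodes, so it contains none of these non-colliders; as the path has no colliders it stays open given \(\mathbf{Z}\), violating condition~(c) and contradicting validity.

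I expect the main obstacle to be the bookkeeping that makes this constructed path legitimate: ensuring it is simple and proper, i.e.\ that the directed path from \(C\) to \(X'\) does not revisit \(\mathbf{X}\), \(Y\) or causal nodes internally (handled by the shortest-path choice) and that the two spliced segments overlap only at \(C\) (the interior of the first consists of non-causal forbidden nodes and that of the second of causal nodes, so they are disjoint). I would also dispatch the degenerate case \(C=Y\) separately: there the construction collapses to the reversed directed path \(X'\leftarrow\cdots\leftarrow Y\), which is again proper, non-causal and unblockable by a forbidden-free set. Finally, the singleton hypothesis on \(Y\) enters precisely at the step ``the two distinct endpoints cannot both be \(Y\)''; with several outcomes a harmless bi-directed edge between two outcomes, as in panel~\(\mathbf{E}\) of Figure~\ref{fig:Examples}, would break the equivalence even though a valid adjustment set exists.
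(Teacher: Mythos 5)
Your graph-theoretic argument is essentially correct and close in spirit to the paper's: the acyclicity observation, the forward direction via Proposition~\ref{nobiDAG}, and the localisation of any bi-directed edge's endpoints to \(\mathbf{X}\cup\{Y\}\) (which reproduces the content of the paper's Lemma~\ref{intoY}), combined with the fact that a valid adjustment set forces \(\mathbf{X}\cap\mathrm{de}(\mathrm{cn}(\mathbf{X},Y,\mathcal{D}),\mathcal{D})=\emptyset\) (the paper's Lemma~\ref{EmaCorollary}, which you re-derive correctly, including the treatment of the degenerate case and the disjointness of the spliced segments). This correctly establishes that \(\mathcal{D}^{\mathbf{X}Y}\) is a DAG \emph{syntactically} if and only if a valid adjustment set exists.

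However, there is a genuine gap: you reduce ``\(\mathcal{D}^{\mathbf{X}Y}\) is a causal DAG'' to ``\(\mathcal{D}^{\mathbf{X}Y}\) has no bi-directed edge and is acyclic,'' but in this paper a causal DAG is a semantic object — the truncated factorisation formula that \(\mathcal{D}^{\mathbf{X}Y}\) implies must hold for the marginal distribution of the retained variables under interventions on them. That the latent projection of a causal DAG is a causal ADMG does not by itself deliver this: one must show that after marginalising out the forbidden nodes, the interventional densities \(f(\mathbf{a},y\mid do(\mathbf{t}'))\) factorise as \(\prod_{V}f(v\mid\mathrm{pa}(V,\mathcal{D}^{\mathbf{X}Y}))\), and in particular that \(f(y\mid\mathbf{a})=f(y\mid\mathrm{pa}(Y,\mathcal{D}^{\mathbf{X}Y}))\) even though \(Y\)'s parents in the projection differ from its parents in \(\mathcal{D}\). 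The paper devotes the bulk of its proof (part~(3)) to exactly this, using the partial topological ordering \(\mathbf{A}<\mathbf{C}<Y<\mathbf{M}\) to marginalise first over the forbidden non-ancestors of \(Y\) and then over the forbidden mediators, together with the preservation of d-separations under latent projection (its part~(2), which you also omit). Without this step you have shown that the forbidden projection is a DAG, not that it is the causal DAG of the marginal system, so the ``if'' direction of the proposition as stated is not yet proved.
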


Further, an adjustment set that is valid in the original graph is also valid in the forbidden projection and vice versa:
\begin{proposition}
	\label{VAS}
	Let \(\mathbf{X}\), \(\mathbf{Y}\) and \(\mathbf{Z}\) be disjoint node sets in a causal DAG \(\mathcal{D}\). Then $\mathbf{Z}$ is a valid adjustment set relative to \((\mathbf{X},\mathbf{Y})\) in $\mathcal{D}$ if and only if $\mathbf{Z}$ is also a valid adjustment set relative to \((\mathbf{X},\mathbf{Y})\) in $\mathcal{D}^{\mathbf{XY}}$.
\end{proposition}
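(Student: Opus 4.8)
I would prove both directions at once by checking that the three conditions of the generalised adjustment criterion---amenability (a), avoidance of forbidden nodes (b), and blocking of all proper non-causal paths (c)---hold in $\mathcal{D}$ if and only if they hold in $\mathcal{D}^{\mathbf{XY}}$. Write $\mathbf{W}=(\mathbf{V}\setminus\mathrm{forb}(\mathbf{X},\mathbf{Y},\mathcal{D}))\cup\mathbf{X}\cup\mathbf{Y}$ for the node set of $\mathcal{D}^{\mathbf{XY}}=\mathcal{D}(\mathbf{W})$ and $\mathbf{L}=\mathbf{V}\setminus\mathbf{W}$ for the projected-out nodes, so that $\mathbf{L}\subseteq\mathrm{forb}(\mathbf{X},\mathbf{Y},\mathcal{D})\setminus(\mathbf{X}\cup\mathbf{Y})$. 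The first move is to dispose of a degenerate case: if $\mathbf{Z}\not\subseteq\mathbf{W}$, then $\mathbf{Z}$ contains a node of $\mathbf{L}$, which is forbidden in $\mathcal{D}$, so $\mathbf{Z}$ violates (b) in $\mathcal{D}$; moreover $\mathbf{Z}$ is not even a subset of the node set of $\mathcal{D}^{\mathbf{XY}}$, so it is not a valid adjustment set there either. Both sides of the equivalence are then false, and we may assume $\mathbf{Z}\subseteq\mathbf{W}$ from now on.

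\textbf{Conditions (a) and (b).} Next I would argue that, once $\mathbf{Z}\subseteq\mathbf{W}$, conditions (a) and (b) hold automatically in both graphs, so they carry no information. Amenability holds in $\mathcal{D}$ because every edge is directed, and in $\mathcal{D}^{\mathbf{XY}}$ because it has no undirected edges: a proper possibly causal path cannot begin with a bi-directed edge (that would place an arrowhead into $\mathbf{X}$), so it begins with a directed edge out of $\mathbf{X}$. For (b), note $\mathbf{W}\cap\mathrm{forb}(\mathbf{X},\mathbf{Y},\mathcal{D})\subseteq\mathbf{X}\cup\mathbf{Y}$ by construction of $\mathbf{W}$, and $\mathbf{Z}$ is disjoint from $\mathbf{X}\cup\mathbf{Y}$, so (b) holds in $\mathcal{D}$. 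To get (b) in $\mathcal{D}^{\mathbf{XY}}$ I would use that the latent projection preserves directed paths, and hence ancestor/descendant relations, among the nodes of $\mathbf{W}$; this gives $\mathrm{forb}(\mathbf{X},\mathbf{Y},\mathcal{D}^{\mathbf{XY}})\cap\mathbf{W}\subseteq\mathrm{forb}(\mathbf{X},\mathbf{Y},\mathcal{D})$, so no node of $\mathbf{Z}$ is forbidden in the projection either. A minor technical point to be checked is that, with multiple treatments, possibly causal nodes in $\mathcal{D}^{\mathbf{XY}}$ still lift to causal nodes in $\mathcal{D}$.

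\textbf{Condition (c): the crux.} All the content therefore lies in showing that $\mathbf{Z}$ blocks every proper non-causal path from $\mathbf{X}$ to $\mathbf{Y}$ in $\mathcal{D}$ iff it does so in $\mathcal{D}^{\mathbf{XY}}$. I would convert this into an ordinary separation statement using the standard reformulation of the adjustment criterion via the proper back-door graph (as in Perkovi\'c et al.): for a set satisfying (b), condition (c) holds in a graph $\mathcal{G}$ iff $\mathbf{X}$ and $\mathbf{Y}$ are d/m-separated given $\mathbf{Z}$ in the graph $\mathcal{G}_{pbd}$ obtained by deleting the first edge of every proper causal path from $\mathbf{X}$ to $\mathbf{Y}$. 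Applying this to both $\mathcal{D}$ and $\mathcal{D}^{\mathbf{XY}}$, it then suffices to prove the commutation identity $(\mathcal{D}^{\mathbf{XY}})_{pbd}=(\mathcal{D}_{pbd})(\mathbf{W})$, after which the d-separation/m-separation equivalence for latent projections stated above (from Richardson et al.) gives $\mathbf{X}\perp\mathbf{Y}\mid\mathbf{Z}$ in $\mathcal{D}_{pbd}$ iff $\mathbf{X}\perp\mathbf{Y}\mid\mathbf{Z}$ in $(\mathcal{D}^{\mathbf{XY}})_{pbd}$, which is exactly (c) in the two graphs.

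\textbf{Main obstacle.} I expect the commutation identity $(\mathcal{D}^{\mathbf{XY}})_{pbd}=(\mathcal{D}_{pbd})(\mathbf{W})$ to be the hard part, since deleting edges and projecting do not commute in general. The reason it works here is structural: the deleted first edges $X\to D$ of proper causal paths have $D\in\mathrm{cn}(\mathbf{X},\mathbf{Y},\mathcal{D})\subseteq\mathrm{forb}$, so $D\in\mathbf{L}\cup\mathbf{Y}$, and any node of $\mathbf{W}$ reachable from $X$ through such an edge is a descendant of a causal node, hence forbidden and therefore---being in $\mathbf{W}$---in $\mathbf{X}\cup\mathbf{Y}$. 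Thus deleting these edges only removes the directed $\mathbf{X}\to\mathbf{Y}$ edges that the projection would produce, which are precisely the edges the back-door construction removes in $\mathcal{D}^{\mathbf{XY}}$; and it cannot affect any bi-directed edge, because an $\mathbf{X}$-node is always an endpoint---never an interior, non-collider node---of the $\leftarrow\cdots\rightarrow$ confounding arcs that generate bi-directed edges. Verifying this edge by edge, and handling the properness bookkeeping when $\mathbf{X}$ has several nodes, is where the careful work lies. As an alternative to the back-door reformulation, one could argue (c) directly by lifting each proper non-causal path in one graph to a proper non-causal path in the other, using that the interior $\mathbf{L}$-nodes inserted on directed segments and on confounding arcs are non-colliders outside $\mathbf{Z}$, so that open/blocked status is preserved; the resulting correspondence of open paths then yields the equivalence.
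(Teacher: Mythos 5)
Your route is genuinely different from the paper's. The paper proves condition (c) by a direct path correspondence: it takes an open proper non-causal path in one graph and explicitly lifts it to an open proper non-causal path in the other, splitting on whether the path's $\mathbf{Y}$-endpoint lies in $\mathbf{Y}\cap\mathrm{forb}(\mathbf{X},\mathbf{Y},\mathcal{D})$ and on whether the path meets forbidden nodes at all --- i.e.\ exactly the ``alternative'' you mention in your last sentence. Your primary route instead reduces (c) to m-separation in the proper back-door graph and then tries to commute the back-door construction with the latent projection so that the Richardson et al.\ separation-preservation result can be invoked. If the commutation identity held, this would be cleaner and more modular than the paper's case analysis, since all the path bookkeeping would be absorbed into two citable results.

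The concrete problem is that the commutation identity $(\mathcal{D}^{\mathbf{XY}})_{pbd}=(\mathcal{D}_{pbd})(\mathbf{W})$ is false in general. Take $\mathbf{X}=\{X_1,X_2\}$, $\mathbf{Y}=\{Y\}$, with edges $X_1\rightarrow C\rightarrow Y$ and $C\rightarrow X_2$, so $C\in\mathrm{cn}(\mathbf{X},\mathbf{Y},\mathcal{D})$ is projected out. The projection $\mathcal{D}^{\mathbf{XY}}$ contains $X_1\rightarrow X_2$, and this edge survives in $(\mathcal{D}^{\mathbf{XY}})_{pbd}$ because $X_2$ is not a causal node; but $\mathcal{D}_{pbd}$ deletes $X_1\rightarrow C$, so $(\mathcal{D}_{pbd})(\mathbf{W})$ has no $X_1\rightarrow X_2$ edge. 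The failure occurs precisely when some node of $\mathbf{X}$ is a descendant of a causal node, which by Corollary 27 of Perkovi\'c et al.\ (Lemma~\ref{EmaCorollary}) is exactly the case in which no valid adjustment set exists. So your plan needs an explicit case split that it currently lacks: first dispose of $\mathbf{X}\cap\mathrm{de}(\mathrm{cn}(\mathbf{X},\mathbf{Y},\mathcal{D}),\mathcal{D})\neq\emptyset$ separately (both sides of the equivalence are then false, via the bi-directed $\mathbf{X}$--$\mathbf{Y}$ edge of Proposition~\ref{nobiDAG}), and only then prove the commutation identity under the hypothesis that a valid adjustment set exists, where Lemma~\ref{intoY}-type reasoning guarantees every edge created by the projection is into $\mathbf{Y}$ and your edge-by-edge verification goes through. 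The paper's direct lifting argument handles the no-valid-set case inside the same induction on paths, which is why it never needs this split. With that repair (and the multivariate-$\mathbf{X}$ properness checks you already flag), your approach should close.
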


Using the forbidden projection, we now define the \(\mathbf{O}^*\)-set and prove that it is equal to the \(\mathbf{O}\)-set.

\begin{definition}[\(\mathbf{O}^*\)-set]
	\label{Oset2}
	Let \(\mathbf{X}\) and \(\mathbf{Y}\) be disjoint node sets in a DAG \(\mathcal{D}\). We define \(\mathbf{O}^*(\mathbf{X},\mathbf{Y},\mathcal{D})\) as:
	\[\mathbf{O}^*(\mathbf{X},\mathbf{Y},\mathcal{D})=\mathrm{pa}(\mathbf{Y},\mathcal{D}^{\mathbf{XY}})\setminus(\mathbf{X}\cup\mathbf{Y}).\]
\end{definition}

In words, the \(\mathbf{O}^*\)-set is the set of parents of \(\mathbf{Y}\) in the forbidden projection \(\mathcal{D}^{\mathbf{XY}}\), excluding treatment nodes and outcome nodes. The next proposition states our key result.

\begin{proposition}
	\label{OOequal}	
	Let \(\mathbf{X}\) and \(\mathbf{Y}\) be disjoint subsets of the node set \(\mathbf{V}\) of a DAG \(\mathcal{D}\) such that \(\mathbf{Y}\subseteq\mathrm{de}(\mathbf{X}, \mathcal{D})\). Then \(\mathbf{O}(\mathbf{X},\mathbf{Y},\mathcal{D})=\mathbf{O}^*(\mathbf{X},\mathbf{Y},\mathcal{D})\).
\end{proposition}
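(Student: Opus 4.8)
The plan is to show the two sets coincide by comparing them elementwise as subsets of $\mathbf{V}\setminus\mathrm{forb}(\mathbf{X},\mathbf{Y},\mathcal{D})$. First I would record two reductions. Since $\mathbf{Y}\subseteq\mathrm{de}(\mathbf{X},\mathcal{D})$, every $Y_j\in\mathbf{Y}$ lies on a proper causal path from $\mathbf{X}$ to $\mathbf{Y}$ (take a directed $\mathbf{X}$-to-$Y_j$ path and start it at its last $\mathbf{X}$-node), so $\mathbf{Y}\subseteq\mathrm{cn}\subseteq\mathrm{forb}$; together with $\mathbf{X}\subseteq\mathrm{forb}$ this both places $\mathbf{O}^*$ inside $\mathbf{V}\setminus\mathrm{forb}$ and identifies the latent set of the forbidden projection (Definition~\ref{fproj}) as $\mathbf{L}:=\mathrm{forb}\setminus(\mathbf{X}\cup\mathbf{Y})$. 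By the directed-edge clause of Definition~\ref{lproj}, we then have $W\in\mathbf{O}^*$ exactly when $W\in\mathbf{V}\setminus\mathrm{forb}$ and there is a directed path in $\mathcal{D}$ from $W$ to some $Y_j\in\mathbf{Y}$ all of whose internal nodes lie in $\mathbf{L}$, whereas $W\in\mathbf{O}$ (Definition~\ref{Oset1}) exactly when $W\in\mathbf{V}\setminus\mathrm{forb}$ and $W\in\mathrm{pa}(\mathrm{cn},\mathcal{D})$.

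For the inclusion $\mathbf{O}\subseteq\mathbf{O}^*$, I would take $W\in\mathbf{O}$, so $W\to C$ for some $C\in\mathrm{cn}$, and follow a proper causal path through $C$ forward from $C$, truncating it at the first node $Y_j\in\mathbf{Y}$ it reaches. All nodes of this segment strictly before $Y_j$ are causal nodes not in $\mathbf{Y}$, hence lie in $\mathbf{L}$. Prepending $W$ yields a directed $W$-to-$Y_j$ path with every internal node in $\mathbf{L}$, so $W\to Y_j$ is an edge of $\mathcal{D}^{\mathbf{XY}}$ and thus $W\in\mathbf{O}^*$.

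For the reverse inclusion $\mathbf{O}^*\subseteq\mathbf{O}$, take $W\in\mathbf{O}^*$ with witnessing path $W\to M_1\to\dots\to M_k\to Y_j$ whose internal nodes lie in $\mathbf{L}$. It suffices to show the first successor of $W$ on this path is a causal node, since $W\to M_1$ (or $W\to Y_j$) then exhibits $W\in\mathrm{pa}(\mathrm{cn})\setminus\mathrm{forb}=\mathbf{O}$. If $k=0$ this is immediate, as $Y_j\in\mathbf{Y}\subseteq\mathrm{cn}$. For $k\ge 1$, $M_1\in\mathbf{L}\subseteq\mathrm{forb}\setminus\mathbf{X}$ is a descendant of some causal node and hence of $\mathbf{X}$, giving a directed path $\alpha$ from some $X_0\in\mathbf{X}$ to $M_1$, while the tail $\rho:M_1\to\dots\to Y_j$ contains no $\mathbf{X}$-node. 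Acyclicity forces $M_1$ to be the only node shared by $\alpha$ and $\rho$, so their concatenation is a simple directed path from $X_0$ to $Y_j$ through $M_1$; truncating it at its last $\mathbf{X}$-node $X^*$, which lies strictly before $M_1$, produces a proper causal path from $\mathbf{X}$ to $Y_j$ containing $M_1$, whence $M_1\in\mathrm{cn}$.

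The main obstacle is exactly this last step: upgrading ``$M_1$ is a forbidden ancestor of $\mathbf{Y}$'' to ``$M_1$ is a causal node''. Because $\mathbf{X}$ is multivariate, a directed $\mathbf{X}$-to-$Y_j$ walk through $M_1$ need not be proper, so one must use acyclicity to keep the concatenation simple and then re-anchor the starting treatment node at $X^*$ so that no interior node falls back into $\mathbf{X}$. Everything else is a direct unfolding of Definitions~\ref{lproj}, \ref{fproj}, \ref{Oset1} and \ref{Oset2} together with the elementary characterisation of $\mathrm{cn}$ and $\mathrm{forb}$.
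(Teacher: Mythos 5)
Your proof is correct and follows essentially the same route as the paper's: both establish the equality by a two-way elementwise inclusion, translating directed edges of the forbidden projection into directed paths of \(\mathcal{D}\) whose internal nodes are forbidden, and conversely. The only difference is that where the paper simply asserts that a non-\(\mathbf{X}\) forbidden node that is a descendant of \(\mathbf{X}\) and an ancestor of \(\mathbf{Y}\) must be a causal node, you spell out the concatenation and the re-anchoring at the last \(\mathbf{X}\)-node needed to make the witnessing path proper, which is a slightly more careful rendering of the same step.
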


It now follows trivially that the statements about the \(\mathbf{O}\)-set in Proposition \ref{eff} are true for the \(\mathbf{O}^*\)-set as well.

Again, \(\mathbf{Y}\subseteq\mathrm{de}(\mathbf{X},\mathcal{D})\) in Proposition \ref{OOequal} is not a severe restriction, because if \(\mathbf{Y}\not\subseteq\mathrm{de}(\mathbf{X},\mathcal{D})\), we can instead consider the effect on \(\mathbf{Y}\cap\mathrm{de}(\mathbf{X},\mathcal{D})\), as we know that the effect on \(\mathbf{Y}\setminus\mathrm{de}(\mathbf{X},\mathcal{D})\) is zero.

Figure~\ref{fig:ExOproj} shows the forbidden projection with respect to ALN and DET of the DAG in Figure~\ref{fig:ExO}. The \(\mathbf{O}\)-set \{AIS, CDR\} (in boxes) is the parent set of DET. All other valid adjustment sets are less efficient, for example the afore-mentioned sets \{AFF, SAN\}, \{AIS, CDR, AFF\} and \{AFF, APA, AIS, CDR, SAN\}. Due to Proposition \ref{VAS}, the validity of all of these sets can be confirmed by using the generalised adjustment criterion stated above on either the original DAG (Figure~\ref{fig:ExO}) or its forbidden projection (Figure~\ref{fig:ExOproj}). See Figure~\ref{fig:Examples} for further examples.

To summarise, the forbidden projection can be used as follows: First, check in the original graph if \(\mathbf{Y}\subseteq\mathrm{de}(\mathbf{X},\mathcal{D})\). Next, construct the forbidden projection \(\mathcal{G}^{\mathbf{XY}}\) and check for bi-directed edges. If there is a bi-directed edge between a node in \(\mathbf{X}\) and a node in \(\mathbf{Y}\), then the causal effect of interest is not identified via adjustment (Proposition \ref{nobiDAG}). Else, \(\mathcal{G}^{\mathbf{XY}}\) contains all information necessary to determine a valid adjustment set (Proposition \ref{VAS}), and in particular the \(\mathbf{O}\)-set, which is then the set of parents of \(\mathbf{Y}\) (Definition \ref{Oset2}, Proposition \ref{OOequal}). If \(\mathbf{Y}\) contains only one node, then \(\mathcal{G}^{\mathbf{XY}}\) is a causal DAG itself and hence straightforward to interpret (Proposition \ref{XYDAG}).

\section{Optimal Adjustment in the IDA Algorithm}
\label{sec:IDA}

In Sections \ref{sec:review} and \ref{sec:new_def}, we considered optimal adjustment in DAGs, and in Appendix~\ref{appC} we generalised the results to amenable maxPDAGs, which include amenable CPDAGs. As a reminder, a maxPDAG is said to be amenable relative to \((\mathbf{X},\mathbf{Y})\) if every proper possibly causal path from \(\mathbf{X}\) to \(\mathbf{Y}\) starts with a directed edge out of \(\mathbf{X}\). In this section, we consider non-amenable CPDAGs and maxPDAGs.

CPDAGs and maxPDAGs are of interest because they are the output of popular causal search algorithms, i.e.\ algorithms that attempt to learn a graph from data. Under the linear model with Gaussian error terms, which we focus on in this section, it is generally not possible to learn a unique DAG. Even under the additional assumptions of causal sufficiency and faithfulness (see e.g.\ \citealp{Spirtesetal2000}), one can at best learn a Markov equivalence class of DAGs, uniquely represented by a CPDAG (see e.g.\ \citealp{AnderssonMadiganPerlman1997}). Given additional knowledge of some causal relationships between variables, access to interventional data, or other model restrictions, one can obtain a refinement of this class, uniquely represented by a maxPDAG  \citep{Meek1995,PerkovicKalischMaathuis2017}. For a CPDAG or maxPDAG \(\mathcal{G}\), we use \([\mathcal{G}]\) to denote the set of DAGs that it represents.  The interpretation of edges in a CPDAG or maxPDAG \(\mathcal{G}\) is as follows: A directed edge \(A\rightarrow B\) means that this edge is present in all DAGs in \([\mathcal{G}]\). An undirected edge \(A-B\) means that \(A\) and \(B\) are adjacent in every DAG in \([\mathcal{G}]\) and there is at least one DAG in \([\mathcal{G}]\) with \(A\rightarrow B\) and at least one with \(A\leftarrow B\).

We suppose in this section that we are interested in a univariate exposure \(X\) and a univariate outcome \(Y\). For a given CPDAG or maxPDAG \(\mathcal{G}\), the true causal effect of \(X\) on \(Y\) may differ across the DAGs in \([\mathcal{G}]\). In particular, \cite{Perkovic2019} (Proposition 4.2) showed that assuming \(Y\not\in\mathrm{pa}(X,\mathcal{G})\), the true causal effect of \(X\) on \(Y\) differs across DAGs in \([\mathcal{G}]\) if and only if \(\mathcal{G}\) is non-amenable relative to \((X,Y)\), i.e.\ there is a possibly causal path from \(X\) to \(Y\) that starts with an undirected edge. Hence, when \(\mathcal{G}\) is non-amenable relative to \((X,Y)\), we can at best determine a multiset of possible causal effects \((\tau_{yx}(\mathcal{D}))_{\mathcal{D} \in [\mathcal{G}]}\), one for each DAG in \([\mathcal{G}]\). (A multiset \((\tau_{yx}(\mathcal{D}))_{\mathcal{D} \in [\mathcal{G}]}\) may contain the same entry multiple times, e.g.\ if \([\mathcal{G}]\) contains five DAGs, of which three imply an effect of 0 and two imply an effect of 1.2, then \((\tau_{yx}(\mathcal{D}))_{\mathcal{D} \in [\mathcal{G}]}=\{0,0,0,1.2,1.2\}\).) While obviously less informative than a single number, this multiset of possible causal effects may still yield useful statistics. The minimum absolute value, for example, is a lower bound for the size of the causal effect. However, enumerating all DAGs in \([\mathcal{G}]\) is computationally very expensive even for moderately sized \(\mathcal{G}\) when there are many undirected edges.

\citet{MaathuisKalischBuhlmann2009} proposed to reduce the complexity of this problem as follows. Consider two DAGs \(\mathcal{D}, \mathcal{D'} \in [\mathcal{G}]\) such that \(\mathrm{pa}(X,\mathcal{D})=\mathrm{pa}(X,\mathcal{D'})=\mathbf{P}\) and \(Y\not\in\mathbf{P}\). As the parents of \(X\) form a valid adjustment set (\citealp{Pearl2009},  p.\,72f.), \(\tau_{yx}(\mathcal{D})=\tau_{yx}(\mathcal{D'})=\tau_{yx}(\mathbf{P})\), where \(\tau_{yx}(\mathbf{P})\) denotes the coefficient of \(X\) in the linear regression of \(Y\) on \(X\) and \(\mathbf{P}\), i.e.\ \(\beta_{yx.\mathbf{p}}\). Let \(\mathbb{P}=\{\mathrm{pa}(X,\mathcal{D})\mid\mathcal{D}\in[\mathcal{G}]\}\) denote the set of all possible parent sets of \(X\) compatible with \(\mathcal{G}\). Then \((\tau_{yx}(\mathbf{P}))_{\mathbf{P} \in \mathbb{P}}\) contains the same distinct values as \((\tau_{yx}(\mathcal{D}))_{\mathcal{D} \in [\mathcal{G}]}\), while \(|\mathbb{P}| \le |[\mathcal{G}]|\). \citet{MaathuisKalischBuhlmann2009} showed that it is possible to determine \(\mathbb{P}\) locally from the CPDAG \(\mathcal{G}\) without enumerating all DAGs in \(\mathcal{G}\). They hence proposed a simple local procedure for calculating \((\hat{\tau}_{yx}(\mathbf{P}))_{\mathbf{P} \in \mathbb{P}}\), which is called `local IDA' (local \underline{I}ntervention Calculus when the \underline{D}AG is \underline{A}bsent). \citet{PerkovicKalischMaathuis2017} proposed a semi-local generalisation to maxPDAGs (`semi-local IDA'). 

The semi-local IDA algorithm for a maxPDAG is given in Algorithm~\ref{AlgL}. Let \(\mathrm{sib}(X,\mathcal{G})\) denote the set of nodes sharing an undirected edge with \(X\) in \(\mathcal{G}\). Semi-local IDA loops over all subsets \(\mathbf{S} \subseteq \mathrm{sib}(X,\mathcal{G})\). It first constructs a graph \(\mathcal{G}'\) such that \(\mathrm{pa}(X,\mathcal{G}')=\mathbf{P}=\mathrm{pa}(X,\mathcal{G})\cup\mathbf{S}\). Here the complexity reduction becomes apparent: only the edges adjacent to \(X\) need to be oriented. To verify whether the added orientations are compatible with the original graph \(\mathcal{G}\), the algorithm attempts to extend the graph to a maxPDAG by applying Meek's orientation rules (ConstructMaxPDAG algorithm; \citealp{Meek1995, PerkovicKalischMaathuis2017}; see Figure~\ref{fig:Meek} in Appendix~\ref{appA}). This step is semi-local as edges not adjacent to \(X\) need to be oriented. If successful, \(\hat{\beta}_{yx.\mathbf{p}}\) is added as a possible causal effect estimate, where \(\mathbf{P} = \mathrm{pa}(X,\mathcal{G'}) = \mathbf{S} \cup \mathrm{pa}(X,\mathcal{G})\).

\cite{NandyMaathuisRichardson2017} further generalised semi-local IDA to sets \(\mathbf{X}\) and \(\mathbf{Y}\). However, this procedure does not use regression adjustment for possible causal effect estimation and is therefore not directly related to our results.

\begin{algorithm}
	\caption{Local or semi-local IDA \citep{MaathuisKalischBuhlmann2009,PerkovicKalischMaathuis2017}.\newline When the input is a CPDAG, line 7 can be simplified and the algorithm becomes fully local.}
	\begin{algorithmic}[1]
		\REQUIRE CPDAG or maxPDAG \(\mathcal{G}\) with node set \(\mathbf{V}=\{V_1,\dots,V_p,X,Y\}\), i.i.d.\ observations for \(V_1,\cdots,V_p,X,Y\) 
		\ENSURE multiset of estimates \(\widehat{\Theta}\)
		\STATE \(\widehat{\Theta}\leftarrow\emptyset\)
		\STATE \(\mathrm{sib}(X,\mathcal{G})\leftarrow\{V \in \mathbf{V}: X-V \text{ in } \mathcal{G}\}\)
		\FORALL {\(\mathbf{S}\subseteq\mathrm{sib}(X,\mathcal{G})\)}
		\STATE \(\mathbf{LocalBg}\leftarrow\emptyset\)
		\STATE for all \(S\in\mathbf{S}\), add \(\{S\rightarrow X\}\) to \(\mathbf{LocalBg}\)
		\STATE for all \(S\in\mathrm{sib}(X,\mathcal{G})\setminus\mathbf{S}\), add \(\{S\leftarrow X\}\) to \(\mathbf{LocalBg}\)
		\STATE \(\mathcal{G}'\leftarrow\text{ConstructMaxPDAG}(\mathcal{G},\mathbf{LocalBg})\)
		\IF {\(\mathcal{G}'\ne\text{``FAIL"}\)}
		\IF {\(Y \notin \mathrm{pa}(X,\mathcal{G}')\)}
		\STATE regress \(Y\) on \(X\cup\mathrm{pa}(X,\mathcal{G}')\) and add the estimated coefficient of \(X\) to \(\widehat{\Theta}\)
		\ELSE 
		\STATE add 0 to \(\widehat{\Theta}\)
		\ENDIF
		\ENDIF
		\ENDFOR
		\RETURN \(\widehat{\Theta}\)
	\end{algorithmic}
	\label{AlgL}
\end{algorithm}

\begin{algorithm}
\caption{Optimal IDA.}
\begin{algorithmic}[1]
	\REQUIRE CPDAG or maxPDAG \(\mathcal{G}\) with node set \(\mathbf{V}=\{V_1,\dots,V_p,X,Y\}\), i.i.d.\ observations for \(V_1,\cdots,V_p,X,Y\) 
	\ENSURE multiset of estimates \(\widehat{\Theta}\)
	\STATE \(\widehat{\Theta}\leftarrow\emptyset\)
	\STATE \(\mathrm{sib}(X,\mathcal{G})\leftarrow\{V \in \mathbf{V}: X-V \text{ in } \mathcal{G}\}\)
	\FORALL {\(\mathbf{S}\subseteq\mathrm{sib}(X,\mathcal{G})\)}
	\STATE \(\mathbf{LocalBg}\leftarrow\emptyset\)
	\STATE for all \(S\in\mathbf{S}\), add \(\{S\rightarrow X\}\) to \(\mathbf{LocalBg}\)
	\STATE for all \(S\in\mathrm{sib}(X,\mathcal{G})\setminus\mathbf{S}\), add \(\{S\leftarrow X\}\) to \(\mathbf{LocalBg}\)
	\STATE \(\mathcal{G}'\leftarrow\text{ConstructMaxPDAG}(\mathcal{G},\mathbf{LocalBg})\)
	\IF {\(\mathcal{G}'\ne\text{``FAIL"}\)}
	\IF {\(Y\in\mathrm{possde}(X,\mathcal{G}')\)}
	\STATE regress \(Y\) on \(X\cup\mathbf{O}(X,Y,\mathcal{G}')\) and add the estimated coefficient of \(X\) to \(\widehat{\Theta}\)
	\ELSE
	\STATE add 0 to \(\widehat{\Theta}\)
	\ENDIF
	\ENDIF
	\ENDFOR
	\RETURN \(\widehat{\Theta}\)
\end{algorithmic}
\label{AlgO}
\end{algorithm}

\subsection{Optimal IDA}
\citetalias{HenckelPerkovicMaathuis2019} established that the parents of \(X\), as used for adjustment by semi-local IDA, form one of the least efficient valid adjustment sets. It therefore seems a good idea to replace \(\mathrm{pa}(X,\mathcal{D})\) by the \(\mathbf{O}\)-set within the IDA algorithm to improve estimation precision. The key question is, however, whether the possible \(\mathbf{O}\)-sets can still be determined semi-locally. More formally, our aim is to estimate the multiset \((\tau_{yx}(\mathbf{O}))_{\mathbf{O} \in \mathbb{O}}\), \(\mathbb{O}= \{\mathbf{O}(X,Y,\mathcal{D}) \,|\, \mathcal{D} \in [\mathcal{G}]\}\), where with a slight abuse of notation we define \(\tau_{yx}(\mathbf{O})=0\) if \(Y\not\in\mathrm{possde}(X,\mathcal{D})\). As before, for two DAGs \(\mathcal{D}\) and \(\mathcal{D'}\) with the same valid \(\mathbf{O}\)-set \(\mathbf{O}(X,Y,\mathcal{D})=\mathbf{O}(X,Y,\mathcal{D'})=\mathbf{O}\), we have \(\tau_{yx}(\mathcal{D})=\tau_{yx}(\mathcal{D'})=\tau_{yx}(\mathbf{O})\).

At first glance, it appears impossible to determine \(\mathbb{O}\) locally or semi-locally, as by Definitions \ref{Oset1} and \ref{Oset2}  the causal nodes, their parents and the forbidden nodes, or the forbidden projection, are required to find the \(\mathbf{O}\)-set. However, it turns out that \(\mathbb{O}\) can be determined semi-locally almost in the same manner as \(\mathbb{P}\). This is because once the directions of all edges involving \(X\) are given, i.e.\ for given \(\mathbf{P}\), application of Meek's rules reveals all descendants of \(X\) and, in consequence, all causal nodes, their parents and the forbidden nodes (cf.\ Lemma \ref{mylemma} in Appendix~\ref{appC}). Hence, via Meek's rules there exists a correspondence between possible parent sets and possible \(\mathbf{O}\)-sets. We therefore propose Algorithm~\ref{AlgO}, which we call optimal IDA. It is implemented in the \texttt{R} package \texttt{pcalg} \citep{Kalischetal2012, Kalischetal2019}.

Algorithm~\ref{AlgO} does not specify whether \(\mathbf{O}(X,Y,\mathcal{G'})\) is determined from \(\mathcal{G}'\) or from the forbidden projection. We expect this choice to be of limited relevance to the algorithm's runtime. In our implementation, we determine \(\mathbf{O}(X,Y,\mathcal{G'})\) directly from \(\mathcal{G}'\). Note also that different possible parent sets can correspond to the same \(\mathbf{O}\)-set. Hence, optimal IDA could be modified to collect all sets in \(\mathbb{O}\) first, remove duplicates, and only then estimate regression coefficients.

In the following, we first state formally what can be said about the efficiency of the estimates output by optimal IDA, showing that it is worthwhile to replace the parents of \(X\) by the \(\mathbf{O}\)-set. Subsequently we compare the computational burden of the two algorithms.

\begin{proposition}
	\label{propIDA}
	Let $X$ and $Y$ be nodes in a causal CPDAG or maxPDAG $\mathcal{G} = (\mathbf{V},\mathbf{E})$, such that $\mathbf{V}$ follows a causal linear model compatible with $\mathcal{G}$ with Gaussian errors. Let $\widehat{\Theta}^{\mathbf{P}}$ and $\widehat{\Theta}^{\mathbf{O}}$ be the multisets returned by semi-local IDA and optimal IDA, respectively, applied to \(X\), \(Y\) and \(\mathcal{G}\), with the subsets of \(\mathrm{sib}(X,\mathcal{G})\) considered in the same order for both. Then, for $i \in\{1 \dots, k\}$, with $k = |\widehat{\Theta}^{\mathbf{P}}| = |\widehat{\Theta}^{\mathbf{O}}|$,
	\begin{enumerate}
		\item $\mathbb{E}[\widehat{\Theta}^{\mathbf{P}}_i] = \mathbb{E}[\widehat{\Theta}^{\mathbf{O}}_i]$ 
		and 
		\item $a.var(\widehat{\Theta}^{\mathbf{P}}_i) \geq a.var(\widehat{\Theta}^{\mathbf{O}}_i)$. 
	\end{enumerate}
\end{proposition}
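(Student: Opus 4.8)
The plan is to exploit the fact that the two algorithms differ only in the adjustment set used inside the regression, while sharing an identical outer loop. First I would observe that, since both algorithms iterate over the same subsets \(\mathbf{S}\subseteq\mathrm{sib}(X,\mathcal{G})\) in the same order and call the deterministic routine ConstructMaxPDAG with the same \(\mathbf{LocalBg}\), they produce the identical graph \(\mathcal{G}'\) (and the identical FAIL/success verdict) at every iteration. Consequently \(|\widehat{\Theta}^{\mathbf{P}}|=|\widehat{\Theta}^{\mathbf{O}}|=k\), and for each \(i\) the entries \(\widehat{\Theta}^{\mathbf{P}}_i\) and \(\widehat{\Theta}^{\mathbf{O}}_i\) arise from one and the same maxPDAG \(\mathcal{G}'\); it therefore suffices to compare the two outputs produced from a fixed non-failing \(\mathcal{G}'\). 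In \(\mathcal{G}'\) all edges incident to \(X\) are oriented, so \(\mathbf{p}:=\mathrm{pa}(X,\mathcal{G}')\) is well defined, \(\mathcal{G}'\) is amenable relative to \((X,Y)\), and there is a DAG \(\mathcal{D}\in[\mathcal{G}']\subseteq[\mathcal{G}]\) with \(\mathrm{pa}(X,\mathcal{D})=\mathbf{p}\). Because all DAGs in \([\mathcal{G}]\) are Markov equivalent and the errors are Gaussian, the true observational law can be realised as a linear SEM compatible with \(\mathcal{D}\), and hence with \(\mathcal{G}'\); this is the step that lets me apply Propositions~\ref{Osuff} and \ref{eff} under the true data-generating distribution even though \(\mathcal{G}'\) need not be the true graph.

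I would then split into three cases according to the position of \(Y\) relative to \(X\) in \(\mathcal{G}'\). If \(Y\in\mathrm{pa}(X,\mathcal{G}')\), i.e.\ \(Y\rightarrow X\), then semi-local IDA outputs \(0\) by its else-branch; and since a possibly causal path \(X\rightarrow\dots\rightarrow Y\) together with the directed edge \(Y\rightarrow X\) would force a directed cycle in some DAG of \([\mathcal{G}']\), we have \(Y\notin\mathrm{possde}(X,\mathcal{G}')\), so optimal IDA also outputs \(0\), and both claims hold trivially. If \(Y\notin\mathrm{pa}(X,\mathcal{G}')\) and \(Y\in\mathrm{possde}(X,\mathcal{G}')\), both algorithms regress: \(\mathbf{p}\) is a valid adjustment set (parents of a singleton treatment in an amenable graph), and by Proposition~\ref{Osuff} so is \(\mathbf{o}:=\mathbf{O}(X,Y,\mathcal{G}')\). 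Under the linear-Gaussian law both conditional expectations are linear, so the two OLS coefficients are unbiased for the same causal effect \(\tau_{yx}(\mathcal{D})=\beta_{yx.\mathbf{p}}=\beta_{yx.\mathbf{o}}\), giving claim (1), while Proposition~\ref{eff}(a) applied to \(\mathcal{G}'\) gives \(a.var(\widehat{\Theta}^{\mathbf{O}}_i)=a.var(\hat{\beta}_{yx.\mathbf{o}})\le a.var(\hat{\beta}_{yx.\mathbf{p}})=a.var(\widehat{\Theta}^{\mathbf{P}}_i)\), which is claim (2).

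The only case in which the two algorithms take different branches is \(Y\notin\mathrm{pa}(X,\mathcal{G}')\) together with \(Y\notin\mathrm{possde}(X,\mathcal{G}')\): here optimal IDA adds the constant \(0\), whereas semi-local IDA still regresses on \(\mathbf{p}\). I would resolve it by noting that \(Y\notin\mathrm{possde}(X,\mathcal{G}')\) forces \(\tau_{yx}(\mathcal{D})=0\), and since \(\mathbf{p}\) is valid, \(\beta_{yx.\mathbf{p}}=0\); hence \(\mathbb{E}[\widehat{\Theta}^{\mathbf{P}}_i]=0=\widehat{\Theta}^{\mathbf{O}}_i\) and \(a.var(\widehat{\Theta}^{\mathbf{P}}_i)\ge 0=a.var(\widehat{\Theta}^{\mathbf{O}}_i)\), so both claims survive. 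Collecting the three cases establishes the proposition index by index.

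I would expect the main obstacle to be not the case analysis itself but the justification highlighted in the first paragraph: Proposition~\ref{eff}(a) is phrased for a linear model compatible with the graph at hand, yet each candidate \(\mathcal{G}'\) is in general not the true graph. The crux is to argue, via Markov equivalence of \([\mathcal{G}]\) and Gaussianity, that the true law is simultaneously a linear-Gaussian model for every candidate \(\mathcal{G}'\), so that the asymptotic-variance ordering of Proposition~\ref{eff}(a)---which depends only on the observational covariance structure---transfers to the estimators actually computed from the data. The mismatched-branch case above is a secondary, but easy to overlook, subtlety.
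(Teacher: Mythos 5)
Your proposal is correct and follows essentially the same route as the paper's proof in Appendix~\ref{appD}: the same iteration-by-iteration pairing of the two algorithms via the shared \(\mathcal{G}'\), the same case split on the position of \(Y\), validity of both \(\mathrm{pa}(X,\mathcal{G}')\) and \(\mathbf{O}(X,Y,\mathcal{G}')\) plus Gaussianity for the equality of expectations, and the \(\mathbf{O}\)-set optimality result for the variance ordering (the paper merely unrolls Proposition~\ref{eff}(a) into the underlying partial-variance lemmas of \citetalias{HenckelPerkovicMaathuis2019}, where you cite it as a black box). The subtlety you flag --- that \(\mathcal{G}'\) need not contain the true DAG and Gaussian Markov equivalence is what licenses applying the optimality result to each candidate graph --- is exactly the point the paper handles with its ``Since we suppose multivariate Gaussianity'' step.
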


The proof is given in Appendix~\ref{appD}. Note that if we do not assume Gaussianity in Proposition \ref{propIDA}, then \(a.var(\hat{\beta}_{yx.\mathbf{o}}) \leq a.var(\hat{\beta}_{yx.\mathbf{z}})\) can only be guaranteed if (i) \(\mathbf{Z}\) is a valid adjustment set in the true DAG, and (ii) \(\mathbf{O}\) is the \(\mathbf{O}\)-set of the true DAG. This is because in a causal linear model with non-Gaussian errors, a variable is only required to be linear in its parents, and is not necessarily linear given another node set (cf. \citealp{NandyMaathuisRichardson2017}). However, if we are willing to assume that all errors in the underlying causal model are non-Gaussian, alternative causal search approaches exist which output a DAG instead of an equivalence class, e.g.\ algorithms such as LiNGAM \citep{Shimizuetal2006}.
\begin{remark}
	(1)	In terms of the computational burden, semi-local and optimal IDA are very similar for maxPDAGs. The key difference is that optimal IDA adjusts for the \(\mathbf{O}\)-set instead of the parent set of \(X\) (line 10), where the \(\mathbf{O}\)-set is straightforward to determine from \(\mathcal{G'}\). However, optimal IDA crucially relies on the construction of the maxPDAG in line 7 to determine the \(\mathbf{O}\)-set, while in semi-local IDA this step can be replaced by a simple local query when the input is known to be a CPDAG. Hence, for the special case of a CPDAG, semi-local IDA can be made fully local by simplifying line 7, whereas optimal IDA cannot.
	
	(2)	A further minor difference between semi-local and optimal IDA is the \texttt{if}-statement in line 9. Semi-local IDA only checks whether \(Y \notin \mathrm{pa}(X,\mathcal{G}')\), whereas optimal IDA checks the stronger condition \(Y \in \mathrm{possde}(X,\mathcal{G'})\). Both conditions ensure that the considered adjustment sets \(\mathrm{pa}(X,\mathcal{G}')\) and \(\mathbf{O}(X,Y,\mathcal{G'})\), respectively, are valid adjustment sets. Moreover, if \(Y\not\in\mathrm{possde}(X,\mathcal{G}')\), then \(\tau_{yx}(\mathcal{D})=0\) for any \(\mathcal{D} \in [\mathcal{G'}]\). The \(0\) estimate of optimal IDA in this case is therefore the most efficient estimate. Alternatively, we could also insist on \(Y \in \mathrm{possde}(X,\mathcal{G'})\) in semi-local IDA and return \(0\) otherwise. As discussed in the appendix of \cite{MaathuisKalischBuhlmann2009}, this is only recommended if the input graph is thought to be reliable, but can lead to the amplification of errors if the input graph is not accurate.
\end{remark}

\begin{remark}
	\label{IDAse}
	Proposition \ref{propIDA} concerns the asymptotic variance when the true CPDAG or a true maxPDAG is given. When the graph is estimated on the same data as used for IDA, the naive standard errors from the adjusted linear regressions are invalid. Although considerable progress has been made in the area of post-selection inference (e.g.\ \citealp{Berketal2013, BelloniChernozhukovHansen2014,RinaldoWassermanGSell2019}), no method has been proposed specifically for estimating standard errors of causal effect estimates after causal search.
\end{remark}

It is straightforward to extend optimal IDA to situations where  \(\mathbf{X}\) and \(\mathbf{Y}\) are sets.
However, as noted earlier, in this case joint causal effect estimation via regression adjustment is not always possible. Optimal IDA will then not return an estimate. The estimation procedures used by joint IDA \citep{NandyMaathuisRichardson2017} provide an alternative.

\subsection{Illustration}
We now illustrate optimal IDA (Algorithm~\ref{AlgO}) using a toy example. Consider the CPDAG \(\mathcal{G}\) shown in Figure~ \ref{fig:ExIDA}(a) and suppose we are interested in the causal effect of \(X\) on \(Y\). Clearly, \(\mathcal{G}\) is not amenable relative to \((X,Y)\) and thus it is sensible to apply optimal IDA.

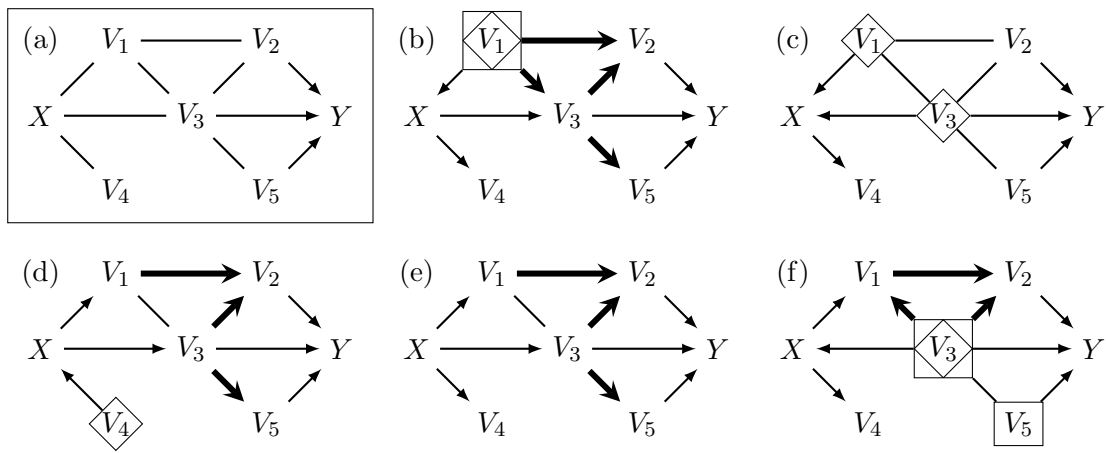
\begin{figure}[p]
	\begin{center}
	\begin{tikzpicture}[node distance=20mm, >=latex]
	\node (X) {\(X\)};
	\node[above of=X, yshift=-10mm] (a) {(a)};
	\node[right of=X] (V3) {\(V_3\)};
	\node[right of=V3] (Y) {\(Y\)};
	\node[right of=X, xshift=-10mm, yshift=10mm] (V1) {\(V_1\)};
	\node[right of=X, xshift=-10mm, yshift=-10mm] (V4) {\(V_4\)};
	\node[right of=V3, xshift=-10mm, yshift=10mm] (V2) {\(V_2\)};
	\node[right of=V3, xshift=-10mm, yshift=-10mm] (V5) {\(V_5\)};
	\draw[-, thick] (X) to (V1);
	\draw[-, thick] (X) to (V4);
	\draw[-, thick] (X) to (V3);
	\draw[-, thick] (V1) to (V2);
	\draw[-, thick] (V1) to (V3);
	\draw[-, thick] (V2) to (V3);
	\draw[-, thick] (V3) to (V5);
	\draw[->, thick] (V2) to (Y);
	\draw[->, thick] (V3) to (Y);
	\draw[->, thick] (V5) to (Y);
	
	\node[draw, fit=(X) (V1) (V5) (Y)] {};
	
	\node[right of=Y, xshift=-10mm] (Xe) {\(X\)};
	\node[above of=Xe, yshift=-10mm] (b) {(b)};
	\node[right of=Xe] (V3e) {\(V_3\)};
	\node[right of=V3e] (Ye) {\(Y\)};
	\node[both, draw, minimum size=22pt, right of=Xe, xshift=-10mm, yshift=10mm] (V1e) {\(V_1\)};
	\node[right of=Xe, xshift=-10mm, yshift=-10mm] (V4e) {\(V_4\)};
	\node[right of=V3e, xshift=-10mm, yshift=10mm] (V2e) {\(V_2\)};
	\node[right of=V3e, xshift=-10mm, yshift=-10mm] (V5e) {\(V_5\)};
	\draw[latex-, thick] (Xe) to (V1e);
	\draw[->, thick] (Xe) to (V4e);
	\draw[->, thick] (Xe) to (V3e);
	\draw[-stealth, line width=0.8mm] (V1e) to (V2e);
	\draw[-stealth, line width=0.8mm] (V1e) to (V3e);
	\draw[stealth-, line width=0.8mm] (V2e) to (V3e);
	\draw[-stealth, line width=0.8mm] (V3e) to (V5e);
	\draw[->, thick] (V2e) to (Ye);
	\draw[->, thick] (V3e) to (Ye);
	\draw[->, thick] (V5e) to (Ye);
	
	\node[right of=Ye, xshift=-10mm] (Xd) {\(X\)};
	\node[above of=Xd, yshift=-10mm] (c) {(c)};
	\node[diamond, draw, inner sep=0pt, minimum size=1pt, right of=Xd] (V3d) {\(V_3\)};
	\node[right of=V3d] (Yd) {\(Y\)};
	\node[diamond, draw, inner sep=0pt, minimum size=1pt, right of=Xd, xshift=-10mm, yshift=10mm] (V1d) {\(V_1\)};
	\node[right of=Xd, xshift=-10mm, yshift=-10mm] (V4d) {\(V_4\)};
	\node[right of=V3d, xshift=-10mm, yshift=10mm] (V2d) {\(V_2\)};
	\node[right of=V3d, xshift=-10mm, yshift=-10mm] (V5d) {\(V_5\)};
	\draw[<-, thick] (Xd) to (V1d);
	\draw[->, thick] (Xd) to (V4d);
	\draw[<-, thick] (Xd) to (V3d);
	\draw[-, thick] (V1d) to (V2d);
	\draw[-, thick] (V1d) to (V3d);
	\draw[-, thick] (V2d) to (V3d);
	\draw[-, thick] (V3d) to (V5d);
	\draw[->, thick] (V2d) to (Yd);
	\draw[->, thick] (V3d) to (Yd);
	\draw[->, thick] (V5d) to (Yd);
	
	\node[below of=X, yshift=-11mm] (Xc) {\(X\)};
	\node[above of=Xc, yshift=-10mm] (d) {(d)};
	\node[right of=Xc] (V3c) {\(V_3\)};
	\node[right of=V3c] (Yc) {\(Y\)};
	\node[right of=Xc, xshift=-10mm, yshift=10mm] (V1c) {\(V_1\)};
	\node[diamond, draw, inner sep=0pt, minimum size=1pt, right of=Xc, xshift=-10mm, yshift=-10mm] (V4c) {\(V_4\)};
	\node[right of=V3c, xshift=-10mm, yshift=10mm] (V2c) {\(V_2\)};
	\node[right of=V3c, xshift=-10mm, yshift=-10mm] (V5c) {\(V_5\)};
	\draw[->, thick] (Xc) to (V1c);
	\draw[<-, thick] (Xc) to (V4c);
	\draw[->, thick] (Xc) to (V3c);
	\draw[-stealth, line width=0.8mm] (V1c) to (V2c);
	\draw[-, thick] (V1c) to (V3c);
	\draw[stealth-, line width=0.8mm] (V2c) to (V3c);
	\draw[-stealth, line width=0.8mm] (V3c) to (V5c);
	\draw[->, thick] (V2c) to (Yc);
	\draw[->, thick] (V3c) to (Yc);
	\draw[->, thick] (V5c) to (Yc);
	
	\node[right of=Yc, xshift=-10mm] (Xb) {\(X\)};
	\node[above of=Xb, yshift=-10mm] (e) {(e)};
	\node[right of=Xb] (V3b) {\(V_3\)};
	\node[right of=V3b] (Yb) {\(Y\)};
	\node[right of=Xb, xshift=-10mm, yshift=10mm] (V1b) {\(V_1\)};
	\node[right of=Xb, xshift=-10mm, yshift=-10mm] (V4b) {\(V_4\)};
	\node[right of=V3b, xshift=-10mm, yshift=10mm] (V2b) {\(V_2\)};
	\node[right of=V3b, xshift=-10mm, yshift=-10mm] (V5b) {\(V_5\)};
	\draw[->, thick] (Xb) to (V1b);
	\draw[->, thick] (Xb) to (V4b);
	\draw[->, thick] (Xb) to (V3b);
	\draw[-stealth, line width=0.8mm] (V1b) to (V2b);
	\draw[-, thick] (V1b) to (V3b);
	\draw[stealth-, line width=0.8mm] (V2b) to (V3b);
	\draw[-stealth, line width=0.8mm] (V3b) to (V5b);
	\draw[->, thick] (V2b) to (Yb);
	\draw[->, thick] (V3b) to (Yb);
	\draw[->, thick] (V5b) to (Yb);
	
	\node[right of=Yb, xshift=-10mm] (Xf) {\(X\)};
	\node[above of=Xf, yshift=-10mm] (f) {(f)};
	\node[both, draw, minimum size=22pt, right of=Xf] (V3f) {\(V_3\)};
	\node[right of=V3f] (Yf) {\(Y\)};
	\node[right of=Xf, xshift=-10mm, yshift=10mm] (V1f) {\(V_1\)};
	\node[right of=Xf, xshift=-10mm, yshift=-10mm] (V4f) {\(V_4\)};
	\node[right of=V3f, xshift=-10mm, yshift=10mm] (V2f) {\(V_2\)};
	\node[rectangle, draw, right of=V3f, xshift=-10mm, yshift=-10mm] (V5f) {\(V_5\)};
	\draw[->, thick] (Xf) to (V1f);
	\draw[->, thick] (Xf) to (V4f);
	\draw[<-, thick] (Xf) to (V3f);
	\draw[-stealth, line width=0.8mm] (V1f) to (V2f);
	\draw[stealth-, line width=0.8mm] (V1f) to (V3f);
	\draw[stealth-, line width=0.8mm] (V2f) to (V3f);
	\draw[-, thick] (V3f) to (V5f);
	\draw[->, thick] (V2f) to (Yf);
	\draw[->, thick] (V3f) to (Yf);
	\draw[->, thick] (V5f) to (Yf);
	
	\end{tikzpicture}
	\end{center}
	\caption{A CPDAG \(\mathcal{G}\) (a) and the five maxPDAGs (b-f) corresponding to the five valid orientations of the neighbourhood of \(X\). The bold edges have been obtained by applying Meek's rules. For each maxPDAG \(\mathcal{G'}\), the boxes \(\square\) indicate \(\mathbf{O}(X,Y,\mathcal{G'})\), while the diamonds \(\Diamond\) indicate \(\mathrm{pa}(X,\mathcal{G})\). In (c), optimal IDA returns 0, as there is no possibly causal path from \(X\) to \(Y\).}
	\label{fig:ExIDA}
\end{figure}

\begin{figure}[p]
\begin{center}
	\includegraphics[height=6.8cm, trim=0cm 0.7cm 0 2.5cm, clip]{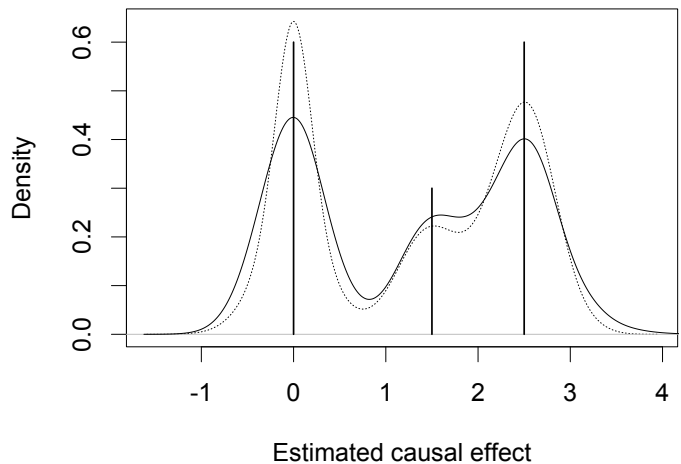}
\end{center}
\caption{IDA density plot in the style of \cite{MaathuisKalischBuhlmann2009}. Shown are density curves for the estimated possible causal effects returned by local IDA (solid) and optimal IDA (dotted). The true possible causal effects are 0, 1.5 and 2.5 (vertical lines; height indicates relative frequency: 0 and 2.5 each occur in two of the five maxPDAGs in Figure~\ref{fig:ExIDA}).}
\label{fig:density}
\end{figure}

The set \(sib(X,\mathcal{G})\) contains 3 nodes, hence there are 8 potential orientations of the undirected edges with endpoint \(X\). From these 8, 3 imply new v-structures and are thus not compatible with \(\mathcal{G}\). The other 5 can be extended to the maxPDAGs shown in Figure~\ref{fig:ExIDA}(b-f), where the bold arrows indicate orientations derived by Meek's rules (see Figure~\ref{fig:Meek} in Appendix~\ref{appA}). For example, in \ref{fig:ExIDA}(b) it follows from \(V_1\rightarrow X\rightarrow V_3\) that \(V_1\rightarrow V_3\) by Meek's Rule 2. By Rule 1, it then follows that \(V_3\rightarrow V_5\). The compatibility check and the application of Meek's rules are carried out in line 7 of optimal IDA.

Next, optimal IDA checks for each maxPDAG \(\mathcal{G}'\), whether \(Y\in\mathrm{possde}(X,\mathcal{G'})\). Here, this is the case for all maxPDAGs except \ref{fig:ExIDA}(c). For the other four graphs, \(\mathbf{O}=\mathbf{O}(X,Y, \mathcal{G'})\) is determined and used to compute \(\hat{\beta}_{yx.\mathbf{o}}\). We indicate \(\mathbf{O}(X,Y, \mathcal{G'})\) by boxes in the Figures \ref{fig:ExIDA}(b) and \ref{fig:ExIDA}(d)-(f). For (c), an effect estimate of zero is returned.

For comparison, the diamonds in Figures~\ref{fig:ExIDA}(b-f) show the adjustment sets in local IDA (Algorithm~\ref{AlgL}), i.e.\ \(\mathrm{pa}(X,\mathcal{G'})\). In (b) and (e), \(\mathrm{pa}(X,\mathcal{G'})=\mathbf{O}(X,Y,\mathcal{G'})\). In (c), optimal IDA returns zero (Algorithm~\ref{AlgO}, line 12), while local IDA returns \(\hat{\beta}_{yx.\mathbf{p}}\) with \(\mathbf{P}=\{V_1,V_3\}\), which converges to \(\beta_{yx.\mathbf{p}}=0\). The main advantage of optimal IDA becomes apparent in cases (d) and (f): In (d), \(\mathbf{O}(X,Y,\mathcal{G'})=\emptyset\), whereas \(\mathrm{pa}(X,\mathcal{G'})=\{V_4\}\) which is guaranteed to reduce efficiency. In (f), \(\mathrm{pa}(X,\mathcal{G'})=\{V_3\}\) and \(\mathbf{O}(X,Y,\mathcal{G'})=\{V_3,V_5\}\), where the latter improves efficiency.

For further illustration, we carried out a small simulation study in which we generated data according to a causal linear model compatible with Figure~\ref{fig:ExIDA}(b). 1\,000 datasets with 40 observations each were generated and given as input to local IDA and optimal IDA, together with the CPDAG in Figure~\ref{fig:ExIDA}(a). The true possible causal effects are 0, 1.5 and 2.5, visualised as vertical lines in Figure~\ref{fig:density}. The plot shows smoothed density curves for the estimates returned by local IDA (solid) and optimal IDA (dotted). The density plot for optimal IDA is clearly narrower around the values 0 and 2.5. The difference between the algorithms is even more pronounced for graphs with more nodes and longer paths (not shown). The \texttt{R}-code \citep{R} for reproducing Figure~\ref{fig:density} is available in the Online Supplement.

\subsection{Simulation}
In order to compare the performance of optimal versus local IDA in finite sample settings, we carried out a more extensive simulation study. The design was chosen to reflect a typical situation where IDA is used, i.e.\ interest lies in the causal effect of \(X\) on \(Y\) in a (known or estimated) CPDAG \(\mathcal{G}\) that is non-amenable relative to \((X,Y)\). Non-amenability implies that the multiset \((\tau_{xy}(\mathcal{D}))_{\mathcal{D} \in [\mathcal{G}]}\) of possible causal effects of \(X\) on \(Y\) compatible with \(\mathcal{G}\) contains more than one distinct value (for almost all parameters values of the causal linear model) (\citealp{Perkovic2019}, Proposition 4.2). A useful summary of \((\tau_{xy}(\mathcal{D}))_{\mathcal{D} \in [\mathcal{G}]}\) is the minimum absolute value, \(\min(\mathrm{abs}((\tau_{xy}(\mathcal{D}))_{\mathcal{D} \in [\mathcal{G}]}))\), because when this value is non-zero, we know that \(X\) has \textit{some} effect on \(Y\). The aim of our simulation study was to compare how well \(\min(\mathrm{abs}((\tau_{xy}(\mathcal{D}))_{\mathcal{D} \in [\mathcal{G}]}))\) is estimated by optimal versus local IDA, in terms of the Monte-Carlo mean squared error (MSE).

We investigated 24 scenarios by considering all combinations of the following parameters: number of nodes \(p\in\{10,20,50,100\}\), expected number of neighbours per node \(d\in\{2,3,4\}\), and sample size \(n\in\{100,1\,000\}\). In each scenario, the following was repeated 1\,000 times (\texttt{R} code for reproducing the simulation study is available  in the Online Supplement):

A DAG \(\mathcal{D}\), with CPDAG \(\mathcal{G}\), with \(p\) nodes and \(d\) expected neighbours per node was randomly chosen such that \(\mathcal{G}\) was non-amenable relative to two randomly chosen nodes \((X,Y)\) and such that \(\min(\mathrm{abs}((\tau_{xy}(\mathcal{D}))_{\mathcal{D} \in [\mathcal{G}]}))\) was non-zero. (Note that the DAG with its unique `true' causal effect was simulated for convenience only. Conceptually, we drew directly from the space of CPDAGs, which is why we consider the whole multiset of possible effects to be `the truth'.) The following was then repeated 100 times: A dataset with \(n\) observations was generated from a linear causal model on \(\mathcal{D}\) where the non-zero coefficients were randomly chosen from a uniform distribution on \([-1,-0.1]\cup[0.1,1]\). Greedy equivalence search \citep{Chickering2002} was applied to the data, yielding an estimated CPDAG \(\mathcal{G}^*\). Optimal and local IDA were both applied to the true CPDAG \(\mathcal{G}\) and the estimated CPDAG \(\mathcal{G}^*\). The four output multisets of estimates were summarised by their minimum absolute values. These were compared on the basis of their Monte-Carlo MSE, i.e.\ the squared difference between the estimated minimum absolute value and the true minimum absolute value, averaged over the 100 repetitions. Specifically, we calculated the MSEs for the estimated minima using optimal IDA versus local IDA by computing the relative MSE (RMSE), MSE(optimal IDA)/MSE(local IDA). This was done separately for \(\mathcal{G}\) and for \(\mathcal{G}^*\), and denoted \(r\) and \(r^*\), respectively. An RMSE of less than one indicates that optimal IDA is more precise than local IDA in estimating the minimum of the multiset of causal effects. In light of Remark~\ref{IDAse}, we did not consider estimated standard errors.

In addition to the above 24 scenarios, we investigated the relative performance of optimal IDA in a scenario where the graph is sparse (\(d=1\)) and the sample size is moderate (\(n=100\)). We considered eight setting where the number of nodes was between \(p=10\) and \(p=1\,000\). Such high-dimensional scenarios occur, for instance, with gene expression data. As greedy equivalence search is slow for large graphs, we reduced the number of replications from 1\,000 to 100 and the number of datasets per graph from 100 to 10.

\begin{figure}[h]
	\begin{center}
	\includegraphics[height=8cm, trim=0 0.5cm 0 0, clip]{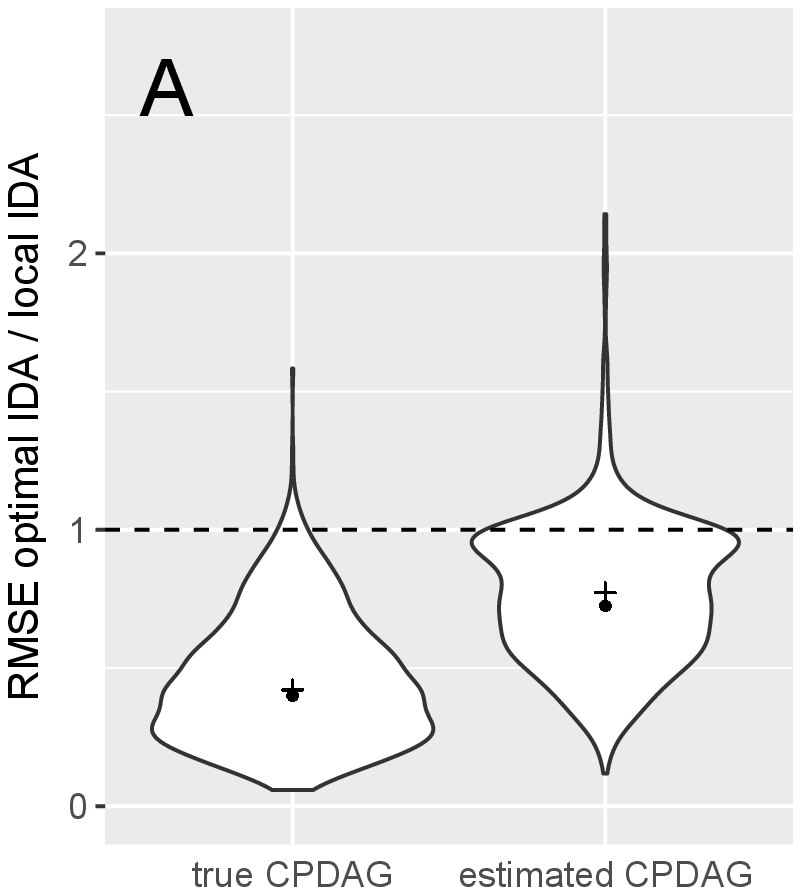}
	\includegraphics[height=8cm, trim=-1cm 0.5cm 0 0, clip]{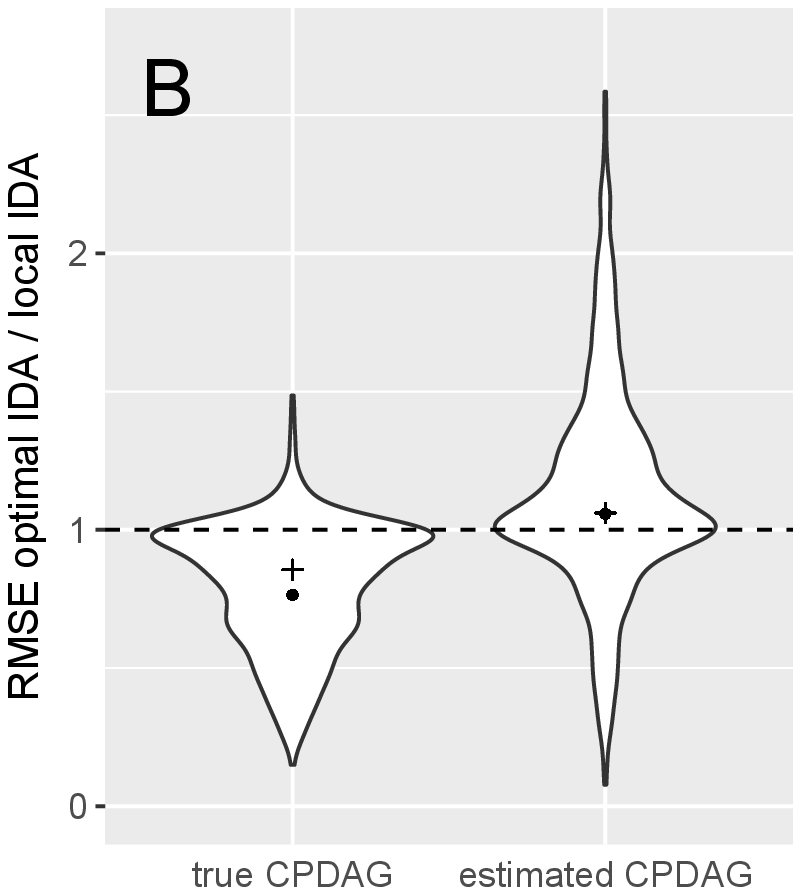}
	\end{center}
	\caption{Violin plots of the relative mean squared errors (RMSEs)  \(r\) and \(r^*\) for the true and estimated CPDAGs, respectively. Scenario \textbf{\textsf{A}}: \(p=100\) nodes, \(d=4\) expected neighbours per node, sample size \(n=1\,000\). Scenario \textbf{\textsf{B}}: \(p=10\), \(d=4\) and \(n=100\). The dots mark the geometric means, the plus signs the medians.}
	\label{fig:violin_main22}
\end{figure}

Figure~\ref{fig:violin_main22} shows violin plots of the RMSEs \(r\) and \(r^*\) over the 1\,000 repetitions, together with the geometric mean and the median. Two scenarios are shown: The one where optimal IDA showed the best overall performance (scenario \textsf{\textbf{A}}, \(p=100, d=4, n=1\,000\)), and the worst one (scenario \textsf{\textbf{B}}, \(p=10, d=4, n=100\)) of all the simulation settings considered. The geometric means and medians for all scenarios are summarised in Tables \ref{tab1} and \ref{tab2}; the complete set of violin plots is shown in Appendix~\ref{appE}. Optimal IDA clearly outperformed local IDA, in terms of the geometric mean and median of the RMSE, in all scenarios when applied to the true CPDAG. When the CPDAG was estimated using greedy equivalence search, optimal IDA was still superior in the majority of scenarios, but \(r^*\) was notably larger than \(r\) in all scenarios, i.e.\ the relative performance of optimal IDA was worse with an estimated CPDAG than with a known CPDAG. As an estimated graph inevitably contains some errors regarding the presence and direction of edges, this result may indicate that estimation adjusting for  the \(\mathbf{O}\)-set suffers more from such errors than adjusting for the set of parents of \(X\).

\begin{table}[t]
	\begin{center}
	\begin{small}
		\begin{tabular}{lcccccc}
			&&&&&&\\
			\toprule
			&  & \(n=100\) &  &  & \(n=1\,000\) & \\
			\cmidrule(rl){2-4}\cmidrule(rl){5-7}
			& \(d=2\) & \(d=3\) & \(d=4\) & \(d=2\) & \(d=3\) & \(d=4\)\\
			\midrule
			\(p=10\) & \(0.70\) \((0.76)\) & \(0.72\) \((0.79)\) & \(0.76\) \((0.86)\) & \(0.69\) \((0.78)\) & \(0.71\) \((0.78)\) & \(0.75\) \((0.86)\)\\
			\(p=20\) & \(0.64\) \((0.69)\) & \(0.63\) \((0.68)\) & \(0.61\) \((0.66)\) & \(0.63\) \((0.68)\) & \(0.60\) \((0.65)\) & \(0.59\) \((0.65)\)\\
			\(p=50\) & \(0.60\) \((0.64)\) & \(0.54\) \((0.57)\) & \(0.51\) \((0.55)\) & \(0.55\) \((0.58)\) & \(0.50\) \((0.54)\) & \(0.46\) \((0.49)\)\\
			\(p=100\) & \(0.57\) \((0.61)\) & \(0.50\) \((0.52)\) & \(0.44\) \((0.46)\) & \(0.54\) \((0.58)\) & \(0.44\) \((0.46)\) & \(0.40\) \((0.42)\)\\
			\bottomrule
		\end{tabular}
	\end{small}
	\end{center}
\caption{Geometric means (in parentheses: medians) of the relative mean squared errors (RMSEs) \(r\) over 1\,000 repetitions for scenarios with different numbers of nodes (\(p\)), expected number of neighbours per node (\(d\)), and sample sizes (\(n\)). Optimal and local IDA were applied to the true CPDAG \(\mathcal{G}\).}	\label{tab1}
\end{table}
\begin{table}[t]
	\begin{center}
	\begin{small}
		\begin{tabular}{lcccccc}
			&&&&&&\\
			\toprule
			&  & \(n=100\) &  &  & \(n=1\,000\) & \\
			\cmidrule(rl){2-4}\cmidrule(rl){5-7}
			& \(d=2\) & \(d=3\) & \(d=4\) & \(d=2\) & \(d=3\) & \(d=4\)\\
			\midrule
			\(p=10\) & \(1.06\) \((1.01)\) & \(1.06\) \((1.04)\) & \(1.06\) \((1.06)\) & \(0.95\) \((0.99)\) & \(0.97\) \((1.00)\) & \(1.01\) \((1.00)\) \\
			\(p=20\) & \(0.99\) \((1.00)\) & \(0.99\) \((1.00)\) & \(0.96\) \((1.00)\) & \(0.88\) \((0.96)\) & \(0.89\) \((0.97)\) & \(0.94\) \((0.99)\)\\
			\(p=50\) & \(0.94\) \((0.98)\) & \(0.89\) \((0.93)\) & \(0.89\) \((0.93)\) & \(0.81\) \((0.90)\) & \(0.79\) \((0.85)\) & \(0.78\) \((0.86)\)\\
			\(p=100\) & \(0.97\) \((1.00)\) & \(0.94\) \((0.97)\) & \(0.90\) \((0.94)\) & \(0.81\) \((0.91)\) & \(0.73\) \((0.80)\) & \(0.72\) \((0.77)\)\\
			\bottomrule
		\end{tabular}
	\end{small}
	\end{center}
\caption{Geometric means (in parentheses: medians) of the relative mean squared errors (RMSEs)  \(r^*\) over 1\,000 repetitions for scenarios with different numbers of nodes (\(p\)), expected number of neighbours per node (\(d\)), and sample sizes (\(n\)). Optimal and local IDA were applied to the estimated CPDAG \(\mathcal{G}^*\).}	\label{tab2}
\end{table}

Small \(n\) and small \(p\) do not entail much advantage of using optimal IDA: In graphs with only a few nodes, the \(\mathbf{O}\)-set and the set of parents of \(X\) are often similar or even coincide, so that the gain in efficiency when using the \(\mathbf{O}\)-set is less pronounced. A smaller sample size leads to more errors in the estimated graph, which affects estimation of the \(\mathbf{O}\)-set more than estimation of the set of parents of \(X\), as we conjectured above. However, optimal IDA seems to have a slight advantage for larger \(d\) when \(p\) is also larger.

\begin{figure}[p]
	\begin{center}
		\includegraphics[width=15cm, trim=0 0 0 0, clip]{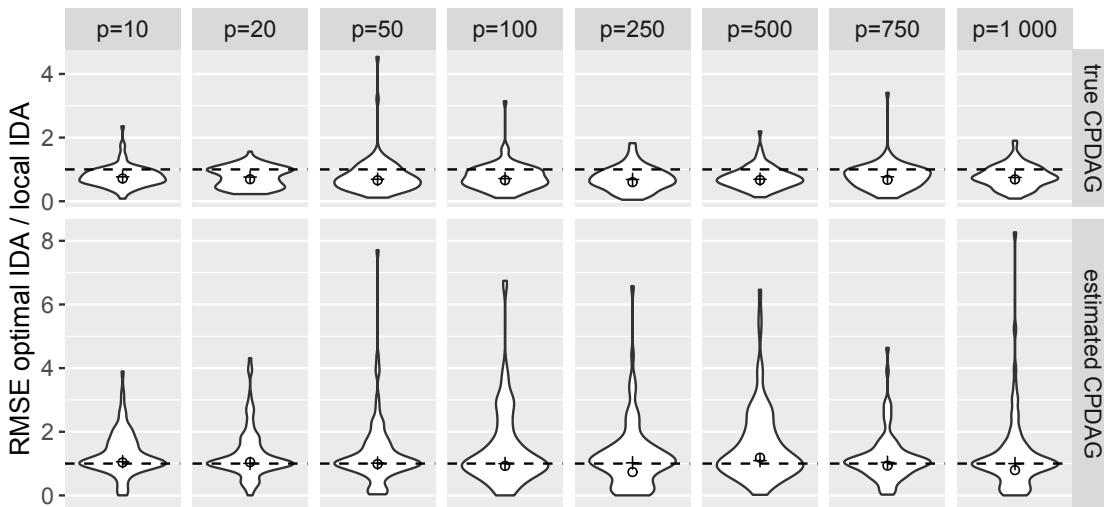}
	\end{center}
	\caption{Violin plots of the relative mean squared errors (RMSEs) \(r\) and \(r^*\) for the true and estimated CPDAGs, respectively. All graphs have \(d=1\) expected neighbour per node and graphs were estimated from \(n=100\) observations; the number of nodes \(p\) varies. The dots mark the geometric means, the plus signs the medians.}
	\label{fig:violin_sparse}
\end{figure}

\begin{table}[p]
	\begin{center}
	\begin{small}
		\begin{tabular}{lcccc}
			&&&&\\
			\toprule
			& \(p=10\) & \(p=20\) & \(p=50\) & \(p=100\)\\
			\midrule
			true CPDAG & \(0.71\) \((0.77)\) & \(0.69\) \((0.76)\) & \(0.66\) \((0.69)\) & \(0.66\) \((0.71)\) \\
			estimated CPDAG & \(1.04\) \((1.06)\) & \(1.04\) \((1.01)\) & \(0.99\) \((1.02)\) & \(0.93\) \((1.00)\) \\ 
			\toprule
			 & \(p=250\) & \(p=500\) & \(p=750\) & \(n=1\,000\)\\
			 \midrule
			 true CPDAG &  \(0.60\) \((0.68)\) & \(0.67\) \((0.69)\) & \(0.67\) \((0.77)\) & \(0.69\) \((0.75)\)\\
			 estimated CPDAG & \(0.73\) \((1.02)\) & \(1.16\) \((1.09)\) & \(0.94\) \((1.04)\) & \(0.79\) \((1.00)\)\\
			\bottomrule
		\end{tabular}
	\end{small}
	\end{center}
\caption{Geometric means (in parentheses: medians) of the relative mean squared errors (RMSEs) \(r\) and \(r^*\) for the true and estimated CPDAGs, respectively, over 100 repetitions for scenarios with different numbers of nodes (\(p\)), \(d=1\) expected neighbours per node, and sample size \(n=100\).}	\label{tab3}
\end{table}

The additional results for the sparse graphs are shown in Figure~\ref{fig:violin_sparse} and Table~\ref{tab3}. Optimal IDA outperformed local IDA regardless of the number of nodes \(p\) when the CPDAG was known. When the CPDAG was not known, the median RMSE was about 1 for all \(p\). The geometric mean varied around 1 with no obvious pattern, which may be due to the small number of replications. The results suggest that in the high-dimensional setting, the primary difficulty is learning the graph, limiting the advantage of optimal IDA over local IDA.

In summary, based on the simulation results, we recommend using optimal IDA when there is high confidence in the estimated graph. The advantage over local IDA will be most pronounced when the number of nodes is at least 20, or better 50 or more.

\section{The \(\mathbf{O}\)-Set and Non-Graphical Variable Selection}
\label{sec:data}

We now assume that neither the causal DAG \(\mathcal{D}\) nor a CPDAG or maxPDAG is known to us, therefore we wish to select a valid adjustment set in a non-graphical manner. We restrict our discussion to the case where we have a univariate treatment \(X\) and outcome \(Y\) of interest. 

In multiple regression analyses, it is common to apply variable selection procedures, e.g.\ backward selection, to find a set of relevant predictors for an outcome $Y$. In high-dimensional settings, regularisation methods that combine selection and estimation, such as the Lasso or the Elastic Net, are commonly used \citep{Tibshirani1996, ZouHastie2005}. While variable selection for prediction is in general a different task than finding an efficient or optimal adjustment set for causal effect estimation, we will discuss next under what assumptions and modifications these tasks coincide. For a general overview of the relation between variable and confounder selection see \cite{WitteDidelez2019}. A basic assumption for the validity of a selected adjustment set is that the set \(\mathbf{Z}\) from which we select the variables must itself be a valid adjustment set, as defined in Section ~\ref{sec:review}. A number of selection procedures can then be used to determine different types of valid adjustment sets as subsets of \(\mathbf{Z}\), e.g.\ a minimal valid adjustment set \citep{WitteDidelez2019, deLunaWaernbaumRichardson2011}.

Consider first Algorithm~\ref{BRS}, which shows the template for backward regression selection (see e.g.\ \citealp{KleinbaumKupper1978, MontgomeryPeckVining2012}) with the above basic assumption added at the outset.
Under the linear model assumptions with Gaussian errors, \(Y\ind Z_i\mid(\mathbf{Z}'_{-i}, X)\) can be tested by comparing the models with regressors \(\mathbf{Z}'_{-i}\cup\{X\}\) versus \(\mathbf{Z}'\cup\{X\}\), using a t-test with null hypothesis \(\beta_{yz_i.x\mathbf{z}'_{-i}}=0\). `Pval' in line 6 is a function that outputs the p-value of a test for the null hypothesis specified in the argument. The maximum p-value is compared in line 9 to a threshold \(\alpha\). For a given \(\alpha\), Algorithm~\ref{BRS} implements the classical `p-value method' (see e.g.\ \citealp{GreenlandPearce2015}). Denote by \(F_{\chi^2_1}(.)\) the distribution function of the \(\chi^2\) distribution with one degree of freedom. For a given sample size \(n\), Algorithm~\ref{BRS} with \(\alpha=1-F_{\chi^2_1}(2)\) or \(\alpha=1-F_{\chi^2_1}(\log(n))\) is equivalent to backward selection using the AIC or BIC, respectively (e.g.\ \citealp{Murtaugh2014, Derryberryetal2018}; although the motivation for using them stems from frameworks other than independence testing, see \citealp{Akaike1974} and \citealp{Schwarz1978}). Algorithm~\ref{BRS} can easily be adapted to work with a measure of conditional independence other than the p-value of the t-test. For example, two  non-parametric implementations of Algorithm~\ref{BRS} were proposed by \cite{LiCookNachtsheim2005}.

If the true independence relations are known, Algorithm~\ref{BRS} can be condensed to its oracle version, Algorithm~\ref{BRS2}. Comparing p-values is then redundant, and every \(Z_i\) needs to be visited only once, as it follows from general properties of conditional independence that the ordering \(Z_1, Z_2, \dots, Z_p\) does not matter, provided the joint probability of all variables is strictly positive. Essentially, Algorithm~\ref{BRS2} eliminates variables until only the `direct predictors' of \(Y\) are left, i.e.\ those variables with non-zero coefficients in the oracle regression of \(Y\) on \(X\) and \(\mathbf{Z}\).

Algorithm~\ref{BRS2} is the non-graphical version of the pruning algorithm introduced in \citetalias{HenckelPerkovicMaathuis2019} which uses d-separation relationships to prune a valid adjustment set to a subset such that the resultant effect estimator has a smaller asymptotic variance. Assume now that an underlying graph exists. The following Proposition \ref{new3.6} formalises how the \(\mathbf{O}\)-set can be viewed as the target set of backward variable selection algorithms and follows from Proposition 3.6 of \citetalias{HenckelPerkovicMaathuis2019} and Theorem 1 in \citetalias{RotnitzkySmucler2019}.

\begin{algorithm}
	\caption{Backward regression selection.}
	\begin{algorithmic}[1]
		\REQUIRE i.i.d.\ observations for variables \(X\), \(Y\) and \(\mathbf{Z}\), such that \(\mathbf{Z}\) is a valid adjustment set relative to \((X,Y)\)
		\STATE \(\mathbf{Z}'\leftarrow\mathbf{Z}\)
		\STATE \(\mathrm{Pmax}\leftarrow 1\)
		\WHILE {\(\mathrm{Pmax}>\alpha\)}
		\STATE \(\mathrm{Plist}\leftarrow\) empty list of length \(|\mathbf{Z}'|\)
		\FORALL {\(i\) in 1 to \(|\mathbf{Z}'|\)}
		\STATE \(\mathrm{Plist}[i]\leftarrow\mathrm{Pval}(Y\ind Z_i\mid(X,\mathbf{Z}'_{-i}))\)
		\ENDFOR
		\STATE \(\mathrm{Pmax}\leftarrow\max(\mathrm{Plist})\)
		\IF {\(\mathrm{Pmax}>\alpha\)}
		\STATE \(\mathbf{Z}'\leftarrow\mathbf{Z}'_{-\mathrm{argmax}(\mathrm{Plist})}\)
		\ENDIF
		\ENDWHILE
		\RETURN \(\mathbf{Z}'\)
	\end{algorithmic}
	\label{BRS}
\end{algorithm}

\begin{algorithm}
	\caption{Oracle backward regression selection.}
	\begin{algorithmic}[1]
		\REQUIRE independence relations between variables \(X\), \(Y\) and \(\mathbf{Z}\), such that \(\mathbf{Z}\) is a valid adjustment set relative to \((X,Y)\)
		\STATE \(\mathbf{Z}'\leftarrow\mathbf{Z}\)
		\FORALL {\(i\) in 1 to \(|\mathbf{Z}'|\)}
		\IF {\(Y\ind Z_i\mid(X,\mathbf{Z}'_{-i})\)}
		\STATE \(\mathbf{Z}'\leftarrow\mathbf{Z}'_{-i}\)
		\ENDIF
		\ENDFOR
		\RETURN \(\mathbf{Z}'\)
	\end{algorithmic}
	\label{BRS2}
\end{algorithm}

\begin{proposition}
	\label{new3.6}
	Let \(X\) and \(Y\) be nodes in a causal DAG, CPDAG or maxPDAG \(\mathcal{G}\) with node set \(\mathbf{V}\) and let \(\mathbf{V}\) follow a causal model with a joint density faithful to \(\mathcal{G}\). Let \(\mathbf{Z}\) be a valid adjustment set relative to \((X,Y)\) in \(\mathcal{G}\) and let \(\mathbf{Z}'\) be the output of Algorithm~\ref{BRS2} when applied to \(\mathbf{Z}\).
	\begin{enumerate}
		\item[(a)] \(\mathbf{Z}'\) is a valid adjustment set and does not depend on the order in which the variables in \(\mathbf{Z}\) are considered in Algorithm~\ref{BRS2}.
		\item[(b)] If \(\mathbf{O}(X,Y,\mathcal{G}) \subseteq \mathbf{Z}\), then \(\mathbf{Z}'=\mathbf{O}(X,Y,\mathcal{G})\).
		\item[(c)] If \(\mathbf{V}\) follows a causal linear model, then \(a.var(\hat{\beta}_{yx.\mathbf{z}'})\le a.var(\hat{\beta}_{yx.\mathbf{z}})\).
		\item[(d)] For every pair of values \(x, x'\in\mathcal{X}\), \(a.var(\hat{\Delta}_{yxx'.\mathbf{z}'}) \le a.var(\hat{\Delta}_{yxx'.\mathbf{z}})\).
		\end{enumerate}
\end{proposition}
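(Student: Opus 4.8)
The plan is to reduce all four claims to the two cited results---Proposition~3.6 of \citetalias{HenckelPerkovicMaathuis2019} and Theorem~1 of \citetalias{RotnitzkySmucler2019}---by using faithfulness as a dictionary between the conditional-independence tests run by Algorithm~\ref{BRS2} and the d-separation relations driving the graphical pruning of \citetalias{HenckelPerkovicMaathuis2019}. First I would fix an underlying DAG \(\mathcal{D}\in[\mathcal{G}]\) to which the density is faithful and move the whole argument into \(\mathcal{D}\), where d-separation is the relevant notion. The existence of the valid set \(\mathbf{Z}\) forces \(\mathcal{G}\) to be amenable relative to \((X,Y)\) (amenability is a necessary part of the generalised adjustment criterion), so by \citetalias{HenckelPerkovicMaathuis2019} every set valid in \(\mathcal{G}\) is valid in \(\mathcal{D}\) and conversely, and \(\mathbf{O}(X,Y,\mathcal{G})=\mathbf{O}(X,Y,\mathcal{D})\). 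By faithfulness, each oracle test \(Y\ind Z_i\mid(X,\mathbf{Z}'_{-i})\) in line~3 holds iff \(Y\perp_{\mathcal{D}} Z_i\mid(X,\mathbf{Z}'_{-i})\); hence, for any fixed ordering of \(\mathbf{Z}\), Algorithm~\ref{BRS2} deletes exactly the nodes deleted by the graphical pruning of \citetalias{HenckelPerkovicMaathuis2019} applied to \(\mathbf{Z}\) in \(\mathcal{D}\).

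Granting this identification, parts (a)--(c) are inherited directly. Validity of the output \(\mathbf{Z}'\) and its independence of the examination order are the content of Proposition~3.6 of \citetalias{HenckelPerkovicMaathuis2019}; I would additionally note that order-independence is structurally forced, since the deletion criterion is a d-separation and d-separation satisfies the graphoid axioms (in particular intersection and contraction), so the collection of removable nodes is determined regardless of sequence. Part (b) is then the statement that pruning any valid \(\mathbf{Z}\) with \(\mathbf{O}(X,Y,\mathcal{D})\subseteq\mathbf{Z}\) returns \(\mathbf{O}(X,Y,\mathcal{D})=\mathbf{O}(X,Y,\mathcal{G})\), and part (c) is its linear-model variance comparison, transported unchanged through the faithfulness bridge.

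For part (d) I would invoke Theorem~1 of \citetalias{RotnitzkySmucler2019}, whose hypothesis demands that the discarded covariates be \emph{jointly} conditionally independent of \(Y\) given \(X\) and the retained set, i.e.\ \(Y\ind(\mathbf{Z}\setminus\mathbf{Z}')\mid(X,\mathbf{Z}')\), together with validity of both sets. Validity comes from (a), but the joint independence is the step that genuinely needs an argument: the algorithm only ever certifies \emph{pairwise} independencies, each relative to a different, shrinking conditioning set. I would assemble them by a telescoping use of contraction. Listing the removed nodes in reverse deletion order \(R_m,R_{m-1},\dots,R_1\), the removal certificates read \(Y\perp_{\mathcal{D}} R_k\mid(X,\mathbf{Z}'\cup\{R_{k+1},\dots,R_m\})\); starting from \(Y\perp_{\mathcal{D}} R_m\mid(X,\mathbf{Z}')\) and applying contraction at each step yields \(Y\perp_{\mathcal{D}}\{R_1,\dots,R_m\}\mid(X,\mathbf{Z}')\), and faithfulness converts this into the required distributional statement \(Y\ind(\mathbf{Z}\setminus\mathbf{Z}')\mid(X,\mathbf{Z}')\).

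I expect the main obstacle to be precisely this passage from the pairwise removal certificates to the single joint independence, handled cleanly by the contraction axiom as above; the one conceptual subtlety to keep honest is the CPDAG/maxPDAG-to-DAG reduction, where amenability must be used to guarantee that the distribution-driven pruning inside the true DAG \(\mathcal{D}\) lands on the \(\mathcal{G}\)-defined \(\mathbf{O}\)-set rather than merely on some DAG-specific set. Everything else is bookkeeping on top of the cited theorems.
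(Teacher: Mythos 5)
Your proposal is correct and follows the same route the paper takes: the paper gives no standalone proof of this proposition, asserting only that it ``follows from Proposition 3.6 of \citetalias{HenckelPerkovicMaathuis2019} and Theorem 1 in \citetalias{RotnitzkySmucler2019}'', and your argument is a faithful elaboration of exactly that reduction (faithfulness as the bridge between the oracle tests and d-separation, amenability plus the equality \(\mathbf{O}(X,Y,\mathcal{G})=\mathbf{O}(X,Y,\mathcal{D})\) for the maxPDAG-to-DAG step). The one piece of genuine added value is your telescoping contraction argument turning the pairwise removal certificates into the joint independence \(Y\ind(\mathbf{Z}\setminus\mathbf{Z}')\mid(X,\mathbf{Z}')\) needed to invoke \citetalias{RotnitzkySmucler2019}'s Theorem 1 in part (d); the paper leaves this detail implicit.
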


As mentioned earlier, Lasso estimation can also be regarded as variable selection, even though its original motivation and common usage mostly concerns prediction. Under specific assumptions, the Lasso asymptotically selects all and only all the `direct predictors' of \(Y\) with probability 1 \citep{ZhaoYu2006, Lounici2008}. Thus, although Lasso uses a different principle than backward selection, it follows from Proposition \ref{new3.6} that when the starting set \(\mathbf{Z}\) is a valid adjustment set and a superset of the \(\mathbf{O}\)-set, the \(\mathbf{O}\)-set can also be viewed as the target set of the Lasso.

We emphasise again that the \(\mathbf{O}\)-set cannot be determined in a purely data-driven way. Neither the assumption that \(\mathbf{Z}\) is valid nor \(\mathbf{O}(X,Y,\mathcal{G})\subseteq\mathbf{Z}\) can be verified empirically. Hence, prior causal knowledge is essential before any variable selection algorithm can be applied \citep{WitteDidelez2019}. In contrast to (semi-)local or optimal IDA, however, selection of an adjustment set based on Algorithm~\ref{BRS2} allows some latent structures, as long as the assumption that \(\mathbf{Z}\) is a valid adjustment set continues to hold. This may be of advantage when only a subset of the variables have been measured.

The guarantees of Proposition \ref{new3.6} for Algorithm~\ref{BRS2} do not translate to the finite sample version Algorithm~\ref{BRS}. Regression selection in finite samples is known to have several weaknesses (see e.g.\ \citealp{Harrell2010}). Some issues are that the output may only be a local optimum, and that valid post-selection inference is difficult \citep{LeebPoetscher2008}. There is, however, a growing literature on post-selection inference both in the context of OLS-based approaches (e.g. \citealp{Berketal2013,  RinaldoWassermanGSell2019}) and of Lasso-based approaches (e.g.\ \citealp{Lockhartetal2014, Leeetal2016}). For causal effect estimation in a non-graphical context, post-selection inference has been considered by \cite{BelloniChernozhukovHansen2014}, \cite{DukesVansteelandt2020}, \cite{DukesVansteelandt2020b}, \cite{Chernozhukovetal2018} and others.

\section{Conclusions}

In this paper, we provided insight into the construction and properties of the \(\mathbf{O}\)-set introduced by \citetalias{HenckelPerkovicMaathuis2019}. We showed that the \(\mathbf{O}\)-set equals the set of parents of \(\mathbf{Y}\) in the latent projection over the forbidden nodes (Proposition \ref{OOequal}). This lends formal support to the intuition that adjusting for all direct causes of \(\mathbf{Y}\) minimises the residual variance and hence improves precision when estimating the causal effect of \(\mathbf{X}\) on \(\mathbf{Y}\).

The forbidden projection is a useful tool in its own right when the aim is to estimate a causal effect via adjustment. It displays all variables of interest, while the forbidden variables, which must not be adjusted for, are marginalised out. The forbidden projection thus reduces the complexity of the causal graph while preserving all information relevant for choosing an adjustment set (see Propositions \ref{VAS} and \ref{XYDAG}). 

We further proposed a new modification of the IDA algorithm, called optimal IDA, which outputs multisets of estimates of possible causal effects by adjusting for the possible \(\mathbf{O}\)-sets. We showed that this increases estimation precision also in cases  where the causal structure is a-priori unknown and needs to be estimated. Moreover, this extends the applicability of optimal adjustment to non-amenable CPDAGs/maxPDAGs. Optimal IDA has been implemented in the \texttt{R} package \texttt{pcalg}. While causal search methods in general have some well-known shortcomings, IDA has proved to be a valuable tool for instance for screening purposes in large datasets \citep{Leetal2013,Engelmannetal2015,LuoHuangCao2018}. The `optimal' version can further improve its performance.

Finally, we detailed the prerequisites and assumptions under which non-graphical algorithms for backward variable selection can be viewed as aiming at selecting the \(\mathbf{O}\)-set. Essentially, we need to assume that the set of variables to select from consists of all nodes in the forbidden projection, or a suitable subset thereof. The algorithm then determines the parents / direct causes of \(Y\) based on detected conditional independencies. If the input contains forbidden nodes, however, or lacks certain confounders, the algorithm might select an invalid adjustment set. To avoid the latter, sufficient prior knowledge on the set of variables corresponding to forbidden nodes is therefore a key prerequisite when automated variable selection is to be used for causal inference. While this prerequisite may not require full knowledge of the underlying causal DAG, it is important to recognise that such prior knowledge cannot be established in a purely data-driven way \citep{WitteDidelez2019}.

Much research on variable selection in causal graphs has focussed on finding small or minimal adjustment sets \citep{deLunaWaernbaumRichardson2011,TextorLiskiewicz2011,KnueppelStang2010}. Small adjustment sets are useful during study planning, for instance when data collection is expensive and costs are to be minimised. Moreover, they entail desirable statistical properties e.g.\ for matching estimators, because suitable matches are more easily found when matching on a few variables only. 
In general, the \(\mathbf{O}\)-set is not minimal, but instead entails optimality of causal effect estimation by regression adjustment in linear causal models and non-parametric settings.  Simulation results further indicate that the optimality of the \(\mathbf{O}\)-set extends to other parametric settings and estimation methods, e.g.\ estimation of the marginal odds ratio via standardised logistic regression \citep{WitteDidelez2019}. Combining the benefits of small and optimal adjustment sets, \citetalias{RotnitzkySmucler2019} show that the optimal minimal set, i.e.\ the set among all minimal adjustment sets yielding the most precise estimation in the class of regular asymptotically linear estimators, must be a subset of the \(\mathbf{O}\)-set, underlining its relevance and importance.

We note that adjustment is only one of several possible ways of identifying causal effects. While adjusting for the \textbf{O}-set is asymptotically more efficient than adjusting for any other valid adjustment set, it is possible that an even smaller asymptotic variance can be obtained by using an alternative identification strategy, e.g.\ the front-door strategy \citep{Pearl2009}. This is further investigated in \citetalias{RotnitzkySmucler2019} and in \cite{GuoPerkovic2020}.

Finally, we would like to discuss some avenues for future research. First, given the results by \citetalias{RotnitzkySmucler2019}, a natural question is whether a non-parametric version of optimal IDA is feasible. Those aspects of IDA that relate to finding different possible valid adjustment sets are obviously not limited to the causal linear model, and estimators such as in \citetalias{RotnitzkySmucler2019} could also be employed for any given \(X\), \(Y\) and adjustment set. The simplifications for graph search algorithms and IDA under linearity, however, are considerable. For instance, greedy equivalence search with a Gaussian score has been shown to be consistent \citep{Chickering2002}, and has the advantage of always returning a CPDAG. Non-parametric graph search algorithms exist, but often come with large computational burdens and/or a low power to detect edges \citep{ShahPeters2020, Ramsey2014}. Further, under a causal linear model, the causal effect of \(X\) on \(Y\) is a single value, see Section~\ref{sec:review}; and the marginal and conditional causal effects for different valid adjustment sets are all identical. For the non-parametric case, instead, the causal effect of \(X\) on \(Y\) is an unspecified function, and issues of non-collapsibility might also come into play. Solving these conceptual hurdles for non-parametric optimal IDA remains an open question.

Second, we assumed throughout this paper that all variables are observed. HPM19 have shown that in the presence of hidden variables, an asymptotically optimal set may not exist. \cite{SmuclerSapienzaRotnitzky2020}, however, gave a sufficient condition under which an optimal adjustment set exists when the underlying DAG includes hidden variables, and showed that an optimal minimal adjustment set always exists. A necessary and sufficient condition for the existence of an optimal adjustment set, however, has not yet been formulated.


\acks{We thank the reviewers and the editor for their valuable comments and suggestions. We gratefully acknowledge financial support by the German Research Foundation (DFG---Project DI 2372/1-1).}


\newpage

\appendix
\section[Appendix A: Terminology]{Terminology}
\label{appA}

The following terminology is used throughout this paper. It is consistent with, and extends \citetalias{HenckelPerkovicMaathuis2019} where needed.

\textbf{Graphs.} A graph \(\mathcal{G}=(\mathbf{V},\mathbf{E})\) consists of a node set \(\mathbf{V}\) and a set of edges \(\mathbf{E}\). We consider three types of edges: \textit{directed} (\(\rightarrow\)), \textit{bi-directed} (\(\leftrightarrow\)) and \textit{undirected} (\(-\)). There can be more than one edge between a given pair of nodes. We only consider loop-free graphs, i.e.\ an edge between a node and itself is not allowed. A loop-free graph where there is at most one edge between a given pair of nodes is called a \textit{simple graph}. Two nodes joined by at least one edge are called \textit{endpoints of the edge} and \textit{adjacent}. A directed edge \(A\rightarrow B\) is said to be \textit{out of} \(A\) and \textit{into} \(B\). A graph \(\mathcal{G}'=(\mathbf{V}',\mathbf{E}')\) is the \textit{induced subgraph} of \(\mathcal{G}=(\mathbf{V},\mathbf{E})\) with respect to \(\mathbf{V}'\) if \(\mathbf{V}'\subseteq\mathbf{V}\) and \(\mathbf{E}'\) includes all edges in \(\mathbf{E}\) that are between nodes in \(\mathbf{V}'\).

\textbf{Paths.} A \textit{path} is a sequence of nodes and edges \((V_0,e_1,V_1,\dots,e_K,V_K)\), \(K\ge 1\), such that every node occurs only once and for \(k=1,\dots,K\), \(e_k\) has endpoints \(V_{k-1}\) and \(V_k\). In a simple graph, the path \((V_0,e_1,V_1,\dots,e_K,V_K)\) can unambiguously be identified by the sequence of nodes \((V_0,V_1,\dots,V_K)\) alone. \(V_0\) and \(V_K\) are called \textit{endpoints of the path} \((V_0,e_1,V_1,\dots,e_K,V_K)\) and the path is said to be \textit{between} \(V_0\) \textit{and} \(V_K\) or \textit{from} \(V_0\) \textit{to} \(V_K\), irrespective of the direction of the edges. For sets of nodes \(\mathbf{A}\) and \(\mathbf{B}\), a path is said to be between \(\mathbf{A}\) and \(\mathbf{B}\) or from \(\mathbf{A}\) to \(\mathbf{B}\) if its first node is in \(\mathbf{A}\) and the last node is in \(\mathbf{B}\). A path from \(\mathbf{A}\) to \(\mathbf{B}\) is \textit{proper} if only its first node \(V_0\) is in \(\mathbf{A}\). Let \(p=(V_0,e_1,V_1,\dots,e_K,V_K)\) and \(k=1,\dots,K\). Then an edge \(V_k\leftarrow V_{k+1}\) on \(p\) is said to point towards \(V_0,\dots,V_k\), while an edge \(V_k\rightarrow V_{k+1}\) on \(p\) is said to point towards \(V_{k+1},\dots,V_K\). A path is \textit{directed} from \(V_0\) to \(V_K\) if all edges in the sequence are directed and point towards \(V_K\). A path \(p\) is \textit{possibly directed} from \(V_0\) to \(V_K\) if all edges on \(p\) are either directed or undirected and there are no \(i,j\), \(1\le i<j\le K\), such that \(V_i\leftarrow V_j\) (cf.\ \cite{PerkovicKalischMaathuis2017}; this definition of a possibly directed path is non-standard as \(V_i\) and \(V_j\) are not necessarily adjacent nodes on the path, which is required for maxPDAGs later). We define the concatenation of two paths \(p=(V_0,e_1,V_1,\dots,e_K,V_K)\) and \(q=(V_K,e_{K+1},V_{K+1},\dots,e_{K+L},V_{K+L})\) as \(p \oplus q=(V_0,e_1,V_1,\dots,e_{K+L},V_{K+L})\), where we require that the nodes \(V_0, \dots, V_{K+L}\) are distinct.

\textbf{Ancestry.} If there is a directed path from \(A\) to \(B\), or if \(A=B\), then \(A\) is an \textit{ancestor} of \(B\) and \(B\) is a \textit{descendant} of \(A\). If there is a possibly directed path from \(A\) to \(B\), or if \(A=B\), then \(A\) is a \textit{possible ancestor} of \(B\) and \(B\) is a \textit{possible descendant} of \(A\). If there is an edge \(A\rightarrow B\), then \(A\) is a \textit{parent} of \(B\) and \(B\) is a \textit{child} of \(A\). If there is an edge \(A-B\), \(A\) and \(B\) are \textit{siblings}. Note that in our terminology, a node is a (possible) ancestor and (possible) descendant of itself, but not a parent/child/sibling of itself. For a node \(V\) in a simple graph \(\mathcal{G}\), we denote the set of all ancestors, possible ancestors, descendants, possible descendants, parents, children and siblings of \(V\) in \(\mathcal{G}\) as \(\mathrm{an}(V, \mathcal{G})\), \(\mathrm{possan}(V, \mathcal{G})\), \(\mathrm{de}(V, \mathcal{G})\), \(\mathrm{possde}(V, \mathcal{G})\), \(\mathrm{pa}(V, \mathcal{G})\), \(\mathrm{ch}(V, \mathcal{G})\), \(\mathrm{sib}(V, \mathcal{G})\), respectively. For a set of nodes \(\mathbf{W}\), the set \(\mathrm{an}(\mathbf{W}, \mathcal{G})\) is defined as \(\bigcup_{W\in\mathbf{W}}\mathrm{an}(W,\mathcal{G})\), with analogous definitions for \(\mathrm{possan}(\mathbf{W}, \mathcal{G})\), \(\mathrm{de}(\mathbf{W}, \mathcal{G})\), \(\mathrm{possde}(\mathbf{W}, \mathcal{G})\), \(\mathrm{pa}(\mathbf{W}, \mathcal{G})\), \(\mathrm{ch}(\mathbf{W}, \mathcal{G})\) and \(\mathrm{sib}(\mathbf{W}, \mathcal{G})\).

\textbf{Colliders, definite-status paths and v-structures.} A non-endpoint node \(V\) is a \textit{collider} on a path \(p\) if both edges adjoining \(V\) on \(p\) have arrowheads at \(V\), i.e.\ \(\rightarrow V\leftarrow\), \(\leftrightarrow V\leftarrow\), \(\rightarrow V\leftrightarrow\), \(\leftrightarrow V\leftrightarrow\). A non-endpoint node \(V\) is a \textit{non-collider} on a path \(p\) if at least one of the edges adjoining \(V\) on \(p\) is out of \(V\), i.e.\ \(\rightarrow V\rightarrow\), \(-V\rightarrow\), \(\leftrightarrow V\rightarrow\), \(\leftarrow V\rightarrow\), \(\leftarrow V\leftrightarrow\), \(\leftarrow V-\), \(\leftarrow V\leftarrow\), or if both edges adjoining \(V\) on \(p\) are undirected edges and the two nodes adjacent to \(V\) on \(p\) are not adjacent to each other. A \textit{definite-status path} is a path on which every non-endpoint is either a collider or a non-collider. In a DAG or an ADMG, all paths are of definite status. Three nodes \(A\), \(B\) and \(C\) form a \textit{v-structure} in a graph \(\mathcal{G}\) if \(A\rightarrow B\leftarrow C\) is the induced subgraph \(\mathcal{G}'\) on \( \{A,  B, C\} \).

\textbf{ADMGs, DAGs and PDAGs.} A directed path from \(A\) to \(B\), together with an edge \(A\leftarrow B\) forms a \textit{directed cycle}. A graph with only directed and bi-directed edges and without directed cycles is called an \textit{acyclic directed mixed graph} (ADMG). A simple graph with only directed edges and without directed cycles is called a \textit{directed acyclic graph} (DAG). A simple graph with only directed and undirected edges containing no directed cycles is called a \textit{partially directed acyclic graph} (PDAG).

\textbf{Blocking and separation.} \citep{Richardson2003,MaathuisColombo2015,Pearl2009} A definite-status path \(p\) in an ADMG or PDAG \(\mathcal{G}\) is \textit{blocked} by a node set \(\mathbf{C}\) if (i) \(p\) contains a non-collider in \(\mathbf{C}\) or (ii) \(p\) contains a collider that is not in \(\mathrm{an}(\mathbf{C},\mathcal{G})\). Otherwise the path \(p\) is \textit{open} given \(\mathbf{C}\). Node sets \(\mathbf{A}\) and \(\mathbf{B}\) are said to be \textit{m-separated} given a set \(\mathbf{C}\) if every path between an \(A\in\mathbf{A}\) and a \(B\in\mathbf{B}\) is blocked by \(\mathbf{C}\). We then write \(\mathbf{A}\perp_\mathcal{G}\mathbf{B}\mid\mathbf{C}\). In DAGs, m-separation is called d-separation.

\textbf{Markov equivalence and CPDAGs.} \citep{AnderssonMadiganPerlman1997} The \textit{(Markov) equivalence class} of a DAG \(\mathcal{D}\) is the set of DAGs that imply the same d-separation relationships as \(\mathcal{D}\). These are all DAGs with the same adjacencies and v-structures \cite{VermaPearl1990}. Markov equivalence classes can be represented as \textit{completed partially directed acyclic graphs} (CPDAGs), which are simple graphs with directed or undirected edges, without directed cycles and with certain restrictions regarding the patterns of edges that can occur. The equivalence class represented by a CPDAG \(\mathcal{G}\) is denoted by \([\mathcal{G}]\). A directed edge \(A\rightarrow B\) in \(\mathcal{G}\) means that this edge is present in all DAGs in the equivalence class \([\mathcal{G}]\). An undirected edge \(A-B\) in \(\mathcal{G}\) means that \(A\) and \(B\) are adjacent in every DAG in \([\mathcal{G}]\) and there is at least one DAG in \([\mathcal{G}]\) with \(A\rightarrow B\) and at least one with \(A\leftarrow B\). 

\textbf{Meek's rules and maxPDAGs.} \citep{PerkovicKalischMaathuis2017} Certain subsets of equivalence classes of DAGs can be represented by maximally oriented PDAGs (maxPDAGs), which are PDAGs with edge orientations that are closed under the orientation rules in Figure~\ref{fig:Meek} (\textit{Meek's rules}, \cite{Meek1995}). The set of DAGs represented by a maxPDAG \(\mathcal{G}\) is denoted by \([\mathcal{G}]\). The edges in maxPDAGs have the same interpretation as in CPDAGs. DAGs and CPDAGs are special cases of maxPDAGs.

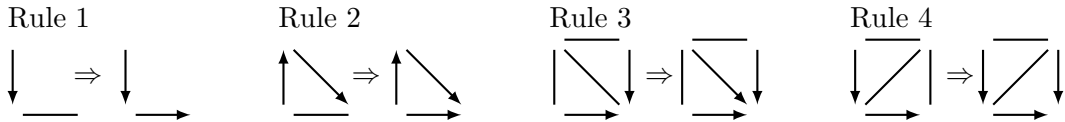
\begin{figure}[h]
	\begin{center}
	\begin{tikzpicture}[node distance=10mm, >=latex]
	\node[] (A1) {};
	\node[above of=A1, xshift=-2mm, yshift=-7mm, anchor=west] (R1) {Rule 1};
	\node[below of=A1] (C1) {};
	\node[right of=C1] (D1) {};
	\node[right of=A1, yshift=-5mm] (Pfeil1) {\(\Rightarrow\)};
	\node[right of=D1, xshift=-5mm] (C1*) {};
	\node[above of=C1*] (A1*) {};
	\node[right of=C1*] (D1*) {};
	\draw[->, thick] (A1) to (C1);
	\draw[-, thick] (C1) to (D1);
	\draw[->, thick] (A1*) to (C1*);
	\draw[->, thick] (C1*) to (D1*);
	
	\node[right of=A1, xshift=26mm] (A2) {};
	\node[above of=A2, xshift=-2mm, yshift=-7mm, anchor=west] (R2) {Rule 2};
	\node[below of=A2] (C2) {};
	\node[right of=C2] (D2) {};
	\node[right of=A2, xshift=1mm, yshift=-5mm] (Pfeil2) {\(\Rightarrow\)};
	\node[right of=D2, xshift=-5mm] (C2*) {};
	\node[above of=C2*] (A2*) {};
	\node[right of=C2*] (D2*) {};
	\draw[->, thick] (C2) to (A2);
	\draw[->, thick] (A2) to (D2);
	\draw[-, thick] (C2) to (D2);
	\draw[->, thick] (C2*) to (A2*);
	\draw[->, thick] (A2*) to (D2*);
	\draw[->, thick] (C2*) to (D2*);
	
	\node[right of=A2, xshift=26mm] (A3) {};
	\node[above of=A3, xshift=-2mm, yshift=-7mm, anchor=west] (R3) {Rule 3};
	\node[right of=A3] (B3) {};
	\node[below of=A3] (C3) {};
	\node[right of=C3] (D3) {};
	\node[right of=A3, xshift=4mm, yshift=-5mm] (Pfeil3) {\(\Rightarrow\)};
	\node[right of=D3, xshift=-3mm] (C3*) {};
	\node[above of=C3*] (A3*) {};
	\node[right of=A3*] (B3*) {};
	\node[right of=C3*] (D3*) {};
	\draw[-, thick] (A3) to (B3);
	\draw[-, thick] (A3) to (C3);
	\draw[-, thick] (A3) to (D3);
	\draw[->, thick] (B3) to (D3);
	\draw[->, thick] (C3) to (D3);
	\draw[-, thick] (A3*) to (B3*);
	\draw[-, thick] (A3*) to (C3*);
	\draw[->, thick] (A3*) to (D3*);
	\draw[->, thick] (B3*) to (D3*);
	\draw[->, thick] (C3*) to (D3*);
	
	\node[right of=A3, xshift=30mm] (A4) {};
	\node[above of=A4, xshift=-2mm, yshift=-7mm, anchor=west] (R4) {Rule 4};
	\node[right of=A4] (B4) {};
	\node[below of=A4] (C4) {};
	\node[right of=C4] (D4) {};
	\node[right of=A4, xshift=4mm, yshift=-5mm] (Pfeil4) {\(\Rightarrow\)};
	\node[right of=D4, xshift=-3mm] (C4*) {};
	\node[above of=C4*] (A4*) {};
	\node[right of=A4*] (B4*) {};
	\node[right of=C4*] (D4*) {};
	\draw[-, thick] (A4) to (B4);
	\draw[->, thick] (A4) to (C4);
	\draw[-, thick] (B4) to (C4);
	\draw[-, thick] (B4) to (D4);
	\draw[->, thick] (C4) to (D4);
	\draw[-, thick] (A4*) to (B4*);
	\draw[->, thick] (A4*) to (C4*);
	\draw[-, thick] (B4*) to (C4*);
	\draw[->, thick] (B4*) to (D4*);
	\draw[->, thick] (C4*) to (D4*);
	
	\end{tikzpicture}
	\end{center}
	\caption{Meek's orientation rules. Let \(\mathcal{G}\) be a simple graph with only directed and undirected edges and without directed cycles. If the graph on the left is an induced subgraph of \(\mathcal{G}\), then orient the undirected edges in \(\mathcal{G}\) according to the graph on the right \citep{Meek1995}. The rules prevent directed cycles and new v-structures.}
	\label{fig:Meek}
\end{figure}

\textbf{Partial topological ordering.} Let \(\mathcal{D}\) be a DAG with node set \(\mathbf{V}\) and let \(\mathbf{V}_1,\dots,\mathbf{V}_p\) be a partition of \(\mathbf{V}\). Then \(\mathbf{V}_1<\dots <\mathbf{V}_p\) is a \textit{partial topological ordering} of \(\mathbf{V}\) if for every \(i>j\), there are no directed edges from \(\mathbf{V}_i\) to \(\mathbf{V}_j\).

\textbf{Independence and faithfulness.} For sets of random variables \(\mathbf{X}\), \(\mathbf{Y}\) and \(\mathbf{Z}\), if \(\mathbf{X}\) and \(\mathbf{Y}\) are conditionally independent given \(\mathbf{Z}\), we write \(\mathbf{X}\ind\mathbf{Y}\mid\mathbf{Z}\). A joint density \(f(\mathbf{v})\) over a set of random variables \(\mathbf{V}\) 
is \textit{Markov} with respect to a DAG \(\mathcal{D}\) with node set \(\mathbf{V}\) if for disjoint \(\mathbf{X},\mathbf{Y},\mathbf{Z}\subseteq\mathbf{V}\), \(\mathbf{X}\perp_\mathcal{D}\mathbf{Y}\mid\mathbf{Z} \Rightarrow\mathbf{X}\ind\mathbf{Y}\mid\mathbf{Z}\); the density \(f(\mathbf{v})\)  is \textit{faithful} to \(\mathcal{D}\) if also \(\mathbf{X}\ind\mathbf{Y}\mid\mathbf{Z}\Rightarrow\mathbf{X}\perp_\mathcal{D}\mathbf{Y}\mid\mathbf{Z}\).

\textbf{Causal DAGs, CPDAGs, maxPDAGs and ADMGs.} Intuitively, a \textit{causal DAG} is a DAG where  an edge \(A\rightarrow B\) means that \(A\) is a direct cause of \(B\) (relative to the variables included). This can be formalised using the intervention operator, denoted by \(do(\cdot)\) in \cite{Pearl2009}. For random variables \(\mathbf{V}\) and \(\mathbf{X}\subseteq\mathbf{V}\), the \textit{post-intervention density} \(f(\mathbf{v}\mid do(\mathbf{x}'))\) is the joint density of \(\mathbf{V}\) in a (hypothetical) experiment that fixes \(\mathbf{X}\) to \(\mathbf{x}'\) for everyone in the population by an external intervention. A joint density \(f(\mathbf{v})\) is \textit{compatible with a causal DAG} \(\mathcal{D}=(\mathbf{V},\mathbf{E})\) if for all \(\mathbf{X}\subseteq\mathbf{V}\), the post-intervention density  \(f(\mathbf{v}\mid do(\mathbf{x}'))\) can be written as 
\[
f(\mathbf{v}\mid do(\mathbf{x}'))=\mathbf{1}(\mathbf{x}=\mathbf{x}')
\prod_{V\in\mathbf{V}\setminus\mathbf{X}}f(v\mid \mathrm{pa}(V,\mathcal{D})),
\]
where \(\mathbf{1}(\mathbf{x}=\mathbf{x}')\) is the indicator function that is 1 if \(\mathbf{x}=\mathbf{x'}\) and 0 otherwise. This is known as the truncated factorisation formula \citep{Spirtesetal2000, Pearl2009}. A CPDAG or maxPDAG \(\mathcal{G}\) is called a \textit{causal CPDAG} or \textit{ causal maxPDAG} if \([\mathcal{G}]\) contains a causal DAG. A \textit{causal ADMG} is an ADMG that has been obtained by subjecting a causal DAG to a latent projection, see Definition \ref{lproj}.

\textbf{(Possibly) causal nodes and forbidden nodes.} See Section~\ref{sec:review}.

\textbf{Valid adjustment sets and amenability.} Let \(\mathbf{X}\), \(\mathbf{Y}\) and \(\mathbf{Z}\) be disjoint sets of random variables, where \(\mathbf{Z}\) is possibly empty. Then \(\mathbf{Z}\) is a \textit{valid adjustment set} relative to \((\mathbf{X},\mathbf{Y})\) if we have 
\begin{equation}\label{eq:VAS}f(\mathbf{y}\mid do(\mathbf{x}))=\begin{cases}
f(\mathbf{y}\mid\mathbf{x})&\textit{if }\mathbf{Z}=\emptyset,\\
\int_{\mathbf{z}}f(\mathbf{y}\mid\mathbf{x},\mathbf{z})f(\mathbf{z})d\mathbf{z}&\text{otherwise.}
\end{cases}\end{equation}
Relative to a causal DAG, CPDAG, maxPDAG or ADMG \(\mathcal{G}=(\mathbf{V},\mathbf{E})\), a valid adjustment set is defined as follows: Let \(\mathbf{X}\), \(\mathbf{Y}\) and \(\mathbf{Z}\) be disjoint subsets of \(\mathbf{V}\), where \(\mathbf{Z}\) is possibly empty. Then \(\mathbf{Z}\) is a \textit{valid adjustment set} relative to \((\mathbf{X},\mathbf{Y})\) in \(\mathcal{G}\) if equation (\ref{eq:VAS}) holds for every joint density \(f(\mathbf{v})\) compatible with \(\mathcal{G}\) \citep{Perkovicetal2018}. Further, \(\mathcal{G}\) is said to be \textit{amenable} for adjustment relative to \((\mathbf{X},\mathbf{Y})\) if every proper possibly causal path from \(\mathbf{X}\) to \(\mathbf{Y}\) starts with a directed edge out of \(\mathbf{X}\) (\citealp{Perkovicetal2018}).

\textbf{Generalised adjustment criterion.} \citep{PerkovicKalischMaathuis2017,Perkovicetal2018,ShpitserVanderWeeleRobins2010} Let \(\mathbf{X}\), \(\mathbf{Y}\) and \(\mathbf{Z}\) be disjoint node sets in a causal DAG, CPDAG, maxPDAG or ADMG \(\mathcal{G}\). Then \(\mathbf{Z}\) is a valid adjustment set relative to \((\mathbf{X},\mathbf{Y})\) in \(\mathcal{G}\) if and only if the following three conditions hold:
\begin{enumerate}
	\item[(a)] \(\mathcal{G}\) is amenable relative to \((\mathbf{X},\mathbf{Y})\),
	\item[(b)] \(\mathbf{Z}\cap\mathrm{forb}(\mathbf{X},\mathbf{Y},\mathcal{G})=\emptyset\),
	\item[(c)] all proper non-causal definite-status paths from \(\mathbf{X}\) to \(\mathbf{Y}\) are blocked by \(\mathbf{Z}\).
\end{enumerate}

\textbf{Causal linear model.} Let \(\mathcal{D}\) be a causal DAG with node set \(\mathbf{V}=(V_1,\cdots,V_p)\). Then \(\mathbf{V}\) is said to follow a \textit{causal linear model} compatible with \(\mathcal{D}\) if the distribution of each \(V_i\in\mathbf{V}\) can be described by an equation of the form\[V_i=\sum_{V_j\in\mathrm{pa}(V_i,\mathcal{D})}\alpha_{ij}V_j+\epsilon_{v_i}\]
with \(\alpha_{ij}\in\mathbb{R}\) and \(\epsilon_{v_i}\) a random variable with mean 0 and finite variance such that \(\epsilon_{v_1},\dots,\epsilon_{v_p}\) are jointly independent. For a causal CPDAG or maxPDAG \(\mathcal{G}\), \(\mathbf{V}\) is said to follow a causal linear model compatible with \(\mathcal{G}\) if \(\mathbf{V}\) follows a causal linear model compatible with a DAG in \([\mathcal{G}]\).

\textbf{Partial variance notation.} Consider a random variable \(S\) and a random vector \(\mathbf{T}\). We denote the covariance matrix of \(\mathbf{T}\) by \(\Sigma_{\mathbf{tt}}\) and the row vector of covariances between \(S\) and \(\mathbf{T}\) by \(\Sigma_{s\mathbf{t}}\). The partial variance of \(S\) given \(\mathbf{T}\) is defined as \(\sigma_{ss.\mathbf{t}}=\mathrm{Var}(S)-\Sigma_{s\mathbf{t}}\Sigma_{\mathbf{tt}}^{-1}\Sigma_{s\mathbf{t}}^{-1}\).

\textbf{Asymptotic variance.} Consider a sequence of estimators \((\hat{\beta}_n)_{n\in\mathbb{N}}\) such that \(\sqrt{n}(\hat{\beta}_n-\beta)\) converges in distribution to \(\mathcal{N}(0,v)\). We call \(v\) the asymptotic variance of \(\hat{\beta}\) and write \(a.var(\hat{\beta})=v\).

\section[Appendix B: Proofs for Section 3]{Proofs for Section 3}
\label{appB}
In this appendix, we prove our claims about the forbidden projection made in Section~\ref{sec:new_def}.

\setcounter{theorem}{5}
\begin{proposition}
	Let \(\mathbf{X}\) and \(\mathbf{Y}\) be disjoint node sets in a causal DAG \(\mathcal{D}\) such that \(\mathbf{Y}\subseteq\mathrm{de}(\mathbf{X},\mathcal{D})\). Then a valid adjustment set relative to \((\mathbf{X}\),\(\mathbf{Y})\) in \(\mathcal{D}\) exists if and only if there is no bi-directed edge between any \(X\in\mathbf{X}\) and \(Y\in\mathbf{Y}\) in \(\mathcal{D}^{\mathbf{XY}}\).
\end{proposition}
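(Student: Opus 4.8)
My plan is to prove both implications by contraposition, translating ``a valid adjustment set exists'' into a blocking statement about proper non-causal paths via the generalised adjustment criterion (conditions (a)--(c)), and reading off bi-directed edges of \(\mathcal{D}^{\mathbf{XY}}\) through item~2 of Definition~\ref{lproj}. I will repeatedly use that amenability (condition (a)) holds automatically in a DAG, that condition (b) forces any candidate \(\mathbf{Z}\) to be disjoint from \(\mathrm{forb}(\mathbf{X},\mathbf{Y},\mathcal{D})\), and the two structural facts that \(\mathbf{X}\cup\mathbf{Y}\subseteq\mathrm{forb}(\mathbf{X},\mathbf{Y},\mathcal{D})\) (since \(\mathbf{Y}\subseteq\mathrm{de}(\mathbf{X},\mathcal{D})\) makes every \(Y\) a causal node, and causal nodes and \(\mathbf{X}\) are forbidden) and that the set projected out is exactly \(\mathbf{L}:=\mathrm{forb}(\mathbf{X},\mathbf{Y},\mathcal{D})\setminus(\mathbf{X}\cup\mathbf{Y})\). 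By Definition~\ref{lproj}(2), a bi-directed edge \(X\leftrightarrow Y\) in \(\mathcal{D}^{\mathbf{XY}}\) is equivalent to a ``confounding'' path \(X\leftarrow\cdots\rightarrow Y\) in \(\mathcal{D}\) whose non-endpoints are all non-colliders lying in \(\mathbf{L}\).

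For the ``only if'' direction I prove the contrapositive: a bi-directed edge \(X\leftrightarrow Y\) precludes every valid adjustment set. The corresponding path \(p=X\leftarrow\cdots\rightarrow Y\) is proper (its internal nodes lie in \(\mathbf{L}\), hence outside \(\mathbf{X}\), and \(Y\notin\mathbf{X}\)) and non-causal (its first edge points into \(X\)). Any \(\mathbf{Z}\) meeting condition (b) is disjoint from \(\mathrm{forb}\supseteq\mathbf{L}\), so it contains none of the internal non-colliders of \(p\); as \(p\) has no collider to be opened, \(p\) remains open given \(\mathbf{Z}\), so condition (c) fails and \(\mathbf{Z}\) is invalid.

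For the ``if'' direction I again argue contrapositively: assuming no valid adjustment set exists, I produce a bi-directed edge. Consider the canonical candidate \(\mathbf{Z}^{*}=\mathrm{an}(\mathbf{X}\cup\mathbf{Y},\mathcal{D})\setminus\mathrm{forb}(\mathbf{X},\mathbf{Y},\mathcal{D})\), i.e.\ all non-forbidden ancestors of \(\mathbf{X}\cup\mathbf{Y}\); it automatically excludes \(\mathbf{X}\cup\mathbf{Y}\) and satisfies (a) and (b). Since it is not valid, condition (c) must fail, yielding a proper non-causal path \(p\) from some \(X\in\mathbf{X}\) to some \(Y\in\mathbf{Y}\) open given \(\mathbf{Z}^{*}\); choose \(p\) of minimal length. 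The crux is to show \(p\) has no collider: an open collider lies in \(\mathrm{an}(\mathbf{Z}^{*})\subseteq\mathrm{an}(\mathbf{X}\cup\mathbf{Y})\), and following a shortest directed path from it to \(\mathbf{X}\cup\mathbf{Y}\) one reroutes \(p\) into a strictly shorter open proper non-causal path, contradicting minimality. A collider-free \(p\) must then have the shape \(X\leftarrow\cdots\leftarrow V_k\rightarrow\cdots\rightarrow Y\) with a single source \(V_k\): it cannot be all-forward, being non-causal, and it cannot be all-backward, since that would make \(Y\) an ancestor of \(X\) (impossible by acyclicity when \(\mathbf{X}\) is a singleton, and excluded in general as the degenerate case of a treatment lying downstream of an outcome).

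It remains to locate every internal node in \(\mathbf{L}\). The ``up'' nodes \(V_1,\dots,V_k\) are ancestors of \(X\), hence of \(\mathbf{X}\cup\mathbf{Y}\); being open non-colliders they avoid \(\mathbf{Z}^{*}\), so they lie in \(\mathrm{forb}\), and none lies in \(\mathbf{Y}\) because an internal \(V_i\in\mathbf{Y}\) would make \(X\leftarrow\cdots\leftarrow V_i\) a strictly shorter open proper non-causal path, contradicting minimality; thus \(V_1,\dots,V_k\in\mathbf{L}\). In particular \(V_k\in\mathrm{forb}\setminus\mathbf{X}=\mathrm{de}(\mathrm{cn}(\mathbf{X},\mathbf{Y},\mathcal{D}),\mathcal{D})\), so every ``down'' node \(V_{k+1},\dots,V_{m-1}\), as a descendant of \(V_k\), also lies in \(\mathrm{de}(\mathrm{cn})\subseteq\mathrm{forb}\); the same minimality argument keeps them out of \(\mathbf{Y}\), placing them in \(\mathbf{L}\). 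Hence \(p=X\leftarrow\cdots\rightarrow Y\) has all non-endpoints non-colliders in \(\mathbf{L}\), which by Definition~\ref{lproj}(2) is exactly a bi-directed edge \(X\leftrightarrow Y\) in \(\mathcal{D}^{\mathbf{XY}}\). The main obstacle is the collider-elimination step: it is a standard but delicate d-separation rerouting that must simultaneously preserve properness and non-causality while strictly shortening the path, and the accompanying care needed to exclude the degenerate all-backward configuration at the endpoint \(Y\).
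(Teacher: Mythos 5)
Your ``only if'' direction is correct and coincides with the paper's: a bi-directed edge \(X\leftrightarrow Y\) in \(\mathcal{D}^{\mathbf{XY}}\) witnesses a proper non-causal path whose internal nodes are all non-colliders in \(\mathrm{forb}(\mathbf{X},\mathbf{Y},\mathcal{D})\), and no set satisfying condition (b) can block it. The ``if'' direction, however, has genuine gaps. The paper does not prove this direction from scratch: it invokes Corollary~27 of \cite{Perkovicetal2018} (Lemma~\ref{EmaCorollary}) to conclude that non-existence of a valid adjustment set forces \(\mathbf{X}\cap\mathrm{de}(\mathrm{cn}(\mathbf{X},\mathbf{Y},\mathcal{D}),\mathcal{D})\neq\emptyset\), and then reads the bi-directed edge off the resulting path \(X^*\leftarrow\dots\leftarrow C^*\rightarrow\dots\rightarrow Y^*\). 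You instead attempt to reprove that combinatorial fact inline, and the collider-elimination step---which you yourself flag as the crux---does not go through as stated. If \(C\) is an open collider on a shortest open proper non-causal path \(p\) and \(q\) is a shortest directed path from \(C\) to \(\mathbf{X}\cup\mathbf{Y}\), the rerouted path need not be strictly shorter than \(p\) (\(|q|\) can exceed the length of the discarded segment of \(p\)), and, more seriously, it need not be open given \(\mathbf{Z}^*\): the internal nodes of \(q\) are ancestors of \(\mathbf{X}\cup\mathbf{Y}\), so whenever they are not forbidden they lie in \(\mathbf{Z}^*\) and appear as \emph{non-colliders} on the rerouted path, blocking it. The correct argument (see the paper's proof of Lemma~\ref{generalisedEmaCorollary}) needs no rerouting: one shows that an open collider \(C\) would have a non-endpoint on-path parent \(B\), and \(B\), being a non-forbidden ancestor of \(\mathbf{X}\cup\mathbf{Y}\), would lie in \(\mathbf{Z}^*\) and block \(p\) as a non-collider---contradiction.

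A second gap is your dismissal of the all-backward configuration \(X\leftarrow\dots\leftarrow Y\). Nothing in the hypotheses excludes \(Y\in\mathrm{an}(X)\) when \(\mathbf{X}\) has several elements (one can have \(X'\rightarrow\dots\rightarrow Y\rightarrow\dots\rightarrow X\) with \(X\neq X'\) in a DAG); indeed this configuration is precisely a witness of \(\mathbf{X}\cap\mathrm{de}(\mathrm{cn}(\mathbf{X},\mathbf{Y},\mathcal{D}),\mathcal{D})\neq\emptyset\), i.e.\ exactly the situation the backward direction must handle. Rather than being ``excluded as degenerate,'' it has to be converted into the required confounding path \(X\leftarrow\dots\leftarrow C^*\rightarrow\dots\rightarrow Y^*\) with forbidden non-collider interior (which is what the paper's proof does). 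So the overall strategy can be repaired, but as written both the elimination of colliders and the treatment of the all-backward case are unproven, and these are the only non-trivial parts of this direction.
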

\begin{proof}
	We show that a valid adjustment set relative to \((\mathbf{X}\),\(\mathbf{Y})\) in \(\mathcal{D}\) \textit{cannot} exist if and only if there is a bi-directed edge between a \(X\in\mathbf{X}\) and a \(Y\in\mathbf{Y}\) in the forbidden projection \(\mathcal{D}^{\mathbf{XY}}\).
	
	Assume first that there is a bi-directed edge in \(\mathcal{D}^{\mathbf{XY}}\) between some \(X\in\mathbf{X}\) and a \(Y\in\mathbf{Y}\). Then according to Definition \ref{lproj} of the latent projection there is a path in \(\mathcal{D}\) between \(X\) and \(Y\) on which all nodes are non-colliders and contained in the forbidden set. This constitutes a non-causal path that cannot be blocked by any sets of nodes that are not forbidden. Hence no valid adjustment set relative to (\(\mathbf{X}\),\(\mathbf{Y}\)) and \(\mathcal{D}\) exists.
	
	Assume now that there is no valid adjustment set relative to (\(\mathbf{X}\),\(\mathbf{Y}\)) in \(\mathcal{D}\). Then Lemma \ref{EmaCorollary} implies \(\mathbf{X}\cap\mathrm{de}(\mathrm{cn}(\mathbf{X},\mathbf{Y},\mathcal{D}),\mathcal{D})\ne\emptyset\). Let \(X^*\in \mathbf{X}\cap\mathrm{de}(\mathrm{cn}(\mathbf{X},\mathbf{Y},\mathcal{D}),\mathcal{D})\). Then there must exist a node \(C^*\in\mathrm{cn}(\mathbf{X},\mathbf{Y},\mathcal{D})\) and a node \(Y^*\in\mathbf{Y}\) such that there is a path of the form \( X^*\leftarrow\dots\leftarrow C^*\rightarrow\dots\rightarrow Y^*\) where	all non-endpoints are non-colliders on the path and in the forbidden set. It follows from Definition \ref{lproj} of the latent projection that \(\mathcal{D}^{\mathbf{XY}}\) contains a bi-directed edge \(X^*\leftrightarrow Y^*\).
\end{proof}

\setcounter{theorem}{14}
\begin{lemma}[Corollary 27 in \citealp{Perkovicetal2018}]
	\label{EmaCorollary}
	Let \(\mathbf{X}\) and \(\mathbf{Y}\) be disjoint node sets in a causal DAG \(\mathcal{D}\) such that \(\mathbf{Y}\subseteq\mathrm{de}(\mathbf{X},\mathcal{D})\). Then a valid adjustment set relative to \((\mathbf{X}\),\(\mathbf{Y})\) in \(\mathcal{D}\) exists if and only if \(\mathbf{X}\cap\mathrm{de}(\mathrm{cn}(\mathbf{X},\mathbf{Y},\mathcal{D}),\mathcal{D})=\emptyset\).
\end{lemma}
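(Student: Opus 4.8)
The plan is to reduce the statement to the generalised adjustment criterion and then prove the two directions by, respectively, exhibiting an explicit valid adjustment set and constructing an explicit path that no candidate set can block. Throughout I write \(\mathrm{cn}=\mathrm{cn}(\mathbf{X},\mathbf{Y},\mathcal{D})\) and \(\mathrm{forb}=\mathrm{forb}(\mathbf{X},\mathbf{Y},\mathcal{D})=\mathrm{de}(\mathrm{cn},\mathcal{D})\cup\mathbf{X}\). Since \(\mathcal{D}\) is a DAG, every proper causal path from \(\mathbf{X}\) to \(\mathbf{Y}\) starts with an edge out of \(\mathbf{X}\), so amenability (condition (a)) holds automatically; moreover \(\mathbf{Y}\subseteq\mathrm{de}(\mathbf{X},\mathcal{D})\) forces \(\mathbf{Y}\subseteq\mathrm{cn}\subseteq\mathrm{forb}\), and all paths are of definite status. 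Hence, by the generalised adjustment criterion, a valid adjustment set exists if and only if some \(\mathbf{Z}\subseteq\mathbf{V}\setminus\mathrm{forb}\) blocks every proper non-causal path from \(\mathbf{X}\) to \(\mathbf{Y}\).

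For the direction ``\(\mathbf{X}\cap\mathrm{de}(\mathrm{cn})=\emptyset\Rightarrow\) existence'', I would exhibit the canonical candidate \(\mathbf{Z}_0=\mathrm{an}(\mathbf{X}\cup\mathbf{Y},\mathcal{D})\setminus\mathrm{forb}\). Conditions (a) and (b) hold by the remarks above and by construction, so the crux is condition (c). I would argue by contradiction: let \(p\) be a shortest proper non-causal path from some \(X_0\in\mathbf{X}\) to some \(Y\in\mathbf{Y}\) that is open given \(\mathbf{Z}_0\). Reading \(p\) from \(X_0\) to \(Y\), let \(V_i\) (\(i\ge 1\)) be the first node from which \(p\) descends directly to \(Y\); since \(p\) is non-causal, \(V_i\neq X_0\), and minimality of \(i\) forces \(V_i\to V_{i-1}\), so \(V_i\) is a fork (non-collider) with \(V_i\in\mathrm{an}(Y)\subseteq\mathrm{an}(\mathbf{X}\cup\mathbf{Y})\). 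As \(p\) is open, \(V_i\notin\mathbf{Z}_0\), hence \(V_i\in\mathrm{forb}\); properness excludes \(V_i\in\mathbf{X}\), leaving \(V_i\in\mathrm{de}(\mathrm{cn})\). The plan is then to exploit that every node of \(p\) between \(V_i\) and \(Y\) is a descendant of \(V_i\) and hence forbidden, and to walk toward \(X_0\): each further non-collider is again forbidden, whereas each collider must lie in \(\mathrm{an}(\mathbf{Z}_0)\subseteq\mathrm{an}(\mathbf{X}\cup\mathbf{Y})\). Tracing the resulting ancestry should either exhibit a node of \(\mathbf{X}\) in \(\mathrm{de}(\mathrm{cn})\), contradicting the hypothesis, or, via a collider that is an ancestor of \(\mathbf{Y}\), produce a strictly shorter open proper non-causal path, contradicting minimality of \(p\) (the symmetric situation in which \(p\) leaves \(Y\) along an outgoing edge being handled analogously).

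For the converse I would prove the contrapositive. Assuming \(\mathbf{X}\cap\mathrm{de}(\mathrm{cn})\neq\emptyset\), pick \(X^{\ast}\in\mathbf{X}\cap\mathrm{de}(\mathrm{cn})\) together with a causal node \(C^{\ast}\) on a proper causal path \(X'\to\dots\to C^{\ast}\to\dots\to Y^{\ast}\) (which exists since \(\mathbf{Y}\subseteq\mathrm{de}(\mathbf{X})\)) and a causal path \(C^{\ast}\to\dots\to X^{\ast}\). Concatenating the reverse of the latter with the \(C^{\ast}\)-to-\(Y^{\ast}\) segment yields a path \(X^{\ast}\leftarrow\dots\leftarrow C^{\ast}\to\dots\to Y^{\ast}\) whose non-endpoints are all proper descendants of \(C^{\ast}\), hence forbidden, and all non-colliders (the only potential fork, \(C^{\ast}\), has both incident edges pointing outward). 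This path is non-causal from \(X^{\ast}\) to \(Y^{\ast}\); choosing the two causal paths to be shortest and truncating at the last \(\mathbf{X}\)-node makes it a simple, proper, definite-status path. Since every non-endpoint is a forbidden non-collider, no set disjoint from \(\mathrm{forb}\) can block it, so condition (c) fails for every candidate and no valid adjustment set exists.

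I expect the main obstacle to be completing condition (c) in the first direction: proving that \(\mathbf{Z}_0\) blocks \emph{every} proper non-causal path requires a careful, likely inductive, case analysis of colliders lying in \(\mathrm{de}(\mathrm{cn})\cap\mathrm{an}(\mathbf{Y})\), where no contradiction is immediate and one must reroute to a shorter open path. The path construction in the converse is comparatively routine but still needs the standard bookkeeping to guarantee a simple, proper path; both steps rely only on the descendant, ancestor and forbidden-node machinery already set up in the paper.
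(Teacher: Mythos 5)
Your second direction (the contrapositive: \(\mathbf{X}\cap\mathrm{de}(\mathrm{cn}(\mathbf{X},\mathbf{Y},\mathcal{D}),\mathcal{D})\neq\emptyset\) implies no valid adjustment set) is sound and is essentially the argument the paper uses in its generalisation (second half of the proof of Lemma~\ref{generalisedEmaCorollary}): the path \(X^*\leftarrow\dots\leftarrow C^*\rightarrow\dots\rightarrow Y^*\) has only forbidden non-colliders as non-endpoints, so no set satisfying condition (b) of the adjustment criterion can block it. The bookkeeping you flag (simplicity, properness) is routine and fixable.

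The first direction, however, has a genuine gap. You correctly identify the canonical candidate \(\mathbf{Z}_0=\mathrm{an}(\mathbf{X}\cup\mathbf{Y},\mathcal{D})\setminus\mathrm{forb}(\mathbf{X},\mathbf{Y},\mathcal{D})\) (the ``Adjust'' set), but the argument that it blocks every proper non-causal path is only a sketch, and the sketch does not close as stated. Two concrete problems: (i) your claim that, walking from \(V_i\) toward \(X_0\), ``each further non-collider is again forbidden'' does not follow from openness --- a non-collider on an open path merely fails to lie in \(\mathbf{Z}_0\), which can happen because it is not an ancestor of \(\mathbf{X}\cup\mathbf{Y}\) at all, not because it is forbidden; (ii) the promised ``strictly shorter open proper non-causal path'' in the collider case is never constructed, and rerouting through a collider's descendant generally produces a walk, not a path, so minimality of \(p\) cannot be invoked without further work. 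The paper avoids this entirely: it cites the completeness of the Adjust set (Lemma~\ref{adjust}, i.e.\ Theorem~5.6 of \citealp{PerkovicKalischMaathuis2017}) to reduce existence of \emph{any} valid set to this one set working, and then runs a short collider-elimination argument on a hypothetical open path \(p\): a collider \(C\) on \(p\) open given \(\mathbf{Z}_0\) must have a descendant in \(\mathbf{Z}_0\), which forces \(\mathrm{an}(C,\mathcal{D})\cap\mathrm{de}(\mathrm{cn},\mathcal{D})=\emptyset\); its path-neighbours are then non-forbidden, non-endpoint non-colliders lying in \(\mathrm{an}(\mathbf{X}\cup\mathbf{Y},\mathcal{D})\), hence in \(\mathbf{Z}_0\), blocking \(p\) --- a contradiction. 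Thus \(p\) is collider-free, hence of the form \(X\leftarrow\dots\leftarrow A\rightarrow\dots\rightarrow Y\) with \(A\in\mathrm{de}(\mathrm{cn},\mathcal{D})\), giving \(X\in\mathbf{X}\cap\mathrm{de}(\mathrm{cn},\mathcal{D})\). This collider-elimination step is exactly the missing idea in your proposal; without it (or without citing the completeness of the Adjust set as a known result), the forward direction is not established.
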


\setcounter{theorem}{6}
\begin{proposition}
	Let \(\mathbf{X}\) and \(\{Y\}\) be disjoint node sets in a causal DAG \(\mathcal{D}\) such that \(Y\in\mathrm{de}(\mathbf{X},\mathcal{D})\). Then \(\mathcal{D}^{\mathbf{X}Y}\) is a causal DAG if and only if there exists a valid adjustment set relative to \((\mathbf{X},Y)\) in \(\mathcal{D}\).
\end{proposition}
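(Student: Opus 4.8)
The plan is to reduce the claim to a statement purely about bi-directed edges and then to combine Proposition~\ref{nobiDAG} with one short structural observation that is special to the case $\mathbf{Y}=\{Y\}$. Since $\mathcal{D}^{\mathbf{X}Y}$ is the latent projection of the causal DAG $\mathcal{D}$, it is by construction a causal ADMG and in particular acyclic. Hence $\mathcal{D}^{\mathbf{X}Y}$ is a causal DAG if and only if it contains \emph{no} bi-directed edge at all, since a causal ADMG with only directed edges is a causal DAG. So it suffices to show that $\mathcal{D}^{\mathbf{X}Y}$ has no bi-directed edge if and only if a valid adjustment set relative to $(\mathbf{X},Y)$ exists in $\mathcal{D}$.

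The forward direction is then immediate: if $\mathcal{D}^{\mathbf{X}Y}$ is a causal DAG it has no bi-directed edges, so in particular there is no bi-directed edge between any $X\in\mathbf{X}$ and $Y$, and Proposition~\ref{nobiDAG} (applicable because $Y\in\mathrm{de}(\mathbf{X},\mathcal{D})$) yields a valid adjustment set. For the converse I would argue by contradiction, so first I would set up notation. Write $\mathbf{W}=(\mathbf{V}\setminus\mathrm{forb}(\mathbf{X},Y,\mathcal{D}))\cup\mathbf{X}\cup\{Y\}$ for the retained nodes and $\mathbf{L}=\mathbf{V}\setminus\mathbf{W}$ for the projected-out nodes. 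Using $\mathrm{forb}(\mathbf{X},Y,\mathcal{D})=\mathrm{de}(\mathrm{cn}(\mathbf{X},Y,\mathcal{D}),\mathcal{D})\cup\mathbf{X}$, I would record the key fact that $\mathbf{L}=\mathrm{de}(\mathrm{cn}(\mathbf{X},Y,\mathcal{D}),\mathcal{D})\setminus(\mathbf{X}\cup\{Y\})$, so every projected-out node is a descendant of a causal node.

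Now suppose $\mathcal{D}^{\mathbf{X}Y}$ contained a bi-directed edge $A\leftrightarrow B$. By Definition~\ref{lproj} this arises from a path in $\mathcal{D}$ of the form $A\leftarrow\dots\rightarrow B$ all of whose internal nodes are non-colliders in $\mathbf{L}$; since in a DAG such a path cannot be a single monotone directed path (both endpoints carry arrowheads), it has a unique source and splits as $A\leftarrow\dots\leftarrow C\rightarrow\dots\rightarrow B$. The peak $C$ is an internal node, hence $C\in\mathbf{L}\subseteq\mathrm{de}(\mathrm{cn}(\mathbf{X},Y,\mathcal{D}),\mathcal{D})$, and therefore both $A$ and $B$, being descendants of $C$, also lie in $\mathrm{de}(\mathrm{cn}(\mathbf{X},Y,\mathcal{D}),\mathcal{D})$ and thus in $\mathrm{forb}(\mathbf{X},Y,\mathcal{D})$. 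As endpoints of an edge of $\mathcal{D}^{\mathbf{X}Y}$ they belong to $\mathbf{W}$, and a forbidden node lying in $\mathbf{W}$ must be in $\mathbf{X}\cup\{Y\}$. Invoking Lemma~\ref{EmaCorollary}, the existence of a valid adjustment set is equivalent to $\mathbf{X}\cap\mathrm{de}(\mathrm{cn}(\mathbf{X},Y,\mathcal{D}),\mathcal{D})=\emptyset$; since $A,B\in\mathrm{de}(\mathrm{cn}(\mathbf{X},Y,\mathcal{D}),\mathcal{D})$ this forces $A,B\notin\mathbf{X}$, hence $A=B=Y$, contradicting $A\neq B$. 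Thus no bi-directed edge exists and $\mathcal{D}^{\mathbf{X}Y}$ is a causal DAG.

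The main obstacle to anticipate is that Proposition~\ref{nobiDAG} only controls bi-directed edges \emph{between} $\mathbf{X}$ and $Y$, whereas being a DAG requires the absence of \emph{all} bi-directed edges, including those between two retained covariates or between a covariate and $Y$. The structural observation that both endpoints of any bi-directed edge must be descendants of a causal node (because the path's peak lies in $\mathbf{L}$) is exactly what bridges this gap, and it is the single-outcome assumption that makes it decisive: once $\mathbf{X}\cap\mathrm{de}(\mathrm{cn}(\mathbf{X},Y,\mathcal{D}),\mathcal{D})=\emptyset$, the only admissible forbidden endpoint in $\mathbf{W}$ is $Y$ itself, so the two distinct endpoints cannot both be accommodated.
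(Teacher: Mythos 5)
Your reduction of ``causal DAG'' to ``no bi-directed edges'' is where the proof breaks down. You assert that ``a causal ADMG with only directed edges is a causal DAG,'' treating this as essentially definitional. It is not: under the paper's definitions, a causal ADMG is merely a graph \emph{obtained by} latent projection of a causal DAG, whereas being a causal DAG requires that the truncated factorisation formula holds for the marginal distribution of the retained nodes, for \emph{every} intervention on subsets of those nodes, relative to the parent sets of the projection. That semantic statement is the hard part of the proposition, and it is exactly what part~(3) of the paper's proof establishes by an explicit marginalisation of the truncated factorisation over the forbidden nodes, using the partial topological ordering \(\mathbf{A}<\mathbf{C}<Y<\mathbf{M}\), Lemma~\ref{intoY} (only \(Y\) acquires new parents in the projection), and the identity \(f(y\mid\mathbf{a})=f(y\mid\mathrm{pa}(Y,\mathcal{D}^{\mathbf{X}Y}))\). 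Nothing in the paper, and nothing you prove, licenses skipping this step; as written, your argument establishes only that \(\mathcal{D}^{\mathbf{X}Y}\) is a DAG \emph{syntactically}, not that it is a \emph{causal} DAG for the marginal law.

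The rest of your argument is correct and in fact tidy. Your structural observation --- that the source of any path generating a bi-directed edge lies in \(\mathbf{L}=\mathrm{de}(\mathrm{cn}(\mathbf{X},Y,\mathcal{D}),\mathcal{D})\setminus(\mathbf{X}\cup\{Y\})\), so both endpoints are forbidden, hence lie in \(\mathbf{X}\cup\{Y\}\), and Lemma~\ref{EmaCorollary} then forces both to equal \(Y\) --- gives a clean direct proof that a valid adjustment set rules out \emph{all} bi-directed edges, not just those between \(\mathbf{X}\) and \(Y\). This recovers what the paper obtains by combining Proposition~\ref{nobiDAG} with Lemma~\ref{intoY}, and your forward direction matches the paper's. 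To complete the proof you would still need to supply the causal-factorisation argument (or an explicit citation to a result guaranteeing that a latent projection without bi-directed edges inherits the causal Markov property), which is the bulk of the paper's work on this proposition.
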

\begin{proof}
	First assume that \(\mathcal{D}^{\mathbf{X}Y}\) is a causal DAG. Then by Proposition~\ref{nobiDAG}, a valid adjustment set relative to \((\mathbf{X},Y)\) in \(\mathcal{D}\) exists.
	
	Now assume that a valid adjustment set relative to \((\mathbf{X},Y)\) in \(\mathcal{D}\) exists. We show that (1) \(\mathcal{D}^{\mathbf{X}Y}\) is a DAG syntactically, i.e.\ a directed graph without cycles, (2) semantically, applying the \textit{d}-separation criterion to sets of nodes in \(\mathcal{D}^{\mathbf{X}Y}\) yields the same separations as applying the \textit{d}-separation criterion to the same node sets in \(\mathcal{D}\), and (3) \(\mathcal{D}^{\mathbf{X}Y}\) is a causal DAG for \(\left(\mathbf{V}\setminus\mathrm{forb}(\mathbf{X},Y,\mathcal{D})\right)\cup\mathbf{X}\cup\{Y\}\).
	
	\textit{(1)} As we assume that a valid adjustment set relative to \((\mathbf{X},Y)\) in \(\mathcal{D}\) exists, it follows from Proposition \ref{nobiDAG} together with Lemma \ref{intoY} that \(\mathcal{D}^{\mathbf{X}Y}\) does not contain bi-directed edges. Acyclicity of latent projections is guaranteed by property 1 of Definition \ref{lproj} of the latent projection: every directed edge in \(\mathcal{D}(\mathbf{W})\) corresponds to a directed path in \(\mathcal{D}\), hence if \(\mathcal{D}^{\mathbf{X}Y}\) had a directed cycle then so would \(\mathcal{D}\). It follows that \(\mathcal{D}^{\mathbf{X}Y}\) is acyclic.	
	
	\textit{(2)} The m-separations in a latent projection \(\mathcal{D}(\mathbf{W})\) correspond to the d-separations between nodes in \(\mathbf{W}\) in the original DAG \(\mathcal{D}\) \citep{Richardsonetal2017}. In our case, \(\mathcal{D}(\mathbf{W})=\mathcal{D}^{\mathbf{X}Y}\) is itself a DAG syntactically, and for DAGs m-separation and d-separation are equivalent.
	
	\textit{(3)} Since \(\mathcal{D}\) is a causal DAG for the random variables \(\mathbf{V}\), the truncated factorisation derived from \(\mathcal{D}\) holds for all interventions \(do(\mathbf{T}=\mathbf{t}')\) with \(\mathbf{T}\subseteq\mathbf{V}\):
	\begin{equation}
	\label{eq:trunc}
	f(\mathbf{v}\mid do(\mathbf{t}'))=\mathbf{1}(\mathbf{t}=\mathbf{t}')\prod_{V\in\mathbf{V}\setminus\mathbf{T}}f(v\mid\mathrm{pa}(V,\mathcal{D})).
	\end{equation}
	We need to show that the truncated factorisation implied by \(\mathcal{D}^{\mathbf{X}Y}\) holds for the joint marginal distribution of \((\mathbf{V}\setminus\mathrm{forb}(\mathbf{X},Y,\mathcal{D}))\cup\mathbf{X}\cup\{Y\}\). We distinguish two cases. In the first case, \(Y\not\in\mathrm{de}(\mathbf{X}, \mathcal{D})\). This case is trivial, as the forbidden set is then empty  and \(\mathcal{D}=\mathcal{D}^{\mathbf{X}Y}\). For the second case, \(Y\in\mathrm{de}(\mathbf{X}, \mathcal{D})\) we define the following sets: \(\mathbf{A}=\left(\mathbf{V}\setminus\mathrm{forb}(\mathbf{X},Y,\mathcal{D})\right)\cup\mathbf{X}\) is the node set of \(\mathcal{D}^{\mathbf{X}Y}\) without \(Y\), \(\mathbf{C}=\mathrm{cn}(\mathbf{X},Y,\mathcal{D})\setminus(\mathbf{X}\cup\{Y\})\) is the set of forbidden nodes that are ancestors of \(Y\), excluding \(\mathbf{X}\) and \(Y\), and \(\mathbf{M}=\mathrm{forb}(\mathbf{X},Y,\mathcal{D})\setminus(\mathbf{X}\cup\{Y\}\cup\mathbf{C})\) is the forbidden set excluding \(\mathbf{X}\), \(Y\) and \(\mathbf{C}\), so that \(\mathbf{C}\cup\mathbf{M}=\mathrm{forb}(\mathbf{X},Y,\mathcal{D})\setminus(\mathbf{X}\cup\{Y\})\) is the set over which we marginalise when subjecting \(\mathcal{D}\) to the forbidden projection. Then the following partial topological ordering holds: \(\mathbf{A}<\mathbf{C}<Y<\mathbf{M}\). (Note that \(Y\) cannot have descendants in \(\mathbf{X}\), as otherwise no valid adjustment set would exist by Lemma \ref{EmaCorollary}.)
	
	We can now rewrite equation (\ref{eq:trunc}) as
	\begin{multline*}
	f(\mathbf{v}\mid do(\mathbf{t}'))=\mathbf{1}(\mathbf{t}=\mathbf{t}')\prod_{A\in\mathbf{A}\setminus\mathbf{T}}f(a\mid\mathrm{pa}(A,\mathcal{D}))\prod_{C\in\mathbf{C}\setminus\mathbf{T}}f(c\mid\mathrm{pa}(C,\mathcal{D}))\\f(y\mid\mathrm{pa}(Y,\mathcal{D}))^{\mathbf{1}(Y\not\in\mathbf{T})}\prod_{M\in\mathbf{M}\setminus\mathbf{T}}f(m\mid\mathrm{pa}(M,\mathcal{D})).\end{multline*}
	
	Consider now interventions only in nodes \(\mathbf{T}\subseteq\left(\mathbf{V}\setminus\mathrm{forb}(\mathbf{X},Y,\mathcal{D})\right)\cup\mathbf{X}\cup\{Y\}\), then \(\mathbf{C}\setminus\mathbf{T}=\mathbf{C}\) and \(\mathbf{M}\setminus\mathbf{T}=\mathbf{M}\). Upon marginalising the above intervention distribution over \(\mathbf{M}\) the last term in the product vanishes but the remaining terms do not change:
	\begin{multline*}
	f(\mathbf{a},y,\mathbf{c}\mid do(\mathbf{t}'))=\mathbf{1}(\mathbf{t}=\mathbf{t}')\prod_{A\in\mathbf{A}\setminus\mathbf{T}}f(a\mid\mathrm{pa}(A,\mathcal{D}))\prod_{C\in\mathbf{C}}f(c\mid\mathrm{pa}(C,\mathcal{D}))\\f(y\mid\mathrm{pa}(Y,\mathcal{D}))^{\mathbf{1}(Y\not\in\mathbf{T})}.
	\end{multline*}
	Further marginalising over \(\mathbf{C}\), the partial topological order guarantees that the variables in \(\mathbf{A}\) do not have parents in \(\mathbf{C}\).  
	This yields
	\begin{multline*}
	f(\mathbf{a},y\mid do(\mathbf{t}'))=\mathbf{1}(\mathbf{t}=\mathbf{t}')\prod_{A\in\mathbf{A}\setminus\mathbf{T}}f(a\mid\mathrm{pa}(A,\mathcal{D}))\int_{\mathbf{c}}\prod_{C\in\mathbf{C}}f(c\mid\mathrm{pa}(C,\mathcal{D}))\\f(y\mid\mathrm{pa}(Y,\mathcal{D}))^{\mathbf{1}(Y\not\in\mathbf{T})}d\mathbf{c}.
	\end{multline*}
	A variable is conditionally independent of its non-descendants given its parents. All variables in \(\mathbf{A}\cup\mathbf{C}\) are non-descendants of \(Y\), hence \(Y\ind\mathbf{A}\cup\mathbf{C}\mid\mathrm{pa}(Y,\mathcal{D})\) and \(f(y\mid\mathrm{pa}(Y,\mathcal{D}))=f(y\mid\mathrm{pa}(Y,\mathcal{D})\cup\mathbf{a}\cup\mathbf{c})=f(y\mid\mathbf{a}\cup\mathbf{c})\). The second equality holds because the parents of \(Y\), if there are any, form a subset of \(\mathbf{A}\cup\mathbf{C}\). 
	Similarly, all variables in \(\mathbf{A}\) are non-descendants of all variables in \(\mathbf{C}\), hence \(f(c\mid\mathrm{pa}(C,\mathcal{D}))=f(c\mid\mathrm{pa}(C,\mathcal{D})\cup\mathbf{a})\). Further, all parents of variables in \(\mathbf{C}\) are in \(\mathbf{A}\cup\mathbf{C}\), hence \(\prod_{C\in\mathbf{C}}f(c\mid\mathrm{pa}(C,\mathcal{D})\cup\mathbf{a})=f(\mathbf{c}\mid\mathbf{a})\). We obtain
	\begin{align*}
	f(\mathbf{a},y\mid do(\mathbf{t}'))&=\mathbf{1}(\mathbf{t}=\mathbf{t'})\prod_{A\in\mathbf{A}\setminus\mathbf{T}}f(a\mid\mathrm{pa}(A,\mathcal{D}))\int_{\mathbf{c}}f(\mathbf{c}\mid\mathbf{a})f(y\mid\mathbf{a}\cup\mathbf{c})^{\mathbf{1}(Y\not\in\mathbf{T})}d\mathbf{c}\\
	&=\mathbf{1}(\mathbf{t}=\mathbf{t}')\prod_{A\in\mathbf{A}\setminus\mathbf{T}}f(a\mid\mathrm{pa}(A,\mathcal{D}))\int_{\mathbf{c}}f(\mathbf{c},y\mid\mathbf{a})^{\mathbf{1}(Y\not\in\mathbf{T})}f(\mathbf{c}\mid\mathbf{a})^{\mathbf{1}(Y\in\mathbf{T})}d\mathbf{c}\\
	&=\mathbf{1}(\mathbf{t}=\mathbf{t'})\prod_{A\in\mathbf{A}\setminus\mathbf{T}}f(a\mid\mathrm{pa}(A,\mathcal{D}))f(y\mid\mathbf{a})^{\mathbf{1}(Y\not\in\mathbf{T})}.
	\end{align*}
	Two things remain to be shown. First, for every \(A\in\mathbf{A}\), \(\mathrm{pa}(A,\mathcal{D})=\mathrm{pa}(A,\mathcal{D}^{\mathbf{X}Y})\) because \(A\) does not have parents in the node set over which we marginalised in the projection. Second, \(f(y\mid\mathbf{a})=f(y\mid\mathrm{pa}(Y,\mathcal{D}^{\mathbf{X}Y}))\), which follows from the fact that all conditional independencies between variables in \(\mathcal{D}^{\mathbf{X}Y}\) can be read off \(\mathcal{D}^{\mathbf{X}Y}\) using the \textit{d}-separation criterion, as we showed in part (2) of this proof. Hence, we have
	\begin{align*}
	f(\mathbf{a},y\mid do(\mathbf{t}'))&=\mathbf{1}(\mathbf{t}=\mathbf{t}')\prod_{A\in\mathbf{A}\setminus\mathbf{T}}f(a\mid\mathrm{pa}(A,\mathcal{D}^{\mathbf{X}Y}))f(y\mid\mathrm{pa}(Y,\mathcal{D}^{\mathbf{X}Y}))^{\mathbf{1}(Y\not\in\mathbf{T})}\\
	&=\mathbf{1}(\mathbf{t}=\mathbf{t}')\prod_{V\in (\mathbf{A}\cup\{Y\}) \setminus\mathbf{T}}f(v\mid\mathrm{pa}(V,\mathcal{D}^{\mathbf{X}Y})),
	\end{align*}
	which is exactly the truncated factorisation formula implied by \(\mathcal{D}^{\mathbf{X}Y}\). Hence, \(\mathcal{D}^{\mathbf{X}Y}\) is a causal DAG for the random variables \(\mathbf{A}\cup\{Y\}=(\mathbf{V}\setminus\mathrm{forb}(\mathbf{X},Y,\mathcal{D}))\cup\mathbf{X}\cup\{Y\}\).
\end{proof}

\setcounter{theorem}{15}
\begin{lemma}
	\label{intoY}
	Let \(\mathcal{D}\) be a DAG with node set \(\mathbf{V}\) and let \(\mathbf{X}\subset\mathbf{V}\) and \(Y\in\mathbf{V}\setminus\mathbf{X}\) such that a valid adjustment set relative to \((\mathbf{X},Y)\) in \(\mathcal{D}\) exists. Then any edge that is in \(\mathcal{D}^{\mathbf{X}Y}\) but not in \(\mathcal{D}\) is a directed edge into \(Y\).
\end{lemma}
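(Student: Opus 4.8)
The plan is to analyse the edges of $\mathcal{D}^{\mathbf{X}Y}$ directly through Definition \ref{lproj}, separating them into the two kinds the latent projection can introduce—new directed edges and (necessarily new) bi-directed edges—and in each case tracing the endpoints back into the forbidden set. Write $\mathbf{W}=(\mathbf{V}\setminus\mathrm{forb}(\mathbf{X},Y,\mathcal{D}))\cup\mathbf{X}\cup\{Y\}$ for the node set of the projection and $\mathbf{L}=\mathrm{forb}(\mathbf{X},Y,\mathcal{D})\setminus(\mathbf{X}\cup\{Y\})$ for the set projected over. Two facts I would record at the outset: first, since $\mathbf{W}$ retains no forbidden node other than those in $\mathbf{X}\cup\{Y\}$, we have $\mathrm{forb}(\mathbf{X},Y,\mathcal{D})\cap\mathbf{W}=\mathbf{X}\cup\{Y\}$; second, because a valid adjustment set exists, Lemma \ref{EmaCorollary} gives $\mathbf{X}\cap\mathrm{de}(\mathrm{cn}(\mathbf{X},Y,\mathcal{D}),\mathcal{D})=\emptyset$. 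I would also note that in a DAG the set $\mathrm{de}(\mathrm{cn}(\mathbf{X},Y,\mathcal{D}),\mathcal{D})$ is closed under taking children, and that $\mathbf{L}\subseteq\mathrm{de}(\mathrm{cn}(\mathbf{X},Y,\mathcal{D}),\mathcal{D})$, since $\mathrm{forb}=\mathrm{de}(\mathrm{cn})\cup\mathbf{X}$ and $\mathbf{L}$ excludes $\mathbf{X}$.

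For a new directed edge $W_i\to W_j$, Definition \ref{lproj} supplies a directed path $W_i\to L_1\to\cdots\to L_k\to W_j$ with $k\ge 1$ and $L_1,\dots,L_k\in\mathbf{L}$. The last interior node $L_k$ lies in $\mathrm{de}(\mathrm{cn})$, and since $L_k\to W_j$, descendant-closure gives $W_j\in\mathrm{de}(\mathrm{cn})\subseteq\mathrm{forb}$. As $W_j\in\mathbf{W}$, this forces $W_j\in\mathbf{X}\cup\{Y\}$, and $W_j\in\mathbf{X}$ is impossible by Lemma \ref{EmaCorollary}. Hence $W_j=Y$, so the new edge points into $Y$, as claimed.

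For a bi-directed edge $W_i\leftrightarrow W_j$—which is automatically new, since $\mathcal{D}$ has only directed edges—Definition \ref{lproj} supplies a path $W_i\leftarrow\cdots\rightarrow W_j$ whose interior nodes are all non-colliders in $\mathbf{L}$. Here I would argue that, in a DAG, such a path (arrowheads into both endpoints, no interior collider) must be of the form $W_i\leftarrow\cdots\leftarrow C\rightarrow\cdots\rightarrow W_j$ for a unique interior source node $C$; the two halves are then directed paths from $C$ to $W_i$ and from $C$ to $W_j$. Since $C$ is an interior node it lies in $\mathbf{L}\subseteq\mathrm{de}(\mathrm{cn})$, so descendant-closure places both $W_i$ and $W_j$ in $\mathrm{de}(\mathrm{cn})\subseteq\mathrm{forb}$, hence in $\mathbf{X}\cup\{Y\}$. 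Ruling out $\mathbf{X}$ via Lemma \ref{EmaCorollary} forces $W_i=W_j=Y$, contradicting the requirement that the endpoints of a projection edge be distinct. Thus $\mathcal{D}^{\mathbf{X}Y}$ has no bi-directed edges at all, and combining the two cases, every edge present in $\mathcal{D}^{\mathbf{X}Y}$ but not in $\mathcal{D}$ is a directed edge into $Y$.

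I expect the bi-directed case to be the main obstacle: the crucial structural step is to show that a non-collider path between $W_i$ and $W_j$ in a DAG, with arrowheads into both endpoints, funnels through a single common-ancestor source node, and that this source necessarily lies in $\mathrm{de}(\mathrm{cn})$ so that descendant-closure drags both endpoints into the forbidden set. Once this observation is established, the remaining arguments are routine applications of the identity $\mathrm{forb}(\mathbf{X},Y,\mathcal{D})\cap\mathbf{W}=\mathbf{X}\cup\{Y\}$ together with Lemma \ref{EmaCorollary}.
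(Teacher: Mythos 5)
Your proof is correct and follows essentially the same route as the paper: both arguments reduce to showing, via descendant-closure of \(\mathrm{forb}(\mathbf{X},Y,\mathcal{D})\) together with Lemma \ref{EmaCorollary}, that \(Y\) is the only retained node that can have an ancestor in the projected-over set \(\mathbf{L}\), which simultaneously forces every new directed edge to point into \(Y\) and rules out bi-directed edges (both endpoints would have to equal \(Y\)). The only cosmetic difference is that the paper first disposes of the degenerate case \(Y\notin\mathrm{de}(\mathbf{X},\mathcal{D})\), where \(\mathbf{L}=\emptyset\) and \(\mathcal{D}^{\mathbf{X}Y}=\mathcal{D}\), so that Lemma \ref{EmaCorollary} is only invoked under its stated hypothesis \(Y\in\mathrm{de}(\mathbf{X},\mathcal{D})\); you should add that one line before citing it.
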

\begin{proof}
	We only consider the case where \(Y\in\mathrm{de}(\mathbf{X},\mathcal{D})\), as otherwise \(\mathcal{D}=\mathcal{D}^{\mathbf{X}Y}\) and our statement follows trivially. Define \(\mathbf{F}=\mathrm{forb}(\mathbf{X},Y,\mathcal{D})\setminus(\mathbf{X}\cup\{Y\})\).
	
	By Definition \ref{fproj}, an edge present in \(\mathcal{D}^{\mathbf{X}Y}\) but not in \(\mathcal{D}\) only occurs if \(\mathcal{D}\) contains a node \(W_j\in\mathbf{V}\setminus\mathbf{F}\) that has an ancestor in \(\mathbf{F}\). We show that the only node in \(\mathbf{V}\setminus\mathbf{F}\) that can have an ancestor in \(\mathbf{F}\) is \(Y\).
	
	Consider first a \(W\in\mathbf{V}\setminus\mathrm{forb}(\mathbf{X},Y,\mathcal{D})\). \(W\) does not have ancestors in \(\mathbf{F}\), as otherwise \(W\) would be a forbidden node itself. Consider next a node \(X\in\mathbf{X}\). \(X\) does not have ancestors in \(\mathbf{F}\) either, as every node in \(\mathbf{F}\) is a descendant of \(\mathrm{cn}(\mathbf{X},\mathbf{Y},\mathcal{D})\), but we assume that a valid adjustment set exists relative to \((\mathbf{X},Y)\) in \(\mathcal{D}\), implying \(\mathbf{X}\cap\mathrm{de}(\mathrm{cn}(\mathbf{X},\mathbf{Y},\mathcal{D}),\mathcal{D})=\emptyset\) by Lemma \ref{EmaCorollary}. Hence, \(Y\) is the only node in \(\mathbf{V}\setminus\mathbf{F}\) that can have an ancestor in \(\mathbf{F}\).
\end{proof}

\setcounter{theorem}{7}
\begin{proposition}
	Let \(\mathbf{X}, \mathbf{Y}\) and \(\mathbf{Z}\) be disjoint node sets in a causal DAG \(\mathcal{D}\). Then  $\mathbf{Z}$ is a valid adjustment set relative to \((\mathbf{X},\mathbf{Y})\) in $\mathcal{D}$ if and only if $\mathbf{Z}$ is also a valid adjustment set relative to \((\mathbf{X},\mathbf{Y})\) in $\mathcal{D}^{\mathbf{XY}}$.
\end{proposition}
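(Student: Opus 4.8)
The plan is to prove the equivalence at the level of the generalised adjustment criterion (GAC), which the paper records for both DAGs and ADMGs; this applies since \(\mathcal{D}\) is a DAG and \(\mathcal{D}^{\mathbf{XY}}\) is a causal ADMG. Thus it suffices to show that conditions (a)--(c) of the GAC hold jointly in \(\mathcal{D}\) if and only if they hold jointly in \(\mathcal{D}^{\mathbf{XY}}\). I would deliberately \emph{not} try to match the three conditions one-by-one, because they are coupled across the two graphs (see the obstacle below); instead I would prove the contrapositive in each direction, producing an explicit GAC obstruction in one graph from an obstruction in the other.

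First, an easy reduction. Write \(\mathbf{W}\) for the node set of \(\mathcal{D}^{\mathbf{XY}}\) and \(\mathbf{L}=\mathrm{forb}(\mathbf{X},\mathbf{Y},\mathcal{D})\setminus(\mathbf{X}\cup\mathbf{Y})\) for the marginalised nodes. If \(\mathbf{Z}\cap\mathrm{forb}(\mathbf{X},\mathbf{Y},\mathcal{D})\neq\emptyset\), then condition (b) fails in \(\mathcal{D}\), and moreover \(\mathbf{Z}\not\subseteq\mathbf{W}\) so \(\mathbf{Z}\) is not even an admissible candidate in \(\mathcal{D}^{\mathbf{XY}}\); both sides are then false. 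Hence I may assume \(\mathbf{Z}\subseteq\mathbf{W}\setminus(\mathbf{X}\cup\mathbf{Y})\), which makes condition (b) hold in \(\mathcal{D}\), and amenability (a) holds automatically in the DAG \(\mathcal{D}\).

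The core is a two-way correspondence between obstructing paths, driven by the edge definition of the latent projection (Definition~\ref{lproj}) and by the fact that m-separation in \(\mathcal{D}^{\mathbf{XY}}\) coincides with d-separation in \(\mathcal{D}\) (Richardson et al.). In the direction ``valid in \(\mathcal{D}\Rightarrow\) valid in \(\mathcal{D}^{\mathbf{XY}}\)'' I would suppose the GAC fails in \(\mathcal{D}^{\mathbf{XY}}\) and lift the witness to \(\mathcal{D}\): a directed edge of \(\mathcal{D}^{\mathbf{XY}}\) expands to a directed path through \(\mathbf{L}\), and a bi-directed edge expands to a fork \(\leftarrow\cdots\rightarrow\) of non-colliders through \(\mathbf{L}\). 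A non-amenability witness (a proper possibly causal path out of \(\mathbf{X}\) beginning with a bi-directed edge) lifts to a proper non-causal path in \(\mathcal{D}\) whose internal nodes are forbidden non-colliders, hence an open, unblockable path, so (c) fails in \(\mathcal{D}\); this is the mechanism behind Proposition~\ref{nobiDAG}. A node \(W\in\mathbf{Z}\) that is forbidden in \(\mathcal{D}^{\mathbf{XY}}\) but not in \(\mathcal{D}\) lifts to a proper non-causal path of \(\mathcal{D}\) on which \(W\) is a collider, which conditioning on \(W\) opens, again breaking (c). A genuinely open proper non-causal path of \(\mathcal{D}^{\mathbf{XY}}\) lifts edge-by-edge to one of \(\mathcal{D}\). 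Conversely, for ``valid in \(\mathcal{D}^{\mathbf{XY}}\Rightarrow\) valid in \(\mathcal{D}\)'', I would take an open proper non-causal definite-status path of \(\mathcal{D}\) and contract its maximal \(\mathbf{L}\)-segments into projected edges, checking that the result is an open proper non-causal path of \(\mathcal{D}^{\mathbf{XY}}\), or else exposes non-amenability or a forbidden \(\mathbf{Z}\)-node there. The separation-preservation theorem guarantees that the ``open versus blocked'' status is respected by both operations, so the only remaining content is the bookkeeping of the other two attributes.

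The main obstacle is precisely that bookkeeping: verifying that ``proper'', ``non-causal'' and ``definite-status'' are preserved under edge expansion and contraction, and that collider/non-collider status and ancestry (hence openness) transfer correctly. The delicate point is that projection can convert an \(\mathbf{L}\)-segment into a bi-directed edge and can promote an otherwise harmless node to collider status, which is exactly why an obstruction appearing as a violation of (b) in \(\mathcal{D}^{\mathbf{XY}}\) may surface as a violation of (c) in \(\mathcal{D}\), and why amenability of \(\mathcal{D}^{\mathbf{XY}}\) must be folded into condition (c) of \(\mathcal{D}\) rather than matched to amenability of \(\mathcal{D}\). I would isolate these facts as short lemmas on how \(\mathrm{cn}\), \(\mathrm{forb}\) and definite-status paths behave under the forbidden projection, mirroring Lemma~\ref{intoY} but without the singleton-\(\mathbf{Y}\) or existence assumptions. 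A cleaner but more semantic alternative would bypass the path bookkeeping: since \(\mathcal{D}^{\mathbf{XY}}\) is a latent projection, every density compatible with \(\mathcal{D}\) restricts to one compatible with \(\mathcal{D}^{\mathbf{XY}}\) with the same \(f(\mathbf{y}\mid do(\mathbf{x}))\) and the same marginal over \(\mathbf{X}\cup\mathbf{Y}\cup\mathbf{Z}\), and conversely, so equation~(\ref{eq:VAS}) holds for all \(\mathcal{D}\)-compatible densities if and only if for all \(\mathcal{D}^{\mathbf{XY}}\)-compatible ones; there the obstacle shifts to justifying the converse realisability of \(\mathcal{D}^{\mathbf{XY}}\)-compatible densities as \(\mathcal{D}\)-marginals, which is immediate if compatibility with a causal ADMG is defined as being such a marginal.
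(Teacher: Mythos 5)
Your proposal is correct and follows essentially the same route as the paper's proof: both work through the generalised adjustment criterion in $\mathcal{D}$ and in the ADMG $\mathcal{D}^{\mathbf{XY}}$, dispose of amenability and the forbidden-set condition quickly (the paper uses $\fb{\mathcal{D}^{\mathbf{XY}}}\subseteq\mathbf{X}\cup\mathbf{Y}$ where you propose an explicit obstruction, but this is a cosmetic difference), and then carry out exactly the two-way lift/contract correspondence between open proper non-causal paths that you describe, including the delicate concatenation of a non-causal prefix with a contracted forbidden suffix into $\mathbf{Y}$ (the paper's $q=p'\oplus q''$ step). The bookkeeping you flag as the main obstacle is precisely the content of the paper's case analysis, so the plan is sound as stated.
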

\begin{proof}
	Throughout, let \(\mathbf{F}=\mathrm{forb}(\mathbf{X},\mathbf{Y},\mathcal{D})\setminus(\mathbf{X}\cup\mathbf{Y})\).
	
	We first suppose that $\mathbf{Z}$ is a valid adjustment set in $\mathcal{D}$ and show that this implies that it is also a valid adjustment set in $\mathcal{D}^{\mathbf{XY}}$.
	Hence, \(\mathbf{Z} \cap \fb{\mathcal{D}} = \emptyset\) and \(\mathbf{Z} \cap \mathbf{Y} = \emptyset\), so that every node in \(\mathbf{Z}\) is also a node of	 \(\mathcal{D}^{\mathbf{XY}}\). Further, \(\fb{\mathcal{D}^{\mathbf{XY}}}\subseteq \mathbf{X} \cup \mathbf{Y}\) and hence \(\mathbf{Z} \cap \fb{\mathcal{D}^{\mathbf{XY}}}=\emptyset\).
	Amenability trivially holds in both $\mathcal{D}$ and $\mathcal{D}^{\mathbf{XY}}$ by assumption.
	
	It  remains to show that every proper non-causal path from $\mathbf{X}$ to $\mathbf{Y}$ in $\mathcal{D}^{\mathbf{XY}}$ is blocked by $\mathbf{Z}$, which we do by contradiction. So suppose that $\mathbf{Z}$ is a valid adjustment set relative to $(\mathbf{X},\mathbf{Y})$ in $\mathcal{D}$ and that there exists a proper non-causal path $p=(V_0,e_1,V_1,\dots,e_K,V_K)$ from $\mathbf{X}$ to $\mathbf{Y}$ in $\mathcal{D}^{\mathbf{XY}}$ that is open given $\mathbf{Z}$. We denote $\mathbf{Y}^{F} = \mathbf{Y} \cap \fb{\mathcal{D}^{\mathbf{XY}}}= \fb{\mathcal{D}^{\mathbf{XY}}} \setminus \mathbf{X}$ and note that $\de(\mathbf{Y}^{F},\mathcal{D}^{\mathbf{XY}}) \subseteq \mathbf{Y}^{F}$.
	
	Let $V_L \in \mathbf{Y}$ be the first node on $p$ that is in \(\mathbf{Y}\) and consider the path segment $p'=(V_0,e_1,V_1,\dots,e_L,V_L)$. Suppose that $L<K$ and that $V_L \in \mathbf{Y^F}$. If $p'$ is causal, then $p$ must either be causal or contain a collider in $\mathbf{Y}^F$, contradicting our assumption that it is open given $\mathbf{Z}$. If $V_L \in \mathbf{Y} \setminus \mathbf{Y}^F$ then $p'$ cannot be causal. Hence, we can suppose that $L=K$ or replace $p$ with $p'$ without loss of generality.
	
	Consider now the case that $V_K \in \mathbf{Y} \setminus \mathbf{Y}^F$. This implies that all nodes on $p$ except $V_0$ are not in $\fb{\mathcal{D}^{\mathbf{XY}}}$. Since $\de(\mathrm{forb}(\mathbf{X},\mathbf{Y},\mathcal{G})\setminus\mathbf{X},\mathcal{D}) \subseteq \fb{\mathcal{D}}$ and by definition of latent projections, this implies that $p$ is also a path in $\mathcal{D}$. As for any node $V$ in $\mathcal{D}^{\mathbf{XY}}$, $\de(V,\mathcal{D}) \subseteq \de(V,\mathcal{D}^{\mathbf{XY}})\cup\mathbf{F}$, and $\mathbf{Z} \cap \mathbf{F}=\emptyset$, it follows that $p$ is also open given $\mathbf{Z}$ in $\mathcal{D}$.
	
	Consider now the case that  $V_K \in \mathbf{Y}^F$. The path $p$ cannot be a one-edge path, as the two possible such paths would require the existence of paths in $\mathcal{D}$ implying that no valid adjustment sets relative to $(\mathbf{X},\mathbf{Y})$ exist in $\mathcal{D}$. By the fact that $\de(\mathbf{Y}^{F},\mathcal{D}) \subseteq \mathbf{Y}^{F}$, the last edge of $p$ must be of the form $p''=V_{K-1}\rightarrow V_K$. By the same argument and the definition of the forbidden projection, the segment $p'=(V_0,e_1,V_1,\dots,V_{K-1})$ is also a path in $\mathcal{D}$, which by definition of $p$ and $p''$ must be non-causal. The path $p''$ corresponds to a causal path $q''$ in $\mathcal{D}$, such that all nodes except for $V_{K-1}$ on $q''$ are forbidden. 
	
	The path $q=p' \oplus q''$ is a proper non-causal path from $\mathbf{X}$ to $\mathbf{Y}$ in $\mathcal{D}$;  we now show that it is open given $\mathbf{Z}$. Since $\de(V,\mathcal{D}) \subseteq \de(V,\mathcal{D}^{\mathbf{XY}}) \cup \mathbf{F} $, for any node $V \notin \mathbf{F}$ in $\mathcal{D}$, it follows from the fact that $p'$ is open given $\mathbf{Z}$ in $\mathcal{D}^{\mathbf{XY}}$ that it is also open given $\mathbf{Z}$ in $\mathcal{D}$. Since $\mathbf{F} \cap \mathbf{Z} =\emptyset$, $q''$ is also open given $\mathbf{Z}$. The node $V_{K-1} $ is a non-collider on $p$ and hence by the assumption that $p$ is open given $\mathbf{Z}$ it follows that $V_K \notin \mathbf{Z}$. Since $V_{K-1}$ is also a non-collider on $q$ it follows that $q$ is open given $\mathbf{Z}$ in $\mathcal{D}$.	
	
	We now turn to the second part of the proof showing that if a set $\mathbf{Z}$ containing no nodes in $\mathbf{F}$ is not a valid adjustment relative to $(\mathbf{X},\mathbf{Y})$ in $\mathcal{D}$ then it is also not a valid adjustment set relative to $(\mathbf{X},\mathbf{Y})$ in $\mathcal{D}^{\mathbf{XY}}$.
	
	Suppose that $\mathbf{Z} \cap \fb{\mathcal{D}} \neq \emptyset$. Since  $\mathbf{Z} \cap \mathbf{F} = \emptyset$ it follows that $\mathbf{Z} \cap (\mathbf{X} \cup \mathbf{Y})\neq \emptyset$; but this clearly implies that  $\mathbf{Z}$ cannot be a valid adjustment set in $\mathcal{D}^{\mathbf{XY}}$.
	
	Suppose now that $\mathbf{Z} \cap (\fb{\mathcal{D}}\cup\mathbf{Y}) = \emptyset$ and that $\mathbf{Z}$ is not a valid adjustment set relative to $(\mathbf{X},\mathbf{Y})$ in $\mathcal{D}$. This implies the existence of a proper non-causal path $p=(V_0,e_1,V_1,\dots,e_K,V_K)$ from $\mathbf{X}$ to $\mathbf{Y}$ in $\mathcal{D}$ that is open given $\mathbf{Z}$. 
	
	Consider first the case that $p$ contains no nodes in $\fb{\mathcal{D}}$. Then $p$ also exists in $\mathcal{D}^{\mathbf{XY}}$ and by the fact that $\de(V,\mathcal{D}^{\mathbf{XY}}) = \de(V,\mathcal{D}) \setminus \mathbf{F}$ it follows that $p$ is also open given $\mathbf{Z}$ in $\mathcal{D}^{\mathbf{XY}}$. Suppose now that $p$ contains at least one node in $\fb{\mathcal{D}}$. Since $p$ is open given $\mathbf{Z}$, it cannot contain a collider in \(\mathrm{forb}(\mathbf{X},\mathbf{Y},\mathcal{D})\). If we suppose that all nodes on $p$ are in $\fb{\mathcal{D}}$, then its existence implies that no valid adjustment exists in $\mathcal{D}$, while the corresponding edge in the forbidden projection would imply the same for $\mathcal{D}^{\mathbf{XY}}$. Hence, we can suppose that $p$ contains at least one node not in $\fb{\mathcal{D}}$ and let $V_L$ be the last such node. Let $p'=(V_0,e_1,V_1,\dots,e_L,V_L)$ and $p''=(V_L,e_{L+1},V_1,\dots,e_K,V_K)$. By construction, $V_{L+1} \in \fb{\mathcal{D}}$ and since $\fb{\mathcal{D}} \setminus \mathbf{X} \subseteq \fb{\mathcal{D}}$, it follows that $p''$ is causal. Thus the forbidden projection will map $p''$ to the path $q''=V_L \rightarrow V_K$. This also implies that $p'$ is non-causal. 
	
	Suppose first that $V_0 \in \mathbf{X} \cap \de(\fb{\mathcal{D}} \setminus \mathbf{X},\mathcal{D})$. Then no valid adjustment set exists in $\mathcal{D}$. Further,  there must be a bi-directed edge from $\mathbf{X}$ to $\mathbf{Y}$ in $\mathcal{D}^{\mathbf{XY}}$ and hence that no valid adjustment set exists in $\mathcal{D}^{\mathbf{XY}}$ either. 
	We can hence suppose that $V_0 \notin \fb{\mathcal{D}} \setminus \mathbf{X}$. This implies that all nodes on $p'$ except $V_0$ are not in $\fb{\mathcal{D}}$. Since this implies that no node on $p'$ is in $\de(\mathbf{F},\mathcal{D})$ it follows that $p'$ is also a path in $\mathcal{D}^{\mathbf{XY}}$. The path $q=p'\oplus q''$ is a proper non-causal path from $\mathbf{X}$ to $\mathbf{Y}$ in $\mathcal{D}^{\mathbf{XY}}$. By the usual argument $p'$ is also open given $\mathbf{Z}$ in $\mathcal{D}^{\mathbf{XY}}$ and trivially, this is also true for $q''$. Further, $V_L \notin \mathbf{Z}$ is a non-collider on $q''$ and hence, $q$ is open given $\mathbf{Z}$ in $\mathcal{D}^{\mathbf{XY}}$.
\end{proof}

\setcounter{theorem}{9}
\begin{proposition}
	Let \(\mathbf{X}\) and \(\mathbf{Y}\) be disjoint subsets of the node set \(\mathbf{V}\) of a DAG \(\mathcal{D}\) such that \(\mathbf{Y}\subseteq\mathrm{de}(\mathbf{X}, \mathcal{D})\). Then \(\mathbf{O}(\mathbf{X},\mathbf{Y},\mathcal{D})=\mathbf{O}^*(\mathbf{X},\mathbf{Y},\mathcal{D})\).
\end{proposition}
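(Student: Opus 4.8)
The plan is to prove the two inclusions separately, reducing both to a careful analysis of which directed edges point into \(\mathbf{Y}\) in the forbidden projection. Write \(\mathbf{F}=\mathrm{forb}(\mathbf{X},\mathbf{Y},\mathcal{D})\setminus(\mathbf{X}\cup\mathbf{Y})\) for the set of nodes that are marginalised out. First I would record three elementary facts that follow from \(\mathbf{Y}\subseteq\mathrm{de}(\mathbf{X},\mathcal{D})\) and the DAG identity \(\mathrm{forb}=\mathrm{de}(\mathrm{cn}(\mathbf{X},\mathbf{Y},\mathcal{D}),\mathcal{D})\cup\mathbf{X}\): (i) every \(Y_j\in\mathbf{Y}\) is itself a causal node, hence \(\mathbf{Y}\subseteq\mathrm{cn}(\mathbf{X},\mathbf{Y},\mathcal{D})\subseteq\mathrm{forb}\); (ii) \(\mathbf{X}\subseteq\mathrm{forb}\); and (iii) \(\mathbf{F}=\mathrm{de}(\mathrm{cn}(\mathbf{X},\mathbf{Y},\mathcal{D}),\mathcal{D})\setminus(\mathbf{X}\cup\mathbf{Y})\). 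In particular a node of \(\mathcal{D}^{\mathbf{XY}}\) that lies outside \(\mathbf{X}\cup\mathbf{Y}\) is exactly a node outside \(\mathrm{forb}\), which makes the set difference defining \(\mathbf{O}^*\) easy to interpret.

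The central reduction uses Definition \ref{lproj}: a node \(W\notin\mathbf{X}\cup\mathbf{Y}\) lies in \(\mathbf{O}^*(\mathbf{X},\mathbf{Y},\mathcal{D})\) if and only if \(W\notin\mathrm{forb}\) and there is a directed path \(W\to\dots\to Y_j\) in \(\mathcal{D}\) with \(Y_j\in\mathbf{Y}\) and all internal nodes in \(\mathbf{F}\). For the inclusion \(\mathbf{O}^*\subseteq\mathbf{O}\), I would take such a \(W\) and distinguish whether the path is a single edge or longer. If \(W\to Y_j\) is a single edge, then since \(Y_j\) is a causal node by fact (i), we get \(W\in\mathrm{pa}(\mathrm{cn}(\mathbf{X},\mathbf{Y},\mathcal{D}),\mathcal{D})\setminus\mathrm{forb}=\mathbf{O}\). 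If the path is longer, its first internal node \(I_1\) lies in \(\mathbf{F}\), hence \(I_1\in\mathrm{de}(\mathbf{X},\mathcal{D})\); concatenating a directed \(\mathbf{X}\)-to-\(I_1\) path with the remaining \(I_1\)-to-\(Y_j\) segment and passing to the sub-path starting at the last treatment node on it shows that \(I_1\) is a causal node, so again \(W\in\mathrm{pa}(\mathrm{cn})\setminus\mathrm{forb}=\mathbf{O}\).

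For the reverse inclusion \(\mathbf{O}\subseteq\mathbf{O}^*\), I would take \(W\in\mathrm{pa}(\mathrm{cn}(\mathbf{X},\mathbf{Y},\mathcal{D}),\mathcal{D})\setminus\mathrm{forb}\), so \(W\to C\) for some causal node \(C\), and \(W\notin\mathrm{forb}\) gives both \(W\notin\mathbf{X}\cup\mathbf{Y}\) and that \(W\) is a node of \(\mathcal{D}^{\mathbf{XY}}\). Being a causal node, \(C\) lies on a proper causal path whose segment from \(C\) onward is a directed path to some outcome; taking \(Y_j\) to be the first node of \(\mathbf{Y}\) on this segment, every node strictly between \(C\) and \(Y_j\) (and \(C\) itself, when \(C\neq Y_j\)) is a causal node that is in neither \(\mathbf{X}\) nor \(\mathbf{Y}\), hence lies in \(\mathbf{F}\). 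Thus \(W\to C\to\dots\to Y_j\) realises a directed edge \(W\to Y_j\) in \(\mathcal{D}^{\mathbf{XY}}\), giving \(W\in\mathbf{O}^*\).

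The main obstacle I anticipate is the bookkeeping around \emph{properness}, precisely because Proposition \ref{OOequal} does not assume that a valid adjustment set exists: a treatment node may itself be a descendant of a causal node, so the concatenated directed paths above can revisit \(\mathbf{X}\). The key technical care is therefore to always pass to the sub-path beginning at the last \(\mathbf{X}\)-node, and to choose \(Y_j\) as the first \(\mathbf{Y}\)-node, so that the internal nodes genuinely fall in \(\mathbf{F}=\mathrm{de}(\mathrm{cn})\setminus(\mathbf{X}\cup\mathbf{Y})\) rather than merely in \(\mathrm{forb}\). Everything else is a direct unwinding of Definitions \ref{lproj}, \ref{fproj}, \ref{Oset1} and \ref{Oset2}.
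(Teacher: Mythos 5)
Your proposal is correct and follows essentially the same route as the paper's proof: both directions are handled by unwinding Definition \ref{lproj} so that membership in \(\mathbf{O}^*\) becomes the existence of a directed path to some \(Y_j\in\mathbf{Y}\) whose internal nodes are forbidden, and then using \(\mathbf{Y}\subseteq\mathrm{de}(\mathbf{X},\mathcal{D})\) to show the relevant endpoint or first internal node is a causal node. If anything, you are more explicit than the paper about the distinction between \(\mathrm{forb}\) and \(\mathbf{F}=\mathrm{forb}\setminus(\mathbf{X}\cup\mathbf{Y})\) and about restarting concatenated paths at the last \(\mathbf{X}\)-node to preserve properness, which are exactly the points the paper's proof treats tersely.
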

\begin{proof}
	We first show that \(\mathbf{O}(\mathbf{X},\mathbf{Y},\mathcal{D})\subseteq\mathbf{O}^*(\mathbf{X},\mathbf{Y},\mathcal{D})\). Let \(Z\in\mathbf{O}(\mathbf{X},\mathbf{Y},\mathcal{D})\). By Definition \ref{Oset1}, \(\mathbf{O}(\mathbf{X},\mathbf{Y},\mathcal{D})\cap\mathrm{forb}(\mathbf{X},\mathbf{Y},\mathcal{D})=\emptyset\) and hence \(Z\) is a node in \(\mathcal{D}^{\mathbf{XY}}\). Furthermore, since \(\mathbf{O}(\mathbf{X},\mathbf{Y},\mathcal{D})\subseteq\mathrm{pa}(\mathrm{cn}(\mathbf{X},\mathbf{Y},\mathcal{D}),\mathcal{D})\), and \(\mathrm{cn}(\mathbf{X},\mathbf{Y},\mathcal{D})\subseteq\mathrm{forb}(\mathbf{X},\mathbf{Y},\mathcal{D})\), there is a node \(Y\in\mathbf{Y}\) such that \(\mathcal{D}\) contains a directed path \(Z\rightarrow\dots\rightarrow Y\) on which all non-endpoint nodes are in \(\mathrm{forb}(\mathbf{X},\mathbf{Y},\mathcal{D})\). Due to property 1 of Definition \ref{lproj}, this corresponds to an edge \(Z\rightarrow Y\) in \(\mathcal{D}^{\mathbf{XY}}\), hence \(Z\in\mathbf{O}^*(\mathbf{X},\mathbf{Y},\mathcal{D})\).
	
	Next, we show that \(\mathbf{O}^*(\mathbf{X},\mathbf{Y},\mathcal{D})\subseteq\mathbf{O}(\mathbf{X},\mathbf{Y},\mathcal{D})\). Let \(Z^*\in\mathbf{O}^*(\mathbf{X},\mathbf{Y},\mathcal{D})\). By Definition \ref{fproj}, this implies that \(Z^*\in\mathbf{V}\setminus(\mathrm{forb}(\mathbf{X},\mathbf{Y},\mathcal{D})\cup\mathbf{X}\cup\mathbf{Y})\). Moreover, by Definition \ref{Oset2}, there is an edge \(Z^*\rightarrow Y^*\) in \(\mathcal{D}^{\mathbf{XY}}\) with \(Y^*\in\mathbf{Y}\). In \(\mathcal{D}\), this corresponds to a directed path \(Z^*\rightarrow\dots\rightarrow Y^*\) on which all non-endpoint nodes are in \(\mathrm{forb}(\mathbf{X},\mathbf{Y},\mathcal{D})\), and \(Z^*\not\in\mathrm{forb}(\mathbf{X},\mathbf{Y},\mathcal{D})\). Denote the path by \(p\). There are two cases: In the first case, \(p\) has no non-endpoint nodes, i.e.\ \(\mathcal{D}\) contains the edge \(Z^*\rightarrow Y^*\). Since we assume \(\mathbf{Y}\subseteq\mathrm{de}(\mathbf{X},\mathcal{D})\), \(Y^*\) must be in \(\mathrm{cn}(\mathbf{X},\mathbf{Y},\mathcal{D})\), hence \(Z^*\in\mathbf{O}(\mathbf{X},\mathbf{Y},\mathcal{D})\). In the second case, \(p\) has at least one non-endpoint node. This means that \(Z^*\in\mathrm{pa}(W,\mathcal{D})\), where  \(W\in\mathrm{forb}(\mathbf{X},\mathbf{Y},\mathcal{D})\setminus(\mathbf{X}\cup\mathbf{Y})\) and \(W\in\mathrm{an}(Y^*,\mathcal{D})\). Since in a DAG, all forbidden nodes are descendants of \(\mathbf{X}\), we also have \(W\in\mathrm{de}(\mathbf{X},\mathcal{D})\), and hence \(W\in\mathrm{cn}(\mathbf{X},\mathbf{Y}, \mathcal{D})\). It follows that \(Z^*\in\mathbf{O}(\mathbf{X},\mathbf{Y},\mathcal{D})\).
\end{proof}

\section[Appendix C: Generalisation of the Forbidden Projection and the \(\mathbf{O}^*\)-Set to Amenable MaxPDAGs]{Generalisation of the Forbidden Projection and the \(\mathbf{O}^*\)-Set to Amenable MaxPDAGs}
\label{appC}

In this appendix, we generalise the forbidden projection (Definition \ref{fproj}) and the \(\mathbf{O}^*\)-set (Definition \ref{Oset2}) to amenable maxPDAGs and show that Propositions similar to \ref{nobiDAG}, \ref{VAS}, \ref{XYDAG} and \ref{OOequal} still hold for the more general definitions.

The latent projection in general (Definition \ref{lproj}) cannot be generalised to (amenable) maxPDAGs as marginalising does not generally result in an ADMG. As an example, consider the maxPDAG \(W_1-L\rightarrow W_2\) with latent node \(L\). It is not clear how the projection should be constructed in this case: \(W_1\rightarrow W_2\) would give the wrong impression that \(W_1\) is an ancestor of \(W_2\) (instead of a \textit{possible} ancestor), while \(W_1-W_2\) would imply that \(W_2\) is a possible ancestor of \(W_1\). As we will show in the following propositions, however, the latent projection can be meaningfully generalised to amenable maxPDAGs when projecting over the special case of a forbidden set.

\setcounter{theorem}{16}
\begin{definition}[Forbidden projection for amenable maxPDAGs]
	\label{gfprof}
	Let \(\mathcal{G}\) be a maxPDAG with node set \(\mathbf{V}\),  and let \(\mathbf{X}\subset\mathbf{V}\) and \(Y\in\mathbf{V}\setminus\mathbf{X}\) such that \(\mathcal{G}\) is amenable relative to \((\mathbf{X},Y)\). Define \(\mathbf{F}=\mathrm{forb}(\mathbf{X},Y,\mathcal{G})\setminus(\mathbf{X}\cup\{Y\})\). The \emph{forbidden projection} \(\mathcal{G}^{\mathbf{X}Y}\) of \(\mathcal{G}\) is a graph with node set \(\mathbf{V}\setminus\mathbf{F}\) and edges as follows: For distinct nodes \(W_i,W_j\in\mathbf{V}\setminus\mathbf{F}\),
	\begin{enumerate}
		\item \(\mathcal{G}^{\mathbf{X}Y}\) contains a directed edge \(W_i\rightarrow W_j\) if and only if \(\mathcal{G}\) contains a directed path \(W_i\rightarrow\dots\rightarrow W_j\) on which all non-endpoint nodes are in \(\mathbf{F}\),
		\item \(\mathcal{G}^{\mathbf{X}Y}\) contains a bi-directed edge \(W_i\leftrightarrow W_j\) if and only if \(\mathcal{G}\) contains a path, with at least one non-endpoint node, of the form \(W_i\leftarrow\dots\rightarrow W_j\) on which all non-endpoints are non-colliders and in \(\mathbf{F}\),
		\item \(\mathcal{G}^{\mathbf{X}Y}\) contains an undirected edge \(W_i-W_j\) if and only if \(\mathcal{G}\) contains \(W_i-W_j\).
	\end{enumerate}
\end{definition}

Note that we restrict the definition to singleton \(Y\). This is because with a set \(\mathbf{Y}\), we run into similar construction/interpretation problems as described above. Consider, for example, an amenable maxPDAG with node set \(\{X,F,Y_1,Y_2\}\) and edges \(X\rightarrow Y_1-F\rightarrow Y_2\) as well as \(X\rightarrow F\). None of \(Y_1\rightarrow Y_2\), \(Y_1\leftrightarrow Y_2\) or \(Y_1-Y_2\) are correct representations of the marginal distribution.

Before generalising the \(\mathbf{O}^*\)-set, we now describe the properties of the forbidden projection for maxPDAGs. A key property is that if a valid adjustment set exists, the forbidden projection of a maxPDAG is itself a maxPDAG (Proposition \ref{keyprop}). This is analogous to Proposition \ref{XYDAG} for DAGs. Proposition \ref{nobiPDAG}, in analogy to Proposition \ref{nobiDAG}, states that if \(\mathbf{X}\) has a causal effect on \(Y\), then the forbidden projection can be used to check whether a valid adjustment set exists. In Proposition \ref{VASPDAG}, we show that a set \(\mathbf{Z}\) is a valid adjustment set in the forbidden projection if and only if it is a valid adjustment set in the original graph, which is analogous to Proposition \ref{VAS}.

We begin with a lemma that will allow us to use \(\mathrm{possde}(\mathbf{X},\mathcal{G})\) and \(\mathrm{de}(\mathbf{X},\mathcal{G})\) interchangeably when \(\mathcal{G}\) is an amenable maxPDAG.

\begin{lemma}
	\label{mylemma}
	Let \(p=(V_1,V_2,\dots,V_K)\) be a possibly directed path in a maxPDAG \(\mathcal{G}\) such that no node on \(p\) shares an undirected edge with \(V_1\). Then a subsequence of \(p\) forms a directed path from \(V_1\) to \(V_K\) in \(\mathcal{G}\).
\end{lemma}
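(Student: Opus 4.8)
The plan is to prove a slightly stronger statement by induction on $k$: for every $k\in\{1,\dots,K\}$, there is a directed path from $V_1$ to $V_k$ that is a subsequence of $(V_1,\dots,V_k)$ (i.e.\ using only path nodes of index at most $k$, in their path order). The lemma is then the case $k=K$. The base case $k=1$ is trivial. For $k=2$, I would observe that the consecutive edge between $V_1$ and $V_2$ cannot be undirected, since $V_2$ lies on $p$ and by hypothesis no node on $p$ shares an undirected edge with $V_1$; nor can it be the backward edge $V_1\leftarrow V_2$, since $p$ is possibly directed. Hence it is $V_1\rightarrow V_2$.

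For the inductive step, I would assume a directed subsequence path $\pi\colon V_1=U_0\rightarrow U_1\rightarrow\dots\rightarrow U_r=V_k$ and consider the edge between $V_k$ and $V_{k+1}$. Because $p$ is possibly directed this edge cannot point backward, so it is either $V_k\rightarrow V_{k+1}$ or $V_k-V_{k+1}$. In the first case I simply append it to $\pi$. In the undirected case I would walk backwards along $\pi$, examining the chords between each $U_i$ and $V_{k+1}$, aiming to locate some $U_i$ with a directed edge $U_i\rightarrow V_{k+1}$; then the prefix $U_0\rightarrow\dots\rightarrow U_i\rightarrow V_{k+1}$ is the desired directed subsequence path to $V_{k+1}$ (its nodes are distinct and in increasing path order since all indices of the $U_i$ are at most $k$).

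The undirected case is the heart of the argument and the main obstacle. Here I would exploit that $\mathcal{G}$, being a maxPDAG, is closed under Meek's rules (Figure~\ref{fig:Meek}). Concretely, as long as $U_i-V_{k+1}$ is still undirected, the configuration $U_{i-1}\rightarrow U_i-V_{k+1}$ forces $U_{i-1}$ and $V_{k+1}$ to be \emph{adjacent}: otherwise Meek's Rule~1 would have oriented $U_i\rightarrow V_{k+1}$, a contradiction. The chord between $U_{i-1}$ and $V_{k+1}$ cannot be the backward edge $V_{k+1}\rightarrow U_{i-1}$ (forbidden since $U_{i-1}$ precedes $V_{k+1}$ in path order and $p$ is possibly directed), so it is either $U_{i-1}\rightarrow V_{k+1}$, in which case the descent stops successfully, or $U_{i-1}-V_{k+1}$, in which case I descend one further step. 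Termination is guaranteed precisely because the descent can only stop with a directed edge: if it reached $U_0=V_1$ it would produce an undirected edge $V_1-V_{k+1}$ with $V_{k+1}$ on $p$, contradicting the hypothesis that no node on $p$ shares an undirected edge with $V_1$. This completes the induction and hence the lemma.
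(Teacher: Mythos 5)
Your proof is correct and follows essentially the same route as the paper's: induction along \(p\), with a backward descent along the already-constructed directed path that uses closure under Meek's Rule~1 to force adjacency of earlier nodes with the new node, terminating either at a directed chord (success) or at an undirected edge incident to \(V_1\) (contradicting the hypothesis). The only cosmetic difference is that you exclude the backward chord \(V_{k+1}\rightarrow U_{i-1}\) directly from the paper's non-standard definition of a possibly directed path, whereas the paper instead invokes closure under Meek's Rule~2; both are valid, and your explicit allowance for the descent to succeed at an intermediate level is if anything slightly cleaner than the paper's phrasing of its ``Graph 3'' case.
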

\begin{proof}
	We show this by induction. By assumption, \((V_1,V_2)\) is a subsequence of \(p\) and forms a directed path from \(V_1\) to \(V_2\). Now assume that a subsequence of \(p\) forms a directed path from \(V_1\) to \(V_{k-1}\), for \(2<k\le K\). Denote this subsequence by \((V_1=W_1,W_2,\dots,W_Q=V_{k-1})\). Clearly, if \(V_{k-1}\rightarrow V_k\) then \((V_1=W_1,W_2,\dots,W_Q=V_{k-1},V_k)\) is a subsequence of \(p\) and forms a directed path from \(V_1\) to \(V_k\) in \(\mathcal{G}\), which is what we wanted to show. If, on the other hand, if \(V_{k-1}-V_k\)  then there are four cases, three of which lead to a contradiction:
	
	(1) The induced subgraph of \(\mathcal{G}\) on \(\{W_{Q-1}, W_Q=W_{k-1}, V_k\}\) is Graph 1 in Figure~\ref{fig:mylemma}. Then \(\mathcal{G}\) is not closed under Meek's Rule 1 (see Figure~\ref{fig:Meek}), which is a contradiction.
	
	(2) The induced subgraph of \(\mathcal{G}\) on \(\{W_{Q-1}, W_Q=W_{k-1}, V_k\}\) is Graph 2 in Figure~\ref{fig:mylemma}. Then \(\mathcal{G}\) is not closed under Meek's Rule 2, which is a contradiction.
	
	(3) The induced subgraph of \(\mathcal{G}\) on \(\{W_{Q-1}, W_Q=W_{k-1}, V_k\}\) is Graph 3 in Figure~\ref{fig:mylemma}. This implies the induced subgraph of \(\mathcal{G}\) on \(\{W_{Q-2}, W_{Q-1}, V_k\}\) would also be graph 3 (with the same reasons as above excluding graphs 1 and 2). Repeating the argument for \(W_{Q-3}, W_{Q-4},\dots\) implies an undirected edge between \(W_1=V_1\) and \(V_k\), which is a contradiction.
	
	(4) The induced subgraph of \(\mathcal{G}\) on \(\{W_{Q-1}, W_Q=W_{k-1}, V_k\}\) is Graph 4 in Figure~\ref{fig:mylemma}. Then \((V_1=W_1,W_2,\dots,W_{Q-1},V_{k})\) is a subsequence of \(p\) and forms a directed path from \(V_1\) to \(V_k\), which is what we wanted to show.
\end{proof}

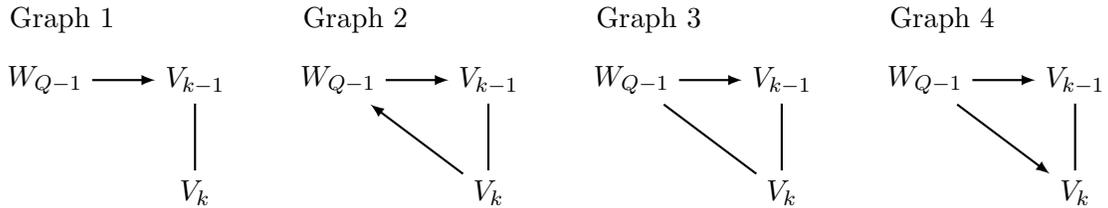
\begin{figure}[h]
	\begin{center}
	\begin{tikzpicture}[node distance=15mm, >=latex]
	\node[] (W1) {\(W_{Q-1}\)};
	\node[above of=W1, xshift=-6mm, yshift=-7mm, anchor=west] (R1) {Graph 1};
	\node[right of=W1, xshift=5mm] (V1) {\(V_{k-1}\)};
	\node[below of=V1] (Vk1) {\(V_k\)};
	\draw[->, thick] (W1) to (V1);
	\draw[-, thick] (V1) to (Vk1);
	
	\node[right of=W1, xshift=24mm] (W2) {\(W_{Q-1}\)};
	\node[above of=W2, xshift=-6mm, yshift=-7mm, anchor=west] (R2) {Graph 2};
	\node[right of=W2, xshift=5mm] (V2) {\(V_{k-1}\)};
	\node[below of=V2] (Vk2) {\(V_k\)};
	\draw[->, thick] (W2) to (V2);
	\draw[-, thick] (V2) to (Vk2);
	\draw[->, thick] (Vk2) to (W2);
	
	\node[right of=W2, xshift=24mm] (W3) {\(W_{Q-1}\)};
	\node[above of=W3, xshift=-6mm, yshift=-7mm, anchor=west] (R3) {Graph 3};
	\node[right of=W3, xshift=5mm] (V3) {\(V_{k-1}\)};
	\node[below of=V3] (Vk3) {\(V_k\)};
	\draw[->, thick] (W3) to (V3);
	\draw[-, thick] (V3) to (Vk3);
	\draw[-, thick] (Vk3) to (W3);
	
	\node[right of=W3, xshift=24mm] (W4) {\(W_{Q-1}\)};
	\node[above of=W4, xshift=-6mm, yshift=-7mm, anchor=west] (R4) {Graph 4};
	\node[right of=W4, xshift=5mm] (V4) {\(V_{k-1}\)};
	\node[below of=V4] (Vk4) {\(V_k\)};
	\draw[->, thick] (W4) to (V4);
	\draw[-, thick] (V4) to (Vk4);
	\draw[<-, thick] (Vk4) to (W4);
	
	\end{tikzpicture}
	\end{center}
	\caption{Graphs for the proof of Lemma \ref{mylemma}.}
	\label{fig:mylemma}
\end{figure}

\begin{proposition}
	\label{nobiPDAG}
	Let \(\mathcal{G}\) be a causal maxPDAG with node set \(\mathbf{V}\), and let \(\mathbf{X}\subset\mathbf{V}\) and \(Y\in\mathbf{V}\setminus\mathbf{X}\) such that \(Y\in\mathrm{possde}(\mathbf{X},\mathcal{G})\) and \(\mathcal{G}\) is amenable relative to \((\mathbf{X},Y)\). Then a valid adjustment set relative to \((\mathbf{X},Y)\) in \(\mathcal{G}\) exists if and only if there is no bi-directed edge between any nodes in the forbidden projection \(\mathcal{G}^{\mathbf{X}Y}\).
\end{proposition}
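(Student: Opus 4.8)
The plan is to prove the contrapositive on both sides: no valid adjustment set relative to \((\mathbf{X},Y)\) exists in \(\mathcal{G}\) if and only if \(\mathcal{G}^{\mathbf{X}Y}\) contains a bi-directed edge. The argument mirrors the proof of Proposition~\ref{nobiDAG} but must contend with undirected edges, which is where Lemma~\ref{mylemma} enters. The first step I would take is to record the maxPDAG analogue of Lemma~\ref{EmaCorollary}: under amenability and \(Y\in\mathrm{possde}(\mathbf{X},\mathcal{G})\), a valid adjustment set exists if and only if \(\mathbf{X}\cap\mathrm{possde}(\mathrm{posscn}(\mathbf{X},Y,\mathcal{G}),\mathcal{G})=\emptyset\). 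I would obtain this from Corollary~27 of \citet{Perkovicetal2018} combined with Lemma~\ref{mylemma}, which lets \(\mathrm{possde}(\mathrm{posscn}(\cdot),\mathcal{G})\) play the role that \(\mathrm{de}(\mathrm{cn}(\cdot),\mathcal{D})\) plays for DAGs.

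For the direction in which a bi-directed edge forces non-existence, I would argue from the node set of the projection alone. Write \(\mathbf{F}=\mathrm{forb}(\mathbf{X},Y,\mathcal{G})\setminus(\mathbf{X}\cup\{Y\})\) and suppose \(\mathcal{G}^{\mathbf{X}Y}\) has an edge \(W_i\leftrightarrow W_j\). By Definition~\ref{gfprof}(2) there is a path \(W_i\leftarrow\dots\rightarrow W_j\) in \(\mathcal{G}\) with at least one interior node, all interior nodes being non-colliders in \(\mathbf{F}\); since \(\mathcal{G}\) has no bi-directed edges, the edge at \(W_i\) is \(F\rightarrow W_i\) for some \(F\in\mathbf{F}\). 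As \(W_i\) is a child of a node in \(\mathbf{F}\subseteq\mathrm{possde}(\mathrm{posscn}(\mathbf{X},Y,\mathcal{G}),\mathcal{G})\), it is itself a possible descendant of a possibly causal node, hence forbidden; but \(W_i\) is a node of \(\mathcal{G}^{\mathbf{X}Y}\), so \(W_i\notin\mathbf{F}\) and therefore \(W_i\in\mathbf{X}\cup\{Y\}\). The same holds for \(W_j\), and since \(Y\) is a single node and \(W_i\neq W_j\), at least one endpoint, say \(W_i\), lies in \(\mathbf{X}\). Then \(W_i\in\mathbf{X}\cap\mathrm{possde}(\mathrm{posscn}(\mathbf{X},Y,\mathcal{G}),\mathcal{G})\), and the analogue of Lemma~\ref{EmaCorollary} gives non-existence. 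As a by-product this shows every bi-directed edge of \(\mathcal{G}^{\mathbf{X}Y}\) has both endpoints in \(\mathbf{X}\cup\{Y\}\), which is why the statement can be phrased for arbitrary nodes.

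For the converse, assume no valid adjustment set exists. The analogue of Lemma~\ref{EmaCorollary} yields a node \(X^*\in\mathbf{X}\cap\mathrm{possde}(\mathrm{posscn}(\mathbf{X},Y,\mathcal{G}),\mathcal{G})\), so there is a possibly causal node \(C^*\) admitting possibly directed paths from \(C^*\) to \(X^*\) and from \(C^*\) to \(Y\). The crucial step is to upgrade these to directed paths: using amenability, which rules out undirected edges emanating from the causal cone of \(\mathbf{X}\), together with Lemma~\ref{mylemma}, I would show that \(C^*\) shares no undirected edge with any node on either path, so each possibly directed path contains a directed subpath \(C^*\rightarrow\dots\rightarrow X^*\) and \(C^*\rightarrow\dots\rightarrow Y\). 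Choosing \(X^*\) and the paths minimally so their interiors avoid \(\mathbf{X}\cup\{Y\}\), and concatenating at \(C^*\), I obtain \(X^*\leftarrow\dots\leftarrow C^*\rightarrow\dots\rightarrow Y\), on which \(C^*\) is a source and every interior node is a descendant of \(C^*\in\mathrm{posscn}\subseteq\mathbf{F}\), hence forbidden and a non-collider. By Definition~\ref{gfprof}(2) this produces the bi-directed edge \(X^*\leftrightarrow Y\) in \(\mathcal{G}^{\mathbf{X}Y}\).

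The main obstacle I anticipate is precisely the path-upgrading step in the converse. Unlike the DAG case, the possibly directed paths out of \(C^*\) may traverse undirected edges, and I must verify, via closure under Meek's rules as captured by Lemma~\ref{mylemma} and via amenability, that these yield genuinely directed segments whose interiors are forbidden non-colliders, rather than undirected edges or colliders that would destroy the \(W_i\leftarrow\dots\rightarrow W_j\) form. Establishing the maxPDAG analogue of Lemma~\ref{EmaCorollary} at the outset, so that the forbidden set and possibly causal nodes of \(\mathcal{G}\) behave uniformly across \([\mathcal{G}]\), is the secondary point that needs care before either direction goes through cleanly.
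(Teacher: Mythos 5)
Your overall strategy does mirror the paper's: reduce to a membership criterion generalising Lemma~\ref{EmaCorollary}, then translate membership into the fork-shaped path \(X^*\leftarrow\dots\leftarrow C^*\rightarrow\dots\rightarrow Y\) that Definition~\ref{gfprof}(2) converts into a bi-directed edge. Your forward direction also adds a worthwhile observation the paper leaves implicit: since every interior node of the projecting path lies in \(\mathbf{F}\) and the endpoints receive arrowheads from nodes in \(\mathbf{F}\), both endpoints of any bi-directed edge of \(\mathcal{G}^{\mathbf{X}Y}\) must lie in \(\mathbf{X}\cup\{Y\}\); this is exactly what is needed to justify the "between any nodes" phrasing of the maxPDAG statement, which the paper's one-line "analogous" proof glosses over.

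However, there is a genuine gap, and it sits precisely where you locate your "main obstacle." You replace Lemma~\ref{EmaCorollary} by a criterion phrased with \(\mathbf{X}\cap\mathrm{possde}(\mathrm{posscn}(\mathbf{X},Y,\mathcal{G}),\mathcal{G})\) and propose to obtain it from Corollary~27 of \cite{Perkovicetal2018} plus Lemma~\ref{mylemma}; but that criterion is never established, and it is where all the real work lies. The paper instead proves Lemma~\ref{generalisedEmaCorollary} in terms of \(\mathbf{X}\cap\mathrm{de}(\mathrm{cn}(\mathbf{X},Y,\mathcal{G}),\mathcal{G})\) via the \(\mathrm{adjust}\)-set characterisation (Lemma~\ref{adjust}, i.e.\ Theorem~5.6 of \cite{PerkovicKalischMaathuis2017}); the blocking argument there directly yields a collider-free path \(X^*\leftarrow\dots\leftarrow C^*\rightarrow\dots\rightarrow Y\) whose segments are genuinely directed with forbidden interiors, so no upgrading step is needed at all. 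In your version the upgrading does not go through as described: Lemma~\ref{mylemma} requires that \emph{no node on the path shares an undirected edge with the source} \(C^*\), and since \(C^*\) is itself a forbidden (possibly causal) node, an undirected edge from \(C^*\) to another forbidden node on the path is not excluded by amenability — amenability only constrains the first edge of proper possibly causal paths out of \(\mathbf{X}\), not edges "emanating from the causal cone." The same unproven criterion is also what your forward direction leans on at its final step ("\(W_i\in\mathbf{X}\cap\mathrm{possde}(\mathrm{posscn})\) gives non-existence"); that step would be immediate if you instead argued, as in Proposition~\ref{nobiDAG}, that the projecting path itself is a proper non-causal definite-status path from \(\mathbf{X}\) to \(Y\) all of whose non-colliders are forbidden and hence unblockable by any candidate adjustment set. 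So the proposal needs either the paper's route through Lemma~\ref{adjust}, or a genuine proof of your \(\mathrm{possde}(\mathrm{posscn})\) criterion together with the directed-subpath extraction, before the converse direction is complete.
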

\begin{proof}
	By Lemma \ref{mylemma}, \(Y\in\mathrm{possde}(\mathbf{X},\mathcal{G})\) implies \(Y\in\mathrm{de}(\mathbf{X},\mathcal{G})\). The proof is now analogous to the proof of Proposition \ref{nobiDAG}, with Lemma \ref{EmaCorollary} replaced by Lemma \ref{generalisedEmaCorollary}.
\end{proof}

\begin{lemma}
	\label{generalisedEmaCorollary}
	Let \(\mathcal{G}\) be a causal maxPDAG with node set \(\mathbf{V}\), and let \(\mathbf{X}\subset\mathbf{V}\) and \(Y\in\mathbf{V}\setminus\mathbf{X}\) such that \(Y\in\mathrm{de}(\mathbf{X},\mathcal{G})\) and \(\mathcal{G}\) is amenable relative to \((\mathbf{X},Y)\). Then a valid adjustment set relative to \((\mathbf{X},Y)\) in \(\mathcal{G}\) exists if and only if \(\mathbf{X}\cap\mathrm{de}(\mathrm{cn}(\mathbf{X},Y,\mathcal{G}),\mathcal{G})=\emptyset\).
\end{lemma}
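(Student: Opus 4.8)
The plan is to prove the two implications separately, handling the forward direction by reduction to the already-established DAG version (Lemma \ref{EmaCorollary}) and the backward direction by a direct construction of a valid adjustment set in \(\mathcal{G}\). The reason for this asymmetry will become clear below: the set-inclusions that make the reduction work point in only one direction.

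For the forward direction (existence of a valid set \(\Rightarrow\) \(\mathbf{X}\cap\mathrm{de}(\mathrm{cn}(\mathbf{X},Y,\mathcal{G}),\mathcal{G})=\emptyset\)), I would fix an arbitrary DAG \(\mathcal{D}\in[\mathcal{G}]\), which exists because \(\mathcal{G}\) is a causal maxPDAG. Since validity relative to \(\mathcal{G}\) requires equation (\ref{eq:VAS}) for every density compatible with \(\mathcal{G}\), a valid adjustment set \(\mathbf{Z}\) relative to \((\mathbf{X},Y)\) in \(\mathcal{G}\) is in particular valid relative to \((\mathbf{X},Y)\) in \(\mathcal{D}\). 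Every directed path of \(\mathcal{G}\) is also a directed path of \(\mathcal{D}\), so \(Y\in\mathrm{de}(\mathbf{X},\mathcal{G})\) gives \(Y\in\mathrm{de}(\mathbf{X},\mathcal{D})\), and Lemma \ref{EmaCorollary} applies, yielding \(\mathbf{X}\cap\mathrm{de}(\mathrm{cn}(\mathbf{X},Y,\mathcal{D}),\mathcal{D})=\emptyset\). The key monotonicity observations are that \(\mathrm{cn}(\mathbf{X},Y,\mathcal{G})\subseteq\mathrm{cn}(\mathbf{X},Y,\mathcal{D})\) (a proper causal path in \(\mathcal{G}\) remains one in \(\mathcal{D}\)) and \(\mathrm{de}(\mathbf{S},\mathcal{G})\subseteq\mathrm{de}(\mathbf{S},\mathcal{D})\) for any \(\mathbf{S}\); together these give \(\mathrm{de}(\mathrm{cn}(\mathbf{X},Y,\mathcal{G}),\mathcal{G})\subseteq\mathrm{de}(\mathrm{cn}(\mathbf{X},Y,\mathcal{D}),\mathcal{D})\), so the DAG conclusion transfers verbatim to \(\mathcal{G}\).

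For the backward direction I would argue directly in \(\mathcal{G}\) via the generalised adjustment criterion, since the inclusions above run the wrong way and cannot be reversed. The natural candidate is \(\mathbf{Z}^*=\mathrm{possan}(\mathbf{X}\cup\{Y\},\mathcal{G})\setminus\mathrm{forb}(\mathbf{X},Y,\mathcal{G})\); note that \(Y\in\mathrm{cn}(\mathbf{X},Y,\mathcal{G})\subseteq\mathrm{forb}(\mathbf{X},Y,\mathcal{G})\) and \(\mathbf{X}\subseteq\mathrm{forb}(\mathbf{X},Y,\mathcal{G})\), so \(\mathbf{Z}^*\) is automatically disjoint from \(\mathbf{X}\cup\{Y\}\). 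Conditions (a) and (b) of the generalised adjustment criterion hold at once: (a) is the assumed amenability, and (b) holds because \(\mathbf{Z}^*\) omits all forbidden nodes by construction. The whole weight of this direction therefore falls on condition (c): that every proper non-causal definite-status path from \(\mathbf{X}\) to \(Y\) is blocked by \(\mathbf{Z}^*\).

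I would establish (c) by contradiction, assuming an open proper non-causal definite-status path \(p=(V_0,\dots,V_K)\) with \(V_0\in\mathbf{X}\) and \(V_K=Y\), and deriving the existence of some \(X\in\mathbf{X}\) with \(X\in\mathrm{de}(\mathrm{cn}(\mathbf{X},Y,\mathcal{G}),\mathcal{G})\), contradicting the hypothesis. This path analysis is the main obstacle. On an open definite-status path every collider is an ancestor of \(\mathbf{Z}^*\) and hence a possible ancestor of \(\mathbf{X}\cup\{Y\}\), while every non-collider lies outside \(\mathbf{Z}^*\) and is therefore either forbidden or not a possible ancestor of \(\mathbf{X}\cup\{Y\}\); tracing these constraints from the \(Y\)-end back toward \(\mathbf{X}\) should force a sub-path witnessing that a treatment node descends from a causal node. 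Throughout this step I would invoke Lemma \ref{mylemma} to trade possibly directed paths for genuine directed paths (and thus \(\mathrm{possde}\) for \(\mathrm{de}\)), which is exactly what makes the amenable maxPDAG setting behave like the DAG setting and lets the argument mirror the reasoning behind Lemma \ref{EmaCorollary}. Handling the definite-status requirement and the collider bookkeeping correctly, rather than any single computation, is where the genuine difficulty lies.
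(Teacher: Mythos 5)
Your overall architecture is sound, and your forward direction is a correct alternative to the paper's: the paper proves that direction in contrapositive form by directly exhibiting a path \(X^*\leftarrow\dots\leftarrow C^*\rightarrow\dots\rightarrow Y\) whose non-endpoints are forbidden non-colliders and which therefore cannot be blocked by any admissible set, whereas you reduce to the DAG statement (Lemma \ref{EmaCorollary}) via the inclusions \(\mathrm{cn}(\mathbf{X},Y,\mathcal{G})\subseteq\mathrm{cn}(\mathbf{X},Y,\mathcal{D})\) and \(\mathrm{de}(\cdot,\mathcal{G})\subseteq\mathrm{de}(\cdot,\mathcal{D})\) for \(\mathcal{D}\in[\mathcal{G}]\). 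Both routes work; yours recycles the DAG result, the paper's is self-contained. Your candidate set \(\mathbf{Z}^*\) for the converse also coincides with \(\mathrm{adjust}(\mathbf{X},Y,\mathcal{G})\) from Lemma \ref{adjust}, since \(\mathbf{X}\subseteq\mathrm{forb}(\mathbf{X},Y,\mathcal{G})\), and your reduction of that direction to condition (c) of the generalised adjustment criterion is exactly what the paper does.

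The gap is that in the backward direction the decisive step is only announced, not proved: ``tracing these constraints \dots should force a sub-path'' is precisely where all the mathematical content of this lemma sits. The paper's argument has a specific two-stage structure that your sketch does not supply. First one shows that an open proper non-causal definite-status path \(p\) can contain \emph{no} collider at all: if \(C\) were a collider on \(p\) open given \(\mathbf{Z}^*\), a descendant of \(C\) would lie in \(\mathbf{Z}^*\), which forces \(\mathrm{an}(C,\mathcal{G})\cap\mathrm{forb}(\mathbf{X},Y,\mathcal{G})=\emptyset\); since \(Y\in\mathrm{forb}(\mathbf{X},Y,\mathcal{G})\), some neighbour \(B\) of \(C\) on \(p\) must then be a non-endpoint non-collider with \(B\in\mathrm{pa}(C,\mathcal{G})\), so \(B\) is a non-forbidden possible ancestor of \(\mathbf{X}\cup\{Y\}\), i.e.\ \(B\in\mathbf{Z}^*\), and \(p\) is blocked after all --- a contradiction. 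Only after colliders are eliminated does non-causality of \(p\) together with \(Y\in\mathrm{de}(\mathbf{X},\mathcal{G})\) (acyclicity) force \(p\) into the form \(X\leftarrow\dots\leftarrow A\rightarrow\dots\rightarrow Y\), whence the top node \(A\) is forbidden and \(X\in\mathrm{de}(\mathrm{cn}(\mathbf{X},Y,\mathcal{G}),\mathcal{G})\). Without this collider-elimination argument the contradiction does not go through, so as written the proposal establishes only the easier half of the lemma; you should carry out this step explicitly (it needs only blocking definitions and the closure of the forbidden set under descendants of causal nodes, not Lemma \ref{mylemma}, which enters elsewhere).
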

\begin{proof}
	We show that \textit{no} valid adjustment set relative to \((\mathbf{X},Y)\) in \(\mathcal{G}\) exists if and only if \(\mathbf{X}\cap\mathrm{de}(\mathrm{cn}(\mathbf{X},Y,\mathcal{G}),\mathcal{G})\ne\emptyset\). The proof is similar to the proof of Corollary 27 in \cite{Perkovicetal2018}.
	
	Assume first that no valid adjustment set relative to \((\mathbf{X},Y)\) in \(\mathcal{G}\) exists, then by Lemma \ref{adjust}, there is a proper non-causal definite-status path from some \(X\in\mathbf{X}\) to \(Y\) that is open given \(\mathrm{adjust}(\mathbf{X},Y,\mathcal{G})=\mathrm{possan}(\mathbf{X}\cup\{Y\},\mathcal{G})\setminus(\mathbf{X}\cup\mathrm{forb}(\mathbf{X},Y,\mathcal{G}))\). Denote one such path by \(p\). Assume for contradiction that \(p\) contains a collider and denote the collider by \(C\). As \(p\) is open given \(\mathrm{adjust}(\mathbf{X},Y,\mathcal{G})\), a descendant of \(C\) is in \(\mathrm{adjust}(\mathbf{X},Y,\mathcal{G})\). This implies that \(\mathrm{an}(C,\mathcal{G})\cap\mathrm{forb}(\mathbf{X},Y,\mathcal{G})=\emptyset\), as otherwise all descendants of \(C\) would be in \(\mathrm{forb}(\mathbf{X},Y,\mathcal{G})\) and could not be in \(\mathrm{adjust}(\mathbf{X},Y,\mathcal{G})\). As \(Y\in\mathrm{forb}(\mathbf{X},Y,\mathcal{G})\), it follows that at least one of the nodes adjacent to \(C\) on \(p\) must be a non-endpoint non-collider on \(p\). Denote one such node by \(B\). As \(B\in\mathrm{pa}(C,\mathcal{G})\), \(B\not\in\mathrm{forb}(\mathbf{X},Y,\mathcal{G})\) and \(B\in\mathrm{adjust}(\mathbf{X},Y,\mathcal{G})\). But then \(p\) is not open given \(\mathrm{adjust}(\mathbf{X},Y,\mathcal{G})\), which is a contradiction. Hence, \(p\) does not contain a collider. As \(p\) is non-causal, \(p\) cannot be directed towards \(Y\), and as we assume that \(Y\in\mathrm{de}(\mathbf{X},\mathcal{G})\), \(p\) cannot be directed towards \(X\). Hence, \(p\) is a path of the form \(X\leftarrow\dots\leftarrow A\rightarrow\dots\rightarrow Y\), where every non-endpoint is a non-collider not in \(\mathrm{adjust}(\mathbf{X},Y,\mathcal{G})\). It follows that \(A\) is in \(\mathrm{forb}(\mathbf{X},Y,\mathcal{G})\) and thus is a descendant of \(\mathbf{X}\), which implies that \(X\in\mathrm{de}(\mathrm{cn}(\mathbf{X},Y,\mathcal{G}),\mathcal{G})\) and hence \(\mathbf{X}\cap\mathrm{de}(\mathrm{cn}(\mathbf{X},Y,\mathcal{G}),\mathcal{G})\ne\emptyset\).
	
	Assume now that \(\mathbf{X}\cap\mathrm{de}(\mathrm{cn}(\mathbf{X},Y,\mathcal{G}),\mathcal{G})\ne\emptyset\). Pick a node from \(\mathbf{X}\cap\mathrm{de}(\mathrm{cn}(\mathbf{X},Y,\mathcal{G}),\mathcal{G})\) and denote it by \(X^*\). Then there must exist a node \(C*\in\mathrm{cn}(\mathbf{X},Y,\mathcal{G})\) such that there is a path of the form \(X^*\leftarrow\dots\leftarrow C^*\rightarrow\dots\rightarrow\) where that all non-endpoint non-colliders on the path are in the forbidden set. This path cannot be blocked by any set of non-forbidden nodes.
\end{proof}

\begin{lemma}[Theorem 5.6 in \citealp{PerkovicKalischMaathuis2017}]
	\label{adjust}
	Let \(\mathbf{X}\) and \(\mathbf{Y}\) be disjoint node sets in a causal maxPDAG \(\mathcal{G}\) such that \(\mathcal{G}\) is amenable relative to \((\mathbf{X},\mathbf{Y})\), and let \(\mathrm{adjust}(\mathbf{X},\mathbf{Y},\mathcal{G})=\mathrm{possan}(\mathbf{X}\cup\mathbf{Y},\mathcal{G})\setminus(\mathbf{X}\cup\mathbf{Y}\cup\mathrm{forb}(\mathbf{X},\mathbf{Y},\mathcal{G}))\). Then a valid adjustment set relative to \((\mathbf{X},\mathbf{Y})\) in \(\mathcal{D}\) exists if and only if all proper non-causal definite-status paths from \(\mathbf{X}\) to \(\mathbf{Y}\) are blocked by \(\mathrm{adjust}(\mathbf{X},\mathbf{Y},\mathcal{G})\) in \(\mathcal{G}\).
\end{lemma}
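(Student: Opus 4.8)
The plan is to read the lemma as the assertion that $\mathrm{adjust}(\mathbf{X},\mathbf{Y},\mathcal{G})$ is a \emph{canonical} valid adjustment set: it is valid exactly when some valid adjustment set exists. Since $\mathcal{G}$ is amenable relative to $(\mathbf{X},\mathbf{Y})$ (condition (a)) and $\mathrm{adjust}(\mathbf{X},\mathbf{Y},\mathcal{G})$ is disjoint from $\mathrm{forb}(\mathbf{X},\mathbf{Y},\mathcal{G})$ by construction (condition (b)), the generalised adjustment criterion reduces the claim to showing that ``$\mathrm{adjust}(\mathbf{X},\mathbf{Y},\mathcal{G})$ blocks every proper non-causal definite-status path'' (condition (c)) is equivalent to ``$\mathrm{adjust}(\mathbf{X},\mathbf{Y},\mathcal{G})$ is itself a valid adjustment set''. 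The forward implication is then immediate: if $\mathrm{adjust}(\mathbf{X},\mathbf{Y},\mathcal{G})$ blocks all such paths it satisfies (a)--(c), so it is valid and a valid set exists.

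For the converse I would argue by contraposition. Assume some proper non-causal definite-status path $p=(V_0,\dots,V_K)$ from $\mathbf{X}$ to $\mathbf{Y}$ is open given $\mathrm{adjust}(\mathbf{X},\mathbf{Y},\mathcal{G})$, and let $\mathbf{Z}$ be an arbitrary set with $\mathbf{Z}\cap\mathrm{forb}(\mathbf{X},\mathbf{Y},\mathcal{G})=\emptyset$; the goal is to exhibit a proper non-causal definite-status path open given $\mathbf{Z}$, showing that no such $\mathbf{Z}$ can be valid. After truncating $p$ at its first node in $\mathbf{Y}$ I may assume no interior node lies in $\mathbf{X}\cup\mathbf{Y}$. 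The key intermediate claim is that every interior \emph{non-collider} of $p$ lies in $\mathrm{possan}(\mathbf{X}\cup\mathbf{Y},\mathcal{G})$: starting from such a node and following its outgoing or undirected edges along $p$, the definite-status property forces each further interior node to be a non-collider whose edges again point forward, so the trace is a possibly-directed subpath that ends at an endpoint in $\mathbf{X}\cup\mathbf{Y}$ or at an open collider $C$, and $C\in\mathrm{an}(\mathrm{adjust}(\mathbf{X},\mathbf{Y},\mathcal{G}),\mathcal{G})\subseteq\mathrm{possan}(\mathbf{X}\cup\mathbf{Y},\mathcal{G})$; transitivity of possible ancestry gives the claim. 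Since such a non-collider lies in $\mathrm{possan}(\mathbf{X}\cup\mathbf{Y},\mathcal{G})$ but is open given $\mathrm{adjust}(\mathbf{X},\mathbf{Y},\mathcal{G})$ and hence not in it, it must belong to $\mathrm{forb}(\mathbf{X},\mathbf{Y},\mathcal{G})$, and therefore cannot lie in $\mathbf{Z}$. Thus $\mathbf{Z}$ can never block $p$ at a non-collider.

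The only remaining way for $\mathbf{Z}$ to block $p$ is at a collider $C$ that is an ancestor of $\mathrm{adjust}(\mathbf{X},\mathbf{Y},\mathcal{G})$ but not of $\mathbf{Z}$, and I expect this to be the main obstacle. Here I would reroute: since $C\in\mathrm{an}(\mathrm{adjust}(\mathbf{X},\mathbf{Y},\mathcal{G}),\mathcal{G})$ there is a directed path from $C$ to some $D\in\mathrm{adjust}(\mathbf{X},\mathbf{Y},\mathcal{G})\subseteq\mathrm{possan}(\mathbf{X}\cup\mathbf{Y},\mathcal{G})$, and from $D$ a possibly-directed path toward $\mathbf{X}\cup\mathbf{Y}$, which Lemma~\ref{mylemma} together with amenability lets me replace by a genuine directed path to an endpoint. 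Splicing the $\mathbf{X}$-side segment of $p$ up to $C$ with this directed detour yields a new proper non-causal definite-status path terminating in $\mathbf{X}\cup\mathbf{Y}$ whose offending collider has been resolved into a directed descent; iterating over the remaining such colliders produces a proper non-causal definite-status path open given $\mathbf{Z}$, the desired contradiction. The delicate part is exactly the bookkeeping of this surgery --- keeping the concatenations simple (cutting at first intersections to avoid repeated nodes), of definite status, proper, and non-causal --- which is where amenability and Lemma~\ref{mylemma} do the real work, following the pattern of the proof of Theorem~5.6 in \citet{PerkovicKalischMaathuis2017}.
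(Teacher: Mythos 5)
This lemma is not proved in the paper at all: it is imported verbatim as Theorem~5.6 of \citet{PerkovicKalischMaathuis2017}, so there is no in-paper argument to compare your proposal against. Judged on its own terms, your skeleton is the standard one for completeness results about $\mathrm{adjust}$: the reduction via the generalised adjustment criterion, the truncation of $p$ at its first node in $\mathbf{Y}$, and the observation that every interior non-collider of a path open given $\mathrm{adjust}(\mathbf{X},\mathbf{Y},\mathcal{G})$ lies in $\mathrm{possan}(\mathbf{X}\cup\mathbf{Y},\mathcal{G})$ and hence in $\mathrm{forb}(\mathbf{X},\mathbf{Y},\mathcal{G})$, so that no candidate $\mathbf{Z}$ can block $p$ at a non-collider, are all correct and are exactly how the argument has to begin. (Minor quibble: your trace does not force ``each further interior node to be a non-collider''; it terminates the first time it hits a collider or an endpoint, which is what you in fact use.)

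The genuine gap is the collider surgery, which is the entire content of the hard direction and is left as a promissory note. Two concrete problems. First, to convert $C\in\mathrm{an}(\mathrm{adjust}(\mathbf{X},\mathbf{Y},\mathcal{G}),\mathcal{G})$ into a \emph{directed} path from $C$ to $\mathbf{X}\cup\mathbf{Y}$ you append a possibly directed path out of some $D\in\mathrm{adjust}(\mathbf{X},\mathbf{Y},\mathcal{G})$ and invoke Lemma~\ref{mylemma} ``together with amenability''. But Lemma~\ref{mylemma} requires that no node on the possibly directed path shares an undirected edge with its \emph{first} node, and amenability only controls possibly causal paths leaving $\mathbf{X}$ --- it says nothing about paths leaving an arbitrary $D\in\mathrm{adjust}(\mathbf{X},\mathbf{Y},\mathcal{G})$. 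So the directed detour is not yet established, and without it you cannot conclude that the detour avoids $\mathbf{Z}$ (which you need, via $C\notin\mathrm{an}(\mathbf{Z},\mathcal{G})$, to keep the rerouted path open). Second, the detour may terminate in $\mathbf{X}$ rather than $\mathbf{Y}$, in which case the spliced walk is no longer a proper path from $\mathbf{X}$ to $\mathbf{Y}$ and the construction must restart from a new source node; you give no termination argument for this iteration, nor do you verify that after each splice the result is still non-causal (this does follow, since a directed $\mathbf{X}$-side prefix up to $C$ would force $C\in\mathrm{forb}(\mathbf{X},\mathbf{Y},\mathcal{G})$, contradicting $C\in\mathrm{an}(\mathrm{adjust}(\mathbf{X},\mathbf{Y},\mathcal{G}),\mathcal{G})$, but it needs saying). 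As written, the proposal proves the easy direction and half of the completeness direction; deferring the rest to ``the pattern of'' the published proof leaves precisely the part that makes the theorem nontrivial unproved.
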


\begin{proposition}
	\label{keyprop}
	Let \(\mathcal{G}\) be a causal maxPDAG with node set \(\mathbf{V}\), and let \(\mathbf{X}\subset\mathbf{V}\) and \(Y\in\mathbf{V}\setminus\mathbf{X}\) such that \(\mathcal{G}\) is amenable relative to \((\mathbf{X},Y)\) and there exists a valid adjustment set relative to \((\mathbf{X},Y)\) in \(\mathcal{G}\). Denote the set of DAGs represented by \(\mathcal{G}\) by \([\mathcal{G}]=\{\mathcal{D}_1, \mathcal{D}_2, \dots, \mathcal{D}_M\}\). Then the forbidden projection \(\mathcal{G}^{\mathbf{X}Y}\) is the causal maxPDAG representing the DAGs in \(\{\mathcal{D}_1^{\mathbf{X}Y}, \mathcal{D}_2^{\mathbf{X}Y}, \dots, \mathcal{D}_M^{\mathbf{X}Y}\}\).
\end{proposition}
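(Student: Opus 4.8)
The plan is to reduce the maxPDAG statement to the DAG-level results of Section~3, applied to each \(\mathcal{D}_i\in[\mathcal{G}]\) (in particular Proposition~\ref{XYDAG}), together with the maxPDAG analogues already available (Proposition~\ref{nobiPDAG}). If \(Y\notin\mathrm{possde}(\mathbf{X},\mathcal{G})\), then by Lemma~\ref{mylemma} \(Y\notin\mathrm{de}(\mathbf{X},\mathcal{D}_i)\) for every \(\mathcal{D}_i\), all forbidden sets equal \(\mathbf{X}\), nothing is projected, and the claim is immediate. So assume \(Y\in\mathrm{possde}(\mathbf{X},\mathcal{G})\); amenability and Lemma~\ref{mylemma} then give \(Y\in\mathrm{de}(\mathbf{X},\mathcal{G})\) and \(Y\in\mathrm{de}(\mathbf{X},\mathcal{D}_i)\) for each \(i\). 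The first key step is to prove the \emph{forbidden-set invariance}
\[\mathbf{F}:=\mathrm{forb}(\mathbf{X},Y,\mathcal{G})\setminus(\mathbf{X}\cup\{Y\})=\mathrm{forb}(\mathbf{X},Y,\mathcal{D}_i)\setminus(\mathbf{X}\cup\{Y\})\quad\text{for every }i,\]
i.e.\ the set projected out is the same in \(\mathcal{G}\) and in each member of its equivalence class. This uses amenability to identify \(\mathrm{posscn}(\mathbf{X},Y,\mathcal{G})\) with \(\mathrm{cn}(\mathbf{X},Y,\mathcal{D}_i)\) and \(\mathrm{possde}\) with \(\mathrm{de}\) along the relevant paths, again via Lemma~\ref{mylemma}. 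Consequently all graphs \(\mathcal{G}^{\mathbf{X}Y}\) and \(\mathcal{D}_i^{\mathbf{X}Y}\) share the common node set \(\mathbf{V}\setminus\mathbf{F}\).

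With this in hand I would record the basic shape of the projected graphs. By Proposition~\ref{XYDAG}, since a valid adjustment set exists in each \(\mathcal{D}_i\), every \(\mathcal{D}_i^{\mathbf{X}Y}\) is a causal DAG; by Proposition~\ref{nobiPDAG}, \(\mathcal{G}^{\mathbf{X}Y}\) contains no bi-directed edge. Next I would observe that \(Y\) has no undirected edge in \(\mathcal{G}^{\mathbf{X}Y}\): an undirected neighbour \(W-Y\) makes \(W\) a possible descendant of \(Y\), hence \(W\in\mathrm{possde}(\mathrm{posscn}(\mathbf{X},Y,\mathcal{G}),\mathcal{G})\subseteq\mathbf{F}\) (amenability excludes \(W\in\mathbf{X}\)), so \(W\) is projected out. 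Combined with Lemma~\ref{intoY} and its maxPDAG analogue implicit in Definition~\ref{gfprof}, this shows that the only edges created by the projection are directed edges into \(Y\); in particular \(\mathcal{G}^{\mathbf{X}Y}\) carries no double edges and is a genuine PDAG.

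Then comes the edge-by-edge identification of \(\mathcal{G}^{\mathbf{X}Y}\) with the representing maxPDAG of \(\{\mathcal{D}_i^{\mathbf{X}Y}\}\). For undirected edges, item~3 of Definition~\ref{gfprof} gives \(A-B\) in \(\mathcal{G}^{\mathbf{X}Y}\) iff \(A-B\) in \(\mathcal{G}\); since \(A,B\notin\mathbf{F}\cup\{Y\}\), such an edge is preserved verbatim under each DAG projection, so it corresponds to \(A\to B\) in some \(\mathcal{D}_i^{\mathbf{X}Y}\) and \(A\leftarrow B\) in another. For directed edges, a directed path through \(\mathbf{F}\) in \(\mathcal{G}\) consists of edges directed identically in every \(\mathcal{D}_i\), so \(A\to B\) in \(\mathcal{G}^{\mathbf{X}Y}\) forces \(A\to B\) in all \(\mathcal{D}_i^{\mathbf{X}Y}\), while conversely an edge directed the same way in all \(\mathcal{D}_i^{\mathbf{X}Y}\) cannot be undirected in \(\mathcal{G}^{\mathbf{X}Y}\). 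Hence the skeletons coincide, the orientation of \(\mathcal{G}^{\mathbf{X}Y}\) is exactly the consensus orientation, and the \(\mathcal{D}_i^{\mathbf{X}Y}\) all share skeleton and v-structures. I would finish by checking that \(\mathcal{G}^{\mathbf{X}Y}\) is closed under Meek's rules: its undirected edges lie within \(\mathbf{V}\setminus(\mathbf{F}\cup\{Y\})\) and agree with those of the Meek-closed graph \(\mathcal{G}\), while the only new edges point into \(Y\), a node with no outgoing edge, so they cannot fire any orientation rule.

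The main obstacle I anticipate is twofold, both parts concerning the interplay between the equivalence class and the projection. First, the forbidden-set invariance of the opening step is what makes the whole reduction legitimate, and establishing it rigorously requires pushing amenability through possibly directed paths that may traverse \(\mathbf{F}\). Second, to conclude that \(\mathcal{G}^{\mathbf{X}Y}\) represents \emph{exactly} \(\{\mathcal{D}_i^{\mathbf{X}Y}\}\) and not a larger class, I must exhibit a bijection between the consistent DAG extensions of \(\mathcal{G}\) and those of \(\mathcal{G}^{\mathbf{X}Y}\): orienting an undirected edge of \(\mathcal{G}^{\mathbf{X}Y}\) is the same as orienting the corresponding non-forbidden undirected edge of \(\mathcal{G}\), while every edge inside or into \(\mathbf{F}\) is already determined, so the two extension problems are identified and the projection maps one class onto the other. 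Verifying that this correspondence is well defined and surjective—so that no extension of \(\mathcal{G}^{\mathbf{X}Y}\) fails to arise as some \(\mathcal{D}_i^{\mathbf{X}Y}\)—is the delicate bookkeeping that I expect to absorb most of the work.
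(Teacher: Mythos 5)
Your proposal is correct and follows essentially the same route as the paper's proof: reduce to the case \(Y\in\mathrm{de}(\mathbf{X},\mathcal{G})\) via Lemma~\ref{mylemma}, rule out bi-directed edges via Propositions~\ref{nobiDAG} and~\ref{nobiPDAG}, show that the only edges created by projection are directed edges into \(Y\) and are identical across the equivalence class (the paper packages this as Lemma~\ref{adddir}), and verify Meek-closure using the fact that \(Y\) has no undirected edges in the projection. The forbidden-set invariance you plan to establish as a first step is already available as Lemma~\ref{forbforb} (Lemma E.8 of \citetalias{HenckelPerkovicMaathuis2019}), so that part need not be re-proved.
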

\begin{proof}
	We only consider the case that \(Y\in\mathrm{possde}(\mathbf{X},\mathcal{G})\), as otherwise the proposition follows trivially from the fact that \(\mathcal{G}^{\mathbf{X}Y}=\mathcal{G}\). By Lemma \ref{mylemma}, \(Y\in\mathrm{possde}(\mathbf{X},\mathcal{G})\) implies \(Y\in\mathrm{de}(\mathbf{X},\mathcal{G})\). We know from Propositions \ref{nobiDAG} and \ref{nobiPDAG} that none of \(\mathcal{G}^{\mathbf{X}Y}, \mathcal{D}_1^{\mathbf{X}Y},\) \(\mathcal{D}_2^{\mathbf{X}Y}, \dots, \mathcal{D}_M^{\mathbf{X}Y}\) contain any bi-directed edges. Consider edges present in the latent projections but not in the original graphs: For the maxPDAG \(\mathcal{G}\), denote the set of edges in \(\mathcal{G}^{\mathbf{X}Y}\) but not in \(\mathcal{G}\) by \(\mathbf{e}(\mathcal{G})\), and define analogous sets \(\mathbf{e}(\mathcal{D}_1), \mathbf{e}(\mathcal{D}_2), \dots, \mathbf{e}(\mathcal{D}_M)\) for the DAGs \(\mathcal{D}_1, \mathcal{D}_2, \dots, \mathcal{D}_M\). None of \(\mathbf{e}(\mathcal{G}),\mathbf{e}(\mathcal{D}_1), \mathbf{e}(\mathcal{D}_2), \dots, \mathbf{e}(\mathcal{D}_M)\) contain any undirected edges. Further, any directed edges in any of \(\mathbf{e}(\mathcal{G}),\mathbf{e}(\mathcal{D}_1), \mathbf{e}(\mathcal{D}_2), \dots, \mathbf{e}(\mathcal{D}_M)\) are into \(Y\). This is because for every \(\mathcal{G}'\in\{\mathcal{G},\mathcal{D}_1, \mathcal{D}_2, \dots, \mathcal{D}_M\}\), an edge in \(\mathbf{e}(\mathcal{G}')\) corresponds to a directed path with at least one forbidden non-endpoint node in \(\mathcal{G}'\). If an edge in \(\mathbf{e}(\mathcal{G}')\) was into a node \(V\in\mathbf{V}\setminus(\mathbf{X}\cup\{Y\})\), then \(V\) would  itself be forbidden, which is a contradiction. If an edge in \(\mathbf{e}(\mathcal{G}')\) was into a node \(X\in\mathbf{X}\), then \(X\) would be in \(\mathrm{de}(\mathrm{cn}(\mathbf{X},Y,\mathcal{G}'),\mathcal{G}')\), which by Lemma \ref{generalisedEmaCorollary} contradicts our assumption that a valid adjustment set exists relative to \((\mathbf{X},Y)\) in \(\mathcal{G}'\). Hence, all edges in all of \(\mathbf{e}(\mathcal{G}),\mathbf{e}(\mathcal{D}_1), \mathbf{e}(\mathcal{D}_2), \dots, \mathbf{e}(\mathcal{D}_M)\) are into \(Y\). In fact, by Lemma \ref{adddir} below, \(\mathbf{e}(\mathcal{G})=\mathbf{e}(\mathcal{D}_1)=\mathbf{e}(\mathcal{D}_2)=\dots= \mathbf{e}(\mathcal{D}_M)=\mathbf{e}\). The graphs \(\mathcal{G}^{\mathbf{X}Y},\mathcal{D}_1^{\mathbf{X}Y}, \mathcal{D}_2^{\mathbf{X}Y}, \dots, \mathcal{D}_M^{\mathbf{X}Y}\) can thus be constructed by copying the induced subgraphs of \(\mathcal{G},\mathcal{D}_1, \mathcal{D}_2, \dots, \mathcal{D}_M\) with respect to \((\mathbf{V}\setminus\mathrm{forb}(\mathbf{X},Y,\mathcal{G}))\cup(\mathbf{X},Y)\) and adding the edges in \(\mathbf{e}\). Hence, \(\mathcal{G}^{\mathbf{X}Y}\) represents the DAGs in \(\{\mathcal{D}_1^{\mathbf{X}Y}, \mathcal{D}_2^{\mathbf{X}Y}, \dots, \mathcal{D}_M^{\mathbf{X}Y}\}\) in the sense that every directed edge in \(\mathcal{G}^{\mathbf{X}Y}\) is also present in all DAGs in \(\{\mathcal{D}_1^{\mathbf{X}Y}, \mathcal{D}_2^{\mathbf{X}Y}, \dots, \mathcal{D}_M^{\mathbf{X}Y}\}\), and for every undirected edge \(V_i-V_j\) in \(\mathcal{G}^{\mathbf{X}Y}\), there is at least one DAG in \(\{\mathcal{D}_1^{\mathbf{X}Y}, \mathcal{D}_2^{\mathbf{X}Y}, \dots, \mathcal{D}_M^{\mathbf{X}Y}\}\) with \(V_i\rightarrow V_j\) and at least one with \(V_i\leftarrow V_j\).
	
	In order show that \(\mathcal{G}^{\mathbf{X}Y}\) has all the characteristics of a maxPDAG, we show that \(\mathcal{G}^{\mathbf{X}Y}\) is closed under Meek's rules. Referring to Figure~\ref{fig:Meek}, we argue that the graphs on the  left-hand sides of Rules 1 -- 4 cannot be induced subgraphs of \(\mathcal{G}^{\mathbf{X}Y}\). Assume for contradiction that the left-hand graph of Rule 1, \(\rightarrow -\), was an induced subgraph of \(\mathcal{G}^{\mathbf{X}Y}\). As this graph is not an induced subgraph of \(\mathcal{G}\) by assumption, and all of \(\mathbf{e}(\mathcal{G}),\mathbf{e}(\mathcal{D}_1), \mathbf{e}(\mathcal{D}_2), \dots, \mathbf{e}(\mathcal{D}_M)\) consist of only directed edges into \(Y\), we can conclude that the directed edge in \(\rightarrow -\) is into \(Y\), i.e.\ \(\rightarrow Y-\). Hence, \(Y\) shares an undirected edge with some node \(V\) in \(\mathcal{G}\), but this means that \(V\) is a forbidden node in some \(\mathcal{D}\in[\mathcal{G}]\), which is not allowed according to Lemma \ref{forbforb}. By similar arguments, none of the graphs on the left-hand sides of Rules 1 -- 4 in Figure~\ref{fig:Meek} is an induced subgraph of \(\mathcal{G}^{\mathbf{X}Y}\).
\end{proof}

\begin{lemma}
	\label{adddir}
	Let \(\mathcal{G}\) be a causal maxPDAG with node set \(\mathbf{V}\), and let \(\mathbf{X}\subset\mathbf{V}\) and \(Y\in\mathbf{V}\setminus\mathbf{X}\) such that \(\mathcal{G}\) is amenable relative to \((\mathbf{X},Y)\) and there exists a valid adjustment set relative to \((\mathbf{X},Y)\) in \(\mathcal{G}\). Define \(\mathbf{F}=\mathrm{forb}(\mathbf{X},Y,\mathcal{G})\setminus(\mathbf{X}\cup\{Y\})\) and pick a node \(V_1\in\mathbf{V}\setminus\mathbf{F}\). Then the following two statements are equivalent:
	\begin{enumerate}
		\item[(i)] A DAG \(\mathcal{D}\in[\mathcal{G}]\) contains a directed path \(p=(V_1,V_2,\dots,V_K=Y)\) such that all non-endpoint nodes on \(p\) are in \(\mathbf{F}\).
		\item[(ii)] The maxPDAG \(\mathcal{G}\) contains a directed path \(q=(V_1=W_1,W_2,\dots,W_Q=Y)\) such that all non-endpoint nodes on \(q\) are in \(\mathbf{F}\).
	\end{enumerate}
\end{lemma}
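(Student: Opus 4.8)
The plan is to prove the two implications separately. The implication (ii)$\Rightarrow$(i) is immediate: by the interpretation of edges in a maxPDAG, every directed edge of $\mathcal{G}$ is present with the same orientation in every DAG $\mathcal{D}\in[\mathcal{G}]$. Hence the directed path $q$ of statement (ii) is literally a directed path in each $\mathcal{D}\in[\mathcal{G}]$, and its non-endpoint nodes lie in $\mathbf{F}$ by hypothesis, so taking $p=q$ establishes (i). All the work is therefore in (i)$\Rightarrow$(ii).

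For (i)$\Rightarrow$(ii), I would first observe that the directed path $p=(V_1,\dots,V_K=Y)$ of $\mathcal{D}$ is a \emph{possibly directed} path in $\mathcal{G}$: since $\mathcal{D}$ and $\mathcal{G}$ share a skeleton and $\mathcal{G}$ cannot orient an edge opposite to $\mathcal{D}$, no edge of $p$ points backward in $\mathcal{G}$. The goal is then to invoke Lemma \ref{mylemma} to extract a directed subpath of $p$ in $\mathcal{G}$ from $V_1$ to $Y$. Because any such subpath is a subsequence of $p$ with the same endpoints, its non-endpoint nodes automatically lie in $\{V_2,\dots,V_{K-1}\}\subseteq\mathbf{F}$, which is exactly what (ii) requires. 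So once Lemma \ref{mylemma} applies, the conclusion follows at once.

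The main obstacle is verifying the hypothesis of Lemma \ref{mylemma}, namely that no node on $p$ shares an undirected edge with $V_1$. I would argue this by contradiction. The key preliminary observation is that every node $V_j$ on $p$ with $j\ge 2$ lies in $\mathrm{forb}(\mathbf{X},Y,\mathcal{G})$: the interior nodes are in $\mathbf{F}\subseteq\mathrm{forb}(\mathbf{X},Y,\mathcal{G})$ by hypothesis, and $Y$ is a possibly causal node and hence also in $\mathrm{forb}(\mathbf{X},Y,\mathcal{G})$. Now suppose $V_1-V_j$ for some $j\ge 2$, and split on the status of $V_1\in\mathbf{V}\setminus\mathbf{F}$. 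If $V_1\in\mathbf{X}$, then concatenating the undirected edge $V_1-V_j$ with the possibly directed segment of $p$ from $V_j$ to $Y$ yields a proper possibly causal path from $\mathbf{X}$ to $Y$ (proper because all interior nodes are in $\mathbf{F}$, which is disjoint from $\mathbf{X}$) that starts with an undirected edge, contradicting amenability of $\mathcal{G}$ relative to $(\mathbf{X},Y)$.

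In the remaining case $V_1\notin\mathrm{forb}(\mathbf{X},Y,\mathcal{G})$, I would use $V_j\in\mathrm{forb}(\mathbf{X},Y,\mathcal{G})=\mathrm{possde}(\mathrm{posscn}(\mathbf{X},Y,\mathcal{G}),\mathcal{G})\cup\mathbf{X}$, so that $V_j\in\mathrm{possde}(C,\mathcal{G})$ for some possibly causal node $C\in\mathrm{posscn}(\mathbf{X},Y,\mathcal{G})$. Appending the undirected edge $V_j-V_1$ to a possibly directed path from $C$ to $V_j$ produces a possibly directed path from $C$ to $V_1$ (an undirected edge at the end introduces no backward arrow), whence $V_1\in\mathrm{possde}(C,\mathcal{G})\subseteq\mathrm{forb}(\mathbf{X},Y,\mathcal{G})$, contradicting $V_1\notin\mathrm{forb}(\mathbf{X},Y,\mathcal{G})$. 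Since $V_1\neq Y$ implies $V_1\in(\mathbf{V}\setminus\mathrm{forb}(\mathbf{X},Y,\mathcal{G}))\cup\mathbf{X}$, these two cases are exhaustive, so the hypothesis of Lemma \ref{mylemma} is secured and the proof is complete. I expect this case analysis, and in particular the clean reduction of the endpoint case $V_j=Y$ to the same two arguments via $Y\in\mathrm{forb}(\mathbf{X},Y,\mathcal{G})$, to be the delicate part.
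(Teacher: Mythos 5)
Your proposal is correct and follows essentially the same route as the paper: (ii)$\Rightarrow$(i) via the edge-interpretation of maxPDAGs, and (i)$\Rightarrow$(ii) by noting $p$ is possibly directed in $\mathcal{G}$, ruling out undirected edges between $V_1$ and later nodes of $p$ through the same two-case contradiction (amenability when $V_1\in\mathbf{X}$; $V_1$ would be a possible descendant of a forbidden node otherwise), and then invoking Lemma \ref{mylemma}. If anything, you treat the endpoint case $V_j=Y$ (via $Y\in\mathrm{forb}(\mathbf{X},Y,\mathcal{G})$) more explicitly than the paper, which folds it silently into the same argument.
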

\begin{proof}
	Statement (ii) implies that the directed path \(p\) is present in all DAGs in \([\mathcal{G}]\) by the defining properties of a maxPDAG. Hence, we only show that (i) implies (ii). Again by the properties of maxPDAGs, the sequence of nodes \((V_1,V_2,\dots,V_K=Y)\) forms a possibly directed path from \(V_1\) to \(Y\) in \(\mathcal{G}\). We first show that no node in \(\{V_2,\dots,V_K=Y\}\) shares an undirected edge with \(V_1\). Suppose, for contradiction, that node \(V_k, 2\le k\le K\) shares an undirected edge with \(V_1\) and distinguish two cases: (1) \(V_1\in\mathbf{X}\), (2) \(V_1\in\mathbf{V}\setminus\mathbf{X}\). The first case contradicts our assumption that \(\mathcal{G}\) is amenable relative to \((\mathbf{X},Y)\). The second case implies that \(V_1\), as a possible descendant of \(V_k\), is in \(\mathbf{F}\), but we chose \(V_1\) such that \(V_1\in\mathbf{V}\setminus\mathbf{F}\). Hence, no node in \(\{V_2,\dots,V_K=Y\}\) shares an undirected edge with \(V_1\). We can thus apply Lemma \ref{mylemma} and conclude that a subsequence of \((V_1,V_2,\dots,V_K=Y)\) forms a directed path from \(V_1\) to \(Y\) in \(\mathcal{G}\), which implies that statement (ii) holds.
\end{proof}

\begin{lemma}[Lemma E.8 in \citetalias{HenckelPerkovicMaathuis2019}]
	\label{forbforb}
	Let \(\mathbf{X}\) and \(\mathbf{Y}\) be disjoint node sets in a maxPDAG \(\mathcal{G}\), such that \(\mathcal{G}\) is amenable relative to \((\mathbf{X},\mathbf{Y})\), and let \(\mathcal{D}\) be a DAG in \([\mathcal{G}]\). Then \(\mathrm{forb}(\mathbf{X},\mathbf{Y},\mathcal{G})=\mathrm{forb}(\mathbf{X},\mathbf{Y},\mathcal{D})\).
\end{lemma}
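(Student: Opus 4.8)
The plan is to prove the two inclusions $\mathrm{forb}(\mathbf{X},\mathbf{Y},\mathcal{D})\subseteq\mathrm{forb}(\mathbf{X},\mathbf{Y},\mathcal{G})$ and $\mathrm{forb}(\mathbf{X},\mathbf{Y},\mathcal{G})\subseteq\mathrm{forb}(\mathbf{X},\mathbf{Y},\mathcal{D})$ separately. Recall that $\mathrm{forb}(\mathbf{X},\mathbf{Y},\mathcal{G})=\mathrm{possde}(\mathrm{posscn}(\mathbf{X},\mathbf{Y},\mathcal{G}),\mathcal{G})\cup\mathbf{X}$ and $\mathrm{forb}(\mathbf{X},\mathbf{Y},\mathcal{D})=\mathrm{de}(\mathrm{cn}(\mathbf{X},\mathbf{Y},\mathcal{D}),\mathcal{D})\cup\mathbf{X}$, so the common term $\mathbf{X}$ may be discarded and I only need to compare the two descendant sets. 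Throughout I use that, since $\mathcal{D}\in[\mathcal{G}]$, every directed edge of $\mathcal{D}$ is either directed or undirected in $\mathcal{G}$ and is never reversed. The inclusion $\mathrm{forb}(\mathcal{D})\subseteq\mathrm{forb}(\mathcal{G})$ is then immediate and uses no amenability: a proper causal path in $\mathcal{D}$ is a proper possibly causal path in $\mathcal{G}$, so $\mathrm{cn}(\mathbf{X},\mathbf{Y},\mathcal{D})\subseteq\mathrm{posscn}(\mathbf{X},\mathbf{Y},\mathcal{G})$, and a directed $\mathcal{D}$-path is a possibly directed $\mathcal{G}$-path, so $\mathrm{de}(\mathbf{S},\mathcal{D})\subseteq\mathrm{possde}(\mathbf{S},\mathcal{G})$ for any set $\mathbf{S}$; combining these gives the inclusion.

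For the hard inclusion $\mathrm{forb}(\mathcal{G})\subseteq\mathrm{forb}(\mathcal{D})$, amenability is essential. I would first record that any proper possibly causal path $p=(V_0,\dots,V_m)$ from $\mathbf{X}$ to $\mathbf{Y}$ in $\mathcal{G}$ has no node sharing an undirected edge with its start $V_0$: if some $V_i$ did, then the undirected edge $V_0-V_i$ concatenated with the subpath of $p$ from $V_i$ to $V_m$ would be a proper possibly causal path from $\mathbf{X}$ to $\mathbf{Y}$ beginning with an undirected edge, contradicting amenability. Lemma~\ref{mylemma} then yields a directed path $\rho$ from $V_0$ to $V_m$ in $\mathcal{G}$, hence in $\mathcal{D}$, all of whose non-$V_0$ nodes are causal in $\mathcal{D}$. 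Inspecting the subsequence construction in the proof of Lemma~\ref{mylemma} shows moreover that every node of $p$ is either on $\rho$ or a directed child in $\mathcal{G}$ of a node on $\rho$; hence every node of $p$ other than $V_0$ lies in $\mathrm{de}(\mathrm{cn}(\mathbf{X},\mathbf{Y},\mathcal{D}),\mathcal{D})$ (for a child of $V_0$ itself one additionally uses its adjacency in $\mathcal{D}$ to the following node of $\rho$). In particular, every possibly causal node of $\mathcal{G}$ is a $\mathcal{D}$-descendant of a causal node of $\mathcal{D}$.

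It then remains to propagate this from the possibly causal nodes to their arbitrary possible descendants. Given $V\in\mathrm{forb}(\mathcal{G})\setminus\mathbf{X}$, write $V\in\mathrm{possde}(C,\mathcal{G})$ with $C\in\mathrm{posscn}(\mathbf{X},\mathbf{Y},\mathcal{G})$; then $V$ is reached from a causal node of $\mathcal{D}$ by a walk each of whose edges is forward-directed or undirected in $\mathcal{G}$ (a directed edge from $\rho$ to $C$, followed by a possibly directed $\mathcal{G}$-path from $C$ to $V$). The crux is therefore a propagation lemma: if $A\in\mathrm{de}(\mathrm{cn}(\mathbf{X},\mathbf{Y},\mathcal{D}),\mathcal{D})$ and $\mathcal{G}$ contains $A\to B$ or $A-B$, then $B\in\mathrm{de}(\mathrm{cn}(\mathbf{X},\mathbf{Y},\mathcal{D}),\mathcal{D})$. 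Iterating this lemma along the walk gives $V\in\mathrm{de}(\mathrm{cn}(\mathbf{X},\mathbf{Y},\mathcal{D}),\mathcal{D})\subseteq\mathrm{forb}(\mathcal{D})$, which finishes the proof. The directed-edge case of the lemma and the case where an undirected $\mathcal{G}$-edge is oriented forward in $\mathcal{D}$ are trivial, since descendants of descendants of causal nodes are again descendants of causal nodes.

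The main obstacle is the remaining case of the propagation lemma, where an undirected $\mathcal{G}$-edge $A-B$ is oriented \emph{backward} as $B\to A$ in $\mathcal{D}$. Here I would exploit that all DAGs in $[\mathcal{G}]$ share exactly the unshielded v-structures already oriented in $\mathcal{G}$: choosing a parent $P$ of $A$ on a directed $\mathcal{D}$-path certifying $A\in\mathrm{de}(\mathrm{cn}(\mathbf{X},\mathbf{Y},\mathcal{D}),\mathcal{D})$, the triple $P\to A\leftarrow B$ cannot be an unshielded collider, because it would then be oriented in $\mathcal{G}$ and contradict $A-B$ being undirected; so $P$ and $B$ are adjacent. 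A case analysis on the edge $P\sim B$, repeatedly invoking Meek's Rules 1 and 2 (Figure~\ref{fig:Meek}) and using amenability to control edges incident to $\mathbf{X}$, then traces $B$ back along the causal chain and places it in $\mathrm{de}(\mathrm{cn}(\mathbf{X},\mathbf{Y},\mathcal{D}),\mathcal{D})$. Ensuring that this backward tracing terminates at a causal node (or at a node of $\mathbf{X}$ whose edges, by amenability, point outward and thus recreate a causal path through $B$) rather than regressing indefinitely is the delicate point, and it is exactly where the Meek-closure of the maxPDAG together with amenability is indispensable.
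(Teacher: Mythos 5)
First, note that the paper itself does not prove Lemma~\ref{forbforb}: it is imported as Lemma~E.8 of \citetalias{HenckelPerkovicMaathuis2019}, so there is no in-paper argument to compare against and you are in effect reconstructing the proof from that reference. Within your attempt, the inclusion \(\mathrm{forb}(\mathbf{X},\mathbf{Y},\mathcal{D})\subseteq\mathrm{forb}(\mathbf{X},\mathbf{Y},\mathcal{G})\) is fine, and so is the observation that amenability prevents any node of a proper possibly causal path from sharing an undirected edge with its first node, which correctly licenses the use of Lemma~\ref{mylemma}.

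The gap lies in the two places where you pass from ``reachable by a possibly directed path in \(\mathcal{G}\)'' to ``a \(\mathcal{D}\)-descendant of a causal node'', neither of which is actually proved. (i) In the subsequence construction of Lemma~\ref{mylemma}, a dropped node \(C\) is a child in \(\mathcal{G}\) of the node of \(\rho\) preceding it; when that preceding node is \(V_0\in\mathbf{X}\) itself, \(C\) is merely a child of \(\mathbf{X}\), which is not forbidden per se. Your parenthetical fix (``use its adjacency in \(\mathcal{D}\) to the following node of \(\rho\)'') does not suffice, because several consecutive nodes of \(p\) can be dropped in a row, so \(C\) need not be adjacent to any node of the final \(\rho\) other than \(V_0\); one is left with a chain of undirected \(\mathcal{G}\)-edges among children of \(V_0\) terminating at \(\rho\), and placing every node of that chain in \(\mathrm{de}(\mathrm{cn}(\mathbf{X},\mathbf{Y},\mathcal{D}),\mathcal{D})\) needs its own induction. (ii) More seriously, the hard case of your propagation lemma---\(A\in\mathrm{de}(\mathrm{cn}(\mathbf{X},\mathbf{Y},\mathcal{D}),\mathcal{D})\), \(A-B\) in \(\mathcal{G}\), \(B\rightarrow A\) in \(\mathcal{D}\)---is a plan rather than a proof. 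The shielding step (\(P\) and \(B\) must be adjacent, else \(P\rightarrow A\leftarrow B\) would be an unshielded collider of \(\mathcal{D}\) absent from \(\mathcal{G}\)) is correct, but the subsequent backward tracing is exactly where the statement could fail: if the recursion reaches the causal node \(N\) at the head of the certifying directed path with \(B\rightarrow N\) in \(\mathcal{D}\), then \(B\) is only a \emph{parent} of a causal node, and parents of causal nodes are not forbidden (they are precisely the candidates for the \(\mathbf{O}\)-set). You must rule this configuration out---for instance by showing that a directed path from \(B\) to \(A\) together with \(A-B\) would force the orientation \(B\rightarrow A\) in \(\mathcal{G}\) via Meek's Rule~2 in Figure~\ref{fig:Meek}, contradicting \(A-B\) being undirected---and that is the content you explicitly defer. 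As it stands, only the easy inclusion is established; \(\mathrm{forb}(\mathbf{X},\mathbf{Y},\mathcal{G})\subseteq\mathrm{forb}(\mathbf{X},\mathbf{Y},\mathcal{D})\) remains unproved.
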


\begin{proposition}
	\label{VASPDAG}
	Let \(\mathcal{G}\) be a causal maxPDAG with node set \(\mathbf{V}\) and let \(\mathbf{X}\subset\mathbf{V}\) and \(Y\in\mathbf{V}\setminus\mathbf{X}\) such that \(\mathcal{G}\) is amenable relative to \((\mathbf{X},Y)\). Then a set \(\mathbf{Z}\) is a valid adjustment set relative to \((\mathbf{X},Y)\) in \(\mathcal{G}\) if and only if it is a valid adjustment set relative to \((\mathbf{X},Y)\) in the forbidden projection \(\mathcal{G}^{\mathbf{X}Y}\).
\end{proposition}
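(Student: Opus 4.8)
The plan is to reduce the maxPDAG statement to the DAG version, Proposition \ref{VAS}, by passing through the representative DAGs of $[\mathcal{G}]$, with Proposition \ref{keyprop} supplying the crucial fact that forming the forbidden projection commutes with this passage. Throughout I may assume $\mathbf{Z}$ is disjoint from $\mathbf{X}\cup\{Y\}$ and contains no node of $\mathbf{F}=\mathrm{forb}(\mathbf{X},Y,\mathcal{G})\setminus(\mathbf{X}\cup\{Y\})$, since otherwise $\mathbf{Z}$ is trivially invalid in both $\mathcal{G}$ and $\mathcal{G}^{\mathbf{X}Y}$ (in the latter such a $\mathbf{Z}$ is not even contained in the node set $\mathbf{V}\setminus\mathbf{F}$). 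I also dispose of the trivial case $Y\notin\mathrm{possde}(\mathbf{X},\mathcal{G})$, where $\mathcal{G}^{\mathbf{X}Y}=\mathcal{G}$ and there is nothing to prove; by Lemma \ref{mylemma} the remaining case is $Y\in\mathrm{de}(\mathbf{X},\mathcal{G})$. The one background fact I use repeatedly is that, by the definition of a valid adjustment set relative to a maxPDAG in Appendix \ref{appA}, a set is a valid adjustment set in a causal maxPDAG $\mathcal{H}$ if and only if it is a valid adjustment set in every DAG in $[\mathcal{H}]$, because the densities compatible with $\mathcal{H}$ are precisely those compatible with some DAG in $[\mathcal{H}]$.

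Suppose first that a valid adjustment set relative to $(\mathbf{X},Y)$ exists in $\mathcal{G}$, and write $[\mathcal{G}]=\{\mathcal{D}_1,\dots,\mathcal{D}_M\}$. By Proposition \ref{keyprop}, $\mathcal{G}^{\mathbf{X}Y}$ is then a causal maxPDAG with $[\mathcal{G}^{\mathbf{X}Y}]=\{\mathcal{D}_1^{\mathbf{X}Y},\dots,\mathcal{D}_M^{\mathbf{X}Y}\}$ (Lemma \ref{forbforb} guarantees that all these DAG projections are taken over the same forbidden set, so they share the node set of $\mathcal{G}^{\mathbf{X}Y}$). I then chain equivalences: $\mathbf{Z}$ is a valid adjustment set in $\mathcal{G}$ if and only if it is valid in every $\mathcal{D}_i$ (maxPDAG--DAG equivalence), if and only if it is valid in every $\mathcal{D}_i^{\mathbf{X}Y}$ (Proposition \ref{VAS} applied to each DAG $\mathcal{D}_i$ with $\mathbf{Y}=\{Y\}$), if and only if it is valid in $\mathcal{G}^{\mathbf{X}Y}$ (maxPDAG--DAG equivalence again, now for the maxPDAG $\mathcal{G}^{\mathbf{X}Y}$). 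This settles the existence case, and it is the part I expect to be routine once Propositions \ref{keyprop} and \ref{VAS} are in hand.

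The main obstacle is the non-existence case, where $\mathcal{G}^{\mathbf{X}Y}$ contains a bi-directed edge and is therefore no longer a maxPDAG, so Proposition \ref{keyprop} and the clean reduction above cannot be applied uniformly. Here I argue directly that neither side of the equivalence ever holds. If no valid adjustment set exists in $\mathcal{G}$, then $\mathbf{Z}$ is trivially invalid in $\mathcal{G}$ for every $\mathbf{Z}$; and since $Y\in\mathrm{de}(\mathbf{X},\mathcal{G})$, Proposition \ref{nobiPDAG} yields a bi-directed edge, which by the construction in its proof (mirroring Proposition \ref{nobiDAG}) is of the form $X^*\leftrightarrow Y$ with $X^*\in\mathbf{X}$. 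This single edge is a proper non-causal definite-status path from $\mathbf{X}$ to $Y$ with no non-endpoint nodes, hence it cannot be blocked by any set, so condition (c) of the generalised adjustment criterion fails for every candidate $\mathbf{Z}$ and no $\mathbf{Z}$ is valid in $\mathcal{G}^{\mathbf{X}Y}$. Both sides are then false and the equivalence holds. The delicate point to spell out carefully in the write-up is what validity in $\mathcal{G}^{\mathbf{X}Y}$ should mean once the graph leaves the maxPDAG class: I would phrase it through the purely syntactic conditions of the generalised adjustment criterion (amenability, absence of forbidden nodes, and blocking of proper non-causal definite-status paths), which remain well-defined on the mixed graph $\mathcal{G}^{\mathbf{X}Y}$, and emphasise that an unblockable $\mathbf{X}$--$Y$ path defeats the blocking condition regardless of the edge types elsewhere in the graph.
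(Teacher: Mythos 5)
Your proposal is correct and follows essentially the same route as the paper: the paper's proof is exactly your chain of equivalences, passing from $\mathcal{G}$ to the DAGs in $[\mathcal{G}]$, applying Proposition \ref{VAS} to each, and reassembling via Proposition \ref{keyprop}. Your separate treatment of the case where no valid adjustment set exists (where $\mathcal{G}^{\mathbf{X}Y}$ leaves the maxPDAG class and Proposition \ref{keyprop} is inapplicable) is a point the paper's two-line proof silently skips, and your direct argument via the unblockable bi-directed $X^*\leftrightarrow Y$ edge from Proposition \ref{nobiPDAG} closes that gap correctly.
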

\begin{proof}
	Let \(\mathcal{D}\in[\mathcal{G}]\) and \(D^{\mathbf{X}Y}\) its forbidden projection. By Proposition \ref{VAS}, a set \(\mathbf{Z}\) is a valid adjustment set relative to \((\mathbf{X},Y)\) in \(\mathcal{D}\) if and only if it is a valid adjustment set relative to \((\mathbf{X},Y)\) in \(\mathcal{D}^{\mathbf{X}Y}\). By Proposition \ref{keyprop}, the set \([\mathcal{G}^{\mathbf{X}Y}]\) contains exactly the forbidden projections of all DAGs in \([\mathcal{G}]\). Hence if \(\mathbf{Z}\) is a valid adjustment set in all \(\mathcal{D}\in[\mathcal{G}]\), then it is a valid adjustment set in all \(\mathcal{D}^{\mathbf{X}Y}\in[\mathcal{G}^{\mathbf{X}Y}]\) and vice versa.
\end{proof}

To summarise, the forbidden projection for amenable causal maxPDAGs has very similar properties as the forbidden projection for causal DAGs, as long as we consider a singleton outcome node \(Y\): Bi-directed edges in the projection indicate the lack of a valid adjustment set; if a valid set exists, the forbidden projection is a maxPDAG itself, preserving all the information relevant to causal effect identification via adjustment; in particular, all valid sets can be read off the forbidden projection as well as the original graph.

Finally, we now generalise Definition \ref{Oset2} of the \(\mathbf{O}^*\)-set and its optimality property in Proposition \ref{OOequal} to amenable maxPDAGs with singleton \(Y\).

\begin{definition}
	Let \(\mathcal{G}\) be a causal maxPDAG with node set \(\mathbf{V}\), let \(\mathbf{X}\subset\mathbf{V}\) and \(Y\in\mathbf{V}\setminus\mathbf{X}\) such that \(\mathcal{G}\) is amenable relative to \((\mathbf{X},Y)\), and let \(\mathcal{G}^{\mathbf{X}Y}\) be the corresponding forbidden projection. We define \(\mathbf{O}^*(\mathbf{X},Y,\mathcal{G})\) as:
	\[
	\mathbf{O}^*(\mathbf{X},Y,\mathcal{G})=\mathrm{pa}(Y,\mathcal{G}^{\mathbf{XY}})\setminus \mathbf{X}.
	\]
\end{definition}

\begin{proposition}
	Let \(\mathcal{G}\) be a causal maxPDAG with node set \(\mathbf{V}\), let \(\mathbf{X}\subset\mathbf{V}\) and \(Y\in\mathbf{V}\setminus\mathbf{X}\) such that \(Y\in\mathrm{possde}(\mathbf{X}, \mathcal{D})\) and \(\mathcal{G}\) is amenable relative to \((\mathbf{X},Y)\), let \(\mathbf{Z}\) be a valid adjustment set relative to \((\mathbf{X},Y)\) in \(\mathcal{G}\) and let \(\mathbf{O}^*=\mathbf{O}^*(\mathbf{X},Y,\mathcal{G})\). Then \(\mathbf{O}^*\) is a valid adjustment set relative to \((\mathbf{X},Y)\) in \(\mathcal{G}\) and if the variables \(\mathbf{V}\) follow a linear causal model compatible with \(\mathcal{G}\), then, for every \(X_i\in\mathbf{X}\), \(a.var(\hat{\beta}_{yx_i.\mathbf{x}_{-i}\mathbf{o}^*})\le a.var(\hat{\beta}_{yx_i.\mathbf{x}_{-i}\mathbf{z}})\).
\end{proposition}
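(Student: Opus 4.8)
The plan is to reduce the claim to the already-established DAG results by exploiting the structural correspondence in Proposition \ref{keyprop} between the forbidden projection $\mathcal{G}^{\mathbf{X}Y}$ of the maxPDAG and the family $\{\mathcal{D}^{\mathbf{X}Y} : \mathcal{D}\in[\mathcal{G}]\}$ of projections of the DAGs it represents. Concretely, I would show that $\mathbf{O}^*(\mathbf{X},Y,\mathcal{G})$ coincides, as a single fixed set, with $\mathbf{O}(\mathbf{X},Y,\mathcal{D})$ for \emph{every} $\mathcal{D}\in[\mathcal{G}]$; once this is in hand, validity and the variance inequality follow from Propositions \ref{Osuff} and \ref{eff} applied DAG-by-DAG.

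The core step is the identity $\mathrm{pa}(Y,\mathcal{G}^{\mathbf{X}Y})=\mathrm{pa}(Y,\mathcal{D}^{\mathbf{X}Y})$ for each $\mathcal{D}\in[\mathcal{G}]$. First I would note that $Y\in\mathrm{possde}(\mathbf{X},\mathcal{G})$ gives $Y\in\mathrm{de}(\mathbf{X},\mathcal{G})$ by Lemma \ref{mylemma}, so $Y$ is a causal node and every node of $\mathrm{possde}(Y,\mathcal{G})\setminus(\mathbf{X}\cup\{Y\})$ lies in $\mathbf{F}=\mathrm{forb}(\mathbf{X},Y,\mathcal{G})\setminus(\mathbf{X}\cup\{Y\})$. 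Combining this with amenability and Lemma \ref{generalisedEmaCorollary}, I would argue that in $\mathcal{G}^{\mathbf{X}Y}$ the node $Y$ can have neither an outgoing directed edge (its target would be a forbidden descendant, hence projected out, or a node in $\mathbf{X}\cap\mathrm{de}(\mathrm{cn}(\mathbf{X},Y,\mathcal{G}),\mathcal{G})$, contradicting the existence of a valid set) nor an undirected edge (an undirected neighbour of $Y$ would be a forbidden possible descendant and thus projected out, while one in $\mathbf{X}$ would violate amenability). Hence every edge of $\mathcal{G}^{\mathbf{X}Y}$ incident to $Y$ is directed into $Y$, so $\mathrm{pa}(Y,\mathcal{G}^{\mathbf{X}Y})$ is exactly the neighbourhood of $Y$. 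By Proposition \ref{keyprop}, $\mathcal{G}^{\mathbf{X}Y}$ is the maxPDAG representing $\{\mathcal{D}^{\mathbf{X}Y}\}$, so $Y$ has the same adjacencies there as in each $\mathcal{D}^{\mathbf{X}Y}$, and these edges, being directed into $Y$ in the maxPDAG, are directed into $Y$ in every represented DAG; the parent sets therefore agree.

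From this identity, Definition \ref{Oset2} and Proposition \ref{OOequal} yield $\mathbf{O}^*(\mathbf{X},Y,\mathcal{G})=\mathrm{pa}(Y,\mathcal{D}^{\mathbf{X}Y})\setminus(\mathbf{X}\cup\{Y\})=\mathbf{O}^*(\mathbf{X},Y,\mathcal{D})=\mathbf{O}(\mathbf{X},Y,\mathcal{D})$ for every $\mathcal{D}\in[\mathcal{G}]$ (since $Y\notin\mathrm{pa}(Y,\cdot)$, removing $\mathbf{X}$ versus $\mathbf{X}\cup\{Y\}$ makes no difference). For validity, each $\mathbf{O}(\mathbf{X},Y,\mathcal{D})$ is a valid adjustment set in $\mathcal{D}$ by Proposition \ref{Osuff}: condition (i) holds because $Y\in\mathrm{de}(\mathbf{X},\mathcal{D})$, and condition (ii) holds because $\mathbf{Z}$, being valid in $\mathcal{G}$, is valid in $\mathcal{D}$. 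As this single set $\mathbf{O}^*$ is valid in every $\mathcal{D}\in[\mathcal{G}]$, and every density compatible with $\mathcal{G}$ is compatible with some $\mathcal{D}\in[\mathcal{G}]$, it is valid relative to $(\mathbf{X},Y)$ in $\mathcal{G}$. For the variance inequality, the given causal linear model compatible with $\mathcal{G}$ is compatible with some particular $\mathcal{D}\in[\mathcal{G}]$; applying Proposition \ref{eff}(a) to this DAG with $\mathbf{o}=\mathbf{O}(\mathbf{X},Y,\mathcal{D})=\mathbf{O}^*$ and the valid set $\mathbf{Z}$ gives $a.var(\hat{\beta}_{yx_i.\mathbf{x}_{-i}\mathbf{o}^*})\le a.var(\hat{\beta}_{yx_i.\mathbf{x}_{-i}\mathbf{z}})$ for every $X_i\in\mathbf{X}$.

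I expect the main obstacle to be the core step $\mathrm{pa}(Y,\mathcal{G}^{\mathbf{X}Y})=\mathrm{pa}(Y,\mathcal{D}^{\mathbf{X}Y})$: one must carefully rule out undirected and outgoing edges at $Y$ in the projection — exactly where amenability, the singleton-$Y$ restriction, and the existence of a valid adjustment set are all used — and then invoke Proposition \ref{keyprop} to transfer adjacency and orientation information from the maxPDAG to each represented DAG. The remaining bookkeeping, namely matching the set-difference conventions in the two $\mathbf{O}^*$ definitions and passing from per-DAG validity to validity in $\mathcal{G}$, is routine.
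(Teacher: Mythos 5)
Your proposal is correct, and its overall skeleton matches the paper's: establish that \(\mathbf{O}^*(\mathbf{X},Y,\mathcal{G})\) equals the common \(\mathbf{O}\)-set of the DAGs in \([\mathcal{G}]\), then invoke Propositions \ref{Osuff} and \ref{eff}. The difference lies in how the set identity is obtained. The paper's proof is a two-line chain that imports Lemma \ref{OO} (Lemma E.7 of \citetalias{HenckelPerkovicMaathuis2019}, giving \(\mathbf{O}(\mathbf{X},Y,\mathcal{G})=\mathbf{O}(\mathbf{X},Y,\mathcal{D})\) on the \emph{original} graphs) and combines it with Proposition \ref{OOequal}, leaving the final transfer \(\mathbf{O}^*(\mathbf{X},Y,\mathcal{D})=\mathbf{O}^*(\mathbf{X},Y,\mathcal{G})\) essentially implicit. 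You instead prove that transfer directly: you show \(\mathrm{pa}(Y,\mathcal{G}^{\mathbf{X}Y})=\mathrm{pa}(Y,\mathcal{D}^{\mathbf{X}Y})\) by ruling out outgoing and undirected edges at \(Y\) in the projection (via amenability, Lemma \ref{generalisedEmaCorollary} and Proposition \ref{keyprop}), which makes Lemma \ref{OO} unnecessary and fills in the step the paper glosses over; the price is that you must redo, at the level of \(Y\)'s neighbourhood, an argument that Proposition \ref{keyprop}'s proof already contains (all edges in \(\mathbf{e}(\mathcal{G})\) are directed into \(Y\), and an undirected edge at \(Y\) would contradict Lemma \ref{forbforb}). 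Two small points: when concluding that every edge at \(Y\) in \(\mathcal{G}^{\mathbf{X}Y}\) is into \(Y\) you should also dismiss bi-directed edges, which follows immediately from Proposition \ref{nobiPDAG} since \(\mathbf{Z}\) is assumed valid; and your DAG-by-DAG application of Propositions \ref{Osuff} and \ref{eff} followed by lifting to \(\mathcal{G}\) is sound but slightly roundabout, since both propositions are stated for maxPDAGs and can be applied to \(\mathcal{G}\) directly once \(\mathbf{O}^*(\mathbf{X},Y,\mathcal{G})=\mathbf{O}(\mathbf{X},Y,\mathcal{G})\) is known.
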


\begin{proof}
	We prove this by showing that \(\mathbf{O}(\mathbf{X},Y,\mathcal{G})\) and \(\mathbf{O}^*(\mathbf{X},Y,\mathcal{G})\) are equal and invoking Propositions \ref{Osuff} and \ref{eff}. By Lemma \ref{mylemma}, \(Y\in\mathrm{possde}(\mathbf{X},\mathcal{G})\) implies \(Y\in\mathrm{de}(\mathbf{X},\mathcal{G})\). Then the equivalence follow directly from Lemma \ref{OO} in combination with Proposition \ref{OOequal}: For every DAG \(\mathcal{D}\in[\mathcal{G}]\), \(\mathbf{O}(\mathbf{X},Y,\mathcal{G})=\mathbf{O}(\mathbf{X},Y,\mathcal{D})=\mathbf{O}^*(\mathbf{X},Y,\mathcal{D})=\mathbf{O}^*(\mathbf{X},Y,\mathcal{G})\).
\end{proof}

\begin{lemma}[Lemma E.7 in \citetalias{HenckelPerkovicMaathuis2019}]
	\label{OO}
	Let \(\mathbf{X}\) and \(\mathbf{Y}\) be disjoint node sets in a maxPDAG \(\mathcal{G}\) such that \(\mathcal{G}\) is amenable relative to \((\mathbf{X},\mathbf{Y})\) in \(\mathcal{G}\), let	 \(\mathbf{Y}\subseteq\mathrm{possde}(\mathbf{X},\mathcal{G})\), and let \(\mathcal{D}\) be a DAG in \([\mathcal{G}]\). Then	  \(\mathbf{O}(\mathbf{X},\mathbf{Y},\mathcal{D})=\mathbf{O}(\mathbf{X},\mathbf{Y},\mathcal{G})\).
\end{lemma}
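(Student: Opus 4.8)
My plan is to prove that the two $\mathbf{O}$-sets coincide by a double inclusion, after reducing everything to the behaviour of the parent sets of the causal nodes once the forbidden set is known to be invariant. The starting point is Lemma~\ref{forbforb}, which gives $\mathrm{forb}(\mathbf{X},\mathbf{Y},\mathcal{D})=\mathrm{forb}(\mathbf{X},\mathbf{Y},\mathcal{G})=:\mathbf{F}$ for every $\mathcal{D}\in[\mathcal{G}]$. Since $\mathrm{posscn}(\mathbf{X},\mathbf{Y},\mathcal{G})\subseteq\mathbf{F}$ (each possibly causal node is its own possible descendant), and since by Lemma~\ref{mylemma} the hypothesis $\mathbf{Y}\subseteq\mathrm{possde}(\mathbf{X},\mathcal{G})$ upgrades to $\mathbf{Y}\subseteq\mathrm{de}(\mathbf{X},\mathcal{G})$, both $\mathbf{O}$-sets share the same ``$\setminus\mathbf{F}$'' operation, and it remains to compare $\mathrm{pa}(\mathrm{cn}(\mathbf{X},\mathbf{Y},\mathcal{D}),\mathcal{D})$ and $\mathrm{pa}(\mathrm{cn}(\mathbf{X},\mathbf{Y},\mathcal{G}),\mathcal{G})$ outside $\mathbf{F}$. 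I would stress that, unlike the forbidden set, the set of causal nodes is \emph{not} invariant across $[\mathcal{G}]$ (orienting an undirected edge can turn a merely possibly causal node into a causal one), so a naive node-by-node matching of parents fails and the argument must be organised at the level of $\mathbf{O}$ itself.

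The inclusion $\mathbf{O}(\mathbf{X},\mathbf{Y},\mathcal{G})\subseteq\mathbf{O}(\mathbf{X},\mathbf{Y},\mathcal{D})$ is the easy half. If $W\in\mathbf{O}(\mathbf{X},\mathbf{Y},\mathcal{G})$ then $W\notin\mathbf{F}$ and $W\rightarrow C$ in $\mathcal{G}$ for some $C\in\mathrm{cn}(\mathbf{X},\mathbf{Y},\mathcal{G})$. Directed edges and directed paths of $\mathcal{G}$ persist in every $\mathcal{D}\in[\mathcal{G}]$, so $W\rightarrow C$ in $\mathcal{D}$ and $C\in\mathrm{cn}(\mathbf{X},\mathbf{Y},\mathcal{D})$, whence $W\in\mathrm{pa}(\mathrm{cn}(\mathbf{X},\mathbf{Y},\mathcal{D}),\mathcal{D})\setminus\mathbf{F}=\mathbf{O}(\mathbf{X},\mathbf{Y},\mathcal{D})$.

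For the converse I would take $W\in\mathbf{O}(\mathbf{X},\mathbf{Y},\mathcal{D})$, so $W\notin\mathbf{F}$ and $W\rightarrow C'$ in $\mathcal{D}$ with $C'$ on a proper causal path $X\rightarrow\cdots\rightarrow C'\rightarrow\cdots\rightarrow Y$ in $\mathcal{D}$. The image of this path in $\mathcal{G}$ is a proper possibly causal path, so $C'\in\mathrm{posscn}(\mathbf{X},\mathbf{Y},\mathcal{G})$, and every node of its suffix $(C',\dots,Y)$ lies in $\mathrm{posscn}\subseteq\mathbf{F}$. First I would argue that the edge $W\rightarrow C'$ is already present and directed in $\mathcal{G}$: it cannot be reversed without contradicting $\mathcal{D}$, and it cannot be undirected, for then $W\in\mathrm{possde}(C',\mathcal{G})\subseteq\mathbf{F}$. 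By the same reasoning no node of the suffix can share an undirected edge with $W$. Hence $(W,C',\dots,Y)$ is a possibly directed path in $\mathcal{G}$ none of whose nodes shares an undirected edge with its first node, and Lemma~\ref{mylemma} extracts a directed path $W\rightarrow C''\rightarrow\cdots\rightarrow Y$ in $\mathcal{G}$ whose second node $C''$ is a child of $W$ lying on the possibly causal path, hence in $\mathrm{posscn}$. To finish I would show $C''\in\mathrm{cn}(\mathbf{X},\mathbf{Y},\mathcal{G})$ by producing a directed path from $\mathbf{X}$ to $C''$: running the same amenability-plus-Lemma~\ref{mylemma} argument on the $\mathbf{X}$-to-$C''$ prefix (an undirected edge from the source to a later node would yield a proper possibly causal path from $\mathbf{X}$ starting with an undirected edge, contradicting amenability) gives the directed prefix, and concatenation produces a proper causal path through $C''$. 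Thus $W\rightarrow C''$ with $C''\in\mathrm{cn}(\mathbf{X},\mathbf{Y},\mathcal{G})$ and $W\notin\mathbf{F}$, i.e.\ $W\in\mathbf{O}(\mathbf{X},\mathbf{Y},\mathcal{G})$.

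The main obstacle is exactly this converse direction: because the causal-node set varies over $[\mathcal{G}]$, one cannot match parents node-by-node and must instead reconstruct, for each non-forbidden parent $W$ of a $\mathcal{D}$-causal node, a genuinely \emph{directed} witnessing path in $\mathcal{G}$. The two delicate points I expect to need the most care are the repeated conversion of possibly directed paths into directed ones via Lemma~\ref{mylemma} --- legitimate only after ruling out undirected edges to the relevant source, which is where $W\notin\mathbf{F}$ and amenability do the real work --- and the bookkeeping required to keep the concatenated $\mathbf{X}$-to-$\mathbf{Y}$ path proper and its nodes distinct.
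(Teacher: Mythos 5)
Your argument is correct, but note that the paper does not prove this lemma at all: it is imported verbatim as Lemma~E.7 of \citetalias{HenckelPerkovicMaathuis2019}, so there is no in-paper proof to match against. What you have written is a self-contained derivation from the paper's own toolkit --- the invariance of the forbidden set (Lemma~\ref{forbforb}, itself imported as Lemma~E.8 of \citetalias{HenckelPerkovicMaathuis2019}) plus the path-straightening Lemma~\ref{mylemma} --- and it goes through. The easy inclusion is exactly as you say, since directed edges and hence proper causal paths of \(\mathcal{G}\) persist in every \(\mathcal{D}\in[\mathcal{G}]\). Your converse is the substantive part and is sound: the image in \(\mathcal{G}\) of a proper causal path of \(\mathcal{D}\) is proper possibly causal (a backward edge elsewhere in \(\mathcal{G}\) would force a cycle in \(\mathcal{D}\)), the suffix from \(C'\) onward sits in \(\mathrm{posscn}\subseteq\mathbf{F}\), and \(W\notin\mathbf{F}\) correctly rules out both an undirected edge and an edge into \(W\) from any suffix node, which is what licenses Lemma~\ref{mylemma} on \((W,C',\dots,Y)\); amenability then licenses it again on the \(\mathbf{X}\)-to-\(C''\) prefix, and the two extracted directed subsequences concatenate into a proper causal path because they draw on disjoint segments of the original path. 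The only point I would spell out more explicitly is that applying Lemma~\ref{mylemma} requires \((W,C',\dots,Y)\) to be \emph{possibly directed}, which needs \(W\leftarrow V_j\) excluded for non-adjacent suffix nodes \(V_j\) as well --- but this follows by the same \(W\in\mathbf{F}\) contradiction you already use for undirected edges, so it is a presentational rather than a logical gap.
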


\section[Appendix D: Proof for Section 4]{Proof for Section 4}
\label{appD}

\setcounter{theorem}{10}
\begin{proposition}
	Let $X$ and $Y$ be nodes in a causal CPDAG or maxPDAG $\mathcal{G} = (\mathbf{V},\mathbf{E})$, such that $\mathbf{V}$ follows a causal linear model compatible with $\mathcal{G}$ with Gaussian errors. Let $\widehat{\Theta}^{\mathbf{P}}$ and $\widehat{\Theta}^{\mathbf{O}}$ be the multisets returned by semi-local IDA and optimal IDA respectively, applied to \(X\), \(Y\) and \(\mathcal{G}\), with the subsets of \(\mathrm{sib}(X,\mathcal{G})\) considered in the same order for both. Then, for $i \in\{1 \dots, k\}$, with $k = |\widehat{\Theta}^{\mathbf{P}}| = |\widehat{\Theta}^{\mathbf{O}}|$,
	\begin{enumerate}
		\item $\mathbb{E}[\widehat{\Theta}^{\mathbf{P}}_i] = \mathbb{E}[\widehat{\Theta}^{\mathbf{O}}_i]$ 
		and 
		\item $a.var(\widehat{\Theta}^{\mathbf{P}}_i) \geq a.var(\widehat{\Theta}^{\mathbf{O}}_i)$. 
	\end{enumerate}
\end{proposition}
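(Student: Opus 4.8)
The key structural observation is that lines~1--8 of semi-local IDA (Algorithm~\ref{AlgL}) and optimal IDA (Algorithm~\ref{AlgO}) are identical: they loop over the subsets $\mathbf{S}\subseteq\mathrm{sib}(X,\mathcal{G})$ in the same order and call the \emph{same} deterministic routine $\text{ConstructMaxPDAG}(\mathcal{G},\mathbf{LocalBg})$ in line~7. Hence for each $\mathbf{S}$ the two algorithms produce the very same graph $\mathcal{G}'$ and fail on exactly the same subsets, so the successful iterations are in bijection and the $i$-th entries $\widehat{\Theta}^{\mathbf{P}}_i$ and $\widehat{\Theta}^{\mathbf{O}}_i$ arise from a common $\mathcal{G}'$; in particular $k=|\widehat{\Theta}^{\mathbf{P}}|=|\widehat{\Theta}^{\mathbf{O}}|$. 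First I would fix such an $i$ with its graph $\mathcal{G}'$, noting that $\mathcal{G}'$ is a refinement of $\mathcal{G}$, i.e.\ $[\mathcal{G}']\subseteq[\mathcal{G}]$, so every DAG in $[\mathcal{G}']$ is Markov equivalent to the true DAG $\mathcal{D}_0\in[\mathcal{G}]$. The proof then splits according to the \texttt{if}-statements in line~9.

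If $Y\notin\mathrm{possde}(X,\mathcal{G}')$, optimal IDA returns the constant $0$ (line~12). When additionally $Y\in\mathrm{pa}(X,\mathcal{G}')$, semi-local IDA also returns $0$ and the two entries agree trivially. Otherwise $\mathbf{P}=\mathrm{pa}(X,\mathcal{G}')$ is a valid adjustment set and, since $Y\notin\mathrm{possde}(X,\mathcal{G}')$ forces $Y\notin\mathrm{de}(X,\mathcal{D})$ and hence $\tau_{yx}(\mathcal{D})=0$ for every $\mathcal{D}\in[\mathcal{G}']$, the population coefficient satisfies $\beta_{yx.\mathbf{p}}=0$. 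Thus $\widehat{\Theta}^{\mathbf{P}}_i$ has probability limit (and, under Gaussianity and OLS unbiasedness, mean) equal to $0$, matching $\widehat{\Theta}^{\mathbf{O}}_i$; its asymptotic variance is nonnegative while that of the deterministic $0$ is zero, so both claims hold.

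The substantive case is $Y\in\mathrm{possde}(X,\mathcal{G}')$, where optimal IDA adjusts for $\mathbf{O}=\mathbf{O}(X,Y,\mathcal{G}')$ and semi-local IDA adjusts for $\mathbf{P}=\mathrm{pa}(X,\mathcal{G}')$ (note $Y\notin\mathrm{pa}(X,\mathcal{G}')$ by acyclicity, so line~10 of Algorithm~\ref{AlgL} is reached). Because $\text{ConstructMaxPDAG}$ has oriented every edge incident to $X$, the set $\mathbf{P}$ satisfies the generalised adjustment criterion in $\mathcal{G}'$ (this is exactly what makes semi-local IDA valid), and $\mathbf{O}$ is valid by Proposition~\ref{Osuff}. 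Both are therefore valid adjustment sets relative to $(X,Y)$ in $\mathcal{G}'$, so $\beta_{yx.\mathbf{p}}=\beta_{yx.\mathbf{o}}=\tau_{yx}(\mathcal{D})$ for any $\mathcal{D}\in[\mathcal{G}']$, which yields the common probability limit and hence part~1. For part~2 I would invoke Proposition~\ref{eff}(a): fixing some $\mathcal{D}\in[\mathcal{G}']$, the set $\mathbf{O}(X,Y,\mathcal{G}')$ coincides with the O-set $\mathbf{O}(X,Y,\mathcal{D})$ by Lemma~\ref{OO}, and $\mathbf{P}$ is a valid adjustment set in $\mathcal{D}$, so the O-set attains the minimal asymptotic variance, giving $a.var(\widehat{\Theta}^{\mathbf{O}}_i)\le a.var(\widehat{\Theta}^{\mathbf{P}}_i)$.

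\textbf{Main obstacle.} The delicate point is that the true DAG $\mathcal{D}_0$ need not lie in $[\mathcal{G}']$, so Proposition~\ref{eff} cannot be applied to $\mathcal{D}_0$ directly; it is stated for a model \emph{compatible} with the graph in which the O-set is formed. This is where the Gaussian-error hypothesis is essential. Since every $\mathcal{D}\in[\mathcal{G}']\subseteq[\mathcal{G}]$ is Markov equivalent to $\mathcal{D}_0$ and the observational law $P$ is multivariate Gaussian, $P$ can be reproduced by a linear-Gaussian structural equation model on $\mathcal{D}$; moreover, under joint Gaussianity each conditional expectation $\mathrm{E}(Y\mid X,\mathbf{Z})$ is linear for \emph{every} conditioning set $\mathbf{Z}$, so the OLS coefficients and their asymptotic variances are functionals of $P$ alone. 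This lets me treat $P$ as following a linear causal model compatible with $\mathcal{D}$ and transfer the variance inequality of Proposition~\ref{eff} to the true data-generating distribution. Without Gaussianity this transfer breaks down, because linearity of $Y$ given a non-parental adjustment set is no longer guaranteed --- precisely the caveat recorded in the Remark following the proposition.
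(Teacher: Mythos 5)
Your proposal is correct and follows the same overall architecture as the paper's proof in Appendix~D: the same identification, for each subset $\mathbf{S}_i$, of a common maxPDAG $\mathcal{G}'_i$ and a DAG $\mathcal{D}\in[\mathcal{G}'_i]$ with $\mathrm{pa}(X,\mathcal{D})=\mathbf{S}_i\cup\mathrm{pa}(X,\mathcal{G})$ via the \cite{PerkovicKalischMaathuis2017} compatibility result, the same case split on the line-9 conditions, the same use of Lemma~\ref{OO} to identify $\mathbf{O}(X,Y,\mathcal{G}'_i)$ with $\mathbf{O}(X,Y,\mathcal{D})$, and the same disposal of the $Y\notin\mathrm{possde}(X,\mathcal{G}'_i)$ case. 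The one genuine divergence is the variance inequality in the main case. You correctly isolate the crux --- Proposition~\ref{eff}(a) is stated for a linear causal model compatible with the graph in which the $\mathbf{O}$-set is computed, and the data-generating DAG need not lie in $[\mathcal{G}'_i]$ --- and you resolve it by re-parameterising the Gaussian observational law as a linear SEM on a Markov-equivalent $\mathcal{D}\in[\mathcal{G}'_i]$ and then citing Proposition~\ref{eff}(a) wholesale, noting that the OLS asymptotic variances are functionals of the distribution alone. The paper instead sidesteps the compatibility issue by unpacking the proof of that proposition: it invokes Lemmas~E.4, E.5 and C.2 of HPM19 to obtain the two conditional independencies and the resulting partial-variance inequalities $\sigma_{yy.x\mathbf{o}_i}\le\sigma_{yy.x\mathbf{p}_i}$ and $\sigma_{xx.\mathbf{o}_i}\ge\sigma_{xx.\mathbf{p}_i}$, and then uses the Gaussian formula $a.var(\hat\beta_{yx.\mathbf{z}})=\sigma_{yy.x\mathbf{z}}/\sigma_{xx.\mathbf{z}}$. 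The two routes rest on the same mathematics (the lemmas are precisely the ingredients of Proposition~\ref{eff}(a)); yours is a cleaner high-level transfer argument, while the paper's makes the exact role of Gaussianity visible at the level of partial variances. Both are sound, and your ``main obstacle'' paragraph explains exactly why the paper could not simply cite Proposition~\ref{eff}(a) as stated.
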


\begin{proof}
	Consider any set $\mathbf{S}_i \subseteq \mathrm{sib}(X,\mathcal{G})$. \cite{PerkovicKalischMaathuis2017} showed that there exists a DAG \(\mathcal{D} \in [\mathcal{G}]\), such that $\mathbf{P}_i=\mathrm{pa}(X,\mathcal{D})=\mathbf{S}_i \cup \mathrm{pa}(X,\mathcal{G})$, if and only if directing the edges in the neighbourhood of $X$ according to $\mathbf{P}_i$ and applying Meek's orientation rules results in a valid maxPDAG $\mathcal{G}'_i$. If this is not the case for $\mathbf{S}_i$, both algorithms discard $\mathbf{S}_i$ at their respective line 8. We can hence suppose that there exists a DAG \(\mathcal{D} \in [\mathcal{G}]\), such that $\mathbf{P}_i = \mathrm{pa}(X,\mathcal{D})=\mathbf{S}_i \cup \mathrm{pa}(X,\mathcal{G})$.	
	
	Suppose that $Y \in \mathrm{possde}(X,\mathcal{G}'_i)$. In this case $\widehat{\Theta}^{\mathbf{O}}_i = \hat{\beta}_{yx.\mathbf{o}_i}$, where $\mathbf{O}_i=\mathbf{O}(X,Y,\mathcal{G}'_i)$. As $\mathcal{G}'_i$ is amenable by construction, it follows from Lemma \ref{OO} in Appendix~\ref{appC} that \(\mathbf{O}_i=\mathbf{O}(X,Y,\mathcal{D})\). Further,  $Y \in \mathrm{possde}(X,\mathcal{G}'_i)$ implies that  $Y \notin \mathrm{pa}(X,\mathcal{G}'_i)$ and thus $\widehat{\Theta}^{\mathbf{P}}_i = \hat{\beta}_{yx.\mathbf{p}_i}$. 
	By Proposition \ref{Osuff}, $\mathbf{O}_i$ is a valid adjustment set relative to $(X,Y)$ in $\mathcal{D}$, and clearly the same holds for $\mathbf{P}_i$. Since we suppose multivariate Gaussianity, this implies that both \(\hat{\beta}_{yx.\mathbf{o}_i}\) and \(\hat{\beta}_{yx.\mathbf{p}_i}\) are consistent estimators of \(\tau_{yx}(\mathcal{D})\), and  \(\mathbb{E}[\hat{\beta}_{yx.\mathbf{o}_i}]= \mathbb{E}[\hat{\beta}_{yx.\mathbf{p}_i}]=\tau_{yx}(\mathcal{D})\).
	
	Further, by Lemmas E.4 and E.5 of the Supplement of \citetalias{HenckelPerkovicMaathuis2019}, \(\mathbf{P}_i\setminus\mathbf{O}_i\) is conditionally independent of \(Y\) given \(\{X\}\cup\mathbf{P}_i\), and \(\mathbf{O}_i\setminus\mathbf{P}_i\) is conditionally independent of \(X\) given \(\mathbf{P}_i\), respectively. These two independencies allow us to invoke Lemma C.2 of \citetalias{HenckelPerkovicMaathuis2019} and conclude that \(\sigma_{yy.x\mathbf{o}_i} \leq \sigma_{yy.x\mathbf{p}_i}\) as well as \(\sigma_{xx.\mathbf{o}_i} \geq \sigma_{xx.\mathbf{p}_i}\). As we assume a multivariate Gaussian distribution, it follows that\[a.var(\hat{\beta}_{yx.\mathbf{o}_i}) = \frac{\sigma_{yy.x\mathbf{o}_i}}{\sigma_{xx.\mathbf{o}_i}} \leq \frac{\sigma_{yy.x\mathbf{p}_i}}{\sigma_{xx.\mathbf{p}_i}} = a.var(\hat{\beta}_{yx.\mathbf{p}_i}).\]
	
	Suppose now that $Y \notin \mathrm{possde}(X,\mathcal{G}'_i)$. Then $Y \notin \mathrm{de}(X,\mathcal{D})$, hence $\tau_{yx}(\mathcal{D})=0$. As $Y \notin \mathrm{possde}(X,\mathcal{G}'_i)$, $\widehat{\Theta}^{\mathbf{O}}_i = 0$ and as a result $a.var(\widehat{\Theta}^{\mathbf{O}}_i) =0$. 
	If $Y \in \mathrm{pa}(X,\mathcal{G}'_i)$, then $\widehat{\Theta}^{\mathbf{P}_i} = 0$ and as a result $a.var(\widehat{\Theta}^{\mathbf{P}}_i) =0$. If $Y \notin \mathrm{possde}(X,\mathcal{G}'_i) \cup \mathrm{pa}(X,\mathcal{G}'_i)$, then $\widehat{\Theta}^{\mathbf{P}}_i = \hat{\beta}_{yx.\mathbf{p}_i}$ and by nature of parent sets $\mathbb{E}[\hat{\beta}_{yx.\mathbf{p}_i}]=0$.
	Clearly, $a.var(\widehat{\Theta}^{\mathbf{P}}_i) >0$ in this case.
\end{proof}

\newpage
\section[Appendix E: Violin Plots]{Violin Plots}
\label{appE}

\begin{figure}[h!]
	\begin{center}
	\includegraphics[height=16.5cm, trim=0 0.5cm 0 0, clip]{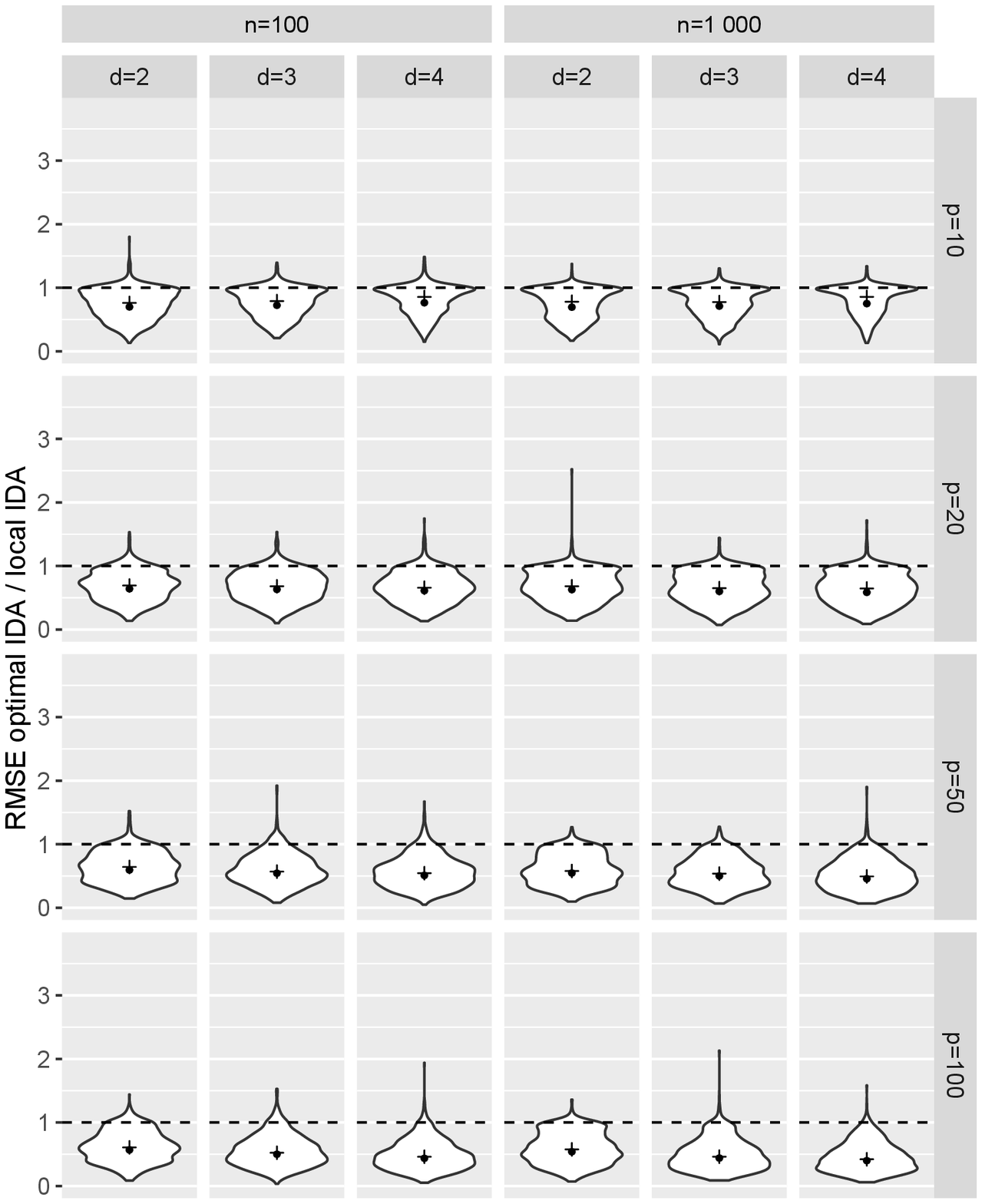}
	\end{center}
	\label{fig:violin_app1}
	\caption{Violin plots of the relative mean squared error (RMSE) \(r\)  over 1000 repetitions for scenarios with different numbers of nodes (\(p\)), expected number of neighbours per node (\(d\)), and sample sizes (\(n\)). Optimal and semi-local IDA were applied to the true CPDAG \(\mathcal{G}\). The dots mark the geometric means, the plus signs the medians.}
\end{figure}
\newpage
\begin{figure}[h!]
	\begin{center}
	\includegraphics[height=16.6cm, trim=0 0.5cm 0 0, clip]{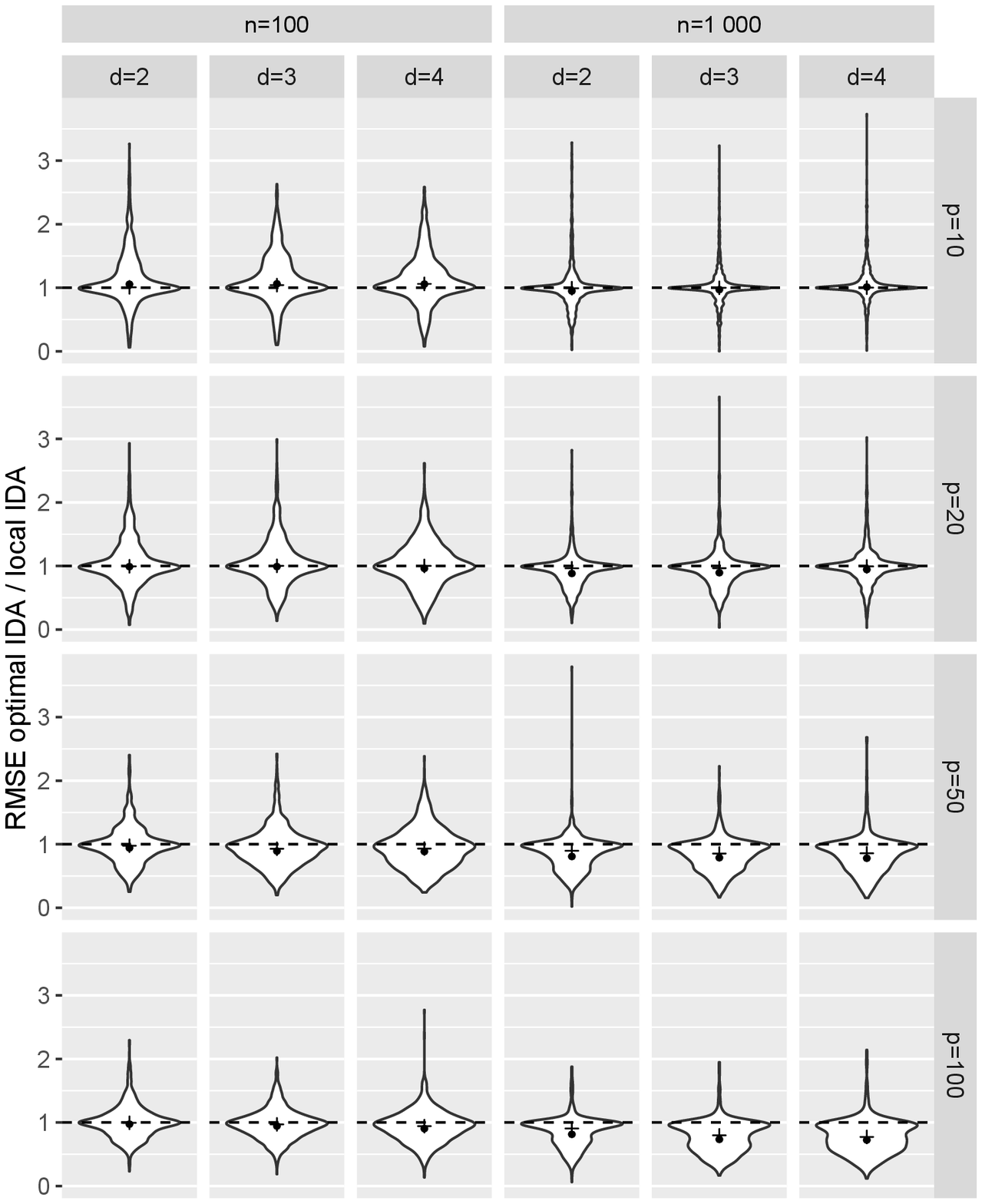}
	\end{center}
	\label{fig:violin_app2}
	\caption{Violin plots of the relative mean squared error (RMSE) \(r^*\)  over 1000 repetitions for scenarios with different numbers of nodes (\(p\)), expected number of neighbours per node (\(d\)), and sample sizes (\(n\)). Optimal and semi-local IDA were applied to the estimated CPDAG \(\mathcal{G}^*\). The dots mark the geometric means, the plus signs the medians.}
\end{figure}

\vskip 0.2in
\bibliography{EfficientBib3}

\end{document}